\title{The Doob transform and the tree behind the forest, with application to near-critical dimers.}
\author{Lucas Rey \thanks{lucas.rey@dauphine.psl.eu}}
\affil{CEREMADE, Universit{\'e} Paris-Dauphine, Université PSL, CNRS, 75016 Paris, France and DMA, {\'E}cole normale sup{\'e}rieure, Universit{\'e} PSL, CNRS, 75005 Paris, France.}
\theoremstyle{plain}
\newtheorem{Thm}{Theorem}[section]
\newtheorem{Cor}[Thm]{Corollary}
\newtheorem{Prop}[Thm]{Proposition}
\newtheorem{Lem}[Thm]{Lemma}
\theoremstyle{definition}
\newtheorem{Def}[Thm]{Definition}
\theoremstyle{remark}
\newtheorem{Rem}[Thm]{Remark}
\newtheorem{Ex}[Thm]{Example}
\newcommand{\lr}[1]{{\textcolor{black}{#1}}}
\DeclareMathOperator{\Diag}{\mathrm{Diag}}
\renewcommand{\sc}{\mathrm{sc}}
\newcommand{\dc}{\mathrm{dc}}
\newcommand{\dn}{\mathrm{dn}}
\newcommand{\TT}{\mathcal{T}}
\newcommand{\RR}{\mathcal{R}}
\newcommand{\CC}{\mathcal{C}}
\newcommand{\FF}{\mathcal{F}}
\newcommand{\Kc}{\mathcal{K}}
\newcommand{\cH}{\mathcal{H}}
\newcommand{\cI}{\mathcal{I}}
\newcommand{\R}{\mathbb{R}}
\newcommand{\C}{\mathbb{C}}
\newcommand{\N}{\mathbb{N}}
\newcommand{\Z}{\mathbb{Z}}
\newcommand{\T}{\mathbb{T}}
\newcommand{\disk}{\mathbb{D}}
\newcommand{\Proba}{\mathbb{P}}
\newcommand{\E}{\mathbb{E}}
\newcommand{\D}{\mathcal{D}}
\newcommand{\Gd}{\mathsf{\Gamma}_{\delta}}
\newcommand{\tod}{\overset{\delta \to 0}\longrightarrow}
\newcommand{\Dd}{\Delta_{\d}}
\newcommand{\oDd}{\Delta_{\d}}
\newcommand{\Rrz}{\mathcal{R}(r,z)}
\newcommand{\Rrzd}{\mathcal{R}_{\delta}(r,z)}
\newcommand{\Drz}{\mathcal{D}(r,z)}
\newcommand{\Drzd}{\mathcal{D}_{\delta}(r,z)}
\newcommand{\Bst}{B^{\mathrm{s}}}
\newcommand{\Bta}{B^{\mathrm{t}}}
\newcommand{\Bstrz}{\Bst(r,z)}
\newcommand{\Bstrzd}{\Bst_{\delta}(r,z)}
\newcommand{\Btarz}{\Bta(r,z)}
\newcommand{\Btarzd}{\Bta_{\delta}(r,z)}
\newcommand{\Birz}{B^i(r,z)}
\newcommand{\Birzd}{B^i_{\delta}(r,z)}
\newcommand{\M}{\mathcal{M}}
\newcommand{\Ed}{\vec{\mathsf{E}}}
\newcommand{\ed}{\vec{e}}
\newcommand{\fd}{\vec{f}}
\newcommand{\Gr}{\mathsf{G}^{\rho}}
\newcommand{\Vr}{\mathsf{V}^{\rho}}
\newcommand{\Er}{\mathsf{E}^{\rho}}
\newcommand{\Erd}{\Ed^{\rho}}
\newcommand{\Go}{\mathsf{G}^o}
\newcommand{\Vo}{\mathsf{V}^o}
\newcommand{\Eo}{\mathsf{E}^o}
\newcommand{\Eod}{\Ed^o}
\newcommand{\SG}{\mathsf{G}}
\newcommand{\SV}{\mathsf{V}}
\newcommand{\SE}{\mathsf{E}}
\newcommand{\SGd}{\mathsf{G}_{\delta}}
\newcommand{\hSGd}{\hat{\mathsf{G}}_{\delta}}
\newcommand{\SVd}{\mathsf{V}_{\delta}}
\newcommand{\SEd}{\mathsf{E}_{\delta}}
\newcommand{\oSG}{\mathsf{G}^{\infty}}
\newcommand{\oSV}{\mathsf{V}^{\infty}}
\newcommand{\oSE}{\mathsf{E}^{\infty}}
\newcommand{\oSGd}{\mathsf{G}_{\delta}^{\infty}}
\newcommand{\oSVd}{\mathsf{V}_{\delta}^{\infty}}
\newcommand{\oSEd}{\mathsf{E}_{\delta}^{\infty}}
\newcommand{\oc}{c^{\infty}}
\newcommand{\om}{m^{\infty}}
\newcommand{\SB}{\mathsf{B}}
\newcommand{\SW}{\mathsf{W}}
\newcommand{\SU}{\mathsf{U}}
\newcommand{\SA}{\mathsf{A}}
\newcommand{\SM}{\mathsf{M}}
\newcommand{\SH}{\mathsf{H}}
\newcommand{\Vsr}{\mathsf{V}^{\star} \setminus \{\rs\}}
\newcommand{\Gdr}{\underline{\mathsf{G}}^{\mathsf{D}}}
\newcommand{\Edr}{\underline{\mathsf{E}}^{\mathsf{D}}}
\newcommand{\Vdr}{\underline{\mathsf{V}}^{\mathsf{D}}}
\newcommand{\GD}{\mathsf{G}^{\mathsf{D}}}
\newcommand{\VD}{\mathsf{V}^{\mathsf{D}}}
\newcommand{\ED}{\mathsf{E}^{\mathsf{D}}}
\newcommand{\Dsk}{(\Delta^{\star})^{\mathrm{k}}}
\newcommand{\Ds}{\Delta^{\star}}
\newcommand{\Dsl}{\widetilde{\Delta}^{\star}}
\newcommand{\Gsl}{\widetilde{G}^{\star}}
\newcommand{\nd}{\nu_{\mathrm{dim}}}
\newcommand{\Zd}{Z_{\mathrm{dim}}}
\newcommand{\Pd}{\mathbb{P}_{\mathrm{dim}}}
\newcommand{\Pdd}{\mathbb{P}_{\mathrm{dim}}^{\mathrm{d}}}
\newcommand{\Pdk}{\mathbb{P}_{\mathrm{dim}}^{\mathrm{k}}}
\newcommand{\SWr}{\underline{\mathsf{W}}}
\newcommand{\SBr}{\underline{\mathsf{B}}}
\newcommand{\SVrs}{\underline{\mathsf{V}}^{\star}}
\newcommand{\SVs}{\mathsf{V}^{\star}}
\newcommand{\SGs}{\mathsf{G}^{\star}}
\newcommand{\SEs}{\mathsf{E}^{\star}}
\newcommand{\nRST}{\nu_{\mathrm{RST}}}
\newcommand{\nRSF}{\nu_{\mathrm{RSF}}}
\newcommand{\nMTSF}{\nu_{\mathrm{MTSF}}}
\newcommand{\ZRST}{Z_{\mathrm{RST}}}
\newcommand{\ZRSF}{Z_{\mathrm{RSF}}}
\newcommand{\ZMTSF}{Z_{\mathrm{MTSF}}}
\newcommand{\PMTSF}{\Proba_{\mathrm{MTSF}}}
\newcommand{\Dk}{\Delta^{\mathrm{k}}}
\newcommand{\Sk}{S^{\mathrm{k}}}
\newcommand{\Qk}{Q^{\mathrm{k}}}
\newcommand{\ck}{c^{\mathrm{k}}}
\newcommand{\Sr}{S^{\rho}}
\newcommand{\ms}{m^{\star}}
\newcommand{\po}{\partial^{\mathrm{out}}}
\newcommand{\oD}{\Delta}
\newcommand{\oDk}{\Delta^{\mathrm{k}}}
\newcommand{\PRST}{\Proba_{\mathrm{RST}}}
\newcommand{\PRSTl}{\widetilde{\Proba}_{\mathrm{RST}}}
\newcommand{\PRSTr}{\PRST^{\rho}}
\newcommand{\PRSF}{\Proba_{\mathrm{RSF}}}
\newcommand{\PW}{\Proba_{\mathrm{W}}}
\newcommand{\PWk}{\Proba_{\mathrm{W}}^{\mathrm{k}}}
\newcommand{\PWl}{\widetilde{\Proba}_{\mathrm{W}}}
\newcommand{\PWlk}{\widetilde{\Proba}_{\mathrm{W}}}
\newcommand{\PdRST}{\Proba^{\mathrm{d}}_{\mathrm{RST}}}
\newcommand{\Pl}{\widetilde{\Proba}}
\newcommand{\Pldim}{\Proba^{\mathrm{d}}_{\mathrm{dim}}}
\newcommand{\nud}{\nu^{\mathrm{d}}}
\newcommand{\nuk}{\nu^{\mathrm{k}}}
\newcommand{\nuko}{\nu^{\mathrm{k},1}}
\newcommand{\Kd}{K^{\mathrm{d}}}
\newcommand{\Kk}{K^{\mathrm{k}}}
\newcommand{\Kko}{K^{\mathrm{k},1}}
\newcommand{\Lko}{L^{\mathrm{k},1}}
\newcommand{\Dl}{\widetilde{\Delta}}
\newcommand{\Ql}{\widetilde{Q}}
\newcommand{\cl}{\widetilde{c}}
\newcommand{\clo}{\widetilde{c}}
\newcommand{\ml}{\widetilde{m}}
\newcommand{\Sl}{\widetilde{S}}
\newcommand{\Hl}{\widetilde{H}}
\newcommand{\Vl}{\widetilde{V}}
\newcommand{\Hlk}{\widetilde{H}}
\newcommand{\Vlk}{\widetilde{V}}
\newcommand{\tl}{\widetilde{\tau}}
\newcommand{\oSk}{S^{\mathrm{k}}}
\newcommand{\oVk}{V^{\infty, \mathrm{k}}}
\newcommand{\ot}{\overline{\tau}}
\newcommand{\Pk}{\mathbbm{P}^{\mathrm{k}}}
\newcommand{\Vk}{V^{\mathrm{k}}}
\newcommand{\Hk}{H^{\mathrm{k}}}
\newcommand{\Gk}{G^{\mathrm{k}}}
\newcommand{\MG}{\partial^{\mathrm{M}}(\mathsf{G})}
\newcommand{\co}{c}
\newcommand{\cs}{c^{\star}}
\newcommand{\cls}{\widetilde{c}^{\star}}
\newcommand{\x }{u}
\newcommand{\y}{v}
\newcommand{\z}{v}
\newcommand{\xd}{x_{\delta}}
\newcommand{\yd}{y_{\delta}}
\newcommand{\ad}{a_{\delta}}
\newcommand{\bd}{b_{\delta}}
\newcommand{\rs}{r}
\newcommand{\ls}{\lambda^{\star}}
\newcommand{\oS}{S}
\newcommand{\ocl}{\widetilde{c}^{\infty}}
\newcommand{\oSl}{\widetilde{S}}
\newcommand{\otl}{\widetilde{\overline{\tau}}}
\newcommand{\otk}{\tau^{\mathrm{k}}}
\newcommand{\oDl}{\widetilde{\Delta}}
\newcommand{\crho}{c^{\mathrm{k}}}
\newcommand{\Qr}{Q^{\rho}}
\newcommand{\tk}{\tau^{\mathrm{k}}}
\newcommand{\Xk}{X^{\mathrm{k}}}
\newcommand{\cf}{c^{\mathrm{f}}}
\newcommand{\mf}{m^{\mathrm{f}}}
\newcommand{\Slk}{\widetilde{S}}
\newcommand{\Qlk}{\widetilde{Q}}
\newcommand{\Dlk}{\widetilde{\Delta}}
\newcommand{\tlk}{\widetilde{\tau}}
\newcommand{\CPk}{P^{\mathrm{k}}}
\newcommand{\CPl}{\widetilde{P}}
\newcommand{\CPd}{P_{\mathrm{dim}}^{\mathrm{d}}}
\newcommand{\Nk}{N^{\mathrm{k}}}
\newcommand{\me}{\mathsf{e}}
\newcommand{\Dkd}{\Delta^{\mathrm{k}}_{\delta}}
\newcommand{\oDkd}{\Delta_{\delta}^{\mathrm{k}}}
\newcommand{\Dld}{\widetilde{\Delta}_{\delta}}
\newcommand{\Skd}{S^{\mathrm{k}}_{\delta}}
\newcommand{\Slkd}{S^{\mathrm{l,k}}_{\delta}}
\newcommand{\Sld}{S^{\mathrm{l}}_{\delta}}
\newcommand{\Ykd}{Y^{\mathrm{k}}_{\delta}}
\newcommand{\Sd}{S_{\delta}}
\newcommand{\Xd}{X_{\delta}}
\newcommand{\Yd}{Y_{\delta}}
\newcommand{\oSkd}{S^{\mathrm{k}}_{\delta}}
\newcommand{\tSd}{\widetilde{S}_{\delta}}
\newcommand{\Bk}{B^{\mathrm{k}}}
\newcommand{\oSd}{\overline{S}_{\d}}
\newcommand{\zed}{\zeta_{\d}}
\newcommand{\xid}{\xi_{\d}}
\newcommand{\zeld}{\zeta^{\mathrm{l}}_{\d}}
\newcommand{\sigk}{\sigma^{\mathrm{k}}}
\newcommand{\sigd}{\sigma^{\mathrm{d}}}
\renewcommand{\a}{\alpha}
\renewcommand{\d}{\delta}
\renewcommand{\t}{\theta}
\newcommand{\ba}{\bar{\alpha}}
\newcommand{\bt}{\bar{\theta}}
\newcommand{\bu}{\bar{u}}
\newcommand{\td}{\tau_{\delta}}
\newcommand{\sid}{\sigma_{\delta}}
\newcommand{\tld}{\tl_{\delta}}
\newcommand{\eps}{\varepsilon}
\newcommand{\cross}{\mathrm{Cross}_{\delta}(r,z)}
\newcommand{\crossk}{\mathrm{Cross}^{\mathrm{k}}_{\delta}(r,z)}
\newcommand{\be}{\begin{equation}}
\newcommand{\ee}{\end{equation}}
\begin{document}
\mathtoolsset{showonlyrefs}
\maketitle

\begin{comment}
    Questions à traiter: Covariance avec la masse ? Pour voir de la conformal covariance, il faudrait que la masse varie cf th 1.4 de Berestycki. Est-ce qu'il faut sortir du cadre des graphes isoradiaux ? s-embeddings peut-être ? questions ouvertes de Berestycki: peut-on apporter quelque chose sur (ii) et (iii) ? Est-ce qu'on peut utiliser l'asymptotique de la fonction de Green (cf \cite{MassiveLaplacian}, \cite{MartinBoundary}) pour montrer une convergence vers le Sine-Gordon field comme dans \cite{SineGordon} ? Et si on met une marche aléatoire branchante dans l'algorithme de Wilson ? utiliser ma méthode "perturbative" initiale (i.e. voir que l'étude est plus simple pour m > 0 que pour m = 0) pour d'autres SLE (typiquement les doubles dim{\`e}res, montrer la tension via Beurling) ?
\end{comment}

\begin{comment}
    question de Nathanael: est ce qu'on peut approximer un Laplacien drifté par un champ de vecteur quelconque de la même mani{\`e}re ?
\end{comment}

%%%%%%%%%%%%%%%%%%%%%%%%%%%%%%%%%%%%%%%%%%%%%%%%%%%%%%%%%%%%%%%%%%%%%%%%%%%%%%%%%%%%%%%%%%%%%

\begin{abstract}
     The Doob transform technique enables the study of a killed random walk via a random walk with transition probabilities tilted by a discrete massive harmonic function. The main contribution of this paper is to transfer this powerful technique to statistical mechanics by relating two models, namely random rooted spanning forests (RSF) and random spanning trees (RST), and provide applications. More precisely, our first main theorem explicitly relates models on the level of partition functions, and probability measures, in the case of finite and infinite graphs. Then, in the planar case, we also rely on the dimer model: we introduce a killed and a drifted dimer model, extending to this general framework the models introduced in \cite{Chhita,ZinvDirac}. Using Temperley’s bijection between RST and dimers, this allows us to relate RSF to dimers and thus extend partially this bijection to RSF. As immediate applications, we give a short and transparent proof of Kenyon’s result stating that the spectral curve of RSF is a Harnack curve, and provide a general setting to relate discrete massive holomorphic and harmonic functions. The other important application consists in proving universality of the convergence of the near-critical loop-erased \lr{random walk}, RST and dimer models by extending the results of \cite{Chhita,ChelkakWan,Berestycki} from the square lattice to any isoradial graphs: we introduce a loop-erased \lr{random walk}, RST and dimer model on isoradial discretizations of any simply connected domain and prove convergence in the massive scaling limit towards continuous objects described by a massive version of SLE2.\\
    
    \end{abstract}

\tableofcontents

%%%%%%%%%%%%%%%%%%%%%%%%%%%%%%%%%%%%%%%%%%%%%%%%%%%%%%%%%%%%%%%%%%%%%%%%%%%%%%%%%%%%%%%%%%%%%

\section{Introduction}
A \emph{rooted spanning forest} (RSF) of a graph $\SG = (\SV,\SE)$ is a set of directed edges $F \subset \Ed$ with no cycles such that every vertex has at most one outgoing edge in $F$. The root vertices $R(F)$ are the vertices with no outgoing edges. A \emph{rooted spanning tree} (RST) with root vertex $r$ is a RSF with only one root vertex $r$. When $\SG$ is finite, given a conductance function $c: \Ed \to \R_{>0}$ and a mass function $m: \SV \to \R_{\geq 0}$ on $\SV$ \lr{which is strictly positive at one vertex at least}, the \emph{Boltzmann measure} on the set of RST, resp. RSF, is defined by
\begin{equation}
    \forall T,~\PRST^r(T) = \frac{\prod_{\ed \in \lr{T}}c_{\ed}}{\ZRST^r(\SG,c)} \quad \text{, resp.} \quad \forall F,~\PRSF(F) = \frac{\prod_{\ed \in F}c_{\ed}\prod_{r \in R(F)}m(r)}{\ZRSF(\SG,c,m)}
\end{equation}
where the normalizing constant is the \emph{partition function}
\begin{equation}
    \ZRST^r(\SG,c) = \sum_{T}\prod_{\ed \in \lr{T}}c_{\ed} \quad \text{, resp.} \quad \ZRSF(\SG,c,m) = \sum_{F}\prod_{\ed \in F}c_{\ed}\prod_{r \in R(F)}m(r).
\end{equation}
Kirchhoff's celebrated matrix-tree theorem \cite{Kirchhoff} computes the partition functions:
\begin{equation}
    \begin{aligned}
        \ZRST^r(\SG,c) = \det\big(\Delta_{|\SV \setminus \{r\}}\big) \quad \text{, resp.} \quad\ZRSF(\SG,c,m) = \det(\Dk),
    \end{aligned}
\end{equation}
where $\Delta$ is the discrete Laplacian on $\SG$ with conductances $c$ and $\Dk$ \lr{($\mathrm{k}$ stand for “killed”)} is the discrete massive Laplacian on $\SG$ with conductances $c$ and masses $m$. The RST model was later shown to exhibit many integrable features: RST can be sampled by Wilson's algorithm \cite{Wilson} using the loop-erasure of the random walk $S$ associated to $\Delta$, the transfer current theorem of Burton and Pemantle \cite{BurtonPemantle} shows that edge correlations are determinantal, \cite{Pemantle} and later \cite{USF} build Gibbs measures on infinite graphs, and more recently the branches of the tree were shown to converge to $SLE_2$ in the scaling limit in \cite{LawlerSchrammWerner}. For specific planar graphs called \emph{Temperleyan graphs}, RST were shown in \cite{Temperley,TreesMatchings} to be in bijection with \emph{dimer covers}, exhibiting a fruitful link with another well-studied model of statistical mechanics. Many properties can be transfered from one model to the other by this bijection.\par
Some of these integrable features also hold for the RSF model, replacing $\Delta$ by $\Dk$ and the random walk $S$ by the killed random walk $\Sk$, but not all of them: in particular no extension of Temperley's bijection is known to be true. One of the main goals of this paper is to fill in this gap by introducing a general method which enables to relate a RSF on a planar graph with a RST and a dimer model on the same graph.  The key underlying idea is to use the \emph{Doob transform technique} (see \cite{kemeny2012denumerable} for a general introduction, \cite{DoobTransform} for a more recent and concise introduction). Given a positive massive harmonic function $\lambda$, this technique relates the \lr{killed random walk} $\Sk$ to a \lr{random walk} $\Sl$ with conductances $\cl_{(x,y)} = \frac{\lambda(y)}{\lambda(x)}c_{(x,y)}$, and also the associated massive and non-massive Laplacian operators:
\be\label{eq:def:doob:intro}
    \forall x \sim y \in \SV,~\Proba_x\big(\Sl_1 = y\big) = \frac{\lambda(y)}{\lambda(x)}\Proba_x(\Sk_1 = y) \quad;\quad \Dl = \Diag(\lambda)^{-1} \Dk \Diag(\lambda).
\ee
The question of existence of massive harmonic functions is addressed in Section~\ref{sec:find:lambda}.

\subsection{General results}
A version of the transfer current theorem for RSF established by Chang \cite{Chang} states that 
\be\label{eq:thm:transfer:intro}
    \forall \{\ed_1, \dots ,\ed_k\} \subset \Ed,~\PRSF(\ed_1, \dots ,\ed_k) = \det\left((\Hk_{\ed_i,\ed_j})_{i,j = 1, \dots ,k}\right)\prod_{i=1}^k c_{\ed_i}
\ee
where the \emph{transfer current operator} $\Hk$ is defined in terms of the massive \emph{potential} (or \emph{Green function}): for all $\ed = (x,y), \fd = (z,w) \in \Ed$,
\be\label{eq:transfer:current:intro}
    \Hk_{\ed,\fd} = \frac{\Vk(w,y)}{\Dk(y,y)}-\frac{\Vk(x,y)}{\Dk(y,y)} \quad \text{where} \quad \forall x,y \in \SV,~\Vk(x,y) = \sum_{n=0}^{\infty}\Proba_x\big(\Sk_n = y).
\ee
Note that the transfer current theorem for RST rooted at $r$ corresponds to $m = \d_r$. When $\SG$ is infinite, $\PRST$ is the \emph{Wilson's measure} defined in Section~\ref{subsec:Wilson}. Our contribution consists in relating a RSF and RST model through the Doob transform of the massive potential (see Theorem~\ref{thm:doob} for a precise statement):
\begin{Thm}\label{thm:doob:intro}
    When $\SG$ is finite, 
    \begin{equation}
        \ZRSF(\SG,c,m) = \ZRST^o(\Go, \cl).
    \end{equation}
    In the right-hand side, RST are considered as rooted “outside” $\SG$. Moreover, for finite or infinite graphs $\SG$ with no loop, if the \lr{killed random walk} $\Sk$ dies almost surely in finite time, the measure $\PRSTl^o$ on RST of $\Go$ weighted by $\cl$ rooted “outside” satisfies the transfer current theorem with transfer current operator
    \be\label{eq:transfer:operator:intro}
        \forall \ed = (w,x), \fd = (y,z) \in \Ed,~\Hl_{\ed,\fd} = \frac{\lambda(y)}{\lambda(w)}\frac{\Vk(w,y)}{\Dk(y,y)} - \frac{\lambda(y)}{\lambda(x)}\frac{\Vk(x,y)}{\Dk(y,y)}.
    \ee
\end{Thm}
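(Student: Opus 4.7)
The plan has two natural parts---the partition-function identity and the transfer-current formula---both driven by the similarity $\Dl = \Diag(\lambda)^{-1}\Dk\Diag(\lambda)$ from \eqref{eq:def:doob:intro}.

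For the finite-graph partition-function identity, I would start from the tautological weight-preserving bijection $F \mapsto T_F := F \cup \{(r,o) : r \in R(F)\}$ between RSFs of $\SG$ and RSTs of $\Go$ rooted at the added ``outside'' vertex $o$, and extend $\cl$ to the edge $(x,o)$ precisely so that the Laplacian of $(\Go,\cl)$ restricted to $\SV$ coincides with $\Dl$. The off-diagonal entries match by the very definition $\cl_{(x,y)} = \lambda(y)c_{(x,y)}/\lambda(x)$, and a short computation based on the massive harmonicity of $\lambda$ shows that there is a unique extension $\cl_{(x,o)}$ making the diagonal entries of $(\Go,\cl)|_{\SV}$ match those of $\Dk$ as well, absorbing the discrepancy in the row-sums $\sum_{y \in \SV} \cl_{(x,y)}$. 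Kirchhoff's matrix-tree theorem then rewrites both sides of the claimed identity as determinants of restricted Laplacians, and the invariance of the determinant under conjugation by $\Diag(\lambda)$ closes the argument.

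For the transfer-current formula, my plan is to apply the transfer-current theorem for RSTs on $\Go$ rooted at $o$---the $m = \delta_o$ specialization of \eqref{eq:thm:transfer:intro} applied on the augmented graph---to the Doob-transformed walk $\Sl$, and then re-express everything in terms of quantities attached to the original killed walk $\Sk$. The computational bridge is the iterated Doob relation, obtained by induction from \eqref{eq:def:doob:intro}:
\begin{equation*}
    \Proba_x(\Sl_n = y) = \frac{\lambda(y)}{\lambda(x)}\Proba_x(\Sk_n = y), \qquad n \geq 0.
\end{equation*}
Summing in $n$ yields $\Vl(x,y) = \frac{\lambda(y)}{\lambda(x)}\Vk(x,y)$, and the similarity gives $\Dl(y,y) = \Dk(y,y)$ on the diagonal. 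Substituting these two relations into the standard transfer-current formula for $\PRSTl^o$ (expressed in terms of the potential of $\Sl$ and the diagonal of $\Dl$) produces precisely the kernel in \eqref{eq:transfer:operator:intro}.

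The main obstacle I foresee is the infinite-graph case, in which Kirchhoff's theorem is not directly available and the measure $\PRSTl^o$ must be constructed by other means. The hypothesis that $\Sk$ dies almost surely in finite time is exactly what is needed, through the Doob correspondence, to ensure that $\Sl$ on $\Go$ is absorbed at $o$ almost surely, so that Wilson's algorithm driven by $\Sl$ terminates almost surely and defines a bona fide Gibbs measure $\PRSTl^o$. A limiting argument along finite exhaustions $\SG_n \nearrow \SG$, or a direct determinantal computation along the Wilson construction, should then suffice to upgrade the finite-graph identity to the infinite setting.
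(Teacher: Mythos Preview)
Your proposal is correct and follows essentially the same route as the paper's proof of the precise Theorem~\ref{thm:doob}. For the partition function, both you and the paper reduce to $\det\Dk_{\SV}=\det\Dl_{\SV}$ via the matrix-tree/matrix-forest theorem and invariance of the determinant under conjugation by $\Diag(\lambda)$; for the transfer current, both apply the transfer-current theorem to the Doob-transformed walk and substitute $\Vl(x,y)=\frac{\lambda(y)}{\lambda(x)}\Vk(x,y)$ together with the diagonal identity $\Dl(y,y)=\Dk(y,y)$. One small imprecision worth flagging: in the infinite case the paper takes $\Go=\SG$ with no added vertex, and the relevant fact is not that $\Sl$ is ``absorbed at $o$'' but that it is \emph{transient} (the paper's Proposition~\ref{prop:doob:transient}), which is what makes Wilson's algorithm rooted at infinity well-defined and case~(i) of the infinite transfer-current Theorem~\ref{thm:transfer} applicable; your exhaustion suggestion is exactly how the paper justifies that theorem.
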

The novelty of this theorem is that it relates a RSF and a RST model, at the level of partition functions when $\SG$ is finite, but also in general at the level of edge probabilities as they are expressed in terms of the massive potential both in the RST model (our Theorem~\ref{thm:doob:intro}) and in the RSF model (Equation~\eqref{eq:transfer:current:intro}).
\par
When $\SG$ is furthermore planar, we relate RSF to dimers using the intermediate RST model. This can be seen as an extension of Temperley's bijection to RSF although there is no one-to-one mapping between configurations, as we relate the partition functions, Laplacian and Kasteleyn matrix of a RSF and dimer models. More precisely, assuming that the conductance function $c$ is symmetric, given a \emph{massive harmonic} function $\lambda: \SV \to \R_{>0}$ and an arbitrary function $\ls: \SVs \to \R_{>0}$ on vertices of the dual, we define in Figure \ref{fig:killed:dimers} two dimer models on the double graph $\Gdr$ called the \emph{killed} and \emph{drifted} dimer model (see Figure \ref{fig:double:graph} for the definition of the double graph).
\begin{figure}[!h]
    \centering   
	\begin{overpic}[abs,unit=1mm,scale=3]{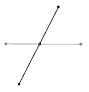}
		\put(2,21){\color{gray}$x$}
		\put(40,26){\color{gray}$y$}
		\put(30,45){$\x $}
		\put(10,0){$\y$}
		\put(23,35){$1$}
		\put(16,13){$1$}
		\put(8,27){$\textcolor{gray}{c_{xy}\frac{\lambda(y)}{\lambda(x)}}$}	
		\put(26,27){\textcolor{gray}{$c_{xy}\frac{\lambda(x)}{\lambda(y)}$}}
        \end{overpic}     
	\begin{overpic}[abs,unit=1mm,scale=3]{killed_dimers.pdf}
		\put(2,21){\color{gray}$x$}
		\put(40,26){\color{gray}$y$}
		\put(30,45){$\x $}
		\put(10,0){$\y$}
		\put(8,36){$\ls(\x)^{-1}\big(c_{xy}\lambda(x)\lambda(y)\big)^{-\frac{1}{2}}$}
		\put(5,10){$\ls(\y)^{-1}\big(c_{xy}\lambda(x)\lambda(y)\big)^{-\frac{1}{2}}$}
		\put(4,27){$\color{gray}\Big(\frac{c_{xy}\lambda(y)}{\lambda(x)}\Big)^{\frac{1}{2}}$}	
		\put(24,27){$\color{gray}\Big(\frac{c_{xy}\lambda(x)}{\lambda(y)}\Big)^{\frac{1}{2}}$}	
        \end{overpic}
        \caption{The local weights $\nud$ for the drifted dimer model (on the left) and the local weights $\nu^m$ for the killed dimer model (on the right). The primal graph is in gray, the dual graph in black.}
        \label{fig:killed:dimers}
\end{figure}
The drifted dimer model is defined such that when $\SG$ is finite, by Temperley's bijection \cite{TreesMatchings},  dimer configurations of the double graph are in weight-preserving bijection with RST of $\Go$ (that is RST of $\SG$ rooted outside) weighted by $\cl$. We state the following result, which is a summarized and simplified combination of Proposition~\ref{prop:gauge:killed:drifted}, Corollary~\ref{cor:temperley:drifted} and Proposition~\ref{prop:massive:holomorphy} (when $\SG$ is finite, the statements have to be refined to take into account boundary effects):
\begin{Thm}
     \leavevmode
    \begin{enumerate}[label=(\roman*)]
        \item When $\SG$ is finite, $\ZRSF(\SG,c,m) = \Zd(\Gdr, \nud)$, where the right-hand side is the partition function of the drifted dimer model on $\Gdr$ with weights $\nud$. 
        \item The drifted and killed dimer model are gauge equivalent: their respective Kasteleyn matrices $\Kd$ and $\Kk$ satisfy $\Kk = \Phi \Kd \Psi$ for some diagonal matrices $\Phi$, $\Psi$.
        \item There exists an operator $\Dsl$ on the dual graph such that the Kasteleyn matrix of the killed dimer model satisfies
            \be\label{eq:massive:holomorphy:intro}
	       (\Kk)^{\dag}\Kk = \begin{pmatrix} \Dk&0 \\ 0&\Dsl \end{pmatrix}.
            \ee 
    \end{enumerate}
\end{Thm}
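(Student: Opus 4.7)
The plan is to prove the three items in order.

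\textbf{Part (i)} is a two-step reduction. First apply Theorem~\ref{thm:doob:intro} to rewrite $\ZRSF(\SG,c,m) = \ZRST^o(\Go, \cl)$, the partition function of RST of $\Go$ weighted by the Doob-tilted conductances $\cl_{(x,y)} = c_{xy}\lambda(y)/\lambda(x)$. Then apply the standard Temperley bijection on $\Go$ with conductances $\cl$: this yields a weight-preserving bijection between such RST and dimer covers of the double graph $\Gdr$. It remains to check that the weights produced by the bijection coincide with the drifted weights $\nud$ of Figure~\ref{fig:killed:dimers}, namely $c_{xy}\lambda(y)/\lambda(x)$ on primal half-edges and $1$ on dual half-edges; this is a routine reading of Temperley.

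\textbf{Part (ii)} is a direct edge-by-edge verification. Comparing the local weights in Figure~\ref{fig:killed:dimers}, the ratio (killed)/(drifted) on each half-edge factors as a function of the endpoint times a function of the midpoint. One checks that the unique consistent diagonal gauge is
\[
    \Phi_x = \lambda(x), \qquad \Phi_u = \ls(u)^{-1}, \qquad \Psi_e = \bigl(c_{xy}\lambda(x)\lambda(y)\bigr)^{-1/2},
\]
where $x$ is primal, $u$ is dual, and $e$ is the midpoint of primal edge $xy$. Substituting into each of the four half-edges of a quadrilateral of $\Gdr$ recovers $\Kk$ from $\Kd$; Kasteleyn signs are preserved since the gauge is diagonal.

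\textbf{Part (iii)}, the heart of the theorem, is proved by computing $(\Kk)^{\dag}\Kk$ block-by-block using the bipartition primal$\sqcup$dual of black vertices. The primal diagonal entry
\[
    \bigl((\Kk)^{\dag}\Kk\bigr)_{x,x} = \sum_{y\sim x}\bigl|\Kk_{e_{xy},x}\bigr|^2 = \sum_{y\sim x} c_{xy}\lambda(y)/\lambda(x)
\]
equals $\Dk(x,x)$ precisely because $\lambda$ is massive harmonic: $(\Dk\lambda)(x) = 0$ rearranges as $\sum_y c_{xy}\lambda(y) = \bigl(\sum_y c_{xy} + m(x)\bigr)\lambda(x)$. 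For adjacent $x,x'$, only the midpoint of edge $xx'$ contributes, yielding $\overline{\sigma_{e,x}}\sigma_{e,x'}\, c_{xx'} = -c_{xx'} = \Dk(x,x')$ with the standard Kasteleyn sign choice. The dual-dual block is computed identically and defines $\Dsl$ as a massive Laplacian on the dual with conductances and masses determined by $c,\lambda,\ls$.

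The remaining and most delicate point is the vanishing of the cross (primal$\times$dual) block. For primal $x$ and dual $u$ in a face containing $x$, only the midpoints $e$ of the two edges of face $u$ incident to $x$ contribute, and a direct computation with the killed weights gives $|\overline{\Kk_{e,x}}\Kk_{e,u}| = \ls(u)^{-1}/\lambda(x)$, independent of which of the two midpoints is chosen --- this is exactly what the symmetric normalizations $\bigl(c_{xy}\lambda(x)\lambda(y)\bigr)^{-1/2}$ and $\ls(u)^{-1}$ appearing in the killed weights are designed to produce. Cancellation then reduces to the sign condition $\overline{\sigma_{e,x}}\sigma_{e,u} + \overline{\sigma_{e',x}}\sigma_{e',u} = 0$ between the two midpoints, which is the standard Kasteleyn condition around a quadrilateral face of $\Gdr$. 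The main obstacle is thus the combinatorial verification that a global Kasteleyn orientation of the planar bipartite double graph implements these local sign-flips consistently on every face.
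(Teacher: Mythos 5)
Your proof is correct and follows essentially the same route as the paper: part (i) is the Doob transform identity composed with Temperley's bijection (Corollary~\ref{cor:temperley:drifted}), part (ii) uses exactly the gauge functions $\lambda$ on primal black vertices, $(\lambda^{\star})^{-1}$ on dual black vertices and $(c_{xy}\lambda(x)\lambda(y))^{-1/2}$ on white vertices (Proposition~\ref{prop:gauge:killed:drifted}), and part (iii) is the same block-by-block computation of $(\Kk)^{\dag}\Kk$, with massive harmonicity entering only through the primal diagonal and the cross block vanishing by the Kasteleyn relation $\overline{\zeta_{w_1x}}\zeta_{w_1\x}=-\overline{\zeta_{w_2x}}\zeta_{w_2\x}$ around each quadrilateral face together with the equality $\nu^{\mathrm{k}}_{w_1x}\nu^{\mathrm{k}}_{w_1\x}=\nu^{\mathrm{k}}_{w_2x}\nu^{\mathrm{k}}_{w_2\x}=(\lambda^{\star}(\x)\lambda(x))^{-1}$ (Proposition~\ref{prop:massive:holomorphy}). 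Two cosmetic remarks only: your $\Phi$/$\Psi$ labels are swapped relative to $\Kk=\Phi \Kd\Psi$ (the left factor acts on white vertices since rows of $K$ are indexed by $\SW$), and the dual block $\Dsl$ is in general \emph{not} a massive Laplacian with nonnegative masses (the paper points this out explicitly), though the theorem only asserts it is some operator.
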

\begin{Rem}
\leavevmode
\begin{itemize}
    \item When $c$ is symmetric, this gives an alternative proof of our Theorem~\ref{thm:doob:intro} using the local statistics formula for edge probabilities in the dimer model (see the discussion after Proposition~\ref{prop:massive:holomorphy}).
    \item (iii) implies a natural definition of \emph{discrete massive holomorphic} function as the functions $f$ in the kernel of \lr{$\Kk$}. For isoradial graphs with critical or $Z$-invariant weights, it generalizes the discrete holomorphy theory of \cite{Kenyon_2002,DiscreteHolomorphy} (non-massive case) and \cite{MakarovSmirnov,ZinvDirac,FermionicObservable} (massive case). This is discussed in Section~\ref{subsec:massive:holomorphy}.
\end{itemize}
\end{Rem}
\par
This result was previously shown in some specific cases: our drifted and killed dimer models can be seen as generalizations of the drifted and killed dimer models (on $\Z^2$, with specific weights) of \cite{Chhita} which are also central to the subsequent article \cite{Berestycki}. These models were themselves related to a drifted and killed Laplacian operator by ad hoc computations. The idea of relating a massive Laplacian with a dimer model through an intermediate RST model was already used in the special case of isoradial graphs in \cite{ZinvDirac} (see in particular Theorems 19\lr{, }21), with the purpose of relating the massive Laplacian with a $Z$-invariant massive Dirac operator. Let us also mention a related work in a forthcoming paper of Ballu, Boutillier, Mkrtchyan and Raschel of which we are aware.\par
Every time it appeared, this idea was model-specific: our approach unveils that \emph{whenever we can find “nice” massive harmonic functions on a graph, the RSF model can be related to a RST and dimer model}. It has the advantage of using the well-developed Doob transform technique, which is used to study \lr{random walk} or \lr{killed random walk} conditioned on particular events. The link with \emph{quasi-stationnary distribution} might also provide new and interesting applications.

\subsection{Applications} We provide applications of our general technique to at least two settings. 

\paragraph{$\Z^2$-periodic graphs.} On $\Z^2$-periodic graphs, a family of Gibbs measures on the infinite graph is obtained in \cite{DeterminantalForest} as the weak limit of multi-type spanning forests (MTSF) on a toroidal exhaustion. A MTSF is a spanning collection of rooted trees and cycle-rooted trees. In the limit, it becomes a RSF of the infinite graph with infinite unrooted tree components of a given slope and rooted tree components. We build “nice” $\Z^2$-periodic positive massive harmonic functions, and using the Doob transform technique, we find that the characteristic polynomial of the MTSF model is equal up to a change of variable to the characteristic polynomial of a drifted dimer model with explicit $\Z^2$-periodic weights: denoting by $\CPk$ and $\CPd$ the characteristic polynomials of the RSF and drifted dimer models, Proposition~\ref{prop:char:periodic} states that
\begin{Prop}\label{prop:char:periodic:intro}
	There exists $(z_0, w_0) \in (\R_{>0})^2$ such that
	\begin{equation}
		\forall (z,w) \in \C^2,~\CPk(z,w) = \CPd\left(\frac{z}{z_0},\frac{w}{w_0}\right).
	\end{equation}
\end{Prop}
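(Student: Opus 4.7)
The plan is to apply the Doob transform of Theorem~\ref{thm:doob:intro} with a positive massive harmonic function $\lambda : \SV \to \R_{>0}$ that is \emph{multiplicatively} $\Z^2$-periodic: denoting by $e_1, e_2$ the generators of the $\Z^2$-action on $\SG$, one seeks $\lambda$ together with constants $z_0, w_0 > 0$ such that $\lambda(x+e_1) = z_0\lambda(x)$ and $\lambda(x+e_2) = w_0\lambda(x)$ for all $x \in \SV$. These constants will be the $z_0, w_0$ appearing in the proposition. The existence of such a $\lambda$ is the content of Section~\ref{sec:find:lambda}: it amounts to producing a real positive point on the spectral curve $\{\det \hat{\Dk}(z,w) = 0\}$, where $\hat{\Dk}(z,w)$ is the magnetic Fourier transform of $\Dk$ on a fundamental domain $F$, and is guaranteed by strict positivity of the mass via a Perron--Frobenius-type argument applied to $\hat{\Dk}(z,w)$ on $F$.

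Given such $\lambda$, the ratios $\lambda(y)/\lambda(x)$ are $\Z^2$-periodic in $x$, so the drifted conductances $\cl_{(x,y)} = \frac{\lambda(y)}{\lambda(x)}\,c_{(x,y)}$ are $\Z^2$-periodic and $\Dl = \Diag(\lambda)^{-1}\Dk\Diag(\lambda)$ is a $\Z^2$-periodic non-massive Laplacian on $\SG$. Crucially, the conjugating diagonal $\Diag(\lambda)$ itself is \emph{not} $\Z^2$-periodic, and this is precisely how the parameters $(z_0, w_0)$ enter the characteristic polynomial: a direct computation of the magnetic Fourier transform on $F$ gives
\begin{equation}
\hat{\Dl}(z,w) \;=\; \Diag(\lambda|_F)^{-1}\,\hat{\Dk}(z_0 z,\, w_0 w)\,\Diag(\lambda|_F),
\end{equation}
so that $\det\hat{\Dl}(z,w) = \det\hat{\Dk}(z_0 z, w_0 w) = \CPk(z_0 z, w_0 w)$.

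It remains to identify $\det\hat{\Dl}(z,w)$ with $\CPd(z,w)$. This is provided by the theorem immediately preceding Proposition~\ref{prop:char:periodic:intro}, part (i) of which equates the RSF partition function on $\SG$ with the drifted dimer partition function on $\Gdr$, while Theorem~\ref{thm:doob:intro} further identifies these with the RST partition function on $\Go$ weighted by $\cl$, i.e.\ with a non-massive spanning tree model for $\Dl$. Carrying out this identification on a toroidal exhaustion of $\SG$ with magnetic twists $(z,w)$ in the two cycles, and combining it with the standard Kasteleyn--Temperley--Kenyon correspondence relating the spanning tree and dimer characteristic polynomials on a Temperleyan torus, yields $\CPd(z,w) = \det\hat{\Dl}(z,w)$. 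Chaining the two identifications gives $\CPd(z,w) = \CPk(z_0 z, w_0 w)$, equivalently $\CPk(z,w) = \CPd(z/z_0, w/w_0)$.

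The hard part will be the construction of the multiplicatively $\Z^2$-periodic positive massive harmonic function $\lambda$, and the careful verification that the Kasteleyn magnetic twists for the drifted dimer characteristic polynomial match those of the Laplacian with no spurious $(z,w)$-dependent prefactor — essentially the same torus-level bookkeeping that appears in Kenyon's treatment of the non-massive case, but one must now track the new multiplicative periods of $\lambda$ through every sign and phase. Once $\lambda$ is in hand and the torus-level Temperley--Kasteleyn identification is done cleanly, the remaining calculation is routine linear algebra on the finite fundamental domain $F$.
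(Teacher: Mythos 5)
Your proposal is correct and follows essentially the same route as the paper: a Perron--Frobenius argument on the fundamental domain produces a multiplicatively $(z_0,w_0)$-periodic positive massive harmonic function $\lambda$, the conjugation $\Dl = \Diag(\lambda)^{-1}\Dk\Diag(\lambda)$ shifts the characteristic polynomial by $(z_0,w_0)$ exactly as in your displayed identity, and the identification of $\det\widehat{\Dl}(z,w)$ with $\CPd(z,w)$ is the torus-level Temperley correspondence, which the paper simply cites (Proposition 3.1 of Sun's work on the toroidal dimer model) rather than re-deriving. The only slight misattribution is that the existence of the multiplicatively periodic $\lambda$ is established in the $\Z^2$-periodic section (Subsection~\ref{subsec:periodic}), not in Section~\ref{sec:find:lambda}, but the argument you sketch is the one used there.
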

This provides an alternative and direct proof of the fact (stated among other things in Theorem 1.4 of \cite{DeterminantalForest}) that the spectral curve of the MTSF model is a Harnack curve, see our Theorem~\ref{thm:kenyon}.

\paragraph{Near-critical dimers and trees.} Our second and more detailed application is to extend the results of \cite{Chhita} and \cite{Berestycki} to isoradial graphs: we prove universality of the convergence of the RST and drifted dimer model in the \emph{near-critical regime} or \emph{massive scaling limit}. We use the \emph{discrete exponential function} which is massive harmonic for the \emph{$Z$-invariant massive Laplacian} on isoradial graphs (these objects were introduced and studied in \cite{MassiveLaplacian}) to relate a RSF model with a killed and drifted dimer model. In the critical case (corresponding to a vanishing mass $m=0$), our model corresponds to the critical RST model on isoradial graph whose branches are known to converge to $SLE_2$ by combining \cite{LawlerSchrammWerner} and \cite{DiscreteHolomorphy}. The associated killed and drifted dimer models coincide with the critical dimer model on isoradial graphs whose height function is known to converge to the \lr{G}aussian free field \cite{DominoTiling,DominoGFF,deTiliere2007,Li}. In \cite{Berestycki}, they consider a dimer model on $\Z^2$ with specific weights in the \emph{near-critical regime} which corresponds to scaling appropriately the mass with the mesh, and show convergence of the branches of the RST model towards \emph{massive $SLE_2$} building on the proof of convergence of the loop-erased \lr{killed random walk} towards massive $SLE_2$ by Chelkak and Wan \cite{ChelkakWan}. The height function of the near-critical dimer model also converges and the limit exhibits \emph{conformal covariant properties}. \par
In this paper, we show \emph{universality} of the convergence of the near-critical RST and dimer model by extending the results of \cite{Berestycki} to any isoradial graph, answering in particular (iv) of their open questions section (1.7). Let $\SGd = (\SVd, \SE_{\d})$ be an isoradial approximation of a simply connected open set $\Omega$ with smooth boundary. The near-critical regime corresponds to $m(x) \approx M^2 \d^2$ for all $x \in \SVd$ where $M>0$ is a fixed \emph{mass parameter} and $\d$ is the \emph{mesh}. The precise scaling of the mass with the mesh is discussed in Section~\ref{subsec:near-critical:preliminaries} and Remark \ref{rem:mass:positive}. \lr{The relation with the expected conformal covariance of the limit is discussed in \ref{rem:conformal:covariance}.} To each drift parameter $\bu \in (0,2\pi)$ corresponds a discrete massive exponential function $\me$ and hence a RST and drifted dimer models by the Doob transform technique. A precise statement of the following result can be found in Theorems \ref{thm:near-critical:tree} and \ref{thm:near-critical:dimers}.
\begin{Thm}\label{thm:near-critical:intro}
    When $\d \to 0$, the random RST $\TT_{\d}^{M,\bu}$ on $\SGd$ associated with the drifted dimer model with mass parameter $M >0$ and drift parameter $\bu \in \R$ converges in the Schramm sense to a continuum limit tree $\TT^{M,\bu}$. Conditionally on the endpoint $y \in \partial \Omega$, a branch of the tree from a point $x \in \Omega$ has the law of massive radial $SLE_2$ in $\Omega$ with mass parameter $\sqrt{2}M$ from $x$ to $y$. The law of the endpoint of the branch started at $x$ is the exit law from $\Omega$ of the \lr{B}rownian motion with drift vector $\sqrt{2}Me^{i\bu}$. Moreover, the centered height function $h^{M,u}_{\d}- \E[h_{\d}^{M,u}]$ of the drifted dimer model converges in law \lr{in the sense of distributions} to a limit.
\end{Thm}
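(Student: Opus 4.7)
The plan is to leverage our Doob-transform identity together with Wilson's algorithm to reduce both statements to the study of a loop-erased \emph{killed} random walk on the isoradial graph $\SGd$, and then adapt the massive discrete complex analysis of \cite{ChelkakWan,Berestycki} from $\Z^2$ to the isoradial setting. By construction, $\TT^{M,\bu}_\d$ corresponds via Temperley's bijection to the drifted dimer model with conductances $\cl$, so Wilson's algorithm applied to $\cl$ shows that its branch from a vertex $x$ is the loop-erasure of the drifted random walk $\Sl$. By \eqref{eq:def:doob:intro}, $\Sl$ is the Doob transform of $\Sk$ by the discrete massive exponential $\me$; equivalently, one may first sample the exit point $y$ with probability proportional to $\me(y)$ times the probability that $\Sk$ dies at $y$, and then run $\Sk$ conditioned to die at $y$. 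Since loop-erasure commutes with this decomposition, a branch of $\TT^{M,\bu}_\d$ is a loop-erased killed random walk \emph{conditioned on its exit point}, whose endpoint law is the $\me$-biased death measure of $\Sk$.

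The scaling limit of the endpoint follows from the convergence of the near-critical massive random walk on isoradial graphs: with $m \approx M^2 \d^2$, $\Sk$ converges to Brownian motion in $\Omega$ killed at a constant rate proportional to $M^2$, see \cite{MassiveLaplacian}. The discrete exponential $\me$ is chosen precisely so that, on the relevant scale, it behaves like the continuous exponential in the direction $e^{i\bu}$ with rate $\sqrt{2}M$; biasing the exit law by $\me$ is then exactly the Doob transform of the killed Brownian motion by this continuous harmonic exponential, giving Brownian motion with drift $\sqrt{2}Me^{i\bu}$ killed at $\partial\Omega$. To identify the conditional law of a branch, the plan is to adapt \cite{ChelkakWan}: the massive Dirac operator $\Kk$ from \eqref{eq:massive:holomorphy:intro} plays the role of their discrete complex operator on $\Z^2$, the $Z$-invariant massive Dirac machinery of \cite{ZinvDirac,FermionicObservable} provides the required s-holomorphicity on any isoradial graph, and the sharp massive Green function asymptotics of \cite{MassiveLaplacian} allow passing to the continuous limit of the key martingale observable. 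One then reads off convergence of the Loewner driving function and identifies the scaling limit of a single branch with massive radial $SLE_2$ of mass parameter $\sqrt{2}M$. Schramm-sense convergence of the whole tree is upgraded from single-branch convergence via the tightness and coupling arguments of \cite{Berestycki}, which go through once uniform crossing estimates for the drifted RST are available; these are inherited from those of the critical RST on isoradial graphs through the Doob transform.

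For the height function, the identity \eqref{eq:massive:holomorphy:intro} realizes the Kasteleyn matrix of the killed dimer model as a massive Dirac operator whose adjoint-square decouples into $\Dk$ and a dual massive Laplacian $\Dsl$. Covariances of the centered height function are therefore given by entries of $(\Kk)^{-1}$, which via this factorization reduce to massive Green functions; the asymptotics of \cite{MassiveLaplacian} then identify the limiting covariance as that of the continuum near-critical Gaussian field singled out in \cite{Berestycki}, and standard moment bounds yield tightness in the space of distributions. The main obstacle throughout is the isoradial extension of the Chelkak-Wan convergence for a single branch: although the $Z$-invariant massive Dirac calculus and Green function asymptotics exist in the literature, turning them into \emph{uniform} (in $\d$ and in the local isoradial geometry) estimates on the holomorphic observable up to $\partial\Omega$, and matching its limit with the correct massive SLE observable, is the core technical work; once this is in hand, the remainder of the proof parallels \cite{Chhita,ChelkakWan,Berestycki}.
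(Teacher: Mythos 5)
Your overall skeleton --- branch of $\TT_\d^{M,\bu}$ = loop-erasure of the Doob-transformed walk, the decomposition into an exponentially biased exit law plus the killed walk conditioned on its exit point, and the Cameron--Martin identification of the endpoint law --- is exactly the paper's route (Lemmas~\ref{lem:near-critical:doob} and \ref{lem:drift=mass} are the discrete Girsanov identities you describe). Where you diverge is at the central step, the convergence of a single conditioned branch to massive $SLE_2$. You propose to redo the Chelkak--Wan martingale-observable analysis on isoradial graphs using the massive Dirac operator $\Kk$, the $Z$-invariant s-holomorphicity calculus and the Green function asymptotics of \cite{MassiveLaplacian}, and you correctly flag this as ``the core technical work'' --- but you leave it there, with no concrete reduction. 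The paper's point is that this work is unnecessary: Theorem~4.2 of \cite{Berestycki} (Theorem~\ref{thm:berestycki} here) is already stated for general planar graphs and reduces the whole SLE statement to two checkable hypotheses, (i) a uniform crossing estimate for the killed walk and (ii) an invariance principle for the underlying walk. The paper verifies (i) via the weak Beurling estimate of \cite{DiscreteHolomorphy} for the \emph{critical} isoradial walk plus a step-by-step coupling with the near-critical walk (Proposition~\ref{prop:cross}), and (ii) via a lazy-walk time reparametrization, which is needed because the increment variance $\d^2 T_\d(x)$ is not spatially constant --- a point your proposal does not address and which is also where the factor $\sqrt{2}$ in the mass parameter comes from. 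As written, your proposal substitutes the hardest available route for the two elementary verifications that actually suffice, and does not carry that route out; this is a genuine gap.

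On the height function you also overclaim. The paper does \emph{not} compute covariances of $h_\d^{M,\bu}$ from $(\Kk)^{-1}$ and massive Green function asymptotics; it invokes Theorem~6.1 of \cite{BLR19}, which upgrades Schramm-sense convergence of the Temperleyan tree (plus the crossing estimate for the drifted walk, inherited from the killed walk because the Girsanov density of Lemma~\ref{lem:near-critical:doob} is bounded above and below) to convergence in law of the centered height function in the sense of distributions, \emph{without identifying the limit}. Your assertion that the limiting covariance can be matched to ``the continuum near-critical Gaussian field singled out in \cite{Berestycki}'' is not something either that paper or this one establishes --- the identification of the limit (conjecturally related to the sine-Gordon field, cf.\ \cite{SineGordon}) is open. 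If you want to keep your covariance-based plan you would need to actually prove convergence of the entries of $(\Kk)^{-1}$ in the near-critical regime, which is a separate and substantial piece of analysis not contained in the cited references.
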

Our main contribution is to introduce the relevant RST and dimer models. In order to apply a general theorem of \cite{Berestycki} to obtain convergence of the loop-erasure of the \lr{killed random walk} towards massive $SLE_2$, we prove a \emph{uniform crossing estimate} and convergence of the \lr{killed random walk} towards the \emph{killed \lr{B}rownian motion}. This is done by applying to the near-critical regime some results of discrete harmonic analysis on isoradial graphs developed in \cite{DiscreteHolomorphy}. It is not the first time that universality of convergence in the near-critical regime is proved for a model of statistical mechanics: \cite{FermionicObservable} and \cite{UniversalityCorrelations} show universality of the convergence of correlations in the massive scaling limit for respectively the random cluster model and the Ising model on isoradial graphs.

\subsection{Perspectives}
This paper brings new perspectives in the study of RSF models by introducing a generic way to relate them to a RST and dimer model for which more tools are available. As soon as a “nice” massive harmonic function is defined on a graph, new links can be made: we applied this technique to the two settings we were aware of, but there are probably many others.
\begin{itemize}
\item For the model of \cite{DeterminantalForest} on $\Z^2$-periodic graphs, we believe that our technique can be used to show that the law of the infinite branches of the Gibbs measure is related to a drifted random walk, and that the law of these infinite branches for a RSF model coincides with the law of these branches for a “drifted” RST model with no mass, and can hence be related to the dimer model of \cite{Wangru}.
\item For the near-critical dimer model, new results might be obtained by using the comprehensive analysis of the $Z$-invariant massive Laplacian performed in \cite{MassiveLaplacian}. In particular, the explicit asymptotic of the Green function might provide new information. We mention that on the square lattice, the recent work of Mason  \cite{SineGordon} proves convergence of the two point correlation functions of the near-critical dimer model towards those of the sine-Gordon field.
\end{itemize}

\subsection{Organization of the paper}
The first sections present the general theory.
\begin{itemize}
\item In Section~\ref{sec:preliminaries}, we give preliminaries on RST and RSF models, (massive) Laplacians and (massive) random walks, and state useful results on the Doob transform that we will apply all along the article.
\item In Section~\ref{sec:doob}, we develop general theorems to transfer results from RST models to RSF models by using the Doob transform technique. We also explain why it is always possible to find massive harmonic functions on infinite graphs and introduce a notion of “killed Martin boundary”, justifying that in principle our results apply to any conductance and mass functions and any graph. 
\item In Section~\ref{sec:planar}, we focus on planar graphs: we define the killed and drifted dimer models and show how they are related to the RST and RSF model.
\end{itemize}
The last sections are devoted to applications. 
\begin{itemize}
\item Section~\ref{sec:per} shows some applications to $\Z^2$-periodic graphs, explaining in particular how to find explicit massive harmonic functions. For this section, we only need the preliminary results of Section~\ref{sec:preliminaries} on the Doob transform.
\item Section~\ref{sec:near-critical} shows applications to the near-critical dimer and RST models. We recall the setting of isoradial graphs and the results of \cite{MassiveLaplacian}, present our model and prove Theorem~\ref{thm:near-critical:intro}. For this section, we need the preliminary results of Section~\ref{sec:preliminaries}, the setting of \lr{Subsection}~\ref{par:finite} and the definition and basic properties of the drifted dimer model of Section~\ref{sec:planar}.
\item Appendix~\ref{app:critical:laplacian} contains the proof of some technical results on the $Z$-invariant massive Laplacian in the near-critical regime used in Section~\ref{sec:near-critical}. It uses results from \cite{DiscreteHolomorphy} on the critical Laplacian. 
\end{itemize}

\paragraph{Acknowledgments.} The author warmly thanks B{\'e}atrice de Tili{\`e}re for her advising all along this project, and for suggesting to extend the results of \cite{ZinvDirac} and \cite{Berestycki} which was the starting point of the project. We also thank C{\'e}dric Boutillier for explaining the argument of \lr{Subsection}~\ref{subsec:periodic}, Nathana{\"e}l Berestycki and Levi Haunschmid-Sibitz for discussions on near-critical dimers, and Marcin Lis for sharing ideas on the killed dimer model. \lr{The author also thanks two anonymous reviewers for valuable comments which improved the quality of the paper.} The author is
partially supported by the DIMERS project ANR-18-CE40-0033 funded by the French National
Research Agency.

%%%%%%%%%%%%%%%%%%%%%%%%%%%%%%%%%%%%%%%%%%%%%%%%%%%%%%%%%%%%%%%%%%%%%%%%%%%%%%%%%%%%%%%%%%%%%

\section{Preliminaries}\label{sec:preliminaries}
\subsection{First definitions}
Consider a connected locally finite countable graph $\SG = (\SV,\SE)$. A \emph{loop} is an edge $e =xx$ with $x \in \SV$. The undirected edge set $\SE$ induces a set of directed edges $\Ed$: to every undirected edge $e = xy \in \SE$ are associated the two directed edges $(x,y)$ and $(y,x)$. We say that two vertices of $\SG$ are \emph{neighbours} and we write $x \sim y$ (or $x \overset{\SG}{\sim} y$ when there is a risk of confusion) if $xy \in \SE$. A \emph{path} $\gamma$ is a set of directed edges $\ed_1 = (x_0,x_1), \dots , \ed_n = (x_{n-1},x_n)$ with $\ed_i \in \Ed$ for all $1 \leq i \leq n$. It is often identified with the sequence of its vertices $(x_0,x_1,x_2, \dots ,x_n)$. The length of the path is $|\gamma|=n$ and for all $0 \leq i \leq |\gamma|$, $\gamma(i) = x_i$. Such a path \emph{connects} the vertices $x_0$ and $x_n$. A \emph{cycle} is a path returning to its starting point: $x_n = x_0$.\par
Consider a positive \emph{conductance function} $c: \Ed \to \R_{> 0}$ which assigns to every directed edge a conductance $c_{(x,y)}$. We sometimes extend $c$ to all pairs of vertices by $c_{(x,y)} = 0$ if $xy \notin \SE$. Define the total conductance at a vertex $c: \SV \to \R_{> 0}$ by $c(x) = \sum_{y \sim x}c_{(x,y)}$. When $\SG$ is infinite, we always assume that the total conductances are uniformly bounded: there exists $C >0$ such that for all $x \in \SV$, $0 < c(x) \leq C$ (this is trivial when $\SG$ is finite). If for all $x \sim y \in \SG$, $c_{(x,y)} = c_{(y,x)}$, we say that $c$ is symmetric and we write $c_{xy} = c_{(x,y)}$.
\begin{comment}
Since $\SG$ is connected and $c$ is positive, for all $x, y \in \SV$, there exists a path $\gamma = (x_0, \dots ,x_n)$ from $x$ to $y$ with positive conductances: $c_{(x_i,x_{i+1})} > 0$ for all $0 \leq i \leq n-1$. \begin{comment}We say that $(\SG, c)$ is \emph{irreducible}\end{comment}
\par
Finally consider a \emph{mass function} $m : \SV \to \R_{\geq 0}$ assigning a non-negative mass $m(x)$ to each vertex $x $ of $\SV$. We will write $m \neq 0$ when $m$ takes at least one positive value and $m=0$ otherwise.
\begin{comment}
A vertex $x \in \SV$ is called \emph{absorbing} if it has no outgoing edge with positive conductance leading to a different vertex: for all $y \sim x, y \neq x, c_{(x,y)}=0$. The set of absorbing vertices is denoted by $A = A(\SG,c)$. In particular, if $(\SG,c)$ is irreducible it has no absorbing vertex. A graph with conductance function is \emph{irreducibly absorbing} if it has exactly one absorbing vertex $\rho$ called the cemetery vertex and for all $x \in \SV \setminus \{\rho\},~y \in \SV$, there is a path with positive conductances from $x$ to $y$.\par
\end{comment}

\paragraph{Matrix and linear operator notations.} A matrix $L$ with rows and columns indexed by $\SV$ and coefficients $L_{x,y} = L(x,y)$ such that $L_{xy} = 0$ except when $x = y$ or $x \sim y$ is identified with the \emph{linear operator} also denoted by $L$ acting on functions $f: \SV \to \C$: 
\begin{equation}
    \forall f \in \lr{\C^{\SV}},~\forall x \in \SV,~(Lf)(x) = \sum_{y \sim x}L(x,y)f(y).
\end{equation}
For $\SU \subset \SV$, we denote by $L_{\SU}$ the restriction of the matrix $L$ to the subset $\SU$. For a function $\phi : \SV \to \C$, we denote by $D(\phi)$ (or sometimes $\Phi$ to lighten notation) the diagonal matrix with entries $\phi$ on the diagonal. 

\paragraph{Spanning trees and forests.}
A \emph{directed rooted spanning forest} (directed RSF) $F$ of $\SG$ is a subset of directed edges $F \subset \Ed$ such that each vertex has at most one outgoing edge in $F$ and there is no cycle constituted of edges of $F$. Denote by $\FF(\SG)$ the set of RSF of $\SG$. Consider $F \in \FF(\SG)$. A vertex $x \in \SV$ is a \emph{root} of $F$ if it has no outgoing edge in $F$: we denote by $R(F) \subset \SV$ the set of root vertices of $F$. Given a distinguished vertex $r \in \SV$, a \emph{directed rooted spanning tree} (directed RST) $T$ of $\SG$ rooted at $r$ is a RSF of $\SG$ satisfying $r(T) = \{r\}$: it is a forest with only one root which is $r$. Denote by $\TT^r(\SG)$ the set of directed RST of $\SG$ rooted at $r$.
\begin{Rem}\label{rem:notation:RST}
    Observe that with this definition, when $\SG$ is infinite, a RST is not necessarily connected: it can have a rooted connected component and several unrooted infinite components. Similarly, a RSF can have rooted connected components and infinite unrooted components. To align with previous notation, we should say “spanning forest” instead of RSF and “spanning forest with exactly one root” for RST.
\end{Rem}
When $\SG$ is finite, the conductance function $c$ induces a weight function on $\TT^r(\SG)$: 
\be\label{eq:def:weight:tree}
    \forall T \in \TT^r(\SG),~\nRST^{r}(T) = \prod_{\ed \in T}c_{\ed}.
\ee
The \emph{partition function} is the weighted sum of RST rooted at $r$:
\be
    \ZRST^{r}(\SG, c) = \sum_{T \in \TT^r(\SG)}\nRST^{r}(T).
\ee
Since $\SG$ is finite and connected, $0 < \ZRST^{r}(\SG, c) < \infty$ and the weight function induces a probability measure on $\TT^r(\SG)$: 
\be\label{def:tree}
    \forall T \in \TT^r(\SG),~\PRST^r(T) = \frac{\nRST^{r}(T)}{\ZRST^{r}(\SG, c)}.
\ee
When $\SG$ is finite, the conductance and mass functions $(c,m)$ also induce a weight function on $\FF(\SG)$: 
\be\label{eq:def:weight:forest}
    \forall F \in \FF(\SG),~\nRSF(F) = \prod_{\ed \in F}c_{\ed}\prod_{x \in R(F)}m(x).
\ee
The partition function is the weighted sum
\begin{equation}
    \ZRSF(\SG,c,m) = \sum_{F \in \FF(\SG)}\nRSF(F).
\end{equation}
When $m \neq 0$, $0<\ZRSF(\SG,c,m)<\infty$, the conductance and mass functions induce a probability measure on $\FF(\SG)$: 
\be\label{def:forest} 
    \forall F \in \FF(\SG),~\PRSF(F) = \frac{\nRSF(F)}{\ZRSF(\SG,c,m)}.
\ee
This measure is supported on RSF with root vertices in the support of the mass function i.e. $R(F) \subset \{x \in \SV: m(x) >0\}$. The probability measures $\PRST^r$ and $\PRSF$ coincide when $m = \delta_r$. For a subset of directed edges  $\{\ed_1, \dots ,\ed_n \} \subset \Ed$ we denote by $\PRST^r(\ed_1, \dots ,\ed_n)$, resp. $\PRSF(\ed_1, \dots ,\ed_n)$, the measure of the set of all trees, resp. forests, containing these edges. 
\begin{Rem}
There is no obvious extension of these probability laws to the case of infinite graphs. This issue is common to many models of statistical physics and we will address it in Section~\ref{subsec:Wilson}.
\end{Rem}
\begin{comment}
    A possible extension of this definition would be to use the Gibbs property: roughly speaking, a probability measure on RSF of the infinite graph $\SG$ is said to satisfy the Gibbs property if for any finite induced subgraph $H \subset G$, conditionally on what happens outside $H$, the law of the restriction to $H$ is proportional to the weight of edges and masses as in \eqref{defForest}. It is a bit more intricate in the case of the RRSF or random spanning tree than for the Ising model or the dimer model because we need to keep track of the connectivity. For a more precise (though not completely formal) definition, see Section 2.1 of \cite{DeterminantalForest}. This formalization of the Gibbs formalism for infinite volume limit was done very recently for another (unrooted) model of random forest called the arboreal gas in \cite{ArborealGas}.
\end{comment} 

\paragraph{The forest-tree bijection.}
Adding a cemetery vertex to $\SG$ allows to connect RSF of $\SG$ and RST of a modified graph $\Gr$ as we now explain. Define a new graph $\Gr = (\Vr, \Er)$ by adding a vertex called the cemetery state: $\Vr = \SV \cup \{\rho\}$, and $\Er$ is the set $\SE$ plus an edge $x\rho$ for each $x$ such that $m(x) >0$. When $m = 0$, we define $\Gr = \SG$, so the graph $\Gr$ is always connected. Define a new positive conductance function $\crho: \Erd \to \R_{> 0}$ by 
\be\label{eq:conductance:Gr}
    \crho_{(x,y)}=
    \left\{
    \begin{array}{ll}
          c_{(x,y)} &\text{ if }(x,y)\in \Ed\\
           m(x) &\text{ if } m(x) >0,~y = \rho\\
           m(y) &\text{ if } m(y) >0,~x = \rho\\
           0 &\text{ otherwise.}
    \end{array}
    \right.
\ee
An example of the graph $\Gr$ with conductance function $\crho$ is drawn on Figure \ref{fig:Gr}.
\begin{figure}
    \centering   
        \begin{overpic}[abs,unit=1mm,scale=1]{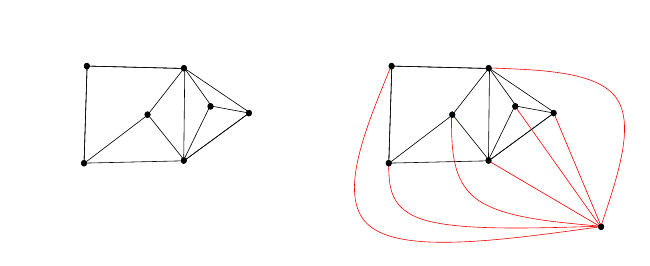}
        \put(100,3){$\rho$}
        \put(18,0){$(\SG,c,m)$}
        \put(80,0){$(\Gr,\crho)$}
        \put(11,33){$x$}
        \put(15.5,26){$c_1$}
        \put(21,35.5){$c_2$}
        \put(62,33){$x$}
        \put(67,26){$c_1$}
        \put(72,35.5){$c_2$}
        \put(52,10){\textcolor{red}{$m(x)$}}
        \end{overpic}
        \caption{On the left: a graph $\SG$ with conductance and mass functions $(c,m)$. On the right: the associated graph $\Gr$ with conductance function $\crho$, assuming that $m(y) >0~\forall y \in \SV$. Edges are labelled by their conductances.}
        \label{fig:Gr}
\end{figure}
Observe that when $c$ is symmetric, $\crho$ also is. The total conductance at a point $x \in \SV$ is $\crho(x) = c(x)+m(x)$.
\begin{Def}\label{def:forest-tree:bijection}
    RSF of $\SG$ with roots in $\{x: m(x)>0\}$ are in bijection with RST of $\Gr$ rooted at $\rho$: to $F \in \FF(\SG)$ we associate $T \in \TT^{\rho}(\SG)$ obtained by adding an edge $(x,\rho)$ for each root vertex $x \in R(F)$. When $\SG$ is finite, this is a weight-preserving bijection called the \emph{forest-tree bijection}:
\be\label{eq:bijection:tree/forest}
    \nRSF(F) = \prod_{\ed \in F}c_{\ed} \prod_{x \in R(F)}m(x) = \prod_{\ed \in F}\crho_{\ed} \prod_{x \in R(F)}\crho_{(x,\rho)} = \prod_{\ed \in T}\crho_{\ed} = \nRST^{\rho}(T).
\ee
In particular, $\ZRSF(\SG,c,m) = \ZRST^{\rho}(\Gr, \crho)$ and when $m \neq 0$, $\PRSF(F) = \PRSTr(T)$.
\end{Def}
\begin{Rem}\label{rem:EST}
    \lr{When $m = 0$ and $\SG$ is infinite, $\rho$ does not exist and $\Gr = \SG$. It is natural to interpret a RST of $\Gr$ rooted at $\rho$ as a RSF of $\SG$ with no root, hence the bijection of Definition \ref{def:forest-tree:bijection} is simply the identity.} This is called an \emph{essential spanning forest} (ESF) of $\SG$, and it is a collection of infinite connected components spanning $\SV$, with no cycle.
\end{Rem}

\paragraph{\lr{Laplacians.}}
A \emph{(discrete) Laplacian operator} $\Delta: \C^{\SV} \to \C^{\SV}$ is naturally associated with the conductance function $c$:
\begin{equation}
    \forall f: \SV \to \C,~\forall x \in \SV,~\Delta f(x) =
    \sum_{y \sim x}c_{(x,y)}(f(x)-f(y)).
\end{equation}
A function $f: \SV \to \C$ such that $\Delta f = 0$ is called \emph{harmonic}. A \emph{(discrete) massive Laplacian operator} $\Dk: \C^{\SV} \to \C^{\SV}$ is naturally associated to the conductance and mass functions $(c,m)$:
\begin{equation}
    \forall f: \SV \to \C,~\forall x \in \SV,~\Dk f(x) =
    m(x)f(x) + \sum_{y \sim x}c_{(x,y)}(f(x)-f(y)).
\end{equation}
A function $f: \SV \to \C$ such that $\Dk f = 0$ is called \emph{massive harmonic}. In the canonical base of $\C^{\SV}$, these Laplacians are represented by square matrices whose rows and columns are indexed by $\SV$:
\begin{equation}
        \Delta_{x,y} = 
        \left\{
        \begin{array}{ll}
             -c_{(x,y)}& \text{ if }x \sim y,~x \neq y\\
             c(x) - c_{(x,x)}& \text{ if } x=y\\
             0& \text{ otherwise}
        \end{array}
        \right.
        ,~
        \Dk_{x,y} = 
        \left\{
        \begin{array}{ll}
             -c_{(x,y)}& \text{ if }x \sim y,~x \neq y\\
             m(x) + c(x) - c_{(x,x)}& \text{ if } x=y\\
             0& \text{otherwise}
        \end{array}
        \right.
        .
\end{equation}
Observe that the massive Laplacian $\Dk$ on $\SG$ coincides with the restriction to $\SV$ of the (non-massive) Laplacian on the graph $\Gr$ with conductance function $\crho$.

\paragraph{Matrix-tree theorem.}
In this \lr{subsection}, we assume that $\SG$ is finite. The objects introduced \lr{above} (the RST/RSF and the Laplacians) are naturally related. Let $r \in \SV$ be fixed. The matrix-tree theorem of \cite{Kirchhoff} (and its extension to directed weighted graphs, which can be found for example in \cite{Chaiken}) relates the determinant of the (non-massive) Laplacian with the weighted sum of RST rooted at $r$: 
\begin{equation}
    \ZRST^{r} = \det \Delta_{\SV \setminus \{r\}}.
\end{equation}
Applying this theorem to $(\Gr,\crho)$ with distinguished vertex $\rho$ and using the forest-tree bijection gives
\be\label{eq:matrix-forest}
    \ZRSF(\SG,c,m) = \ZRST^{\rho}(\Gr, \crho) = \det \Dk
\ee
since, as we already observed, the restriction to $\SV$ of the Laplacian on $\Vr$ with conductance function $\crho$ is $\Dk$. This identity is sometimes called the \emph{matrix-forest theorem}.

\paragraph{\lr{Green function.}}
\lr{
When $m \neq 0$, since $\SG$ is connected there exists at least one RSF of $\SG$ with strictly positive weight, which implies that $\ZRSF(\SG,c,m) > 0$. Hence by the matrix-forest theorem, $\det \Dk > 0$. The inverse of the massive Laplacian is the \emph{massive Green function} $\Gk: \SV \times \SV \to \R$. In matrix notation, it satisfies
\be\label{eq:green:inverse:laplacian}
    \Gk = (\Dk)^{-1}.
\ee
In other words
\begin{equation}
    \forall x, y \in \SV,~\mathbbm{1}_{\{x=y\}} = (\Dk \Gk)_{x,y} = m(x)\Gk(x,y) +  \sum_{z \sim x}c_{(x,z)}(\Gk(z,y)-\Gk(x,y)). 
\end{equation}
\begin{Rem}
    This definition of the Green function can be extended if $\SG$ is infinite under additional conditions. For example if the masses are uniformly bounded away from $0$ and $\infty$, we can still make sense of $\Gk = (\Dk)^{-1}$ by inverting the restriction of the operator $\Dk$ to $L^2(\SV)$, see for example Section 4.1 of \cite{MassiveLaplacian}.
\end{Rem}
}

\paragraph{Random walks and potentials.}
To the conductance function $c$ is associated a discrete time Markov chain $S$ on $\SV$ called the \emph{random walk} with conductances $c$. Given $x_0 \in \SV$, the \lr{random walk} with initial state $x_0$ is defined by $S_0 = x_0$ and transition probabilities
\begin{equation}
    \forall i \in \Z_{\geq 0},~\forall x,y \in \SV,~
    Q(x,y) = \Proba_{x_0}(S_{i+1} = y|S_i=x) = 
        \left\{ 
        \begin{array}{ll}
            c_{(x,y)}/c(x) &\text{if }y \sim x  \\
            0 &\text{otherwise.}
        \end{array}
        \right.
\end{equation}
Since $\SG$ is connected and $c$ is positive, this Markov chain is irreducible. Recall that $D(c)$ denotes the diagonal matrix with entries $c(x)$. The transition kernel is related to the Laplacian:
\be\label{eq:def:transition:kernel}
    Q = I - D(c)^{-1}\Delta.
\ee
For $\SU \subset \SV$, we denote by $\tau(\SU)$, resp. $\tau^+(\SU)$, the first time $i \geq 0$, resp. $i >0$, at which the \lr{random walk} enters the set $\SU$:
\begin{equation}
    \begin{aligned}
    \tau(\SU) &= \inf\{i \geq 0, S_i \in \SU\}, & \tau^+(\SU) &= \inf\{i > 0, S_i \in \SU\}.
    \end{aligned}
\end{equation}
If $\SU  = \{x\}$ is reduced to a point, we compactify the notation $\tau(\SU) =: \tau(x)$ and $\tau^+(\SU) =: \tau^+(x)$.\par
The conductance function $\crho$ associated with $c$ and $m$ induces a Markov chain $\Sr$ on $\Vr$. The \lr{random walk} $\Sr$ stopped at $\rho$ is denoted by $\Sk$ and is called the \emph{killed random walk}: 
\begin{equation}
    \forall i \in \Z_{\geq 0},~\Sk_i := \Sr_{i \wedge \tau(\rho)}
\end{equation}
where we use the standard notation $\wedge$ for the minimum. The cemetery vertex $\rho$ is an absorbing state for the Markov chain $\Sk$. The sub-Markovian transition kernel $\Qk$ associated with $\Sk$ is a matrix with rows and columns indexed by $\SV$ related to the massive Laplacian:
\be\label{eq:def:massive:transition:kernel}
    \Qk = I - D(\ck)^{-1}\Dk.
\ee
We denote by $\tk$, resp. $(\tk)^+$, the hitting times of sets for this Markov chain. When $m = 0$, $\Sk = \Sr = S$ and $\tk(\rho) = (\tk)^+(\rho) = \infty$ by convention, so from now on we only state the definitions and results for the \lr{killed random walk} as they include the \lr{random walk} case $m=0$.
\begin{Rem}\label{rem:coupling:RandomWalk/KilledRandomWalk}
    The \lr{random walk} $S$ associated with $c$ and the \lr{killed random walk} $\Sk$ associated with $(c,m)$ can be coupled. Let $(U_i)_{i \geq 0}$ be a sequence of independent uniform variables in $[0,1]$, independent of $S$. Define $\Xk$ recursively by $\Xk_0 = S_0$ and 
    \begin{equation}
        \forall i \in \Z_{\geq 0},~\Xk_{i+1}=
        \left\{
        \begin{array}{ll}
             \rho & \text{ if }(\Xk_i = \rho) \text{ or } \left(\Xk_i = S_i = x \in \SV \text{ and }U_i > c(x)/(c(x)+m(x))\right)\\
             S_{i+1} & \text{ if } \Xk_i = S_i = x\in \SV \text{ and }U_i \leq c(x)/(c(x)+m(x)).
        \end{array}
        \right.
    \end{equation}
    Then, $\Xk$ has the law of the \lr{killed random walk} associated with the conductance and mass functions $(c,m)$ on $\SG$ so we can denote it $\Xk = \Sk$, and
    \be\label{eq:coupling:KilledRandomWalk-RandomWalk}
        \forall i < \tk(\rho),~\Sk_i = S_i.
    \ee
\end{Rem}
The \lr{killed random walk} $\Sk$ is \emph{transient} if for all $x \in \SV$,
    \begin{equation}
	\Proba_x\big((\tk)^+(x) < \infty\big) < 1,
    \end{equation}
\lr{implying} that with probability $1$, the walk reaches the cemetery state in finite time or “escapes to infinity”. When $m =0$, this is the usual definition of a transient \lr{random walk}.
\begin{Rem}\label{rem:always:transient}
    When $\SG$ is finite and $m = 0$, the \lr{random walk} is never transient. When $m \neq 0$, the \lr{killed random walk} is always transient: if $\Sk$ is coupled to $S$ as in Remark \ref{rem:coupling:RandomWalk/KilledRandomWalk}, either $S$ is transient, in which case $\Sk$ also is, or it is recurrent. In this case, given $x \in \SV$ such that $m(x) >0$, $S$ returns almost surely infinitely many times to $x$ and each time $\Sk$ has the same positive probability of dying, so $\Sk$ dies almost surely and hence it is also transient.
\end{Rem}
The \emph{potential} of a transient \lr{killed random walk} or transient \lr{random walk} is well-defined:
\be\label{eq:def:potential}
    \forall x,y \in \SV,~\Vk(x,y) = \E_x\left[\sum_{n=0}^{\tk(\rho)-1}\mathbbm{1}_{\{\Sk_n=y\}}\right] = \E_x\left[\sum_{n=0}^{\infty}\mathbbm{1}_{\{\Sk_n=y\}}\right] = \sum_{n = 0}^{\infty}\Proba_x(\Sk_n = y) < \infty.
\ee
This definition coincides with the potential of a transient \lr{random walk} when $m = 0$ (recall that in this case, $\tk(\rho) = \infty$ by convention). By the simple Markov property, the potential satisfies,
\begin{equation}
    \forall x,y \in \SV, ~\Vk(x,y) = \mathbbm{1}_{\{x=y\}} + \sum_{z \sim x}\frac{c_{(x,z)}}{\ck(x)}\Vk(z,y).
\end{equation}
In other words, in matrix notation, 
\be\label{eq:harmonicity:potential}
    \Dk \Vk = D(\ck).
\ee
Using \eqref{eq:def:massive:transition:kernel}, this implies
$$
    \Vk = (I-\Qk)^{-1}
$$
which could also be used as a definition of $\Vk$. If $\SG$ is finite and $m \neq 0$, taking inverses implies that the potential is related to the massive Green function: $\Vk = \Gk D(\ck)$, or in other words
\be\label{eq:green=potential}
    \forall x,y \in \SV,~\Vk(x,y) = \ck(y)\Gk(x,y).
\ee
If $\SG$ is infinite, $m = 0$ and the \lr{random walk} $S$ is transient, the potential $V = \Vk$ is well-defined but the Green function is not since the Laplacian is not a priori invertible so Equation~\eqref{eq:green=potential} does not make sense.\par
We will soon need to use results of \cite{Chang}, who has a different definition of the potential. Let $(\Xk_t)_{t \in \R_+}$ be the continuous time pure jump Markovian process on $\SV \cup \{\rho\}$ with sub-Markovian generator $\Dk$ (see Chapter 2.2 of \cite{Chang} for more details). The explosion time of this process is infinite since we always assume that the total conductances are bounded. The embedded Markov chain is the discrete time \lr{killed random walk}: if $N_0 =0, N_1, \dots$ are the times of the jumps, $\Xk_{N_i} = \Sk_i$ for all $i \in \N$. The associated \emph{continuous potential} is 
\begin{equation}
    \forall x,y \in \SV,~\Vk_c(x,y) = \E_x\bigg[\int_0^{\infty}\mathbbm{1}_{\{\Xk_t = y\}}\bigg].
\end{equation}
It is well defined by transience of the embedded Markov chain and the fact that the conductances (hence the jump rates) are bounded. Chang uses the notation $V^x_y = \Vk_c(x,y)$. Since the continuous time process $\Xk$ has a jump rate $\ck(y)$ at $y$, it spends an average time $\ck(y)^{-1}$ in $y$ every time it hits $y$, and since the embedded Markov chain is the \lr{killed random walk}:
\begin{equation}
    \Vk_c(x,y) = \frac{\Vk(x,y)}{\ck(y)},
\end{equation}
hence when $\SG$ is finite and $m \neq 0$, by Equation~\eqref{eq:green=potential} the continuous potential coincides with the Green function: $\Vk_c(x,y) = \Gk(x,y)$ for all $x,y \in \SV$.\par
The \emph{transfer current operator} is defined for all $\ed = (w,x), \fd = (y,z) \in \Erd$ by
\be\label{eq:def:transfer}
	\Hk_{\ed,\fd} = \Vk_c(w,y) - \Vk_c(x,y) = \frac{\Vk(w,y)}{\ck(y)}-\frac{\Vk(x,y)}{\ck(y)},
\ee
with the convention that for all $x \in \SV$, $\Vk(x,\rho) = \Vk(\rho,x) = 0$. When $m=0$, the transfer current operator is defined on $\Ed$ so the conventions are not needed and it coincides with the usual transfer current operator associated with a transient \lr{random walk}. Observe that this transfer current operator is not symmetric and if $y = \rho$ or if $\ed$ is a loop, $\Hk_{\ed,\fd} = 0$.

\paragraph{Induced subgraph and wired boundary conditions.}
Consider an infinite graph $\oSG = (\oSV,\oSE)$. Let $\SV \subset \oSV$ be a strict subset of $\oSV$. The subgraph of $\oSG$ \emph{induced} by $\SV$ is the graph $\SG = (\SV,\SE)$ with edge set $\SE$ such that for all $x, y \in \SV$, $xy \in \SE$ if and only if $xy \in \oSE$. When $\oSG$ is planar, the induced subgraph $\SG$ is \emph{simply connected} if all the vertices of $\oSV \setminus \SV$ belong to the outer face of $\SG$. The (inner) \emph{boundary} $\partial \SV$ of $\SG$ in $\oSG$ is
\begin{equation}\label{eq:def:partial}
    \partial \SV = \{x \in \SV~|~\exists y \in \oSV \setminus \SV,~x \overset{\oSG}{\sim} y\}.
\end{equation}
Given a conductance and mass functions $(\oc, \om)$ on $\oSG$, a natural conductance and mass functions with \emph{wired boundary conditions} are defined on the induced subgraph. It  corresponds to assigning conductances by restriction $c :=  \oc_{|\Ed}$ and adding a mass for each positive conductance between a vertex in $\SV$ and a vertex in $\oSV \setminus \SV$: 
\begin{equation}
    \forall x \in \SV,~m(x) := \om(x) + \sum_{y \overset{\oSG}{\sim} x,~y \in \oSV \setminus \SV}\oc_{(x,y)}.
\end{equation}
\lr{
\begin{Rem}\label{rem:free:bc}
    Note that we slightly abuse notation here: we should write $(c^{\mathrm{w}},m^{\mathrm{w}})$ instead of $(c,m)$ to emphasize that we take \emph{wired} boundary conditions. Since this is the only type of boundary conditions that we consider in this article, this will hopefully create no confusion. The \emph{free boundary conditions} correspond to assigning conductances and masses by restriction $\cf = \oc_{|\Ed}$ and $\mf = \om_{|\SV}$. We only mention it and do not use it in this article. 
\end{Rem} 
}
Note that \lr{$m$} differs from $\om$ only on $\partial \SV$. Denote by $\Dk$ and $\Sk$ the massive Laplacian and \lr{killed random walk} on $\oSG$ associated with the conductance and mass functions $(\oc, \om)$. The massive Laplacian $\SG$ associated to $(c, m)$ is simply the restriction $\oDk_{\SV}$: for all $x, y \in \SV$ such that $x \neq y$ and $x \sim y$, $\oDk_{x,y} = -\oc_{(x,y)} = -c_{(x,y)}$ and 
\be\label{eq:restriction:laplacian:wired}
    \forall x \in \SV,~\oDk_{x,x} = \om(x) + \sum_{y \overset{\oSG}{\sim} x} \oc_{(x,y)} =  m(x) + \sum_{y \overset{\SG}{\sim} x} c_{(x,y)}.
\ee
Equivalently, the \lr{killed random walk} on $\SG$ with conductance and mass functions $(c, m)$ is the \lr{killed random walk} $\oSk$ on $\oSG$ with conductance and mass functions $(\oc,\om)$ killed when it exits $\SV$. More formally, if we define $\Xk$ by $\Xk_i = \oSk_i$ for all $i < \otk\big(\oSV \setminus \SV\big)$ and $\Xk_i = \rho$ for all $i \geq \otk\big(\oSV \setminus \SV\big)$, then $\Xk$ has the law of the \lr{killed random walk} on $\SG$ associated with the conductance and mass functions $(c, m)$.\par

\begin{figure}
    \centering   
        \begin{overpic}[abs,unit=1mm,scale=1]{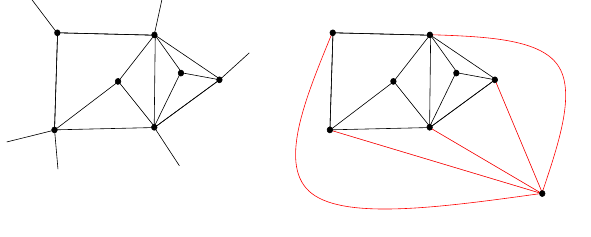}
        \put(92,3){$o$}
        \put(17,0){$(\SG,c)$}
        \put(70,0){$\Go$}
        \put(6,33){$x$}
        \put(11,26){$c_1$}
        \put(16,36){$c_2$}
        \put(1,38){$\oc_3$}
        \put(52,33){$x$}
        \put(57,26){$c_1$}
        \put(62,36){$c_2$}
        \put(46,10){\textcolor{red}{$\oc_3$}}
        \end{overpic}
        \caption{On the left: a simply connected induced subgraph $\SG$ of a planar graph $\oSG$. Edges leaving $\SV$ are marked with no target vertex. On the right: the graph $\Go$ is also planar. Edges are labelled by their conductances.}
        \label{fig:Go}
\end{figure}
Assume now that $\om =0$. The wired conductance and mass functions are $c := \oc_{|\SV}: \Ed \to \R_{> 0}$ and
    \be\label{eq:mass:wired}
        \forall x \in \SV,~m(x) := \sum_{y \overset{\oSG}{\sim} x, y \in \oSV \setminus \SV}\oc_{(x,y)}.
    \ee
In particular when $\SV$ is a strict subset of $\oSV$, $m \neq 0$ even though $\om = 0$. The \lr{killed random walk} on $\SG$ associated with the conductance and mass functions $(c, m)$ is simply the \lr{random walk} $\oS$ on $\oSG$ with conductance function $\oc$ killed when it exits $\SV$, and the massive Laplacian on $\SV$ associated with the conductance and mass functions $(c, m)$ is simply the restriction $\oD_{\SV}$. In this case, the forest-tree bijection is particularly easy to understand: denote by $\Go$ the graph obtained from $\oSG$ by identifying all vertices of $\oSV \setminus \SV$ to a single vertex $o$. When $\oSG$ is planar and $\SG$ is simply connected, the graph $\Go$ is still planar: this will be useful later on. The forest-tree bijection states that RSF of $\SG$ weighted by $(c,m)$ are in weight-preserving bijection with RST of $\Go$ rooted at $o$ weighted by $c$, with the natural abuse of notation that the conductance of an edge $(x,o)$ is the sum of conductances $\oc_{(x,y)}$ of edges $(x,y)$ with $y \in \oSV \setminus \SV$. An example of the graph $\Go$ is drawn on Figure \ref{fig:Go}. These RST of $\Go$ rooted at $o$ can be thought of as RST of $\SG$ rooted at the outer boundary.

\subsection{Wilson's algorithm and the transfer current theorem}\label{subsec:Wilson}
The transfer current theorem (first proved by Burton and Pemantle \cite{BurtonPemantle}) expresses the probability that some fixed edges belong to a random uniform (undirected) spanning tree of a finite graph as determinants of submatrices of the transfer current operator. It shows in particular that the uniform spanning tree seen as a random point process on the set of edges is determinantal. This result was extended in many directions, but one which is particularly interesting to us is to the case of directed RSF proved by Chang in his PhD thesis \cite{Chang}.\par
Recall that by the forest-tree bijection, the probability measure $\PRSTr$ on $\TT^{\rho}(\Gr)$ associated with the conductance function $\crho$ by Equation~\eqref{def:tree} coincides with the probability measure $\PRSF$ on RSF of $\SG$ weighted by $(c,m)$. Theorem 5.2.3 of \cite{Chang} reads
\begin{Thm}[Finite Directed Transfer Current Theorem]\label{thm:transfer:finite}
     When $\SG$ is finite and $m \neq 0$, 
    \be\label{TransferCurrentFinite}
	\forall \{\ed_1, \dots ,\ed_k\} \subset \Erd,~       \PRSTr(\ed_1,  \dots ,\ed_k) = \det\left((\Hk_{\ed_i,\ed_j})_{i,j = 1, \dots ,k}\right)\prod_{i=1}^k \crho_{\ed_i}.
	\ee 
\end{Thm}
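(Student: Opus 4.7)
The strategy is to reduce to a statement about directed spanning trees via the forest-tree bijection, and then extract the transfer current formula by combining the all-minors matrix-tree theorem with Jacobi's cofactor identity.

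By Definition~\ref{def:forest-tree:bijection}, $\PRSF$ identifies with $\PRSTr$ on $\TT^{\rho}(\Gr)$ weighted by $\crho$, so it suffices to prove the corresponding determinantal identity on $\Gr$. Writing $\ed_i = (w_i, x_i)$ and factoring out $\prod_i \crho_{\ed_i}$ from the weight of every tree containing the prescribed edges, one has
$$\PRSTr(\ed_1, \dots, \ed_k) = \prod_{i=1}^k \crho_{\ed_i} \cdot \frac{N}{\det \Dk},$$
where $N$ is the weighted sum over spanning trees of $\Gr$ rooted at $\rho$ containing all the $\ed_i$, with those edges' own weights removed, and the denominator is provided by the matrix-forest theorem~\eqref{eq:matrix-forest}.

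The central step is to apply the all-minors matrix-tree theorem of Chaiken~\cite{Chaiken} to $N$: prescribing the $k$ outgoing edges $(w_i, x_i)$ amounts to selecting a signed $(|\SV|-k) \times (|\SV|-k)$ minor of $\Dk$ obtained by deleting the rows indexed by $\{w_1, \dots, w_k\}$ and the columns indexed by $\{x_1, \dots, x_k\}$. Taking the ratio with $\det \Dk$ and invoking Jacobi's cofactor identity for the inverse $\Gk = (\Dk)^{-1}$ recasts this large cofactor as a $k \times k$ determinant of entries of $\Gk$. Carrying out the row operations corresponding to the deletion then leaves, in the $(i,j)$-entry, the difference $\Gk(w_i, w_j) - \Gk(x_i, w_j)$, reflecting the contrast between the $w_i$- and $x_i$-rows.

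Finally, by~\eqref{eq:green=potential} and the identity $\Vk_c = \Vk/\ck = \Gk$, these Green function differences are exactly $\Hk_{\ed_i, \ed_j}$ as defined in~\eqref{eq:def:transfer}; the conventions $\Vk(x, \rho) = \Vk(\rho, x) = 0$ accommodate edges incident to the cemetery vertex. The main technical obstacle is the careful sign bookkeeping through Chaiken's formula and Jacobi's identity, together with checking that the right-hand side vanishes on edge configurations no tree can contain---most notably when two prescribed edges share a tail, where the matrix $(\Hk_{\ed_i, \ed_j})$ has two coinciding columns because $\Hk_{\ed, \fd}$ depends on $\fd = (y, z)$ only through its tail $y$.
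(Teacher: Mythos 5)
The paper does not actually prove this theorem: it quotes it as Theorem 5.2.3 of Chang's thesis \cite{Chang}, whose proof (like Burton--Pemantle's) runs through Wilson's algorithm. Your determinantal route is therefore genuinely different, and it is viable: the reduction to trees of $\Gr$ via the forest-tree bijection, the use of the matrix-forest theorem \eqref{eq:matrix-forest} for the denominator, the final expression $\Hk_{\ed_i,\ed_j}=\Gk(w_i,w_j)-\Gk(x_i,w_j)$, and the observation that the determinant vanishes when two prescribed edges share a tail (coinciding columns) are all correct. What a linear-algebraic proof buys is independence from Wilson's algorithm and an automatic treatment of degenerate edge configurations.

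There is, however, one concretely wrong intermediate step. The weighted sum $N$ over trees of $\Gr$ containing $\ed_1,\dots,\ed_k$ (with those edges' weights removed) is \emph{not} the single minor of $\Dk$ obtained by deleting rows $\{w_1,\dots,w_k\}$ and columns $\{x_1,\dots,x_k\}$. Take $k=1$, $\SV=\{1,2\}$, conductances $c_{(1,2)}=a$, $c_{(2,1)}=b$, masses $m(1)=p$, $m(2)=q$, and $\ed_1=(1,2)$: then $N=q$, whereas deleting row $1$ and column $2$ of $\Dk$ leaves the entry $-b$. Pushed through Jacobi, a single complementary minor would yield $\pm\det\Dk\,\det\big(\Gk(x_i,w_j)\big)_{i,j}$ rather than the differences you need. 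The correct identification is $N=\det\Dk'$, where $\Dk'$ is $\Dk$ with row $w_i$ replaced by $e_{w_i}^{T}-e_{x_i}^{T}$ (standard basis vectors indexed by $\SV$, with the convention $e_{\rho}=0$): this forces the outgoing edge of $w_i$ to be $(w_i,x_i)$ with unit weight. By multilinearity in the replaced rows, $\det\Dk'$ is an \emph{alternating sum of $2^k$} of Chaiken's minors, not one of them; and generalized cofactor expansion along those rows (equivalently, writing $\Dk'=(I-E_WE_W^T)\Dk+E_WV^T$ with $V=[e_{w_1}-e_{x_1},\dots]$, $E_W=[e_{w_1},\dots]$ and applying $\det(I+AB)=\det(I+BA)$) gives $\det\Dk'=\det\Dk\cdot\det\big((e_{w_i}-e_{x_i})^{T}\Gk e_{w_j}\big)_{i,j}=\det\Dk\cdot\det\big(\Hk_{\ed_i,\ed_j}\big)_{i,j}$, which is exactly the claim. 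An equivalent clean variant is to perturb the weights $c_{\ed_i}\mapsto c_{\ed_i}(1+t_i)$ and extract the coefficient of $t_1\cdots t_k$ in $\det\big(\Dk+\sum_i t_ic_{\ed_i}e_{w_i}(e_{w_i}-e_{x_i})^{T}\big)$; this also disposes of all configurations no tree can contain (shared tails, prescribed cycles, loops) without any case analysis. With that correction the rest of your argument goes through.
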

In \cite{Chang} this result is stated for processes with finite state space that are absorbed by the cemetery state almost surely in finite time: it corresponds to $\SG$ being finite and $m \neq 0$. The historical proof of \cite{BurtonPemantle} in the non-directed case relies on electrical networks. Other proofs, including that of \cite{Chang} which deals with the general directed case, rely on Wilson's algorithm. We do not repeat the proof but we introduce a slightly more general version of Wilson's algorithm in the setting of transient \lr{killed random walk} on countable graphs. This algorithm is a combination of the algorithm of \cite{Chang} for a directed \lr{killed random walk} that dies almost surely and Wilson's method rooted at infinity described in Section 10.1 of \cite{LyonsPeres} (for reversible Markov chains with no killing). It actually unifies the two algorithms by identifying the cemetery state with infinity.\par
We first need to define the concept of loop-erasure. A path $\gamma$ in $\SG$, finite or infinite, is said to be \emph{transient} if it visits no vertex infinitely many times. If $\gamma = (x_0,x_1, \dots )$ is a transient path, its \emph{loop-erasure} denoted by $LE(\gamma)$ is also a path in $\SG$ built in the following way: start with $\gamma_1 = (x_0,x_1)$. Define recursively $\gamma_{n+1}$ out of $\gamma_n$ in the following way: let $l_n = |\gamma_n|$. If for all $0 \leq i \leq l_n$, $\gamma_n(i) \neq x_{n+1}$, then $\gamma_{n+1}:= (\gamma_n(0), \dots ,\gamma_n(l_n),x_{n+1})$. Otherwise, let $k = \inf \{0 \leq i \leq l_n, x_{n+1} = \gamma_n(i)\}$ and define $\gamma_{n+1} = (\gamma_n(0), \dots ,\gamma_n(k-1), x_{n+1})$. This produces a new path in $\SG$ with no cycle. If $\gamma$ is finite this algorithm stops after $|\gamma|$ steps. If $\gamma$ is infinite, a countable number of steps must be done. Nevertheless, the loop-erasure is well defined when the path is transient (see the proof of Proposition 10.1 of \cite{LyonsPeres} for details).

\begin{Def}[Wilson's algorithm rooted at the cemetery state]\label{def:Wilson}
    Assume that $\Sk$ is transient. Choose an arbitrary order $L = \{x_1, x_2, \dots \}$ on $\SV$. Let $T_0 = \emptyset$. Assume that for some $i \geq 0$, the random subgraph $T_i \subset \Erd$ was constructed. Let $x_{\phi(i)}$ be the first vertex of $L$ which does not belong to  an edge in $T_i$. Run a \lr{killed random walk} $\Sk$ started at $x_{\phi(i)}$. Since $\Sk$ is transient, almost surely, one of the two following situations occur: 
    \begin{itemize}
        \item $\Sk$ hits $\rho$ or one of the vertices in $T_i$ in finite time. In this case, perform the chronological loop-erasure of the trajectory up to this time: it produces a finite path $s_i$ with no cycle disjoint of $T_i$ except for its endpoint. Add this path to $T_i$ to form $T_{i+1} = T_i \cup s_i$. 
        \item $\Sk$ never hits $T_i$ nor $\rho$. Perform the chronological loop-erasure of this infinite trajectory: this is almost surely well-defined because since $\Sk$ is transient, the path is almost surely transient. This produces an infinite path $t_i$ with no cycle. Form $T_{i+1}$ by adding this infinite path to $T_i$.
	\end{itemize}
Run this algorithm until all the vertices of $\SG$ are spanned by $T_i$ (it can take a countable number of steps if $\SG$ is infinite). This produces a random RST of $\Gr$ (recall that with our definition, a RST can have infinite unrooted components, see Remark \ref{rem:notation:RST}). By the forest-tree bijection, this RST corresponds to a RSF of $\SG$. 
\end{Def}
\begin{Def}\label{def:wilson:measure}
    We call the \emph{Wilson's measure} (following \cite{Heloise}) the probability measure on RST of $\Gr$ rooted at $\rho$ obtained by applying this algorithm and denote it by $\PW$. 
\end{Def}
When $m =0$, this algorithm is Wilson's algorithm rooted at infinity described in Section 10.1 of \cite{LyonsPeres} (for symmetric conductance functions). It produces an ESF of $\SG$ (see Remark \ref{rem:EST}). When $\SG$ is finite and $m \neq 0$, the Wilson's measure is supported on RST of $\Gr$ and coincides with the probability measure defined in Equation~\eqref{def:tree} associated with the conductances $\crho$: 
\begin{Prop}\label{prop:Wilson:finite}
    When $\SG$ is finite and $m \neq 0$, $\PW$ coincides with the measure $\PRSTr$ defined in Equation~\eqref{def:tree} associated with the conductances $\crho$: $\PW = \PRSTr$
\end{Prop}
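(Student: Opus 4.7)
The plan is to identify the algorithm of Definition~\ref{def:Wilson} with the standard Wilson algorithm on the finite graph $\Gr$ equipped with the conductance function $\crho$, run with root $\rho$, and then invoke the classical Wilson theorem on this finite directed weighted graph.

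First I would check that the algorithm is well-defined and terminates. Since $\SG$ is finite and $m\neq 0$, Remark~\ref{rem:always:transient} says that $\Sk$ is transient; on a finite state space this is equivalent to saying that $\Sk$ reaches the cemetery $\rho$ almost surely in finite time. Therefore, at every step $i$ the walk started from $x_{\phi(i)}$ enters $T_i\cup\{\rho\}$ a.s.\ in finite time, so the "infinite excursion" branch of Definition~\ref{def:Wilson} never occurs, every path $s_i$ is finite, and the loop-erasure involves only finitely many operations. Since $|\SV|<\infty$ the algorithm halts after finitely many iterations and outputs a (finite) RST of $\Gr$ rooted at $\rho$.

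Next I would translate the algorithm to $\Gr$. Using the construction \eqref{eq:conductance:Gr} of $\crho$, the killed random walk $\Sk$ on $\SG$ associated with $(c,m)$ is literally the ordinary random walk $\Sr$ on $\Gr$ with conductance $\crho$, absorbed upon hitting $\rho$; this is the content of \eqref{eq:coupling:KilledRandomWalk-RandomWalk} applied to $\Gr$. Thus running $\Sk$ from $x_{\phi(i)}$ until it enters $T_i\cup\{\rho\}$ and chronologically loop-erasing the trajectory is the same step as in the classical Wilson procedure on $(\Gr,\crho)$ rooted at $\rho$. The enumeration $L$ of $\SV$ plays exactly the role of Wilson's auxiliary ordering of non-root vertices.

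Finally I would invoke Wilson's theorem, which in the finite directed weighted setting (originally \cite{Wilson}, with the directed generalization also recalled in \cite{Chang}) states that the law of the resulting tree is proportional to $\prod_{\ed\in T}\crho_{\ed}$, independently of the chosen ordering $L$. This is exactly $\PRSTr$ as defined by \eqref{def:tree} applied to $(\Gr,\crho)$, and yields $\PW=\PRSTr$. The substantive ingredient is the order-independence statement inside Wilson's theorem, which I would not reprove; the rest of the argument is a verification that Definition~\ref{def:Wilson}, in the finite and massive case, coincides step by step with classical Wilson on $\Gr$. The only point requiring care is the a.s.\ finiteness of each loop-erased excursion, which is precisely what the transience argument above guarantees.
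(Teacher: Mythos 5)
Your proposal is correct and follows essentially the same route as the paper: both reduce the statement to the known form of Wilson's theorem for the finite directed weighted graph $(\Gr,\crho)$ rooted at $\rho$ (the paper cites Proposition 5.2.6 of \cite{Chang}, which gives $\PW(T)=\det(\Gk)\prod_{\ed\in T}\crho_{\ed}$, and identifies the constant via the matrix-forest theorem, while you invoke the proportionality form directly). One tiny imprecision: the identification of $\Sk$ with the walk on $\Gr$ absorbed at $\rho$ is the paper's definition $\Sk_i=\Sr_{i\wedge\tau(\rho)}$ rather than the content of Equation~\eqref{eq:coupling:KilledRandomWalk-RandomWalk}, but this does not affect the argument.
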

\begin{proof}
    Proposition 5.2.6 of \cite{Chang} states that
    \begin{equation}
	\forall T \in \TT^{\rho}(\Gr),~ \PW(T) = \det(\Gk) \prod_{\ed \in T} \crho_{\ed}.
    \end{equation}
Since by the matrix-forest theorem (see Equation~\eqref{eq:matrix-forest}) and the definition of the massive Green function, $\ZRSF^{\rho}(\Gr,\crho) = \det(\Dk) = \det(\Gk)^{-1}$, $\PW$ coincides indeed with $\PRSTr$.
\end{proof}
When $\SG$ is infinite, another natural measure on RST of $\Gr$ is the weak limit over any exhaustion of the measure \eqref{def:tree}. We now explain why this coincides with the Wilson's measure. An \emph{exhaustion} of $\SG$ is a sequence of connected induced subgraphs $\SG_n = (\SV_n, \SE_n)$ such that $\SV_n$ is increasing (for union) and $\bigcup_{n \in \N} \SV_n = \SV$. We denote by $(c_n, m_n)$ the associated conductance and mass functions with wired boundary conditions. Recall that even when $m = 0$, $m_n$ takes at least one positive value since the finite set $\SV_n$ is a strict subset of the infinite set $\SV$. Consider an exhaustion $(\SG_n)_{n \in \N}$ of $\SG$, and denote by $\PW^n$ the Wilson's measures on RST of $\SG_n^{\rho}$ rooted at $\rho$ associated with the conductance and mass functions $(c_n, m_n)$.

\begin{Prop}\label{prop:Wilson}
    Assume that $\SG$ is infinite and one of the following holds:
    \begin{enumerate}[label=(\roman*)]
        \item $m=0$ and $\Sk = S$ is transient. 
        \item $m \neq 0$ and for all $x \in \SV$, $\Proba_x(\tk(\rho) < \infty)=1$: $\Sk$ dies almost surely in finite time. 
    \end{enumerate}
    The Wilson's measure $\PW$ on RST of $\Gr$ rooted at $\rho$ associated with the conductance and mass functions $(c,m)$ coincides with the weak limit of the $\PW^n$. More precisely,  
    \be\label{eq:weak:cv:exhaustion:KilledRandomWalk}
        \forall \{\ed_1,  \dots , \ed_k\} \subset \Erd,~\PW^n(\ed_1, \dots ,\ed_k) \overset{n \to \infty}{\longrightarrow} \PW(\ed_1, \dots ,\ed_k).
    \ee 
    When $m=0$, we simply have $\Erd = \Ed$ and $\PW$ is supported on EST of $\SG = \Gr$.
\end{Prop}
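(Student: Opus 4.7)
The plan is to couple Wilson's algorithm on $\SG$ and on $\SG_n$ using a common source of randomness, and to show that for any fixed finite edge set the output trees almost surely agree on the relevant edges once $n$ is large enough; dominated convergence then yields \eqref{eq:weak:cv:exhaustion:KilledRandomWalk}.

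Fix $F = \{\ed_1, \ldots, \ed_k\} \subset \Erd$, let $V_F \subset \SV$ be its set of endpoints other than $\rho$, and choose an order $L = (x_1, x_2, \ldots)$ on $\SV$ starting with $V_F$; for $n$ large, $V_F \subset \SV_n$ and one restricts $L$ accordingly to define $L_n$. Since a directed RSF assigns at most one outgoing edge to each vertex, the event $\{F \subset T\}$ is determined by the outgoing edge of each vertex of $V_F$ in the tree produced by Wilson's algorithm, and these edges are fixed after at most $|V_F|$ successive loop-erased walks have been run. I would couple each \lr{killed random walk} used by the algorithm on $\SG_n$ with the corresponding walk on $\SG$ by writing the first as the second, further killed at the exit time $\tau_n$ of $\SV_n$, in line with the description of the wired boundary conditions preceding the subsection.

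Under this coupling, the remaining step is to verify that the first $|V_F|$ loop-erased walks on $\SG_n$ and on $\SG$ almost surely agree at every vertex of $V_F$ for $n$ large. In case (ii), where $\tk(\rho) < \infty$ almost surely, each walk on $\SG$ visits a finite random subset of $\SV$, which is contained in $\SV_n$ for $n$ large, so $\tau_n > \tk(\rho)$ and the coupled walks coincide exactly, producing identical loop-erasures. In case (i), the walks on $\SG$ are infinite but transient, every vertex being visited only finitely often, and the loop-erasure $LE(S)$ is well defined; the classical stability property $LE(S_{[0,m]})(\ell) \to LE(S)(\ell)$ as $m \to \infty$ (see Section 10.1 of \cite{LyonsPeres}) together with $\tau_n \to \infty$ almost surely gives agreement of the loop-erasures on any prescribed finite initial segment once $n$ is large, which is enough to control both the outgoing edges at $V_F$ and the sequence of vertices selected by the algorithm at each subsequent step.

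The main obstacle I anticipate is case (i): one must match the loop-erasure of a finite path ending at $\rho$ on $\SG_n$ with the loop-erasure of an infinite transient path on $\SG$, and propagate this matching through the successive choices of the algorithm. This is exactly the consistency between Wilson's algorithm on an exhaustion with wired boundary conditions and Wilson's algorithm rooted at infinity. Once it is in place, $\mathbbm{1}_{\{F \subset T_n\}} \to \mathbbm{1}_{\{F \subset T\}}$ almost surely and dominated convergence concludes.
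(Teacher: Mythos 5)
Your proposal is correct and follows essentially the same route as the paper: the paper does not write out a proof but refers to Proposition 5.2.6 of \cite{Chang} for case (ii) and to Proposition 10.1 of \cite{LyonsPeres} for case (i), and both of those are exactly the coupling you describe (run Wilson's algorithm on $\SG_n$ and on $\SG$ with the same walk trajectories, the wired walk being the $\SG$-walk killed on exiting $\SV_n$). Case (ii) is handled cleanly by your observation that the finitely many walks needed to cover $V_F$ a.s.\ visit a finite set, hence lie in $\SV_n$ eventually.

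The one point in case (i) that your sketch leaves implicit, and that must be made explicit for the propagation step, is this: from the second branch on, the two algorithms stop their walks upon hitting trees that genuinely differ (a finite path ending at $\rho$ versus an infinite ray), so stability of $LE(S_{[0,m]})$ alone does not suffice. The standard fix, and the one the cited proof uses, is to fix $n_0$ with $V_F\subset \SV_{n_0}$ and restrict to the event that each of the first $|V_F|$ walks either stays in $\SV_n$ until absorption or, after exiting $\SV_n$, never returns to $\SV_{n_0}$; by transience this event has probability $1-o(1)$ as $n\to\infty$, and on it the traces of both algorithms on $\SV_{n_0}$ (hence the outgoing edges at $V_F$ and the decisions of which vertex to start from next) coincide. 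With that event inserted, your dominated-convergence conclusion goes through.
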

\begin{comment}
\begin{Prop}\label{prop:Wilson}
    Let $(\SG_n)_{n \in \N}$ be any exhaustion of $\SG$ such that for every $n \in \N$. Denote by $\PW^n$ the Wilson's measures on RST of $\SG_n^{\rho}$ associated with the conductance and mass functions $(c_n, m_n)$.
    \begin{enumerate}[label=(\roman*)]
        \item Assume that $m=0$ and $\Sk = S$ is transient. The Wilson's measure $\PW$ on EST of $\SG$ associated with the conductance function $c$ coincides with the weak limit of the $\PW^n$. More precisely,  
    \be\label{eq:weak:cv:exhaustion}
        \forall \{\ed_1,  \dots , \ed_k\} \subset \Ed,~\PW^n(\ed_1, \dots ,\ed_k) \overset{n \to \infty}{\longrightarrow} \PW(\ed_1, \dots ,\ed_k).
    \ee
        \item Assume that for all $x \in \SV$, $\Proba_x[\tau^{\rho}(\rho) < \infty]=1$: $\Sk$ dies almost surely in finite time. The Wilson's measure $\PW$ on RST of $G^{\rho}$ associated with the conductance and mass functions $(c,m)$ coincides with the weak limit of the $\Proba^n$. More precisely,  
    \be\label{eq:weak:cv:exhaustion:KilledRandomWalk}
        \forall \{\ed_1,  \dots , \ed_k\} \subset \Erd,~\PW^n(\ed_1, \dots ,\ed_k) \overset{n \to \infty}{\longrightarrow} \PW(\ed_1, \dots ,\ed_k).
    \ee 
    \end{enumerate}
\end{Prop}
\end{comment}
Observe that the assumption in case (ii) is stronger than the transience assumption. Case (ii) is exactly Proposition 5.2.6 of \cite{Chang}. Case (i) is an immediate extension of Proposition 10.1 of \cite{LyonsPeres} to the directed transient case. In their setting, the conductance function is symmetric, but the exact same proof works. 
\begin{Rem}
    Proposition~\ref{prop:Wilson} holds more generally for transient \lr{killed random walk} (which includes cases (i) and (ii): this corresponds to identifying $\rho$ with the cemetery state) by simply combining the two proofs, but we will not need it. Case (ii) of Proposition~\ref{prop:Wilson} also holds for free boundary conditions (defined in Remark \ref{rem:free:bc}) instead of wired boundary conditions, but in general case (i) does not. Nevertheless the limits with free and wired boundary conditions coincide in case (i) for the simple random walk on $\Z^d$ (see for example Corollary 10.9 of \cite{LyonsPeres}).
\end{Rem}
This proposition combined with Proposition~\ref{prop:Wilson:finite} shows that Wilson's measure that we introduced on RSF of infinite graphs corresponds to the weak limit along any exhaustion with wired boundary conditions of the measures defined in Equation~\eqref{def:tree} (for the conductances $\crho$ on $\Gr$). Taking weak limits for edge probabilities, one deduces that the transfer current theorem also holds for infinite directed graphs. Recall the definition of the transfer current operator $\Hk$ when $\Sk$ is transient. Recall that when $m = 0$, $\Hk = H$ is the usual transfer current operator associated with a transient \lr{random walk}.

\begin{Thm}[Infinite Transfer Current Theorem]\label{thm:transfer}
    Assume that $\SG$ is infinite and one of the following holds:
    \begin{enumerate}[label=(\roman*)]
        \item $m=0$ and $\Sk = S$ is transient.
        \item $m \neq 0$ and for all $x \in \SV$, $\Proba_x(\tk(\rho) < \infty)=1$. 
    \end{enumerate}
    The Wilson's measure $\PW$ on RST of $\Gr$ satisfies
        \be\label{TransferCurrentRecurrent}
		\forall \{\ed_1, \dots ,\ed_k\} \subset \Erd,~\PW(\ed_1, \dots ,\ed_k) = \det\left((\Hk_{\ed_i,\ed_j})_{i,j = 1, \dots ,k}\right)\prod_{i=1}^k \crho_{\ed_i}.
	\ee
    When $m=0$, $\Gr = \SG$, $\Erd = \Ed$, $\Hk = H$ and $\PW$ is supported on EST of $\SG$.
\end{Thm}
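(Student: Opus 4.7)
The plan is to apply the finite transfer current theorem (Theorem~\ref{thm:transfer:finite}) along an exhaustion and pass to the limit, using Proposition~\ref{prop:Wilson} to handle the left-hand side. Fix any exhaustion $(\SG_n)_{n\in\N}$ of $\SG$ with the associated wired conductance and mass functions $(c_n,m_n)$, and fix a finite collection $\{\ed_1,\dots,\ed_k\}\subset\Erd$. For $n$ large enough each $\ed_i$ either lies in $\SE_n$, or (in case (ii)) is of the form $(x,\rho)$ with $\om(x)>0$ and $x$ in the interior of $\SV_n$; in both situations $\crho_{n,\ed_i}=\crho_{\ed_i}$ for all $n$ sufficiently large. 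Since $\SG_n$ is finite with $m_n\neq 0$ (the vertices in $\partial\SV_n$ carry a positive wired mass), Theorem~\ref{thm:transfer:finite} gives
\be
    \PW^n(\ed_1,\dots,\ed_k)=\det\bigl(\Hk_n(\ed_i,\ed_j)\bigr)_{i,j=1,\dots,k}\prod_{i=1}^k \crho_{n,\ed_i},
\ee
where $\Hk_n$ is the transfer current operator associated with $(\SG_n,c_n,m_n)$.

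By Proposition~\ref{prop:Wilson} the left-hand side converges to $\PW(\ed_1,\dots,\ed_k)$ as $n\to\infty$, so we only need pointwise convergence $\Hk_n(\ed_i,\ed_j)\to\Hk(\ed_i,\ed_j)$. Recalling the definition \eqref{eq:def:transfer} together with the conventions $\Vk(\cdot,\rho)=\Vk(\rho,\cdot)=0$, this reduces to showing two things: first, that $\ck_n(y)\to\ck(y)$ for every fixed $y\in\SV$, which is immediate since for $n$ large enough $y$ lies in the interior of $\SV_n$, so $m_n(y)=m(y)$ and hence $\ck_n(y)=\ck(y)$; second, that $\Vk_n(x,y)\to\Vk(x,y)$ for every fixed $x,y\in\SV$.

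The convergence of potentials is the main point and is handled by a coupling argument. By the discussion following \eqref{eq:restriction:laplacian:wired}, the killed random walk on $\SG_n$ with wired conductances $(c_n,m_n)$ can be realized as the killed random walk $\Sk$ on $\SG$ stopped additionally upon exiting $\SV_n$. Denoting by $\tau_n$ the first exit time of $\Sk$ from $\SV_n$,
\be
    \Vk_n(x,y)=\E_x\Bigl[\sum_{i=0}^{(\tk(\rho)\wedge\tau_n)-1}\mathbbm{1}_{\{\Sk_i=y\}}\Bigr],
\ee
and the integrand is dominated by the integrable random variable $N(y):=\sum_{i<\tk(\rho)}\mathbbm{1}_{\{\Sk_i=y\}}$, whose expectation under $\Proba_x$ equals $\Vk(x,y)<\infty$ (by transience in case (i) and by almost sure finiteness of $\tk(\rho)$ in case (ii)). Moreover $\tk(\rho)\wedge\tau_n\uparrow\tk(\rho)$ almost surely: in case (ii) the full trajectory of $\Sk$ before $\tk(\rho)$ has a finite range that lies in $\SV_n$ for $n$ large, while in case (i) the trajectory up to any fixed time horizon lies in $\SV_n$ for $n$ large, so $\tau_n\to\infty$ a.s. Dominated convergence then yields $\Vk_n(x,y)\to\Vk(x,y)$, which combined with the previous paragraph gives $\Hk_n(\ed_i,\ed_j)\to\Hk(\ed_i,\ed_j)$ and hence the desired formula \eqref{TransferCurrentRecurrent}. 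The only real obstacle is producing the dominating random variable $N(y)$, which is exactly what the transience/almost-sure-death hypotheses guarantee.
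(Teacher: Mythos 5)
Your proof is correct and follows essentially the same route as the paper's: exhaust the graph, identify $\PW^n$ with the finite Boltzmann measure via Proposition~\ref{prop:Wilson:finite}, apply the finite transfer current Theorem~\ref{thm:transfer:finite}, use Proposition~\ref{prop:Wilson} for the left-hand side, and pass to the limit in the potentials $\Vk_n$. The only difference is one of presentation: the paper delegates case (ii) to a reference (Chang) and writes out only case (i), justifying $\Vk_n(x,y)\to V(x,y)$ with the single phrase ``by transience of $S$'', whereas you treat both cases in a unified, self-contained way and make the limiting argument explicit via the monotone coupling $\tk(\rho)\wedge\tau_n\uparrow\tk(\rho)$ together with the integrable dominating variable $N(y)$ --- a harmless and arguably more careful write-up of the same idea.
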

\begin{proof}
Case (ii) is proved in the discussion after Remark 18 of \cite{Chang}. Let us prove case (i) in the same spirit. Assume that $m =0$ and $S$ is transient. Let $(\SG_n)_{n \in \N}$ be an exhaustion of $\SG$. Denote by $\PW^n$ the Wilson's measure on RST of $\SG_n$ associated with $(c_n,m_n)$. Recall that $m_n \neq 0$ since $\SV_n$ is a strict subset of $\SV$. Let $\ed_1,\dots,\ed_k \in \Ed$. Case (i) of Proposition~\ref{prop:Wilson} shows that
    \be\label{eq:cv:transfer:exhaustion}
        \PW(\ed_1,\dots,\ed_k) = \lim_{n \to \infty}\PW^n(\ed_1,\dots,\ed_k).
    \ee
    Since by Proposition~\ref{prop:Wilson:finite}, on finite graphs the Wilson's measure coincides with the natural measure defined in \eqref{def:tree}, the finite transfer current Theorem~\ref{thm:transfer:finite} applies with the transfer current operator $\Hk_n$ associated with the potential
    \begin{equation}
        \forall x,y \in \SV_n,~\Vk_n(x,y) = \E_x\Bigg[\sum_{i=0}^{\tau(\SV \setminus \SV_n)-1}\mathbbm{1}_{\{S_i=y\}}\Bigg],
    \end{equation}
    by definition of the potential of a \lr{killed random walk} and since the \lr{killed random walk} on $\SG_n$ associated with $(c_n, m_n)$ is simply $S$ killed when it exits $\SV_n$. For fixed $x,y \in \SV$, by transience of $S$:
    \begin{equation}
        \Vk_n(x,y) \overset{n \to \infty}{\longrightarrow} V(x,y) = \E_x\Bigg[\sum_{i=0}^{\infty}\mathbbm{1}_{\{S_i=y\}}\Bigg].
    \end{equation}
    Together with \eqref{eq:cv:transfer:exhaustion}, it proves the theorem with the associated transfer current operator
    \[
	\forall \ed = (wx), \fd = (yz) \in \Ed,~ H_{\ed,\fd} = \frac{V(w,y)}{c(y)}-\frac{V(x,y)}{c(y)}.\qedhere
    \]
\end{proof}

\begin{comment}
    %AAAAAA maybe I dont want to do the full rigorous def of gibbs measure\dots Kenyon does not. For arboreal gas it looks heavy. We can state it informally as a weak limit over exhaustions with any boundary conditions AAAAAAAA
%\begin{Rem}
% We start with the case of directed trees and we first need to introduce some notation. We say that $H$ is an induced subgraph of $\SG$ if for $x,y \in H$ and $x \sim y$ in $\SG$ then $x\sim y$ in $H$. If $T$ is a directed spanning tree of $\SG$, erasing all the edges of $T$ not in $H$ results in $T_{|H}$ an acyclic directed spanning subgraph of $H$. $T_{|H}$ induces a partial order on the vertices of $H$: we say that $x <_{T_{|H}} y$ if there is a directed path in $T_{|H}$ from $x$ to $y$. Let $\mathcal{T}_{H}$ be the set of all acyclic directed spanning subgraphs of $H$ and $\mathcal{T}_{H}(T)$ the elements $t$ of $\mathcal{T}_{H}$ such that the partial order induced by $t$ and $T_{|H}$ coincide on the boundary vertices of $H$. 
%
%
%\begin{Def}
%	A probability measure $\mu$ on the set of random essential spanning forests of an infinite graph $\SG$ is said to satisfy the Gibbs property for $\Dk$ if for any $H$ finite induced subgraph of $\SG$, for any $T_0$ essential spanning forest of $\SG$, for any $t \in \mathcal{T}_{H}(T)$, 
%	\be
%		\Proba[T_H = t
%	\ee
%\end{Def}
%
%\begin{Def}
%	A probability measure on the set of random rooted spanning forests in said to satisfy the Gibbs property for $\Dk$ if the associated tree on $\SG$ with an additional cemetery state $\delta$ satisfies the Gibbs property. It coincides with the definition given in \cite{DeterminantalForest} (page 10)
%\end{Def}
%\begin{Def}
%A measure $\mu$ on the set of random rooted spanning forest of $\SG$ is called a Gibbs measure if it satisfies the Gibbs property.
%\end{Def}
%\end{Rem}
\end{comment}

\subsection{The Doob transform technique}\label{subsec:doob}
Let $(\oSG,\oc,\om)$ be a graph with conductance and mass functions. Let $\SV \subset \oSV$ and denote by $\SG$ the induced subgraph and by $(c,m)$ the conductance and mass function with wired boundary conditions. Recall that the restriction $\oDk_{\SV}$ is the massive Laplacian on $\SG$ with conductance and mass functions $(c,m)$. Consider a positive function $\lambda: \oSV \to \R_{>0}$, and define a new conductance function on $\oSG$:
\be\label{eq:def:doob:conductances}
    \forall x \overset{\oSG} \sim y \in \oSV,~\ocl_{(x,y)} := \frac{\lambda(y)}{\lambda(x)}\oc_{(x,y)}.
\ee
Let $\oSl$, $\oDl$, $\Ql$ be respectively the \lr{random walk}, non-massive Laplacian and transition kernel on $\oSG$ associated with these conductances. The \lr{killed random walk} on the induced subgraph $\SG$ associated with the conductance and mass functions with wired boundary conditions $(\cl, \ml)$ induced on $\SG$ by $\ocl$ is simply the \lr{random walk} $\oSl$ killed when it exits $\SV$. As it was already noted, the Laplacian associated with $(\SG, \cl, \ml)$ is the restriction $\oDl_{\SV}$. Recall that when $\SV$ is a strict subset of $\oSV$, $\ml$ takes at least one positive value even though $\oSl$ is a non-massive \lr{random walk}, so $\Dl_{\SV}$ is a massive Laplacian. When $\SV = \oSV$, $c = \oc$ and $\Dl_{\SV} = \Dl$ is non-massive. This is the case that the reader should keep in mind as it is the more intuitive, the additional technicalities are only here to handle the general case.

\begin{Def}\label{def:doob:transform}[Definition 8-11 of \cite{kemeny2012denumerable}]
    The \lr{random walk} $\Slk$ is called the \emph{Doob transform} (or sometimes \emph{$h$-transform}) of $\oSk$ by $\lambda$.
\end{Def}
Let $\Lambda = D(\lambda)$ be the diagonal matrix with rows and columns indexed by $\oSV$ and diagonal entries $\lambda$. Recall that $\lambda$ is said to be massive harmonic on $\SV$ if for all $x \in \SV$, $(\Dk \lambda)(x) = 0$. Recall that $\Qk$ is the transition kernel of the \lr{killed random walk} $\Sk$. The following holds:

\begin{Prop}\label{prop:doob}
    If $\lambda\lr{: \oSV \to \R_{>0}}$ is massive harmonic for $\oDk$ on $\SV$, then
    \begin{equation}
        \Dl_{\SV} = \Lambda_{\SV}^{-1} \Dk_{\SV} \Lambda_{\SV}. 
    \end{equation}
    Equivalently, 
    \be\label{eq:kernel:doob}
        \forall x,y \in \SV,~\Qlk(x,y) = \frac{\lambda(y)}{\lambda(x)}\Qk(x,y).
    \ee
\end{Prop}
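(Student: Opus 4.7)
The plan is to establish the matrix identity $\Dl_{\SV}=\Lambda_{\SV}^{-1}\Dk_{\SV}\Lambda_{\SV}$ by direct comparison of entries, deducing the transition-kernel identity \eqref{eq:kernel:doob} as an immediate consequence. The equivalence between the two formulations follows from \eqref{eq:def:massive:transition:kernel}: conjugation by the diagonal matrix $\Lambda_{\SV}$ preserves diagonal entries, so the matrix identity forces the total killed conductance at each $x\in\SV$ to agree on both sides, after which the off-diagonal scaling $\lambda(y)/\lambda(x)$ transfers unchanged to $\Qlk$ (diagonal entries of the two kernels coincide automatically since $\lambda(x)/\lambda(x)=1$).

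The matrix identity can be checked separately on off-diagonal and diagonal entries. The off-diagonal case is automatic: for $x\neq y$ in $\SV$, \eqref{eq:def:doob:conductances} together with the matrix expression of $\Dk$ gives
$$(\Dl_{\SV})_{x,y}=-\ocl_{(x,y)}=-\tfrac{\lambda(y)}{\lambda(x)}\oc_{(x,y)}=\tfrac{\lambda(y)}{\lambda(x)}\Dk_{x,y}=(\Lambda_{\SV}^{-1}\Dk_{\SV}\Lambda_{\SV})_{x,y},$$
and no hypothesis on $\lambda$ is used. Diagonal entries are the only place where massive harmonicity enters. On one side, $(\Lambda_{\SV}^{-1}\Dk_{\SV}\Lambda_{\SV})_{x,x}=\Dk_{x,x}=\oDk_{x,x}=\om(x)+\sum_{y\overset{\oSG}{\sim}x,\,y\neq x}\oc_{(x,y)}$ by \eqref{eq:restriction:laplacian:wired}. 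On the other side, the same restriction formula applied to the \emph{non-massive} Doob-transformed Laplacian on $\oSG$ yields $(\Dl_{\SV})_{x,x}=\sum_{y\overset{\oSG}{\sim}x,\,y\neq x}\tfrac{\lambda(y)}{\lambda(x)}\oc_{(x,y)}$. Equating the two expressions and multiplying by $\lambda(x)>0$ recovers exactly the relation $(\oDk\lambda)(x)=0$, which is the assumed massive harmonicity of $\lambda$ at $x$.

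The only real subtlety is bookkeeping at the boundary $\partial\SV$: the harmonicity is stated for $\oDk$ on the ambient graph $\oSG$, and it automatically accounts for neighbors $y\in\oSV\setminus\SV$ of $x$, which are precisely those feeding the boundary mass $m(x)$ under the wired restriction on one side and the Doob-transformed diagonal via $\tfrac{\lambda(y)}{\lambda(x)}\oc_{(x,y)}$ on the other. Loops require no separate treatment, since $\oc_{(x,x)}(\lambda(x)-\lambda(x))=0$ in the harmonicity relation and $\tfrac{\lambda(x)}{\lambda(x)}\oc_{(x,x)}=\oc_{(x,x)}$ cancels identically between the two sides of the diagonal comparison. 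Beyond carefully matching these conventions, I do not anticipate any obstacle.
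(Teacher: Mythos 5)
Your proof is correct and follows essentially the same route as the paper: off-diagonal entries match by the definition of $\cl$ alone, the diagonal comparison reduces (after multiplying by $\lambda(x)$, with loops cancelling) to $(\oDk\lambda)(x)=0$, and the kernel identity follows from the resulting equality of total conductances via \eqref{eq:def:massive:transition:kernel}. The only cosmetic difference is that the paper first derives $\ck(x)=\ml(x)+\cl(x)$ from harmonicity and then feeds it into the diagonal entry, whereas you equate the diagonal entries directly and observe that this \emph{is} the harmonicity relation; the content is identical.
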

The Laplacians $\Dl_{\SV}$ and $\Dk_{\SV}$ are said to be \emph{gauge equivalent} with gauge $\Lambda_{\SV}$. When $\SV = \oSV$, this is a gauge equivalence between a non-massive Laplacian $\Dl$ and a massive Laplacian $\Dk$.
\begin{proof}
For all $x \sim y \in \SV$, $x \neq y$
\be\label{eq:equality:out:diagonal}
     (\Lambda^{-1}_{\SV} \Dk_{\SV} \Lambda_{\SV})_{x,y} = \frac{\lambda(y)}{\lambda(x)}\Dk_{x,y} = -\frac{\lambda(y)}{\lambda(x)}c_{(x,y)} = -\cl_{(x,y)} = \Dlk_{x,y}. 
\ee
Moreover, for $x \in \SV$, by massive harmonicity of $\lambda$ at $x$ for $\oDk$:
\be\label{eq:equality:total:c}
    \ck(x) = m(x) + c(x) = \om(x) + \oc(x) = \om(x) + \sum_{y \overset{\oSG}{\sim} x}\oc_{(x,y)} = \sum_{y \overset{\oSG}{\sim} x}\frac{\lambda(y)}{\lambda(x)}\oc_{(x,y)} = \ml(x) + \cl(x).
\ee
This implies, for $x \in \SV$
\be\label{eq:equality:diagonal}
    \left(\Lambda^{-1}_{\SV}\Dk_{\SV}\Lambda_{\SV}\right)_{x,x} = \Dk_{x,x} = \ck(x) - c_{(x,x)} = \ml(x)+\cl(x)-\cl_{(x,x)} = \Dlk_{x,x},
\ee
which proves the first point of the Lemma. The second point of the Proposition is a direct consequence of Equation~\eqref{eq:equality:total:c} and the definition of the transition kernels in Equation~\eqref{eq:def:transition:kernel} (and Equation~\eqref{eq:def:massive:transition:kernel} when $\SV = \oSV$):
\begin{equation}
    \Ql_{\SV} = I - D(\cl + \ml)^{-1}\Dl_{\SV} = I - D(\ck) \Lambda_{\SV}^{-1} \Dk_{\SV} \Lambda_{\SV} = \Lambda^{-1}_{\SV}\Qk_{\SV}\Lambda_{\SV}.
\end{equation}
\end{proof}
A first property of the Doob transform is the following: 
\begin{Prop}\label{prop:doob:transient}
    Assume that $\Sk$ is transient and $\SV = \oSV$. If $\lambda$ is massive harmonic for $\oDk$ on $\SV$, the Doob transform $\Sl$ is also transient. 
\end{Prop}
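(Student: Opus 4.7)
My plan is to use the gauge relation of Proposition~\ref{prop:doob} to compare the $n$-step transition kernels, and hence the potentials, of $\Sl$ and $\Sk$. The first step is to iterate the identity $\Qlk(x,y) = \frac{\lambda(y)}{\lambda(x)}\Qk(x,y)$. A straightforward induction on $n$, in which the intermediate factors $\lambda(x_i)$ appearing along a trajectory $x \to x_1 \to \cdots \to x_{n-1} \to y$ telescope, gives
$$\forall n \geq 0,\ \forall x, y \in \SV,\quad (\Qlk)^n(x,y) = \frac{\lambda(y)}{\lambda(x)}\,(\Qk)^n(x,y).$$

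Summing this identity over $n \geq 0$ and using the series definition~\eqref{eq:def:potential} of the potential, I would deduce
$$\Vl(x,y) \;=\; \sum_{n=0}^\infty \Proba_x(\Sl_n = y) \;=\; \frac{\lambda(y)}{\lambda(x)}\,\Vk(x,y), \qquad x,y \in \SV.$$
By transience of $\Sk$, the right-hand side is finite for every $x, y \in \SV$, and since $\lambda$ is strictly positive we obtain $\Vl(x,x) < \infty$ for every $x \in \SV$. By the classical equivalence between finiteness of the diagonal potential and strict subcriticality of the return probability $\Proba_x(\tl^+(x)<\infty)$, this is exactly transience of $\Sl$.

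I do not expect any real obstacle: all the work is packaged into Proposition~\ref{prop:doob} and the clean telescoping it enables. The only mild point worth stating carefully is that, under the assumption $\SV = \oSV$, the walk $\Sl$ is a genuine non-massive random walk (as discussed in the paragraph preceding Definition~\ref{def:doob:transform}), so the notion of transience applied to $\Sl$ matches the classical definition recalled earlier in the preliminaries.
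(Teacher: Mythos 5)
Your proof is correct, and it rests on exactly the same mechanism as the paper's: the multiplicative telescoping of $\lambda$ along trajectories, which forces the relevant diagonal quantity to be gauge-invariant. The difference is only in which quantity you compare. The paper sums over first-passage paths (paths from $x$ to $y$ avoiding $y$ in their interior) to get $\Proba_x\big((\tl)^+(y)=n\big)=\frac{\lambda(y)}{\lambda(x)}\Proba_x\big((\tk)^+(y)=n\big)$ directly, so that at $y=x$ the return probabilities coincide and transience follows from the definition with no further input. You instead sum over all $n$-step paths to get $(\Qlk)^n=\Lambda^{-1}(\Qk)^n\Lambda$ and hence $\Vl(x,y)=\frac{\lambda(y)}{\lambda(x)}\Vk(x,y)$, then invoke the standard equivalence $\Vl(x,x)<\infty \Leftrightarrow \Proba_x\big((\tl)^+(x)<\infty\big)<1$ (the number of visits to $x$ is geometric by the strong Markov property). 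This costs you one extra classical lemma that the paper avoids, but it buys you the gauge identity for the potential, which is precisely Equation~\eqref{eq:Green:gauge} that the paper re-derives later in the proof of Theorem~\ref{thm:doob}; so your route front-loads a computation that is needed anyway. One small point to keep clean: the series $\sum_n\Proba_x(\Sl_n=y)$ is a priori only an element of $[0,\infty]$ since transience of $\Sl$ is what is being proved, but the identity holds there by monotone convergence and finiteness then follows from transience of $\Sk$, so there is no circularity.
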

This holds more generally for $\SV \subset \oSV$ with the same proof but we will not need it and we find it more clear to provide a proof in the intuitive case $\SV = \oSV$ where the notation is less cumbersome.
\begin{proof}
    It is left as an exercise after Definition 8-11 of \cite{kemeny2012denumerable}, we prove it to illustrate our definitions. For $\SA \subset \SV$, we denote respectively by $(\tk)^+(\SA)$, $(\tl)^+(\SA)$ the first positive hitting time of $\SA$ for $\Sk$ and $\Sl$. Let $x, y \in \SV$, $n \in \N$. By summing over all paths in $\SV$ of length $n$ from $x$ to $y$ that do not hit $y$ except at their extremities,
    \begin{equation}
        \Proba_x\big((\tl)^+(y) = n\big) 
        \overset{\eqref{eq:kernel:doob}}{=} \frac{\lambda(y)}{\lambda(x)}\Proba_x\big((\tk)^+(y) = n\big).
    \end{equation}
    Since $\Sk$ is transient, summing over all $n$ and taking $y =x$ gives
    \[
        \Proba_x\big((\tl)^+(x) < \infty\big) = \Proba_x\big((\tk)^+(x) < \infty\big) <1.\qedhere
    \]
    \begin{comment}
    It is left as an exercise after Definition 8-11 of \cite{kemeny2012denumerable}, we prove it to illustrate our definitions. For $\SA \subset \oSV$, we denote respectively by $\ot^+(\SA)$, $\otl^+(\SA)$ and $(\tlk)^+(\SA)$ the first positive hitting time of $\SA$ for $\oSk$, $\oSl$, and $\Slk$. Recall that $\Slk$ is $\oSl$ killed when it exits $\SV$. Let $x, y \in \SV$, $n \in \N$. By summing over all paths in $\SV$ of length $n$ from $x$ to $y$ that do not hit $y$ except at their extremities,
    \be
        \begin{aligned}
        \Proba_x\big[(\tlk)^+(y) = n\big] = \Proba_x\big[\otl^+(y) = n, \otl(\oSV \setminus \SV) > n\big] 
        &\overset{\eqref{eq:kernel:doob}}{=} \frac{\lambda(y)}{\lambda(x)}\Proba_x\big[\ot^+(y) = n, \tau(\oSV \setminus \SV) > n\big]\\
        &\leq \frac{\lambda(y)}{\lambda(x)}\Proba_x\big[\ot^+(y) = n\big].
        \end{aligned}
    \ee
    Since $\oSk$ is transient, summing over all $n$ and taking $y =x$ gives
    \be
        \Proba_x\big[\tl^+(x) < \infty\big] \leq \Proba_x\big[\ot^+(x) < \infty\big] <1.\qedhere
    \ee
    \end{comment}
\end{proof}
The \emph{Doob transform technique} is often used to study general Markov chains conditioned by a certain event. We refer to Chapter 8 of \cite{kemeny2012denumerable} for a general introduction from the potential theory point of view and to Section 7 of \cite{DoobTransform} for a recent and clear introduction. It is a powerful tool that works in very general settings. The difficulty is to find a setting where these results can be applied and give quantitative estimate.

\section{From rooted spanning forests to directed spanning trees}\label{sec:doob}
This section has two goals: the first one (developed in Section~\ref{sec:doob:rsf}) is to show some applications of the Doob transform technique to the random RSF and RST models when a positive massive harmonic function is available. The idea is to show that a RSF model on a graph $\SG$ can be related to a RST model \emph{on the same graph} (infinite case) or on a modified but similar graph $\Go$ (finite case). \lr{The crucial difference with the forest-tree bijection, which relates RSF on $\SG$ with RST on $\Gr$, is the nature of the graph $\Gr$ or $\Go$. The former is obtained by adding an edge from every vertex (with positive mass) to an additional vertex $\rho$ while the latter is obtained by adding an edge only from boundary vertices to an additional vertex $o$. In the applications, we will see that $\Go$ (in the finite case) or $\SG$ (in the infinite case) retains some properties of $\SG$ (planarity, periodicity), contrary to $\Gr$. Our main applications will be to the case of planar graphs. To apply dimers techniques, it is very important that the modified graph $\Go$ is planar, which is typically not the case of $\Gr$ (see Figure \ref{fig:Gr}).}

The second goal (developed in Section~\ref{sec:find:lambda}) is to justify that it is always possible to find positive massive harmonic functions on $\SG$ (infinite case) or on $\Go$ (finite case) and hence to apply Theorem~\ref{thm:doob}. In the finite case it is impossible to find positive massive harmonic functions on $\SG$, see \lr{Subsection}~\ref{FiniteRandomWalk} (which explains why the statements are heavier in the finite case).

\subsection{The Doob transform technique for rooted spanning forests}\label{sec:doob:rsf}
We start by introducing the notation in the intuitive case of an infinite graph, and then show how it has to be adapted in the finite case. The reader can in a first time skip the finite case and jump directly to the statement of Theorem~\ref{thm:doob}.

\subsubsection{The infinite case}\label{par:infinite}
Assume that $\SG$ is infinite and that $\Sk$ dies almost surely in finite time as in (ii) of Theorem~\ref{thm:transfer}. Let $\Vk,\Hk,\PWk$ be respectively the associated potential, transfer current operator and Wilson's measure on RSF of $\SG$.\par
Let $\lambda$ be a positive massive harmonic function on $\SV$. The Doob transform technique of Section~\ref{subsec:doob} applies: denote by $\Sl$ the Doob transform of $\Sk$ by $\lambda$ on $\SV$: it is the \lr{random walk} associated with the conductance function $\cl$ and the non-massive Laplacian $\Dl$. Denote by $\Vl$ the associated potential of the transient \lr{random walk} $\Sl$:
\be\label{def:Vlambda}
	\forall x,y \in \SV,~\Vl(x,y) = \E_x\left[\sum_{i =0}^{\infty} \mathbbm{1}_{\{\Sl_i=y\}}\right]
\ee 
and by $\Hl, \PWl$ the associated transfer current operator on $\SV$ and Wilson's measure on EST of $\SG$. Proposition~\ref{prop:doob} shows that $\Dk$ and $\Dl$ are gauge equivalent:
\be\label{eq:gauge:doob:infinite}
    \Dl = \Lambda^{-1} \Dk \Lambda.
\ee
The implications of this equation for the RSF and ERST models are the content of Theorem~\ref{thm:doob}.

\subsubsection{The finite case}\label{par:finite}
Consider a connected graph $\oSG = (\oSV,\oSE)$ with conductance and mass functions $(\oc,\om)$ and denote by $\oDk$ and $\oSk$ the associated massive Laplacian and \lr{killed random walk}. Let $\SV \subset \oSV$ be a strict \emph{finite} subset such that the induced subgraph $\SG$ is connected, and denote by $(c,m)$ the conductance and mass functions with wired boundary conditions induced by $(\oc, \om)$ on $\SG$. Recall that the restriction $\oDk_{\SV}$ is the massive Laplacian on $\SG$ associated with $(c,m)$. Let $\lambda: \oSV \to \R_{>0}$ be massive harmonic for $\oDk$ on $\SV$. Denote by $\Slk$ and $\Dlk$ the Doob transform of $\oSk$ by $\lambda$ on $\SV$, and by $(\cl, \ml)$ the conductance and mass function with wired boundary conditions induced by $\ocl$ on $\SG$: $\Dlk_{\SV}$ is the Laplacian associated with $(\cl, \ml)$. Observe that unlike in \lr{Subsection}~\ref{par:infinite}, $\SV \neq \oSV$ so $\ml \neq 0$. Proposition~\ref{prop:doob} holds and states that:
\be\label{eq:gauge:doob:finite}
    \Dlk_{\SV} = \Lambda^{-1}_{\SV} \Dk_{\SV} \Lambda_{\SV}.
\ee
The Laplacians $\Dk_{\SV}$ and $\Dlk_{\SV}$ are related to RSF models on $\SG$ with respective conductance and mass functions $(c,m)$ and $(\cl, \ml)$. Since $\oSl$ is a \lr{random walk} with no mass, we are in the setting described around Equation~\eqref{eq:mass:wired}. Recall that $\Go$ is the graph obtained from $\SG$ by identifying all the vertices outside $\SV$ to an additional vertex $o$. By the forest-tree bijection, RSF of $\SG$ weighted by $(\cl, \ml)$ are in weight-preserving bijection with RST of $\Go$ weighted by $\cl$. \emph{We use the notation $o$ instead of $\rho$ to insist on the fact that the graphs $\Go$ and $\Gr$ are different (compare Figure \ref{fig:Gr} and \ref{fig:Go})}, and this is the key point of our work.\par
\begin{comment}
\be\label{eq:def:clo}
    \forall (x,y) \in \Eod,~
    \clo_{(x,y)}=
    \left\{
    \begin{array}{ll}
          \cl_{(x,y)} &\text{ if }(x,y)\in \Ed\\
           \ml(x) = \sum_{z \sim x, z \notin \SV} \widetilde{\overline{c}}_{(x,z)} &\text{ if } x \in \SV, y = o\\
           1 &\text{ if }x=y=o\\ 
           0 &\text{ otherwise}
    \end{array}
    \right.
    .
\ee
\end{comment}
\lr{Let $\Vk, \Hk$ denote the potential and transfer current operators associated with the \lr{killed random walk} $\Sk$ on $\SG$ with conductance and mass functions $(c,m)$.} Let $\Vlk$, $\Hlk$ be the potential and transfer current operator associated with the \lr{random walk} $\Slk$ killed when it exits $\SV$ (or more formally, with the \lr{killed random walk} on $\SG$ associated with the conductance and mass functions $(\cl, \ml)$). Observe the difference with \lr{Subsection}~\ref{par:infinite} where they are the potential and transfer operators $\Vl$ and $\Hl$ associated with a \lr{random walk} \emph{with no killing}. Let $\PWk$ and $\PWlk$ be the Wilson's measure on RST of $\Gr$ (rooted at $\rho$) and RST of $\Go$ (rooted at $o$) associated with $\Sk$ and $\Slk$. 

\subsubsection{Statement and proof of the main result}
Consider either the setting of \lr{Subsection}~\ref{par:infinite} ($\SG$ is infinite) or the setting of \lr{Subsection}~\ref{par:finite} ($\SG$ is finite). The following holds:
\begin{Thm}\label{thm:doob}
    The Wilson's measures $\PWk$ and $\PWlk$ are naturally related: they both satisfy the transfer current Theorem~\ref{thm:transfer:finite} with respective transfer current operators
    \be\label{eq:thm:transfer:doob}
        \left\{
        \begin{array}{ll}
             \forall \ed = (w,x), \fd = (y,z) \in \Erd,~
	&\Hk_{\ed,\fd} = \frac{\Vk(w,y)}{\crho(y)}-\frac{\Vk(x,y)}{\crho(y)}\\
             \forall \ed = (w,x), \fd = (y,z) \in \Eod,~
        &\Hlk_{\ed,\fd} = \frac{\lambda(y)}{\lambda(w)}\frac{\Vk(w,y)}{\crho(y)} - \frac{\lambda(y)}{\lambda(x)}\frac{\Vk(x,y)}{\crho(y)}.
        \end{array}
        \right.
    \ee
    When $\SG$ is infinite, $\Eod = \Ed$. When $\SG$ is finite, we use the conventions $\Vk(o,\cdot) = \Vk(\cdot,o) = 0$. Moreover, when $\SG$ is finite, the following equality of partition functions holds:
    \be\label{eq:doob}
	\ZRSF(\SG,c,m) = \ZRST^{\rho}(\Gr, \crho) = \ZRST^{o}(\SG^o, \clo).
    \ee
\end{Thm}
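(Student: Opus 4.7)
My plan is to treat the three claims as a package unified by the gauge equivalence $\Dl = \Lambda^{-1}\Dk\Lambda$ from Proposition~\ref{prop:doob}. The first transfer current formula for $\Hk$ is essentially a direct citation: the forest-tree bijection (Definition~\ref{def:forest-tree:bijection}) identifies $\PRSF$ (or its extension $\PWk$) with $\PRSTr$ on $\Gr$, and the hypotheses of the theorem are exactly those required to apply either Theorem~\ref{thm:transfer:finite} (finite case) or Theorem~\ref{thm:transfer}(ii) (infinite case), giving the stated formula $\Hk_{\ed,\fd} = \Vk(w,y)/\crho(y) - \Vk(x,y)/\crho(y)$.

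The core of the work lies in the second formula. The key identity I would establish is the pathwise relation
\begin{equation}
    \Proba_x\big(\Slk_n = y,\, n < \tlk(\rho)\big) \;=\; \frac{\lambda(y)}{\lambda(x)}\,\Proba_x\big(\Sk_n = y,\, n < \tk(\rho)\big),
\end{equation}
which follows by iterating Proposition~\ref{prop:doob} (or equivalently, summing over paths of length $n$ and using \eqref{eq:kernel:doob}). Summing over $n$ and using transience yields $\Vlk(x,y) = \tfrac{\lambda(y)}{\lambda(x)}\Vk(x,y)$, and Equation~\eqref{eq:equality:total:c} gives $\cl(y)+\ml(y) = \ck(y) = \crho(y)$. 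Then the transfer current theorem applied to the walk $\Slk$ (which is transient by Proposition~\ref{prop:doob:transient} in the infinite case, and automatically so in the finite case where it dies a.s.\ at $o$) produces exactly the stated formula for $\Hlk$. In the finite case I would also invoke the forest-tree bijection applied to $(\Go,\clo)$, using that a RST of $\Go$ rooted at $o$ corresponds to a RSF of $\SG$ with the wired mass $\ml$, which is where the convention $\Vk(o,\cdot) = \Vk(\cdot,o) = 0$ enters (edges incident to $o$ contribute only through the boundary mass).

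For the partition function chain \eqref{eq:doob}, I would simply combine the matrix-tree theorem (or rather the matrix-forest theorem \eqref{eq:matrix-forest}) with the gauge equivalence \eqref{eq:gauge:doob:finite}. Concretely:
\begin{equation}
    \ZRSF(\SG,c,m) = \det(\Dk_{\SV}) \;=\; \det\!\big(\Lambda_{\SV}^{-1}\Dk_{\SV}\Lambda_{\SV}\big) = \det(\Dlk_{\SV}) = \ZRST^{o}(\Go,\clo),
\end{equation}
where the first equality is \eqref{eq:matrix-forest}, the second uses that conjugation preserves determinants, the third is \eqref{eq:gauge:doob:finite}, and the last applies the forest-tree bijection to the graph $\SG$ with conductances $\cl$ and mass $\ml$ induced by the Doob transform (recalling that $\ml(x)$ equals the sum of $\ocl$-conductances from $x$ to $\oSV\setminus\SV$). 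The middle equality $\ZRSF(\SG,c,m)=\ZRST^{\rho}(\Gr,\crho)$ is the classical forest-tree bijection itself.

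\textbf{Main obstacle.} I expect the bookkeeping around the boundary in the finite case to be the subtlest point: one must verify that the ``killed potential'' $\Vk$ (corresponding to the walk absorbed at $o$/$\oSV\setminus\SV$) and the ``massive potential'' $\Vlk$ after the Doob transform interact consistently with the convention $\Vk(o,\cdot)=0$, so that the formula for $\Hlk$ continues to make sense for edges $\ed,\fd$ incident to $o$. The identity $\ck = \cl + \ml$ from \eqref{eq:equality:total:c} is exactly what makes this work, so I would take care to highlight that massive harmonicity of $\lambda$ on $\SV$ (not on $\oSV$) is precisely what is needed and sufficient.
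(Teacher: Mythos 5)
Your proposal follows essentially the same route as the paper's proof: the partition function identity via the matrix-forest theorem plus gauge invariance of the determinant, the first transfer current formula by direct citation of Theorem~\ref{thm:transfer:finite} (finite case) or Theorem~\ref{thm:transfer}(ii) (infinite case), and the second formula by summing the iterated Doob relation \eqref{eq:kernel:doob} to get $\Vlk(x,y)=\tfrac{\lambda(y)}{\lambda(x)}\Vk(x,y)$ combined with the total-conductance identity \eqref{eq:equality:total:c}. The details, including the transience of $\Sl$ via Proposition~\ref{prop:doob:transient} and the boundary convention at $o$, match the paper's argument, so there is nothing to add.
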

When $\SG$ is infinite, this theorem relates the RSF model on $\SG$ with conductance and mass functions $(c,m)$ with an EST model on $\SG$ with conductance function $\cl$. When $\SG$ is finite, it relates a RSF model on $\SG$ with another RSF model on $\SG$ with the mass “sent to the boundary”.

\begin{proof}
    We first prove the equality of partition functions in the finite case. The first equality in \eqref{eq:doob} is the matrix-forest theorem stated in Equation~\eqref{eq:matrix-forest}. For the second equality, the matrix-tree theorem implies on the one hand 
    \begin{equation}
        \ZRST^{o}(\SG^o, \clo) = \det(\Dlk_{\SV})
   \end{equation} 
   and on the other hand
   \begin{equation}
        \ZRST^{\rho}(\Gr, \crho) = \det(\Dk_{\SV}).
   \end{equation}
    The gauge equivalence of Equation~\eqref{eq:gauge:doob:finite} and the fact that the determinant is invariant by gauge change imply the second equality of Equation~\eqref{eq:doob}.\par
    In \lr{Subsection}~\ref{par:infinite}, we assumed that $\Sk$ dies almost surely in finite time so the first statement of Equation~\eqref{eq:thm:transfer:doob} is case (ii) of Theorem~\ref{thm:transfer}. In this case, the \lr{random walk} $\Sl$ is transient by Proposition~\ref{prop:doob:transient} so it satisfies the hypothesis of case (i) of Theorem~\ref{thm:transfer} with the transfer current operator $\Hl$ on $\SG$. In the setting of \lr{Subsection}~\ref{par:finite}, the first statement of Equation~\eqref{eq:thm:transfer:doob} is simply case (i) of Theorem~\ref{thm:transfer:finite}. In this case, the \lr{random walk} $\Slk$ killed when exiting $\SV$ is a \lr{killed random walk} on a finite graph and Theorem~\ref{thm:transfer:finite} holds with the potential and transfer current operators $\Vlk, \Hlk$ on $\Go$. Note that by convention, $\Vlk(x,o) = \Vlk(o,x) = 0$ for all $x \in \SV$.\par
    We only have to show that in both cases, the transfer current operator $\Hlk$ takes the form given in Equation~\eqref{eq:thm:transfer:doob}. By Equation~\eqref{eq:kernel:doob}, 
    \begin{equation}
        \forall~i \in \N,~\forall x,y \in \SV,~ \Proba_x\big(\Slk_i = y\big) = \frac{\lambda(y)}{\lambda(x)}\Proba_x(\Sk_i = y),
    \end{equation}
    which implies for all $x,y \in \SV$
    \be\label{eq:Green:gauge}
        \Vlk(x,y) = \sum_{i =0}^{\infty} \Proba_x\big(\Slk_i = y\big) = \frac{\lambda(y)}{\lambda(x)} \sum_{i =0}^{\infty} \Proba_x(\Sk_i = y) = \frac{\lambda(y)}{\lambda(x)} \Vk(x,y).
    \ee
    Note that in the finite case, this also holds when $x=o$ or $y=o$ since both sides are $0$. Moreover, Proposition~\ref{prop:doob} (and more precisely Equation~\eqref{eq:equality:diagonal}) implies that for all $x \in \SV$, $\crho(x) = \cl(x)$. Hence, for all $\ed = (w,x) \in \Eod, \fd = (y,z) \in \Eod$ (this is simply $\Ed$ when $\SG$ is infinite), the definition of the transfer current operator $\Hlk$ becomes
    \[
        \Hlk_{\ed,\fd} = \frac{\Vlk(w,y)}{\cl(y)}-\frac{\Vlk(x,y)}{\cl(y)} = \frac{\lambda(y)}{\lambda(w)}\frac{\Vk(w,y)}{\crho(y)} - \frac{\lambda(y)}{\lambda(x)}\frac{\Vk(x,y)}{\crho(y)}.\qedhere
    \]
\end{proof}

\subsection{Positive massive harmonic functions}\label{sec:find:lambda}

\subsubsection{The finite case}\label{FiniteRandomWalk}
Consider a finite graph $\SG$ with conductance and mass functions $(c,m)$ and associated massive Laplacian $\Dk$. 
\begin{Prop}
    If $m \neq 0$, there are no positive massive harmonic functions on $\SV$.
\end{Prop}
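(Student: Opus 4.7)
The plan is to prove this by the classical maximum principle, applied to the sub-Markovian kernel $\Qk$ defined in \eqref{eq:def:massive:transition:kernel}.

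Suppose for contradiction that $\lambda : \SV \to \R_{>0}$ is massive harmonic, i.e. $\Dk \lambda \equiv 0$ on $\SV$. Rewriting $\Dk \lambda(x) = 0$ as
\begin{equation}
    (m(x)+c(x))\lambda(x) = c_{(x,x)}\lambda(x) + \sum_{y \sim x,\, y \neq x} c_{(x,y)}\lambda(y),
\end{equation}
one sees that $\lambda$ is $\Qk$-harmonic in the sense that $\Qk \lambda = \lambda$. Since $\SV$ is finite, the positive function $\lambda$ attains its maximum $M := \max_{\SV} \lambda > 0$ at some vertex $x^\star \in \SV$. First I would handle the easy case where $m(x^\star) > 0$: the identity $\lambda(x^\star) = \sum_{y} \Qk(x^\star,y)\lambda(y)$ together with $\sum_y \Qk(x^\star,y) = c(x^\star)/(c(x^\star)+m(x^\star)) < 1$ and $\lambda(y) \leq M$ gives $M \leq M \cdot c(x^\star)/(c(x^\star)+m(x^\star)) < M$, a contradiction.

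The second step handles $m(x^\star) = 0$ by propagating the maximum. In this case $\Qk(x^\star, \cdot)$ is a genuine probability measure supported on the neighbours of $x^\star$, so the equality $\lambda(x^\star) = \sum_{y \sim x^\star} \Qk(x^\star,y)\lambda(y)$ together with $\lambda \leq M$ forces $\lambda(y) = M$ for every neighbour $y$ of $x^\star$ with $c_{(x^\star,y)} > 0$. Since $\SG$ is connected and $c$ is strictly positive on edges, iterating this argument shows that $\lambda \equiv M$ on all of $\SV$. By the hypothesis $m \neq 0$, there exists some $x_0 \in \SV$ with $m(x_0) > 0$; applying the first step at $x_0$ (where $\lambda(x_0) = M$ is still the maximum) yields the same strict inequality $M < M$, contradicting the assumption that $\lambda$ is positive and massive harmonic.

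I do not expect any real obstacle here: the argument is just the discrete strong maximum principle for a strictly sub-Markovian kernel on a connected finite graph, with the non-trivial mass providing the strict contraction needed to get the contradiction. The only care is the possibility that the maximum is first attained at a mass-free vertex, which is why the propagation step is needed before invoking the strict sub-Markovianity at a vertex of positive mass.
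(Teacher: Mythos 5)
Your proof is correct and is essentially the paper's argument: both are the discrete maximum principle on the connected finite graph, with the strict inequality coming from the positive mass. The only cosmetic differences are that you phrase it via the sub-Markovian kernel $\Qk$ and split cases on whether the maximum is attained at a vertex of positive mass, whereas the paper first shows $\lambda$ is constant (using a maximum point with a strictly smaller neighbour) and then observes that constancy forces $m\equiv 0$.
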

\begin{proof}
    We proceed by contradiction and assume that there exists a positive massive harmonic function $\lambda$. We apply the maximum principle to show that $\lambda$ is constant: by contradiction, if $\lambda$ is not constant, since $\SG$ is connected we can pick a point $x \in \SV$ where $\lambda$ reaches its maximum and a neighbouring point $y \sim x$ with $\lambda(y) < \lambda(x)$. Massive harmonicity at $x$ implies
    \begin{equation}
        \sum_{y \sim x}c_{(x,y)}\lambda(x) \leq \left(m(x) + \sum_{y \sim x}c_{(x,y)}\right)\lambda(x) = \sum_{y \sim x}c_{(x,y)}\lambda(y) < \sum_{y \sim x}c_{(x,y)}\lambda(x),
    \end{equation}
    which is a contradiction. Hence $\lambda$ is constant (and positive), and for all $x \in \SV$, massive harmonicity at $x$ implies $c(x) = \crho(x) = c(x)+m(x)$ which contradicts the fact that $m \neq 0$.
\end{proof}
This justifies why when $\SG$ is finite we cannot use the same simple setting as in \lr{Subsection}~\ref{par:infinite}. Consider now $(\oSG, \oc, \om)$, \lr{let $\SV \subset \oSV$ be a \emph{strict} finite subset of $\oSV$ and let $(\SG,c,m)$ be the finite connected induced subgraph with wired boundary conditions as in \lr{Subsection}~\ref{par:finite}.}
\begin{Prop}\label{prop:find:lambda:finite}
    There exists a positive function $\lambda$ on $\oSV$ satisfying $(\oDk \lambda)(x) = 0$ for all $x \in \SV$. 
\end{Prop}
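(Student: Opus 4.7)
The plan is to treat the equation $(\oDk \lambda)(x) = 0$ for $x \in \SV$ as a Dirichlet problem: prescribe $\lambda$ on $\oSV \setminus \SV$ and solve for its values on $\SV$. For $x \in \SV$, expanding $\oDk \lambda(x) = 0$ and separating neighbors in $\SV$ from neighbors in $\oSV \setminus \SV$, one can use the definition of the wired mass $m(x) = \om(x) + \sum_{y \overset{\oSG}{\sim} x,~y \in \oSV \setminus \SV}\oc_{(x,y)}$ to rewrite the system as $\Dk_{\SV} \lambda|_{\SV} = b$, where $\Dk_{\SV}$ is the massive Laplacian on $\SG$ with the wired conductance and mass functions $(c,m)$, and the right-hand side is $b(x) := \sum_{y \overset{\oSG}{\sim} x,~y \in \oSV \setminus \SV}\oc_{(x,y)}\lambda(y)$.

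I would choose the boundary values $\lambda(y) := 1$ for all $y \in \oSV \setminus \SV$. Then $b$ is non-negative, and since $\oSG$ is connected and $\SV$ is a strict subset, there is at least one $x_0 \in \SV$ with a neighbor in $\oSV \setminus \SV$, so $b \neq 0$. Moreover, because $\SV$ is a strict subset, the wired mass $m$ on $\SG$ is nonzero, and hence by Remark~\ref{rem:always:transient} the \lr{killed random walk} $\Sk$ on $\SG$ dies almost surely in finite time. The matrix-forest theorem~\eqref{eq:matrix-forest} then gives $\det \Dk_{\SV} > 0$, so $\Dk_{\SV}$ is invertible, and we can set $\lambda|_{\SV} := (\Dk_{\SV})^{-1} b$.

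It remains to check positivity. Using the probabilistic identity $(\Dk_{\SV})^{-1}(x,y) = \Gk(x,y) = \Vk(x,y)/\crho(y)$ from Equation~\eqref{eq:green=potential}, the potential $\Vk(x,y) = \sum_{n \geq 0}\Proba_x(\Sk_n = y)$ is strictly positive for every pair $x,y \in \SV$: since $\SG$ is connected, there is a path in $\SG$ from $x$ to $y$, and at each step along this path the transition probability $c_{(\cdot,\cdot)}/\crho(\cdot)$ and the non-killing probability $c(\cdot)/\crho(\cdot)$ are strictly positive, so $\Proba_x(\Sk_n = y) > 0$ for the length $n$ of the path. Therefore all entries of $(\Dk_{\SV})^{-1}$ are strictly positive, and since $b$ is non-negative with at least one strictly positive entry, $\lambda|_{\SV} > 0$. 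Together with $\lambda|_{\oSV \setminus \SV} = 1$, this produces the required strictly positive function on $\oSV$.

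There is no substantial obstacle: the only delicate point is producing a positive (not merely non-negative) solution, which is handled by choosing boundary values that give a non-trivial right-hand side together with the strict positivity of the Green function on the connected finite graph $\SG$.
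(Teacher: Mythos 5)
Your proof is correct, but it takes a different route from the paper's. The paper's proof is a one-liner: fix any $z \in \oSV \setminus \SV$ and set $\lambda(\cdot) = \oVk(\cdot,z)$, the potential of the killed random walk on the ambient graph $\oSG$; this is positive by connectedness and massive harmonic away from $z$ by Equation~\eqref{eq:harmonicity:potential}, hence on $\SV$. You instead solve a Dirichlet problem entirely inside the finite subgraph: prescribe $\lambda \equiv 1$ outside $\SV$, reduce the equation on $\SV$ to $\Dk_{\SV}\lambda|_{\SV} = b$ with $b \geq 0$, $b \neq 0$, and invert using the strict positivity of the Green function of the wired killed walk on the connected finite graph $\SG$. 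Both arguments are sound; the reduction to $\Dk_{\SV}\lambda|_{\SV} = b$ is exactly the computation underlying \eqref{eq:restriction:laplacian:wired}, and your positivity argument via $\Gk = \Vk D(\crho)^{-1} > 0$ is complete. Your version has two small advantages: it only invokes the killed walk on $\SG$ (which always dies a.s.\ since the wired mass is nonzero on a finite connected graph), so it does not need the potential $\oVk$ on $\oSG$ to be finite --- the paper's construction implicitly requires $\oSk$ to be transient on $\oSG$, which can fail, e.g., if $\oSG$ is finite with $\om = 0$; and it comes with the clean probabilistic interpretation $\lambda(x) = \Proba_x\big(\tk(\oSV\setminus\SV) < \tk(\rho)\big)$ for $x \in \SV$, the probability of escaping $\SV$ before being killed. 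What the paper's approach buys in exchange is brevity and a function that is massive harmonic on all of $\oSV \setminus \{z\}$ rather than just on $\SV$.
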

\begin{proof}
    Recall that $\oSk$ is the \lr{killed random walk} on $\oSG$ associated with $(\oc, \om)$ and let $\oVk$ be the associated potential operator on $\oSV$. 
    %Be careful that it does not coincide with $\Vk$ even when restricting to $\SV$.
    Fix any $z \in \oSV \setminus \SV$ and let $\lambda(\cdot) = \oVk(\cdot,z)$. This defines a function which is positive on $\oSV$ since $\oSG$ is connected and massive harmonic on $\oSV \setminus \{z\}$ by Equation~\eqref{eq:harmonicity:potential}, so in particular it is massive harmonic on $\SV$.
\end{proof}
Proposition~\ref{prop:find:lambda:finite} justifies that the statement of Theorem~\ref{thm:doob} \lr{is} non empty \lr{in the finite setting}. A more explicit way of finding massive harmonic functions as in \lr{Subsection}~\ref{par:finite} is to use an explicit massive harmonic function $\lambda$ on an infinite graph $\oSG$ with massive Laplacian $\oDk$ (see Example~\ref{ex:explicit:lambda} in the next \lr{subsection}) and apply the Doob transform technique on a finite induced subgraph $\SG$ of $\oSG$. This is the starting point of Section~\ref{sec:near-critical}.
\begin{comment}
Recall from the preceding Section~\ref{FiniteSpanningForest} the definition of the \lr{killed random walk} $S^{o,\rho}$ associated to the graph with conductance and mass functions $(\SG^o, c,m-l)$. It is a Markov chain on the graph $(\SG^o)^{\rho}$ (i.e. the graph $\Go$ with an additional cemetery vertex) which has two distinct absorbing states $\rho$ and $o$. Denote by $\Proba$ the law of $S^{o,m}$ and let $\tau(\rho)$, $\tau(o)$ respectively be the first time at which the walk hits $\rho, o$. Define for all $x \in \Vo$:
\end{comment}

\begin{comment}
\begin{equation}
\lambda(x) = \Proba_x[\tau(o)<\tau(\rho)].
\end{equation}
Observe that since $o$ is absorbing, $\lambda(o) = 1$.
\begin{Prop}\label{prop:lambda:massive}
    If at least one of the masses $l$ is positive, the function $\lambda$ is positive on $\Vo$ and massive harmonic for $\Dom$ on $\SV$.  
\end{Prop}
\begin{proof}
    Since $(\SG,c)$ is irreducible and there exists $x \in \SV$ such that $l(x) >0$, from each vertex $x \in \SV$ there is a path with positive probability towards $o$. This implies that $\lambda$ is positive on all $\Vo$. Besides, for any $x \in \SV$, applying the simple Markov property after one step gives
    \be
        \Proba_x[\tau(o)<\tau(\rho)] 
	= \mathbbm{1}_{\{x=o\}} + \mathbbm{1}_{\{x\neq o\}} \left(\frac{l(x)}{\ck(x)}\Proba_o[\tau(o)<\tau(\rho)]+\sum_{y \in \SV, y \sim x} \frac{c_{(x,y)}}{\ck(x)} \Proba_y[\tau(o)<\tau(\rho)]\right)\\
    \ee
	which means precisely $\left(\Dom\lambda\right)(x) = 0$ for $x \in \SV$.
\end{proof}
\end{comment}

\subsubsection{The infinite case: Martin boundary of a killed random walk}\label{subsec:Martin}
For some specific infinite graphs and Laplacians (for example isoradial graphs or $\Z^d$-periodic graphs), explicit positive massive harmonic functions are known. This will be detailed in \lr{Example~\ref{ex:explicit:lambda}} and in Section~\ref{sec:near-critical} and Section~\ref{subsec:periodic}.\par
In this \lr{subsection}, we show that there always exist positive massive harmonic functions on an infinite graph, which justifies that the statements of Section~\ref{sec:doob:rsf} are non-empty. For non-massive \lr{random walk}, or \lr{killed random walk} with a constant killing function $m(x) = m >0~\forall x \in \SV$, harmonic and superharmonic functions are described by the theory of the Martin boundary (see Section 24 of \cite{Woess} for a general introduction in the transient case). To our knowledge, the general theory has not been developed for \lr{killed random walk} with non-constant killing functions, but some examples (which we will recall at the end of the \lr{subsection}) were considered in \cite{MartinBoundary}. We give a sketch of what such a general theory would look like and prove a first result. It would be interesting to push this further.

Consider an infinite irreducible graph with a conductance function $c$ and a mass function $m \neq 0$. Recall that the \lr{killed random walk} is transient and its potential $\Vk$ is defined by Equation~\eqref{eq:def:potential}. Let $x_0 \in \SV$ be a reference point. For $y \in \SV$, the \emph{killed Martin kernel} is defined as the function $\Kc(\cdot,y):\SV \to \R_{>0}$
\begin{equation}
    \forall x \in \SV,~\Kc(x,y) = \frac{\Vk(x,y)}{\Vk(x_0,y)}.
\end{equation}
Consider a sequence of vertices $\zeta = (z_n)_{n \in \N} \in \SV^{\N}$ going to $\infty$. We say that $\zeta$ converges in the \emph{killed Martin \lr{compactification}} if for all $x \in \SV$, the killed Martin kernel converges towards a limit in $(0,\infty)$
\be\label{eq:martin:cv}
    \Kc(x, \zeta) := \lim_{n \to \infty} \frac{\Vk(x,z_n)}{\Vk(x_0,z_n)} \in (0, +\infty).
\ee 
\lr{This defines a mode of convergence for sequences of vertices.} Two sequences $\zeta_1, \zeta_2$ converging \lr{in} the Martin \lr{compactification} are \emph{equivalent} and we write $\zeta_1 \sim \zeta_2$ if the corresponding Martin kernel $\Kc(\cdot, \zeta_1)$ and $\Kc(\cdot, \zeta_2)$ coincide. The \emph{killed Martin boundary} $\MG$ is defined as \lr{the equivalence classes of sequences which do not contain a constant subsequence, equal to a vertex of $\SG$.} 
\begin{Prop}\label{prop:Martin:non-empty}
    The killed Martin boundary of a transient \lr{killed random walk} is non empty. More precisely, for all sequence $(z_n)_{n \in \N} \in \SV^{\N}$ such that $z_n \overset{n \to \infty}{\longrightarrow} \infty$, there exists an extraction $k_n \overset{n \to \infty}{\longrightarrow} \infty$ such that the subsequence $(z_{k_n})_{n \in \N}$ converges \lr{in} the killed Martin boundary.
\end{Prop}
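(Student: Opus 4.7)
The plan is to establish compactness of the family $\{\Kc(\cdot, z_n)\}_{n \in \N}$ of killed Martin kernels viewed as functions on the countable set $\SV$, and then extract a convergent subsequence by a Cantor diagonal argument. The key analytic input is a Harnack-type two-sided bound: for every fixed $x \in \SV$, the ratio $\Kc(x, y) = \Vk(x, y)/\Vk(x_0, y)$ stays in a fixed compact subinterval of $(0, +\infty)$ uniformly in $y \in \SV$. Once this is proved, the conclusion is immediate: enumerate $\SV = \{y_1, y_2, \dots\}$; for each $i$ the real sequence $(\Kc(y_i, z_n))_n$ lives in a bounded interval away from $0$, so a standard diagonal extraction produces a subsequence $(z_{k_n})$ along which $\Kc(y_i, z_{k_n})$ converges to a limit in $(0, +\infty)$ for every $i$, which is precisely convergence in the sense of \eqref{eq:martin:cv}. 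Since $z_n \to \infty$, the subsequence eventually escapes every finite set and therefore admits no constant subsequence, so its equivalence class defines an element of $\MG$.

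To prove the Harnack-type bound, I would rely on the first-passage identity
\begin{equation}
    \Vk(x, y) = \Proba_x\big(\tk(y) < \tk(\rho)\big) \cdot \Vk(y, y),
\end{equation}
valid for any transient sub-Markovian chain, obtained by applying the strong Markov property at the first hit of $y$ and summing a geometric series of return excursions of $\Sk$ to $y$ (with termination at $\rho$). Combined with the fact that $\Vk(y, y) \geq 1 > 0$ for every $y \in \SV$, this gives
\begin{equation}
    \Kc(x, y) = \frac{\Proba_x\big(\tk(y) < \tk(\rho)\big)}{\Proba_{x_0}\big(\tk(y) < \tk(\rho)\big)}.
\end{equation}
Since $\SG$ is connected and $c$ is positive, one can exhibit finite paths with strictly positive probability for the killed walk both from $x_0$ to $x$ and from $x$ to $x_0$; call their probabilities $p_{x_0, x}$ and $p_{x, x_0}$, both in $(0, 1]$ (this uses that total conductances are bounded and each individual step has positive probability $c_{(u,v)}/(c(u)+m(u)) > 0$). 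The strong Markov property at $\tk(x)$ (respectively $\tk(x_0)$) then yields
\begin{equation}
    \Vk(x_0, y) \geq p_{x_0, x}\, \Vk(x, y) \quad\text{and}\quad \Vk(x, y) \geq p_{x, x_0}\, \Vk(x_0, y),
\end{equation}
whence $p_{x, x_0} \leq \Kc(x, y) \leq 1/p_{x_0, x}$ uniformly in $y$, as required.

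The main obstacle is the absence, to my knowledge, of an off-the-shelf Martin boundary theory for killed random walks with non-constant mass function: the Harnack-type bound must be rederived rather than quoted. Because the first-passage identity above is purely trajectorial and does not use reversibility, $\Z^d$-periodicity, or a constant killing rate, the classical argument carries through with only mild bookkeeping, so the rest of a fully written proof is routine.
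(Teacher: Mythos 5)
Your proposal is correct and follows essentially the same route as the paper: two-sided Harnack-type bounds on the ratio $\Vk(x,\cdot)/\Vk(x_0,\cdot)$ obtained from connectivity and the Markov property, followed by a diagonal extraction over the countable vertex set. The only (minor) difference is that you compare potentials by forcing the walk along a fixed finite path and applying the strong Markov property at its endpoint, which gives bounds uniform in all $y$, whereas the paper bounds the ratio using an $n_0$-step transition probability and the condition $\tau(z_n)>n_0$, so its bounds only hold for $n$ large — both versions suffice for the extraction.
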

We could not find this result under such general hypothesis so we provide a proof even though this will not seem new to the reader acquainted with the usual Martin boundary theory. 
\begin{proof}
For all $x \in \SV$, since $\SG$ is connected there exists $n_0 \in \N$ such that $\Proba_{x_0}(\Sk_{n_0}=x) > 0$. Hence by the simple Markov property,
\begin{equation}
    \frac{\Vk(x,z_n)}{\Vk(x_0,z_n)} \geq \frac{\Proba_{x_0}(\Sk_{n_0}=x, \tau^{\rho}(z_n) > n_0)\Vk(x_0,z_n)}{\Vk(x_0,z_n)} \overset{n \to \infty}{\longrightarrow} \Proba_{x_0}(\Sk_{n_0}=x) > 0
\end{equation}
where the convergence is justified by the fact that $z_n \to \infty$. This shows that the ratio is bounded from below: there exists $\eps(x) >0$ such that for all $n$ large enough,
\begin{equation}
    \frac{\Vk(x,z_n)}{\Vk(x_0,z_n)} \geq \eps(x).
\end{equation}
The reverse argument works the same way (by exchanging $x_0$ and $x$): there exists $E(x)>0$ such that for all $n$ large enough,
\begin{equation}
    \frac{\Vk(x,z_n)}{\Vk(x_0,z_n)} \leq E(x).
\end{equation}
Hence, for fixed $x$, there exists a subsequence $z_{k_n}$ with $k_n \to \infty$ such that the ratio
$\frac{\Vk(x,z_{k_n})}{\Vk(x_0,z_{k_n})}$ converges. Since this holds for all $x \in \SV$ \lr{and $\SV$ is countable}, by diagonal extraction we can find a common converging subsequence.
\end{proof}
In the usual Martin boundary theory (i.e. the \lr{killed random walk} with constant mass function $m \geq 0$), an integral representation theorem shows that all harmonic functions are obtained as integrals of Martin kernels. We do not know if such a result holds for \lr{killed random walk} with general killing function, but we can still use that limits of killed Martin kernels provide us with massive harmonic functions on the full graph:
\begin{Prop}
    If $\zeta \in \MG$, $\Kc(\cdot, \zeta)$ is a positive massive harmonic function on $\SV$.
\end{Prop}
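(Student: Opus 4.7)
The plan is to leverage the identity $\Dk \Vk = D(\ck)$ from Equation~\eqref{eq:harmonicity:potential} to show that the function $y \mapsto \Vk(\cdot,y)$ is already massive harmonic off of $y$ itself, and then pass to the limit. Positivity will be immediate from the definition of convergence in the killed Martin compactification.

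More precisely, I would first observe that for every fixed $y \in \SV$, the function $f_y := \Vk(\cdot,y)$ satisfies $(\Dk f_y)(x) = \ck(x)\mathbbm{1}_{\{x=y\}}$. In particular, $f_y$ is massive harmonic on all of $\SV \setminus \{y\}$. Dividing by the positive constant $\Vk(x_0,y)$ preserves this property, so the Martin kernel $\Kc(\cdot,z_n)$ is massive harmonic on $\SV \setminus \{z_n\}$ for every $n$.

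Next, fix $x \in \SV$ and let $N(x) := \{x\} \cup \{z \in \SV : z \sim x\}$, which is a finite set since $\SG$ is locally finite. Because $z_n \to \infty$, there exists $n_0 = n_0(x)$ such that $z_n \notin N(x)$ for all $n \geq n_0$; for such $n$ the Martin kernel $\Kc(\cdot,z_n)$ is massive harmonic at $x$, i.e.\ $(\Dk \Kc(\cdot,z_n))(x) = 0$. By the definition of $\zeta \in \MG$, we have the pointwise convergence $\Kc(z,z_n) \to \Kc(z,\zeta)$ for every $z \in \SV$, and in particular for each $z \in N(x)$. Since
\[
    (\Dk \Kc(\cdot,z_n))(x) = m(x)\Kc(x,z_n) + \sum_{z \sim x} c_{(x,z)}\bigl(\Kc(x,z_n) - \Kc(z,z_n)\bigr)
\]
is a finite linear combination of values at the finitely many points of $N(x)$, the limit passes through and gives $(\Dk \Kc(\cdot,\zeta))(x) = 0$. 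As this holds for every $x \in \SV$, the function $\Kc(\cdot,\zeta)$ is massive harmonic on $\SV$.

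Finally, positivity of $\Kc(\cdot,\zeta)$ is built into the definition of convergence in the killed Martin compactification, since \eqref{eq:martin:cv} requires the limit to lie in $(0,+\infty)$. There is no real obstacle here — the only mild point to check is that $\Dk$ is a local operator on a locally finite graph so that finite linear combinations commute with the pointwise limit; everything else is a direct consequence of the identity $\Dk \Vk = D(\ck)$ already established in the preliminaries.
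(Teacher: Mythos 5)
Your proof is correct and follows essentially the same route as the paper's: massive harmonicity of $\Vk(\cdot,z_n)$ away from $z_n$ via Equation~\eqref{eq:harmonicity:potential}, division by $\Vk(x_0,z_n)$, and passage to the limit using locality of $\Dk$. The only (harmless) difference is that you require $z_n \notin N(x)$, whereas $x \neq z_n$ already suffices since $(\Dk \Vk(\cdot,z_n))(x) = \ck(x)\mathbbm{1}_{\{x=z_n\}}$.
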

Observe that this is in some sense the limit of the finite case since it corresponds to sending $z \to \infty$ in Proposition~\ref{prop:find:lambda:finite}.
\begin{proof}
Let $\zeta = (z_n)_{n \in \N}$. Since by Equation~\eqref{eq:harmonicity:potential} $\Vk(\cdot, z_n)$ is a massive harmonic function on $\SV \setminus \{z_n\}$, we have that for fixed $x \in \SV$ and $n$ large enough such that $x \neq z_n$:
\begin{equation}
   (\Dk \Vk(\cdot, z_n))(x) = 0. 
\end{equation}
Dividing by $\Vk(x_0, z_n)$ and letting $n \to \infty$ gives the massive harmonicity of the killed Martin kernel.
\end{proof}
We now give some examples where this killed Martin boundary was studied.
\begin{Ex}\label{ex:explicit:lambda}
\leavevmode
    \begin{itemize}
        \item The degenerate case of our model where the \lr{random walk} associated to the conductances $c$ is recurrent and the mass is infinite at a point and $0$ elsewhere is the setting of the recurrent Martin boundary theory, which is well developed and written out for example in Chapter 9 of \cite{kemeny2012denumerable}. In particular, for the simple random walk on $\Z^2$ killed almost surely at the origin, the convergence in Equation~\eqref{eq:martin:cv} holds and the left hand side is the usual potential kernel for any sequence $(z_n)_{n \in \N}$ going to infinity.
        \item If the graph $\SG$ is isoradial and $\Dk$ is the $Z$-invariant massive Laplacian of \cite{MassiveLaplacian}, the main result of \cite{MartinBoundary} is the asymptotics of the killed Martin kernel $\Kc(\cdot, \zeta)$ depending on how the sequence $\zeta$ goes to infinity. This asymptotic is described explicitly: it is the discrete massive exponential evaluated at a certain point, so the killed Martin boundary is fully described in this case.
        \item If the graph $\SG$ with conductance and mass functions $(c,m)$ are $\Z^d$-periodic, the asymptotic of the killed Martin kernel is also discussed in \cite{MartinBoundary} (building on earlier results). Theorem 5 of \cite{MartinBoundary} establishes convergence of the killed Martin kernel for translated copies of the same vertex $x \in \SV$. More precisely, they show that for $x \in \SV$, $y = x + (k_1,\dots,k_d)$ and $z_n = u+(i_1^n,\dots,i_d^n)$ such that $z_n \to \infty$ and $\frac{z_n}{|z_n|}$ converges to a limit $r$,
    \begin{equation}\label{expdecrease}
        \lim_{n \to \infty} \frac{\Vk(x,z_n)}{\Vk(y,z_n)} = \exp(\nu(r) \cdot (k_1,\dots,k_d))
    \end{equation}
    where $\nu$ is defined precisely in \cite{MartinBoundary}. 
    \end{itemize}
\end{Ex}

\section{The planar case: from rooted spanning forests to dimers}\label{sec:planar}
The goal of this section is to explain how our results relating a RSF model on a planar graph $\SG$ with a RST model on a related planar graph $\Go$ can be used to relate the RSF model on $\SG$ with a dimer model on a planar graph related to $\Go$. 

\subsection{Preliminary: Temperley's bijection}\label{SubsecTemperley}
Temperley's bijection was first stated in \cite{Temperley74} as a bijection between (undirected) spanning trees of a finite grid and dimer configurations of an associated grid. It was later extended by several authors. The version that we describe is a weight-preserving bijection between weighted RST of a planar graph $\SG$ and weighted dimer configurations of its double graph. This extension can be found in Section 2 of \cite{TreesMatchings}. For the notation we align as much as possible with Section 2.3.1 of \cite{ZinvDirac}. \emph{Warning}: in the bijection that we present here, the primal and dual graph are exchanged compared to the literature.\par

\paragraph{Dimers on a bipartite graph.} \lr{For a more general introduction to the subject, see for example the lecture notes \cite{KenLectureNotes}.} Let $\SG = (\SV, \SE)$ be a bipartite graph (finite or infinite). Its vertex set can be partitioned into white and black vertices $\SV = \SW \sqcup \SB$ such that there is no edge between two vertices of the same color. We assume that $\SG$ has the same number of white and black vertices $|\SW| = |\SB|$ \lr{(when $\SV$ is infinite, both $\SW$ and $\SB$ must be infinite)}. A \emph{dimer configuration} on $\SG$ is a subset of edges $\SM \subset \SE$ such that every vertex in $\SV$ is incident to exactly one edge of this subset. The set of dimer configurations of $\SG$ is denoted by $\M(\SG)$. A dimer model on $\SG$ is specified by a weight function on the edges $\nu: e \in \SE \to \nu_e \geq 0$. The weight of a dimer configuration is 
\be\label{eq:def:weight:dimer}
    \forall \SM \in \M(\SG),~\nd(\SM) = \prod_{e \in \SM}\nu_e.
\ee
The partition function of the dimer model is the weighted sum
\be\label{eq:partition:dimer}
    \Zd(\SG,\nu) = \sum_{\SM \in \M(\SG)}\nd(\SM).
\ee
When $\SG$ is finite and $\M(\SG) \neq \emptyset$, $0 < \Zd(\SG,\nu)< \infty$ so this weight function induces a probability measure on dimer configurations:
\be\label{eq:def:proba:dimer}
    \forall \SM \in \M(\SG),~ \Pd(\SM) = \frac{\nd(\SM)}{\Zd(\SG,\nu)}.
\ee
For all $\{e_1,\dots,e_k\} \subset \SE$, we denote by $\Pd(e_1,\dots,e_k)$ the measure of the set of all dimer configurations containing $\{e_1,\dots,e_k\}$.

\lr{Let us assume that $\SG$ is finite.} A \emph{set of phases} $\zeta$ is the attribution of a complex number of modulus one $\zeta_{wb}$ to each edge $wb \in \SE$. It satisfies the \emph{Kasteleyn property} if, for all inner face of $\SG$ bounded by the $2p$ vertices $w_1,b_1,\dots,w_p,b_p$ in clockwise order, 
\begin{equation}
    \frac{\zeta_{w_1b_1} \dots \zeta_{w_pb_p}}{\zeta_{w_2b_1} \dots \zeta_{w_1b_p}} = (-1)^{p-1}.
\end{equation}
Given a set of phases $\zeta$ on $\SG$ satisfying the Kasteleyn property, the \emph{Kasteleyn matrix} of the dimer model associated with these phases is the matrix with rows indexed by white vertices and columns by black vertices
\be\label{eq:def:kasteleyn:matrix}
    \forall w \in \SW,~b \in \SB,~K_{w,b} = 
    \left\{
    \begin{array}{ll}
         \zeta_{wb}\nu_{wb} & \text{ if }w \sim b \\
         0& \text{ otherwise.} 
    \end{array}
    \right.
\ee
It has been known since \cite{Kas61, Temperley} (see \cite{Kuperberg1998} for the version with phases that we use here) that the determinant of the Kasteleyn matrix counts the weighted sum of dimer configurations:
\be\label{eq:Kasteleyn:formula}
    |\det K| = \Zd(\SG, \nu).
\ee
When $\M(\SG) \neq \emptyset$, this implies that the Kasteleyn matrix is invertible and the probability measure $\Pd$ is related to the Kasteleyn matrix $K$ by the \emph{local statistics formula} of \cite{Kenyon_1997}:
\be\label{eq:local:statistics}
    \forall~\{w_1b_1,\dots,w_kb_k\} \subset \SE,~\Pd\big(w_1b_1,\dots,w_kb_k\big) = \lr{\prod_{i=1}^k K_{w_i,b_j} \det \big(K^{-1}_{b_i,w_j}\big)_{1\leq i,j \leq k}}.
\ee 
The dimer models associated with two weight functions $\nu^1, \nu^2$ are \emph{gauge equivalent} with gauge functions $\phi: \SW \to \R_{>0}, \psi: \SB \to \R_{>0}$ if
\begin{equation}
    \forall~wb \in \SE,~\nu^2_{wb} = \phi(w)\psi(b)\nu^1_{wb}. 
\end{equation}
The dimer models are gauge equivalent with gauge functions $\phi: \SW \to \R_{>0}, \psi: \SB \to \R_{>0}$ if and only if their respective Kasteleyn matrix $K^1$, $K^2$ (with respect to the same fixed set of phases) satisfy 
\begin{equation}
    K^2 = \Phi K^1 \Psi,
\end{equation}
where $\Phi = D(\phi)$ and $\Psi = D(\psi)$. The Kasteleyn matrices are also said to be \emph{gauge equivalent} with gauge $(\Phi, \Psi)$. 
If $\SG$ is finite, gauge equivalence preserves the measure since for all $\SM \in \M(\SG)$, 
\be\label{eq:gauge:preserves:measure}
    \nu^2(\SM) = \prod_{e \in \SM}\nu^2_e = \prod_{w \in \SW}\phi(w) \prod_{b \in \SB}\psi(b) \prod_{e \in \SM}\nu^1_e = \left(\prod_{w \in \SW}\phi(w) \prod_{b \in \SB}\psi(b)\right) \nu^1(\SM).
\ee
where $\prod_{w \in \SW}\phi(w) \prod_{b \in \SB}\psi(b)$ is a constant which does not depend on $\SM$. In particular, gauge equivalence preserves the local statistics formula Equation~\ref{eq:local:statistics}.
\par

\paragraph{Dual and double graph of a planar graph.}\label{paragraph:dual}
For the rest of the section, let $(\SG,c,m)$ be \lr{either
\begin{itemize}
    \item an infinite (i.e. $|\SV| = \infty$) connected planar graph with mass and conductance functions as in \lr{Subsection}~\ref{par:infinite} 
    \item or a finite simply connected induced subgraph with wired boundary conditions of a planar graph with conductance and mass functions $(\oSG, \oc, \om)$  as in \lr{Subsection}~\ref{par:finite}.
\end{itemize}
Recall our definition of the boundary $\partial \SV$ of a finite subgraph $\SG$ of an infinite graph $\oSG$. Recall that when $\SG$ is infinite, we use the convention $\SG = \oSG$ so it is natural to set $\partial \SV = \emptyset$. Note that the boundary of a planar graph is often defined as “the vertices lying on the unbounded face”. This is not the definition that we use here, since we consider the boundary of $\SG$ as a subgraph of $\oSG$ (and not as a graph embedded in the plane).}

Recall from \lr{Subsection}~\ref{par:finite} the definition of $\Go$ obtained by identifying all vertices in $\oSV \setminus \SV$ to a unique vertex $o$, and recall that we abuse notation and also write $c$ for the conductance function on $\Go$. We already observed that since $\SG$ is simply connected, $\Go$ is planar. If $\SG$ is infinite, we use the convention $\Go = \SG$.\par
Let $\SGs = (\SVs, \SE^{\star})$ be the restricted dual graph of $\Go$ (in black on Figure \ref{fig:double:graph}). It is the dual graph of $\Go$ from which the dual vertex corresponding to the outer face and all the edges incident to it are removed. If $\SG$ is infinite, it is simply the dual graph of $\SG = \Go$. Observe that $\SG$ is the restricted dual of $\SGs$ so we are in the same situation as \cite{TreesMatchings} with primal and dual exchanged. If $\SG$ is finite, given $\rs \in \SGs$, every RST of $\Go$ rooted at $o$, $T \in \TT^{o}(\Go)$, has a \emph{dual directed spanning tree} rooted at $\rs$ which we denote by $T^{\star}$. It is the unique RST of $\SGs$ rooted at $\rs$ which does not intersect $T$.\par
\begin{figure}[!h]
    \centering
    \begin{overpic}[abs,scale=3.7]{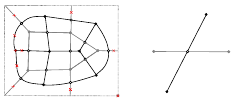}
        \put(216,5){\color{gray}$o$}
        \put(14,85){$\rs$}
        \put(263,85){\color{gray}$x$}
        \put(414,85){\color{gray}$y$}
        \put(360,157){$\x $}
        \put(290,8){$\y$}
        \put(337,77){$w$}
        \put(352,115){$\nu_{w\x } = 1$}
        \put(322,50){$\nu_{w\y} = 1$}
        \put(278,93){\color{gray}$\nu_{wx} = c_{(x,y)}$}
        \put(345,93){\color{gray}$\nu_{wy} = c_{(y,x)}$}
    \end{overpic}
    \caption{On the left: $\SG$ (in gray) is a finite simply connected induced subgraph of a planar graph. The additional vertex $o$ in the outer face is represented in a spread-out way by a dotted gray line. The restricted dual $\SGs$ is drawn in black. The white vertices of the double graph $\GD$ are drawn as white balls. The graph $\Gdr$ is obtained from $\GD$ by removing the vertices $o$ and $\rs$ and the edges incident to them (crossed in red). On the right: the weight function $\nu$ on the edges of $\Gdr$.}
    \label{fig:double:graph}
\end{figure}
Embed the graphs $\SG^o$ and $\SGs$ in the plane so that the primal and dual edges intersect at a single point. The \emph{double graph} $\GD= (\VD, \ED)$ is constructed by superimposing the graphs $\SG^o$ and $\SGs$ and adding an extra vertex at each crossing of a primal and dual edge. It has two types of vertices: the original vertices are called \emph{black vertices} (drawn as gray and black bullets on Figure \ref{fig:double:graph}) and denoted by $\SB = \Vo \sqcup \SVs$ while the extra vertices are called \emph{white vertices} and denoted by $\SW$ (drawn as white bullets on Figure \ref{fig:double:graph}). The graph $\GD$ is bipartite: its edges correspond to half-edges of $\SG^o$ or $\SGs$ linking a white vertex with a black vertex.\par
Every (undirected) edge of the double graph $\GD$ corresponds to a directed edge of the primal graph $\Go$ or dual graph $\SGs$ (and conversely): if an edge $e = xw \in \ED$ links a primal vertex $x \in \Vo$ to a white vertex $w \in \SW$, since this white vertex corresponds to the intersection between a primal edge $xy \in \SE^o$ and a dual edge $\x \y  \in \SE^{\star}$, we say that the (undirected) half-edge $xw \in \ED$ corresponds to the (directed) edge $(x,y) \in \Eo$ of the primal graph. The same correspondence holds between edges $\x w \in \ED$ with $\x  \in \SVs, w \in \SW$ and directed edges of the dual graph $\SGs$.\par
For the rest of the section, when $\SG$ is finite, $\rs \in \partial \SGs$ denotes a vertex on the boundary of the dual graph (which is also planar). Denote by $\Gdr = (\Vdr,\Edr)$ the graph obtained from $\GD$ by removing $o$, $\rs$ and all the edges incident to them (on Figure \ref{fig:double:graph} it corresponds to removing everything crossed in red). Using our notation for induced subgraphs, $\Gdr = \GD_{\VD \setminus \{o,\rs\}}$. When $\SG$ is infinite, we use the convention $\Gdr = \GD$. The graph $\Gdr$ is also bipartite with white vertices $\SWr = \SW$ and black vertices $\SBr := \SV \sqcup \SVrs$ with $\SVrs = \SVs \setminus \{\rs\}$. Note that if $\SG$ is infinite, $\SV = \Vo$ and $\SVrs = \SVs$. When $\SG$ is finite, $\Gdr$ has the same number of black and white vertices: $|\SWr| = |\SBr|$ (by Euler's formula applied to the planar graph $\SGs$, which has $|\Vo|$ faces, $|\SVs|$ vertices and $|\SWr|$ edges).

\bigskip

Even if $\SG$ is not bipartite, its double graph $\GD$ is and there is a convenient way to choose phases on $\GD$. It goes as follows: around each white vertex, there are four vertices of $\GD$ alternating between vertices of $\SG$ and $\SGs$, say $x, \x , y, \y$ in the clockwise order. We attribute phases to edges 
\be\label{eq:phases:double:graph}
    \zeta_{wx}=1, \zeta_{w\x }=i, \zeta_{wy}=-1, \zeta_{w\y}=-i
\ee
(the choice of starting with $1$ on $x$ or $y$ is arbitrary and can be made independently for each $w \in \SW$). Such a choice of phases is represented on Figure \ref{fig:kasteleyn:phases}. 
\begin{figure}[!h]
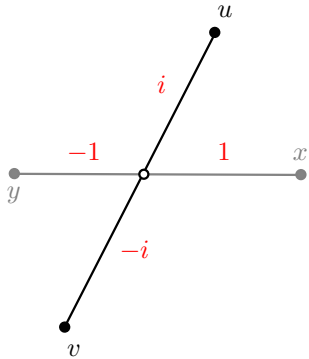

    \centering   
        \begin{overpic}[abs,unit=1mm,scale=3]{killed_dimers.pdf}
            \put(2,21){\color{gray}$y$}
		\put(40,26){\color{gray}$x$}
		\put(30,45){$\x $}
		\put(10,0){$\y$}
            \put(10,26){\color{red}$-1$}
            \put(30,26){\color{red}$1$}
            \put(22,35){\color{red}$i$}
            \put(17,13){\color{red}$-i$}
        \end{overpic}
        \caption{A choice of phases $\zeta$ on the double graph $\Gdr$ satisfying the Kasteleyn property (in red).}
        \label{fig:kasteleyn:phases}
\end{figure} 

These phases satisfy the Kasteleyn property: the alternating product around the faces is $-1$ (and all the inner faces of $\GD$ are bounded by four vertices). The restriction of these phases to $\Gdr$ also satisfies the Kasteleyn property since inner faces of $\Gdr$ are also inner faces of $\GD$. For the rest of this section, such a choice of phases is fixed.

\lr{Recall the definition of the dual directed spanning tree of a directed spanning tree $T \in \TT^o(\Go)$ from Subsection \ref{SubsecTemperley} just after the definition of the restricted dual graph.}
\paragraph{Temperley's bijection.} We define weights on the edges of $\Gdr$ (see Figure \ref{fig:double:graph}). 
\be\label{def:dimer:weight}
    \forall~wb \in \Edr,~\nu_{wb} = 
    \left\{
    \begin{array}{ll}
        1 & \text{ if } b = \x  \in \SVrs\\
        c_{(x,y)} & \text{ if }b = x \in \SV \text{ and } xw \in \Edr \text{ corresponds to } (x,y) \in \Eod
    \end{array}
    \right.
    .
\ee
\emph{Temperley's bijection} is a weight-preserving correspondence between dimer configurations and RST of a finite planar graph. Denote by $\nRST^{o}$ the weight function on $\TT^{o}(\Go)$ associated to $c$ by Equation~\eqref{eq:def:weight:tree} and by $\nd$ the weight function on $\M(\Gdr)$ associated to $\nu$ by \eqref{eq:def:weight:dimer}.

\begin{Thm}[Temperley's bijection, Theorem 1 of \cite{TreesMatchings}]\label{thm:Temperley}
    Assume that $\SG$ is finite. Dimer configurations of $\Gdr$ are in weight-preserving bijection with RST of $\Go$ rooted at $o$. More precisely, for every $T \in \TT^{o}(\Go)$ (see \ref{paragraph:dual}), the corresponding dimer configuration $\SM \in \M(\Gdr)$ is obtained by replacing each directed edge of $T$ and of its dual directed spanning tree $T^{\star}$ by the corresponding (undirected) edge of $\Gdr$, and satisfies
    \begin{equation}
        \nRST^o(T) = \nd(\SM).
    \end{equation}
\end{Thm}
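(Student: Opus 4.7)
The plan is to construct the map explicitly, verify that its image lies in $\M(\Gdr)$, then build the inverse using the planar duality, and finally check the weight identity. All four steps are essentially combinatorial, but the non-trivial point is acyclicity on the inverse side, which uses planarity in a crucial way.

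First, I would describe the forward map carefully. Given $T \in \TT^o(\Go)$ and its dual directed spanning tree $T^{\star} \in \TT^{\rs}(\SGs)$, I would define $\SM \subset \Edr$ by declaring that for every directed edge $(x,y) \in T$, the half-edge $xw \in \Edr$ is in $\SM$, where $w$ is the white vertex at the midpoint of the undirected primal edge $xy$; symmetrically, for every directed edge $(\x ,\y) \in T^{\star}$, the half-edge $\x w' \in \Edr$ is in $\SM$, where $w'$ sits at the midpoint of $\x \y$. By construction no half-edge incident to $o$ or $\rs$ is used, so $\SM \subset \Edr$.

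Next I would check that $\SM \in \M(\Gdr)$, i.e.\ every vertex of $\Gdr$ is covered exactly once. Every $x \in \SV$ has a unique outgoing edge in $T$ (because $T$ is a spanning tree rooted at $o$ and $x \neq o$), so $x$ is covered by exactly one half-edge; the symmetric argument covers each $\x  \in \SVrs$. For a white vertex $w$ at the crossing of a primal edge $xy$ and dual edge $\x \y$, exactly one of the following occurs: either $T$ uses the edge $\{x,y\}$ in one of its two orientations, in which case by the defining property of $T^{\star}$ the dual edge $\{\x ,\y\}$ is not used, and exactly one of the four half-edges at $w$ (namely $xw$ or $yw$) belongs to $\SM$; or $T$ does not use $\{x,y\}$, in which case $T^{\star}$ uses $\{\x ,\y\}$ and again exactly one half-edge (either $\x w$ or $\y w$) is in $\SM$.

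For the inverse map, given $\SM \in \M(\Gdr)$, each $x \in \SV$ is matched to a unique white neighbour $w$; this determines a directed edge $(x,y) \in \Eod$ following the correspondence between half-edges and directed primal edges. Call the resulting directed subgraph $T \subset \Eod$; by construction every vertex of $\SV$ has out-degree $1$ in $T$. Similarly $\SM$ defines a directed subgraph $T^{\star}$ of $\SGs$ in which every vertex of $\SVrs$ has out-degree $1$. The key step --- and the main obstacle --- is showing $T$ is acyclic (hence $T \in \TT^o(\Go)$, since a directed graph where every vertex except one has out-degree $1$ and no cycles must be a tree rooted at that vertex). Here one invokes planarity: a directed cycle $\gamma$ in $T$ encloses a region $R$ in the plane containing some vertices of $\SVs$; by the dimer condition the dual half-edges at the white vertices on $\gamma$ are \emph{not} used by $T^{\star}$, so no dual edge of $T^{\star}$ crosses $\gamma$, which means the out-edges in $T^{\star}$ of dual vertices inside $R$ stay inside $R$. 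But then the restriction of $T^{\star}$ to the finitely many dual vertices in $R$ has every vertex of out-degree $1$, forcing a directed cycle in $T^{\star}$, which contradicts $\rs \notin R$ being the unique root. So $T$ is acyclic; the same argument applied with primal and dual exchanged shows $T^{\star} \in \TT^{\rs}(\SGs)$, and $T^{\star}$ is clearly the dual directed spanning tree of $T$. This map is manifestly the inverse of the forward map.

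Finally the weight identity is immediate from the definitions: using \eqref{def:dimer:weight}, half-edges $\x w \in \Edr$ with $\x  \in \SVrs$ contribute weight $1$, while a half-edge $xw$ corresponding to $(x,y) \in \Eod$ contributes $c_{(x,y)}$, so
\begin{equation}
\nd(\SM) = \prod_{(x,y) \in T} c_{(x,y)} \cdot \prod_{(\x ,\y) \in T^{\star}} 1 = \prod_{\ed \in T} c_{\ed} = \nRST^o(T).
\end{equation}
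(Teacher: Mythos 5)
Your overall strategy is the standard proof of the Temperley--KPW bijection (the paper itself does not prove this statement; it cites Theorem~1 of Kenyon--Propp--Wilson), and the forward map, the covering check at black and white vertices, and the weight computation are all correct. The one place where the argument genuinely breaks is the acyclicity step for the inverse map. You suppose $T$ has a directed cycle $\gamma$ bounding a region $R$, correctly observe that no edge of $T^{\star}$ can cross $\gamma$ (the white vertices on $\gamma$ are already matched to primal vertices), and deduce that the dual vertices inside $R$ all have their out-edge inside $R$, hence $T^{\star}$ contains a directed cycle. You then declare this a contradiction ``with $\rs \notin R$ being the unique root.'' It is not: a finite digraph in which every vertex except one has out-degree $1$ can perfectly well contain directed cycles (in the components not reaching the sink) --- that is exactly the possibility you are trying to exclude for $T$ itself. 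As written, the acyclicity of $T$ is derived from the acyclicity of $T^{\star}$, and then you propose to prove the acyclicity of $T^{\star}$ ``by the same argument with primal and dual exchanged,'' which would in turn need the acyclicity of $T$. The argument is circular.

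The standard repair is an infinite descent (equivalently, a minimality argument): among all directed cycles of $T$ and of $T^{\star}$, choose one, say $\gamma$, enclosing the minimal number of faces of $\GD$. If $\gamma \subset T$, your own reasoning produces a directed cycle of $T^{\star}$ contained strictly inside $R$ (a primal cycle encloses at least one bounded face of $\Go$, hence at least one vertex of $\SVrs$ once one checks $\rs \notin R$), and that dual cycle encloses strictly fewer faces of $\GD$ than $\gamma$ does, contradicting minimality; the case $\gamma \subset T^{\star}$ is symmetric, using that $o$ cannot be enclosed by a dual cycle. While you are at it, you should also justify the assertion $\rs \notin R$: it holds because $\rs$ is chosen on the boundary of $\SGs$, i.e.\ it is a face of $\Go$ sharing an edge with the unbounded outer face, and a bounded cycle of edges of $\Go$ cannot enclose such a face without the (connected, unbounded) outer face crossing the cycle. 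With the descent inserted, the rest of your proof --- the out-degree-one-plus-acyclic-implies-rooted-tree conclusion, the identification of $T^{\star}$ as the dual tree, and the weight identity $\nd(\SM)=\prod_{\ed\in T}c_{\ed}=\nRST^o(T)$ --- goes through as you wrote it.
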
 
Pictures of Temperley's bijection in this context can be found in Figure 1 of \cite{TreesMatchings} or Figure 2 of \cite{ZinvDirac}. Recall the notation $\Pd$ for the probability measure on $\M(\Gdr)$ associated with $\nu$ by Equation~\eqref{eq:def:proba:dimer} and denote by $\PRST^{o}$ the probability measure on RST of $\Go$ associated with the conductance function $c$ by Equation~\eqref{def:tree}.
\begin{Cor}\label{cor:consequence:Temperley}
    Assume that $\SG$ is finite. The partition function of the dimer and tree models are equal:
    \begin{equation}
    \ZRST^{o}(\Go, c) = \Zd(\Gdr,\nu).
    \end{equation}
    Moreover, for all $\ed_1, \dots, \ed_k \in \Eod$, if $f_1,\dots, f_k$ are the corresponding undirected edges of $\Gdr$:
    \begin{equation}
        \PRST^{o}(\ed_1,\dots,\ed_k) = \Pd(f_1,\dots,f_k).
    \end{equation}
\end{Cor}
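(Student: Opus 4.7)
The plan is to derive both statements as direct consequences of Temperley's bijection (Theorem~\ref{thm:Temperley}), which I will denote $\Phi: \TT^{o}(\Go) \to \M(\Gdr)$. Recall that $\Phi$ is weight-preserving, meaning $\nRST^{o}(T) = \nd(\Phi(T))$ for every $T$.

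For the equality of partition functions, I would simply sum this identity over all trees: since $\Phi$ is a bijection between $\TT^{o}(\Go)$ and $\M(\Gdr)$, one has $\sum_{T} \nRST^{o}(T) = \sum_{\SM} \nd(\SM)$, which is exactly the claimed equality $\ZRST^{o}(\Go, c) = \Zd(\Gdr, \nu)$.

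For the edge probability statement, I would track the bijection at the level of individual edges. The explicit construction of $\Phi$ described in Theorem~\ref{thm:Temperley} associates to each directed primal edge $(x,y) \in T$ the undirected edge $xw \in \Edr$, where $w$ is the white midpoint of the primal edge $xy$ (so that $w$ becomes matched with $x$ in the dimer configuration $\Phi(T)$). Using the correspondence described just before Equation~\eqref{def:dimer:weight} between directed edges of $\Go$ and undirected edges of $\Gdr$ incident to primal black vertices, this yields a one-to-one translation: for $\ed = (x,y) \in \Eod$ and its associated edge $f = xw \in \Edr$, we have $\ed \in T$ if and only if $f \in \Phi(T)$. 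Hence $\Phi$ restricts to a weight-preserving bijection between $\{T \in \TT^{o}(\Go) : \ed_1, \dots, \ed_k \in T\}$ and $\{\SM \in \M(\Gdr) : f_1, \dots, f_k \in \SM\}$. Summing $\nRST^{o}(T) = \nd(\Phi(T))$ over this restricted bijection and dividing by the (already established) equal partition functions gives $\PRST^{o}(\ed_1,\dots,\ed_k) = \Pd(f_1,\dots,f_k)$.

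The only subtlety worth spelling out — rather than a genuine obstacle — is to verify that the membership of $f_i$ in $\Phi(T)$ depends only on the primal tree $T$ and not on its dual $T^\star$. This follows from the fact that edges of $\Phi(T)$ incident to primal black vertices in $\SV$ come exclusively from primal tree edges of $T$, while edges incident to dual black vertices in $\SVrs$ come from dual tree edges of $T^\star$; since $f_i = x_i w_i$ is incident to a primal vertex, its presence is governed solely by the primal part of the bijection.
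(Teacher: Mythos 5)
Your proof is correct and is exactly the routine verification the paper leaves implicit: the corollary is stated with no proof, as an immediate consequence of the weight-preserving bijection of Theorem~\ref{thm:Temperley}, obtained by summing the identity $\nRST^o(T)=\nd(\Phi(T))$ over all configurations (for the partition functions) and over the restricted families containing the prescribed edges (for the edge probabilities). Your remark that membership of $f_i=x_iw_i$ in $\Phi(T)$ is governed solely by the primal tree $T$ and not by $T^{\star}$ is precisely the point that makes the event correspondence exact, and it is handled correctly.
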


\subsection{The drifted and killed dimer models}\label{subsec:killed:drifted}
The goal of this Section is to combine Theorem~\ref{thm:doob} with Temperley's bijection to relate the RSF model on $\SG$ with a dimer model. We introduce two families of dimer models and show how they are related to the RSF model. For the rest of this section, we assume that $c$ is a \emph{symmetric} conductance function (the symmetry is necessary here unlike in the rest of the paper).\par
Let $\lambda: \oSV \to \R_{>0}$, not necessarily massive harmonic for the moment. Recall that when $\SG$ is finite, it is an induced subgraph of a larger graph $\oSG$. Recall the definition of the conductance function associated with the Doob transform $\forall (x,y) \in \SE,~\cl_{(x,y)} = \frac{\lambda(y)}{\lambda(x)}c_{xy}$. The conductance function $\cl$ is not in general symmetric, even though $\co$ is. Recall that when $\SG$ is finite, we abuse notation and also denote by $\cl$ the conductance function on $\Go$ obtained from $\ocl$ by identifying vertices of $\oSV \setminus \SV$ to $o$.\par
We introduce two dimer models on $\Gdr$ (local weights are pictured in Figure \ref{fig:killed:dimers}): they are a generalization on any graph with any symmetric conductance function of the “drifted” and “killed” dimer models of \cite{Chhita} (which were defined on $\Z^2$ for specific conductance functions) and of the “killed” dimer model defined in Section 3.1 of \cite{ZinvDirac} underlying the $Z^u$-Dirac operator (which was defined on an isoradial graph, with specific elliptic weights). The local weights associated with these dimer models are pictured on Figure \ref{fig:killed:dimers}.
\begin{Def}[The drifted dimer model]\label{def:dimer:drifted}
Denote by $\nud$ the weight function on the edges of $\Gdr$ associated to $\clo$ by Equation~\eqref{def:dimer:weight}:
\be\label{def:dimer:weight:drifted:finite}
    \forall~ wb \in \Edr,~\nud_{wb} = 
    \left\{
    \begin{array}{ll}
        1 & \text{ if } b = \x  \in \SVrs\\
        \clo_{(x,y)}  & \text{ if }b = x \in \SV \text{ and } xw \in \Edr \text{ corresponds to } (x,y) \in \Eod.
    \end{array}
    \right.
\ee
(see Figure \ref{fig:killed:dimers}).
When $\SG$ is infinite, $\Gdr = \GD$, $\Eod = \Ed$ and $\forall (x,y) \in \SE,~\clo_{(x,y)} = \frac{\lambda(y)}{\lambda(x)}c_{xy}$. The \emph{drifted dimer model} associated with $\lambda$ is the dimer model on $\Gdr$ with weights $\nud$.
\end{Def}
Let $\ls: \SVs \to \R_{>0}$ be an arbitrary positive function on the dual. 
\begin{Def}[The killed dimer model]\label{def:killed:dimer}
We define a weight function $\nuk$ associated to $\lambda, \ls$ on the edges of $\Gdr$ (see Figure \ref{fig:killed:dimers}). If $w \in \SWr$ corresponds to the intersection of the primal and dual edges $xy \in \Eo$ (with $x \neq o$) and $\x \y  \in \SE^{\star}$ (with $\x  \neq r$), let
\be\label{def:dimer:weight:killed}
    \nuk_{wb} = 
    \left\{
    \begin{array}{ll}
        \ls(\x )^{-1}\big(c_{xy}\lambda(x)\lambda(y)\big)^{-\frac{1}{2}} & \text{ if } b = \x \\
        \big(c_{xy}\lambda(y)\big)^{\frac{1}{2}}\lambda(x)^{-\frac{1}{2}}  & \text{ if }b = x
    \end{array}
    \right.
\ee
(see Figure \ref{fig:killed:dimers}) \lr{with the slight abuse of notation that when $y = o$, $\lambda(y)$ should be interpreted as $\lambda(z)$ where $z \in \oSV \setminus \SV$ is the only vertex such that $w$ is the middle of the edge $xz$}. Recall that when $\SG$ is infinite, $\Gdr = \GD$ and $\Eo = \SE$. The dimer model on $\Gdr$ with weight function $\nuk$ is called the \emph{killed dimer model}.
\end{Def}
We denote by $\Kd$ and $\Kk$ the Kasteleyn matrices of the drifted and killed dimer models, with rows indexed by $\SWr$ and columns by $\SBr$ (associated with the fixed phases $\zeta$). If $\SG$ is finite we denote by $\Pdd$ and $\Pdk$ the associated probability measure on $\M(\Gdr)$ (see Equation~\eqref{eq:def:proba:dimer}).
\begin{Prop}\label{prop:gauge:killed:drifted}
    \lr{For any choice of $\lambda: \oSV \to \R_{>0}$ and $\ls: \SVs \to \R_{>0}$,} the drifted and killed dimer models are gauge equivalent.
\end{Prop}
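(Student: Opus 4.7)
The plan is to produce explicit positive gauge functions $\phi: \SWr \to \R_{>0}$ and $\psi: \SBr \to \R_{>0}$ and verify directly that $\nuk_{wb} = \phi(w)\psi(b)\nud_{wb}$ for every edge $wb \in \Edr$. Since both Kasteleyn matrices $\Kk$ and $\Kd$ are built from the same set of phases $\zeta$ fixed in \eqref{eq:phases:double:graph}, this edgewise identity is equivalent to the matrix identity $\Kk = \Phi \Kd \Psi$ with $\Phi = D(\phi)$ and $\Psi = D(\psi)$, which is the definition of gauge equivalence as recalled before the statement.

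Consider a white vertex $w \in \SWr$ sitting at the intersection of a primal edge $xy$ and a dual edge $\x \y $. Comparing the local weights on Figure \ref{fig:killed:dimers}, and using the symmetry of $c$, the ratio $\nuk/\nud$ on the primal half-edge $wx$ equals $\big(\lambda(x)/(c_{xy}\lambda(y))\big)^{1/2}$ while on $wy$ it equals $\big(\lambda(y)/(c_{xy}\lambda(x))\big)^{1/2}$. Since $\phi(w)$ is attached to $w$ alone and must be consistent in both expressions, these ratios force $\psi(x)/\psi(y) = \lambda(x)/\lambda(y)$, which leads to the natural guess
\begin{equation}
    \phi(w) = \big(c_{xy}\lambda(x)\lambda(y)\big)^{-1/2}, \quad \psi(x) = \lambda(x) \text{ for } x \in \SV, \quad \psi(\x ) = \ls(\x )^{-1} \text{ for } \x  \in \SVrs.
\end{equation}

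With these definitions, the four edgewise identities around $w$ reduce to immediate verifications: for example,
\begin{equation}
    \phi(w)\psi(x)\,\nud_{wx} = \frac{\lambda(x)}{\big(c_{xy}\lambda(x)\lambda(y)\big)^{1/2}} \cdot \frac{c_{xy}\lambda(y)}{\lambda(x)} = \Big(\frac{c_{xy}\lambda(y)}{\lambda(x)}\Big)^{1/2} = \nuk_{wx},
\end{equation}
and analogous short computations yield the identities on $wy$, $w\x $ and $w\y$. In the finite case the only subtlety concerns white vertices lying on the boundary, namely $w$ at the midpoint of a primal edge $xz$ with $z \in \oSV \setminus \SV$: the half-edge $wo$ is then absent from $\Gdr$ while $wx$ remains, and with the convention spelled out in Definition \ref{def:killed:dimer} (interpreting $\lambda(y)$ as $\lambda(z)$) the same formula $\phi(w) = \big(c_{xz}\lambda(x)\lambda(z)\big)^{-1/2}$ still produces the correct ratio on $wx$. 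No massive harmonicity hypothesis on $\lambda$ nor any constraint on $\ls$ is used, in line with the generality of the statement; the hardest part of the argument is simply guessing the symmetric form of $\phi(w)$, and everything else is a bookkeeping verification.
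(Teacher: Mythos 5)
Your proof is correct and uses exactly the same gauge functions $\phi(w) = (c_{xy}\lambda(x)\lambda(y))^{-1/2}$, $\psi|_{\SV} = \lambda$, $\psi|_{\SVrs} = (\ls)^{-1}$ as the paper, which leaves the edgewise verification to the reader while you carry it out explicitly (including the boundary convention). Nothing is missing.
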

\begin{proof}
    Define a gauge function $\psi$ on $\SBr$ coinciding with $\lambda$ on $\SV$ and $(\ls)^{-1}$ on $\SVrs$. We also define a function $\phi$ on the white vertices. For $w$ corresponding to the primal edge $xy$ (we can assume that $x \neq o$ upon exchanging $x$ and $y$) let $\phi(w) = \big(c_{xy}\lambda(x)\lambda(y)\big)^{-\frac{1}{2}}$. It is left to the reader to check that for all $w \in \SWr$, $b \in \SBr$
\be\label{eq:gauge:killed:drifted}
	(\Kk)_{w,b} = \phi(w)\psi(b)\Kd_{w,b}.
\ee 
\end{proof}
We first show that when $\lambda$ is massive harmonic on $\SV$, a consequence of the Doob transform technique is that the partition function of the drifted dimer model is equal to the partition function of the RSF model. It is a direct corollary of Temperley's bijection (Corollary~\ref{cor:consequence:Temperley}) and of Theorem~\ref{thm:doob}:
\begin{Cor}[Temperley's bijection at the level of partition functions]\label{cor:temperley:drifted}
    Assume that $\SG$ is finite and $\lambda$ is massive harmonic for $\oDk$ on $\SV$. The RST model on $\Go$ with conductance function $\clo$ and the drifted dimer model are in weight-preserving bijection. In particular:
    \begin{equation} 
        \ZRSF(\SG,c,m) = \ZRST^{o}(\SG^o, \clo) = \Zd(\Gdr, \nud)
    \end{equation}
\end{Cor}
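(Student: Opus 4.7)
The proof is essentially a chaining of two results already proved in the paper, with the drifted dimer model built specifically to make Temperley's bijection interact cleanly with the Doob transform. My plan is therefore to split the claimed double equality into its two halves and handle each one separately.

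For the first equality $\ZRSF(\SG,c,m) = \ZRST^{o}(\SG^o, \clo)$, I would simply invoke Theorem~\ref{thm:doob}, namely Equation~\eqref{eq:doob}, which gives exactly this identity under the running hypotheses that $\SG$ is finite and $\lambda$ is positive and massive harmonic for $\oDk$ on $\SV$. Recall that the proof of that equality goes via the matrix–forest theorem and the gauge equivalence $\Dlk_{\SV} = \Lambda_{\SV}^{-1} \Dk_{\SV} \Lambda_{\SV}$ (Proposition~\ref{prop:doob}), so that $\det \Dk_{\SV} = \det \Dlk_{\SV}$.

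For the second equality $\ZRST^{o}(\SG^o, \clo) = \Zd(\Gdr, \nud)$, the plan is to apply Temperley's bijection (Corollary~\ref{cor:consequence:Temperley}) directly, but with $\SG$ replaced by $\SG^o$ and the conductance function $c$ replaced by $\clo$. Comparing the definition \eqref{def:dimer:weight} of the weight function $\nu$ attached to a conductance function with the definition \eqref{def:dimer:weight:drifted:finite} of $\nud$, we see that $\nud$ is by construction exactly the Temperley weight function associated with $\clo$. Since Temperley's bijection in the form we use here is stated for directed RST with arbitrary (not necessarily symmetric) conductances, no extra assumption is needed even though $\clo$ is asymmetric in general. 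Chaining the two identities gives the claim.

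There is no real obstacle; the content is the compatibility between the Doob transform (which replaces $(\SG,c,m)$ by $(\SG^o,\clo)$ at the level of partition functions) and Temperley's bijection (which then relates $\ZRST^{o}(\SG^o,\clo)$ to the drifted dimer partition function). The one minor point I would check carefully while writing is that the convention for the weight assigned to each undirected half-edge $wb$ of $\Gdr$, when rewritten in terms of the directed edge $(x,y) \in \Eo$ it corresponds to, matches the formula \eqref{def:dimer:weight:drifted:finite} for $\nud$ — but this is immediate from the way $\nud$ was defined and is what makes the drifted dimer model the natural dimer partner of the Doob-transformed tree model.
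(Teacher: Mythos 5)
Your proof is correct and follows exactly the route of the paper's own argument: the first equality is Equation~\eqref{eq:doob} of Theorem~\ref{thm:doob}, and the second is Corollary~\ref{cor:consequence:Temperley} applied to $(\Go,\clo)$, using that $\nud$ is by definition the Temperley weight function associated with $\clo$. Nothing is missing.
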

\begin{proof}
    Temperley's bijection holds by definition of the drifted dimer model. The first equality is stated in Theorem~\ref{thm:doob}. The last one is Corollary~\ref{cor:consequence:Temperley}.
\end{proof}
This result can be seen as a partial extension of Temperley's bijection since it relates RSF of $\SG$ with dimer configurations on the double graph $\Gdr$. Our equality only holds for partition function, unlike Temperley's bijection which is a one-to-one bijection. This result has no direct extension for an infinite planar graph $\SG$ since the partition function is not well-defined, but an analogous result for infinite $\Z^2$-periodic graphs is given in Section~\ref{subsec:Temperley:per}, relating the free energy (which can be thought of as the infinite equivalent of the partition function) of a $\Z^2$-periodic forest model with the free energy of a $\Z^2$-periodic dimer model.

\bigskip

When $\lambda$ is massive harmonic, the killed dimer model is also related to the RSF model via the massive Laplacian $\Dk$. To state this result, we need a bit of notation. The \emph{\lr{H}ermitian conjugate} $M^{\dag}$ of a matrix $M$ is the conjugate of the transposed matrix. If a square matrix $M$ has rows and columns indexed by $\SBr$, it is \emph{block diagonal} if all its coefficients outside of the blocks $\SV \times \SV$ and $\SVrs \times \SVrs$ are $0$. In which case, we write
\begin{equation}
    M = \begin{pmatrix} \Asterisk &0 \\ 0& \Asterisk\end{pmatrix}.
\end{equation}
A natural \emph{dual conductance function} $\cs$ and \emph{modified dual conductance function}  and $\cls$ are defined on $\SE^{\star}$. Each edge $\x \y$ of $\SGs$ intersects a unique edge $xy$ of $\Go$. The dual conductance function is then $\cs_{\x \y} = \big(c_{xy}\big)^{-1}$ and the dual modified conductance function is then $\cls_{\x \y} = \big(c_{xy}\lambda(x)\lambda(y)\big)^{-1}$. \lr{Recall that we abuse notation and denote by $c$ the conductance function on $\Go$, so $c_{xy}$ is well defined even if $xy = zo$ for some $z \in \SV$, in which case it is equal to the sum of the conductances $\oc_{zw}$ of edges $zw$ with $w \in \oSV \setminus \SV$.} 

\lr{The conductance functions $\cs$ and $\cls$ are both symmetric.} The Laplacian operator on $\SGs$ associated with the conductance function $\cs$ is denoted by $\Ds$. We also define a matrix $\Dsl$ with rows and columns indexed by $\SVrs$ and coefficients
    \be\label{eq:def:dual:laplacian}
        \forall \x , \y  \in \SVrs,~ \Dsl_{\x ,\y} = 
        \left\{
        \begin{array}{ll}
            -\cls_{\x \y}\ls(\x )^{-1}\ls(\y)^{-1} & \text{ if }\x  \overset{\SGs}{\sim} \y,~\x  \neq \y\\
             \sum_{\z \overset{\SGs}{\sim} \x , \z \neq \x } \cls_{\x \z }\ls(\x )^{-2} & \text{ if } u=v\\
             0& \text{ otherwise}
        \end{array}
        .
        \right.
    \ee
We will comment on this definition after the statement of Proposition~\ref{prop:massive:holomorphy}.
\begin{Prop}\label{prop:massive:holomorphy}
    The matrix $(\Kk)^{\dag}\Kk$ is block diagonal. Moreover, if $\lambda$ is massive harmonic for $\oDk$ on $\SV$,
    \be\label{eq:massive:holomorphy}
	(\Kk)^{\dag}\Kk = \begin{pmatrix} \Dk_{\SV}&0 \\ 0&\Dsl \end{pmatrix}.
    \ee 
    When $\SG$ is infinite, reciprocally, \lr{let} $\nu: \Edr \to \R_{>0}$ \lr{be} a weight function and $K$ denotes the associated Kasteleyn matrix (with the set of phases $\zeta$ \lr{fixed in \eqref{eq:phases:double:graph}}), if 
    \be\label{eq:massive:holomorphy:reciprocal}
	K^{\dag}K = \begin{pmatrix} \Dk_{\SV}&0 \\ 0& \Asterisk \end{pmatrix},
    \ee
    \lr{then} there exist $\lambda:\SV \to \R_{>0}$ massive harmonic for $\Dk$ and $\ls: \SVs \to \R_{>0}$ such that $\nu$ is the weight function of the killed dimer model associated with $\lambda, \ls$.
\end{Prop}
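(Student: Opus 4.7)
The plan is to prove the forward direction by a local computation at each white vertex of $\Gdr$ based on a single algebraic identity, and then invert this computation in the reciprocal. Formula \eqref{def:dimer:weight:killed} gives, at every white vertex $w$ sitting at the crossing of a primal edge $xy$ with a dual edge $\x\y$,
\begin{equation}
    \nuk_{w,x}\,\nuk_{w,\x} \;=\; \frac{1}{\lambda(x)\,\ls(\x)}.
\end{equation}
The right-hand side depends only on the corner $(x,\x)$, not on the particular edge $xy$ through $w$; together with the Kasteleyn phase structure ($\{1,-1\}$ on primals, $\{i,-i\}$ on duals around every $w$) this makes both block-diagonality and the identification of the diagonal blocks essentially automatic.

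\textbf{Forward direction.} For $b,b' \in \SBr$ the entry $((\Kk)^{\dag}\Kk)_{b,b'}$ is a sum over the (at most two) white vertices adjacent to both. For $b=x \in \SV$ and $b'=\x \in \SVrs$ the two contributing vertices $w_1,w_2$ are the midpoints of the two edges of $\SG$ at $x$ bounding the face $\x$; the identity above gives $\nuk_{w_1,x}\nuk_{w_1,\x} = \nuk_{w_2,x}\nuk_{w_2,\x}$, and the Kasteleyn property applied to the $4$-face of $\Gdr$ with corners $\{w_1,x,w_2,\x\}$ rearranges to $\overline{\zeta_{w_1,x}}\zeta_{w_1,\x} = -\overline{\zeta_{w_2,x}}\zeta_{w_2,\x}$, so the two terms cancel and $((\Kk)^{\dag}\Kk)_{x,\x}=0$; this uses neither harmonicity of $\lambda$ nor the choice of $\ls$. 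For the primal block the local identity $\nuk_{w,x}\nuk_{w,y}=c_{xy}$ at the midpoint of $xy$, together with $\overline{\zeta_{w,x}}\zeta_{w,y}=-1$, yields $((\Kk)^{\dag}\Kk)_{x,y}=-c_{xy}=\Dk_{x,y}$ for $x \sim y$, while $((\Kk)^{\dag}\Kk)_{x,x}=\sum_{y \sim x}c_{xy}\lambda(y)/\lambda(x)$ reduces to $m(x)+c(x)=\Dk_{x,x}$ by massive harmonicity of $\lambda$ (edges from $x$ to $o$ being interpreted via the wired-boundary convention of Subsection~\ref{par:finite}). The dual block is verified identically from \eqref{eq:def:dual:laplacian}.

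\textbf{Reciprocal direction.} Assume $\SG$ infinite and let $\nu>0$ be a weight function whose Kasteleyn matrix $K$ satisfies \eqref{eq:massive:holomorphy:reciprocal}. The phase analysis converts the primal-dual vanishing into the corner identity $\nu_{w_1,x}\nu_{w_1,\x}=\nu_{w_2,x}\nu_{w_2,\x}$, so that $A(x,\x):=\nu_{w,x}\nu_{w,\x}$ is a well-defined function of the corner; and the primal off-diagonal $(K^{\dag}K)_{x,y}=-c_{xy}$ gives $\nu_{w,x}\nu_{w,y}=c_{xy}$ at each midpoint. Combining these at $w$ yields $\nu_{w,x}^{2}/c_{xy}=A(x,\x)/A(y,\x)$ for any dual face $\x$ bounding $xy$. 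I then set $\lambda(y)/\lambda(x):=\nu_{w,x}^{2}/c_{xy}$: the right-hand side telescopes as $A(x_i,\x)/A(x_{i+1},\x)$ around each face of $\SG$, and by planarity any primal cycle decomposes into faces, so $\lambda:\SV\to\R_{>0}$ is consistently defined up to a multiplicative constant. The function $\lambda(x)A(x,\x)$ then depends only on $\x$, which legitimates $\ls(\x):=1/(\lambda(x)A(x,\x))$. A direct verification gives $\nu=\nuk$ for these $\lambda,\ls$, and the diagonal identity $\sum_{y\sim x}c_{xy}\lambda(y)/\lambda(x)=m(x)+c(x)$ is exactly $(\Dk\lambda)(x)=0$.

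The forward direction is a mechanical check once the identity $\nuk_{w,x}\nuk_{w,\x}=1/(\lambda(x)\ls(\x))$ is recognized and the Kasteleyn phase cancellation is applied. The real work lies in the reciprocal, where a global cocycle condition for $\lambda$ must be extracted from purely local corner identities; planarity of $\SG$ enters crucially to upgrade face-wise telescoping to path-independence of the multiplicative ratios $\nu_{w,x}^{2}/c_{xy}$. A secondary technical point handled by the infinite hypothesis is the absence of the outer vertex $o$, at whose neighbours the local identities would need separate treatment.
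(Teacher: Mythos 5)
Your forward direction is the paper's own argument, step for step: the corner identity $\nuk_{w,x}\nuk_{w,\x}=(\lambda(x)\ls(\x))^{-1}$ plus the Kasteleyn sign flip between the two white vertices of a quadrilateral face kills the primal--dual block unconditionally, and the three block computations (with massive harmonicity entering only on the primal diagonal) are exactly those in the paper. Your reciprocal, however, takes a genuinely different and more self-contained route. The paper introduces an auxiliary matrix $L$ with entries $\overline{K_{w,x}}$ on primal half-edges and $(K_{w,\x})^{-1}$ on dual ones, observes that the hypothesis forces the alternating product around every inner face of the double graph to equal $-1$, and then invokes a general gauge-equivalence lemma (Lemma 72 of \cite{ZinvDirac}) comparing $L$ with the reference matrix $\Lko$ built from $\lambda\equiv 1$, $\ls\equiv 1$; the functions $\lambda$, $\ls$ and the white gauge $\phi(w)^2=(\lambda(x)\lambda(y))^{-1}$ are then read off from the gauge functions. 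You instead extract the corner invariant $A(x,\x)=\nu_{w,x}\nu_{w,\x}$ and the midpoint relation $\nu_{w,x}\nu_{w,y}=c_{xy}$ directly from the two blocks of the hypothesis, define $\lambda$ as a multiplicative potential for the ratios $\nu_{w,x}^2/c_{xy}=A(x,\x)/A(y,\x)$, and verify the cocycle condition by telescoping around faces and using planarity to pass from faces to arbitrary cycles --- which amounts to reproving the cited gauge lemma in this special case. Both arguments are correct; yours avoids the external reference and makes the positivity of $\lambda$ and $\ls$ immediate (all ratios are manifestly positive), while the paper's is shorter on the page because the combinatorial work is outsourced. One small point worth making explicit in your write-up is that $\lambda(x)A(x,\x)$ is constant over the corners $x$ of a fixed face $\x$ (it follows from the same relation $\lambda(x)A(x,\x)=\lambda(y)A(y,\x)$ for consecutive boundary corners), since this is what legitimates the definition of $\ls(\x)$; you assert it but the one-line justification should appear.
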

We first make a few observations on the definitions and statements, then state some consequences, and finally provide a proof of this proposition at the end of the section. 
\begin{itemize}
    \item Recall that when $\SG$ is infinite, $\Dk_{\SV} = \Dk$.
    \item When $\SG$ is finite, $\rs \in \SVs$ acts as a cemetery state in the definition of $\Dsl$ since for $\x  \overset{\SGs}{\sim} \rs$, $\rs$ contributes to the diagonal coefficient $\Dsl_{\x , \x }$ even though there is no extra diagonal coefficient $\Dsl_{\x, \rs}$ since the matrix is indexed by $\SVrs = \SVs \setminus \{\rs\}$.
    \item The value of $\ls$ at $\x $ is not used in the construction of the killed dimer model, but it will be useful that $\ls$ is defined on all $\SVs$ when we discuss duality.
    \item The matrix $\Dsl$ is not in general the (massive) Laplacian operator associated with a conductance (and mass) function. In the infinite case though, for the choice $\ls = 1$, $\Dsl$ is the (non massive) Laplacian operator on $\SGs$ associated with the symmetric conductance function $\cls$. In the finite case, for the choice $\ls=1$, $\Dsl$ is the massive Laplacian associated with the conductance function $\cls$ restricted to $\SVrs = \SVs \setminus \{\rs\}$. It is associated with a RST model on $\SGs$ rooted at $\rs$.
    \begin{comment}
    \item If $\SG$ is finite, the reciprocal statement does not hold because the conductances $c_{(x,o)}$ cannot be recovered (they are mixed with the masses on the boundary).
    \end{comment}
\end{itemize}
We state some consequences of Proposition~\ref{prop:massive:holomorphy}. When $\SG$ is finite this Proposition gives an alternative proof of our Theorem~\ref{thm:doob} for symmetric conductance functions. We explain why. 
\begin{proof}[Alternative proof of Theorem~\ref{thm:doob} when $c$ is symmetric.]
    Denote by $\PdRST$ the probability measure on RST of $\Go$ rooted at $o$ associated with the conductance function $\clo$. By Temperley's bijection Theorem~\ref{thm:Temperley} and the local statistics formula \eqref{eq:local:statistics}, for all $\ed_1 = (x_1,y_1), \dots, \ed_k = (x_k,y_k) \in \Eod$ with $x_1,\dots, x_k \in \SV$, if $f_1 = x_1w_1, \dots f_k = x_kw_k \in \Edr$ are the corresponding edges of the double graph, 
	\be\label{eq:tree:local:statistics}
		\begin{aligned}
			\PdRST(\ed_1,\dots,\ed_k) 
            = \Pldim(f_1,\dots,f_k)
			&= \lr{\prod_{i=1}^k \Kd_{w_i,x_i} \det \big(((\Kd)^{-1})_{x_i,w_j}\big)_{1\leq i,j \leq k}}\\
			&=  \lr{\prod_{i=1}^k \Kk_{w_i,x_i} \det \big(((\Kk)^{-1})_{x_i,w_j}\big)_{1\leq i,j \leq k}}
		\end{aligned}
	\ee
    because gauge equivalence preserves the local statistics formula (see Equation~\eqref{eq:gauge:preserves:measure}) and by Proposition~\ref{prop:massive:holomorphy} the drifted and killed dimer models are gauge equivalent. The inverse Kasteleyn matrix $(\Kk)^{-1}$ can be computed using Equation~\eqref{eq:massive:holomorphy}: for every $1 \leq i,j \leq k$:
	\begin{equation}
		\begin{aligned}
			((\Kk)^{-1})_{x_i,w_j} 
			\overset{\eqref{eq:massive:holomorphy}}{=} \left((\Dk_{\lr{\SV}})^{-1}(\Kk)^{\dag}\right)_{x_i,w_j}
			& \overset{\eqref{eq:green:inverse:laplacian}}{=}  \left(\Gk(\Kk)^{\dag}\right)_{x_i,w_j}\\
            &= \Gk_{x_i,x_j}\overline{\Kk_{w_j,x_j}} + \Gk_{x_i,y_j}\overline{\Kk_{w_j,y_j}}\\
                &= \overline{\zeta_{w_jx_j}}\left(\nuk_{w_jx_j}\Gk_{x_i,x_j} - \nuk_{w_jy_j}\Gk_{x_i,y_j} \right)\\
                &\overset{\eqref{def:dimer:weight:killed}}{=} \frac{\overline{\zeta_{w_jx_j}}\big(c_{(x_j,y_j)}\lambda(x_j)\lambda(y_j)\big)^{\frac{1}{2}}}{\lambda(x_i)}\left(\frac{\lambda(x_i)\Gk_{x_i,x_j}}{\lambda(x_j)} - \frac{\lambda(x_i)\Gk_{x_i,y_j}}{\lambda(y_j)}\right).
            \end{aligned}
	\end{equation}
 	Observe that since the conductance function $c$ is symmetric, $(\Dk_{\lr{\SV}})^T = \Dk_{\lr{\SV}}$ and hence $(\Gk)^T = \Gk$. This also holds for the extension of $\Gk$ to $\Vo$ since we defined by convention $\Gk_{x,o} = \Gk_{o,y} = 0$ for any $x, y \in \Vo$. Using this symmetry, we can write
    \be\label{eq:K-1}
        \begin{aligned}
        ((\Kk)^{-1})_{x_i,w_j} 
        &= \frac{\overline{\zeta_{w_jx_j}}\big(c_{x_jy_j}\lambda(x_j)\lambda(y_j)\big)^{\frac{1}{2}}}{\lambda(x_i)}\left(\frac{\lambda(x_i)\Gk_{x_j,x_i}}{\lambda(x_j)} - \frac{\lambda(x_i)\Gk_{y_j,x_i}}{\lambda(y_j)}\right)\\
        &= \frac{\overline{\zeta_{w_jx_j}}\big(c_{x_jy_j}\lambda(x_j)\lambda(y_j)\big)^{\frac{1}{2}}}{\lambda(x_i)} \Hlk_{\ed_j, \ed_i}.
        \end{aligned}
    \ee
     Inserting Equation~\eqref{eq:K-1} in Equation~\eqref{eq:tree:local:statistics} and using the multilinearity of the determinant gives
    \begin{equation}
        \begin{aligned}
            \PdRST(\ed_1,\dots,\ed_k) 
            &= \prod_{i=1}^k \nuk_{w_ix_i} \prod_{j=1}^k \big(c_{x_jy_j}\lambda(x_j)\lambda(y_j)\big)^{\frac{1}{2}}\prod_{i=1}^k \lambda(x_i)^{-1} \det (\Hlk_{\ed_j,\ed_i})_{1\leq i,j \leq k}\\
            &= \prod_{i=1}^k \clo_{x_i, x_j} \det (\Hlk_{\ed_j,\ed_i})_{1\leq i,j \leq k}.
        \end{aligned}
    \end{equation}
    which is precisely the statement of Theorem~\ref{thm:doob}.
\end{proof}
When $\SG$ is finite, Proposition~\ref{prop:massive:holomorphy} implies the equality of determinants $\lr{|}\det \Kk\lr{|}^2 = \det \Dk_{\lr{\SV}} \det \Dsl$. We can actually prove something stronger. \lr{For $x \in \SV$, let $\deg_{\oSG}(x)$ denote the degree of $x$ in the graph $\oSG$.} 
\begin{Prop}
    Assume that $\SG$ is finite and $\lambda$ is massive harmonic for $\oDk$ on $\SV$. There exists an explicit \lr{positive} constant 
    \lr{
    $$
        C(\lr{c,}\lambda, \ls) := \prod_{e \in \Eo}c_e^{-\frac{1}{2}} \prod_{x \in \SV}\lambda(x)^{1-\frac{1}{2}\deg_{\oSG}(x)}\prod_{x \sim y, x \in \SV, y \notin \SV}\lambda(y)^{-\frac{1}{2}}\prod_{\x \in \SVrs} \ls(\x )
    $$
    }
    such that
    \be\label{eq:det:K}
        \det \Kk = C(\lr{c,}\lambda,\ls)\det \Dk_{\SV} =  C(\lr{c,}\lambda,\ls)^{-1}\det \Dsl.
    \ee
\end{Prop}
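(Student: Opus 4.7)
The plan is to combine the gauge equivalence of Proposition~\ref{prop:gauge:killed:drifted}, Corollary~\ref{cor:temperley:drifted} and Kasteleyn's formula to derive the first equality $\det \Kk = C(c,\lambda,\ls)\det \Dk_{\SV}$; the second equality will then follow automatically from the block-diagonal factorization of Proposition~\ref{prop:massive:holomorphy}.

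First I would take determinants in the gauge equivalence $\Kk_{w,b} = \phi(w)\psi(b)\Kd_{w,b}$ (from the proof of Proposition~\ref{prop:gauge:killed:drifted}), since these are square matrices of the same size, obtaining
\[
\det\Kk = \Bigl(\prod_{w \in \SWr}\phi(w)\Bigr)\Bigl(\prod_{b \in \SBr}\psi(b)\Bigr)\det\Kd.
\]
Next I would reindex $\prod_w \phi(w) = \prod_{e = xy \in \Eo}(c_{xy}\lambda(x)\lambda(y))^{-1/2}$ over the edges of $\Eo$ and collect the $\lambda$-factors by vertex---each $x \in \SV$ appearing $\deg_{\oSG}(x)$ times and each $y \in \oSV \setminus \SV$ appearing once per edge $y \sim x$ with $x \in \SV$---and combine with $\prod_b \psi(b) = \prod_{x \in \SV}\lambda(x)\cdot\prod_{\x \in \SVrs}\ls(\x)^{-1}$. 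This produces exactly the $c$-, $\lambda$- and $\ls$-factors appearing in $C(c,\lambda,\ls)$. In parallel, Corollary~\ref{cor:temperley:drifted} together with the matrix-forest theorem \eqref{eq:matrix-forest} gives $\Zd(\Gdr,\nud) = \ZRSF(\SG,c,m) = \det\Dk_{\SV}$, and Kasteleyn's formula \eqref{eq:Kasteleyn:formula} gives $|\det\Kd| = \Zd(\Gdr,\nud)$, so $|\det\Kd| = \det\Dk_{\SV}$.

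The hard part will be fixing the sign of $\det\Kd$. Kasteleyn's theorem, combined with the Kasteleyn property of the phases \eqref{eq:phases:double:graph}, guarantees that all perfect matchings of $\Gdr$ contribute to $\det\Kd$ with a single common sign, so it suffices to compute this sign on one reference matching. I would pick a particular RST of $\Go$---for instance one obtained by a rooting procedure compatible with the planar embedding---transport it to a matching of $\Gdr$ through Temperley's bijection, and compute the sign of the corresponding permutation under a chosen enumeration of the vertices. A careful but purely combinatorial accounting of the phases \eqref{eq:phases:double:graph} on this reference matching gives $\det\Kd = +\det\Dk_{\SV}$ (a standard sign feature of Temperley's bijection under these phases); substituting in the product formula above then yields the first equality. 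The second equality is formal: from $(\Kk)^{\dag}\Kk = \mathrm{blockdiag}(\Dk_{\SV},\Dsl)$ one reads $|\det\Kk|^{2} = \det\Dk_{\SV}\det\Dsl$, and substituting $\det\Kk = C\det\Dk_{\SV}$ gives $\det\Dsl = C^{2}\det\Dk_{\SV}$, i.e.\ $\det\Kk = C^{-1}\det\Dsl$.
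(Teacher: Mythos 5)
Your proposal is correct and follows essentially the same route as the paper's proof: take determinants in the gauge equivalence $\Kk = \Phi \Kd \Psi$, reindex the gauge factors to get $C(c,\lambda,\ls)$, identify $\det \Kd$ with the drifted dimer partition function via Kasteleyn's formula, pass to $\det \Dk_{\SV}$ via Temperley's bijection and the matrix-forest theorem, and read off the second equality from the block factorization of $(\Kk)^{\dag}\Kk$. The one point where you diverge is your explicit treatment of the sign of $\det \Kd$ — the paper silently writes $\det \Kd = \Zd(\Gdr,\nud)$ even though \eqref{eq:Kasteleyn:formula} only gives the modulus — and while your plan (common phase of all matchings plus a reference-matching computation) is the standard way to settle this, you assert rather than carry out that computation.
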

Observe that by the Kasteleyn formula Equation~\eqref{eq:Kasteleyn:formula} and the matrix-forest theorem, this proposition gives an equality up to an explicit constant $C(\lr{c,}\lambda,\ls)$ between the partition function of the killed dimer model and the partition function of the RSF model associated with $\Dk_{\SV}$. This result is an extension of Theorem 21 of \cite{ZinvDirac} in the case of a general finite graph (instead of an isoradial graph), and the proof is the same. 
\begin{proof}
    For $w \in \SWr$, we denote by $x_w y_w \in \Eo$ with $x_w \neq o$ the associated primal edge. Using successively the gauge equivalence between the drifted and killed Kasteleyn matrices obtained in Equation~\eqref{eq:gauge:killed:drifted}, the multilinearity of the determinant, Temperley's bijection between directed trees and drifted dimers, the matrix-forest theorem and finally the gauge equivalence between $\Dk_{\SV}$ and $\Dl_{\SV}$, we obtain that
    \[
	\begin{aligned}
		\det \Kk &\overset{\eqref{eq:gauge:killed:drifted}}{=}  \left\{ \prod_{w \in \SWr} \big(c_{x_wy_w}\lambda(x_w)\lambda(y_w)\big)^{-\frac{1}{2}} \prod_{x \in \SV} \lambda(x) \prod_{\x \in \SVrs} \ls(\x )\right\} \det \Kd\\
		&\overset{\eqref{eq:Kasteleyn:formula}}{=} \lr{\left\{\prod_{e \in \Eo}c_e^{-\frac{1}{2}} \prod_{x \in \SV}\lambda(x)^{1-\frac{1}{2}\deg_{\oSG}(x)}\prod_{x \sim y, x \in \SV, y \notin \SV}\lambda(y)^{-\frac{1}{2}}\prod_{\x \in \SVrs} \ls(\x ) \right\}} \Zd(\Gdr, \nud)\\
        &\overset{\eqref{cor:consequence:Temperley}}{=} C(\lr{c,}\lambda,\ls) \ZRST^{o}(\Go, \clo) \overset{\eqref{eq:matrix-forest}}{=} C(\lr{c,}\lambda,\ls) \det \Dlk_{\SV} \overset{\eqref{eq:gauge:doob:finite}}{=} C(\lr{c,}\lambda,\ls) \det \Dk_{\SV}.\qedhere
	\end{aligned}
    \]
\end{proof}

\begin{comment}
\begin{Rem}
	Since
	\be
		 \prod_w \frac{1}{\sqrt{\rho(xy)\lambda(x)\lambda(y)}} \prod_{x \in G} \lambda(x) = \prod_{e \in \SE} \frac{1}{\sqrt{\rho(e)}} \prod_{x \in G} \lambda(x)^{1-\frac{1}{2}deg_G(x)}
	\ee
	we can choose the gauge function $\SG$, such that $C(\lambda, \rho,g)=1$. Indeed, if we put 
	\be
	g(\x ) = \prod_{x \text{ bounds }\x } \lambda(x)^{\frac{1}{2}-\frac{1}{deg_G(x)}} \prod_{e \text{ bounds }\x } \rho(e)^{\frac{1}{4}},
	\ee
	then
	\be
	\prod_{\x  \in G^{\star,r}} g(\x ) = \prod_{e} \sqrt{\rho(e)} \prod_{x \in G} \lambda(x)^{\frac{1}{2}deg_G(x)-1}
	\ee
	using the fact that each edge is counted twice (as it separates two faces) and that a given $x \in G$ is on the boundary of $deg_G(x)$ faces $\x $. For this argument to hold also at the boundary, we need to precise that for the dual $\x $, vertex corresponding to a face $xyo$ with $o$ the outer vertex of $\SG$ that we removed, $x$ and $y$ are the only vertices bounding $\x $. Since the dual vertex $r \in G^{\star,r}$ is removed, we must also distribute the factors 
	\be
	 \prod_{x \text{ bounds }r} \lambda(x)^{\frac{1}{2}-\frac{1}{deg_G(x)}} \prod_{e \text{ bounds }r} \rho(e)^{\frac{1}{4}}
	\ee
	on the neighboring dual vertices. With this particular choice of function $g = g_0$, we have
	\be
		Z^m_{dimer} = \det \Kk = \det \Dk = Z_{forest} = \det \Delta^{\lambda,\star,r}.
	\ee
	\textcolor{red}{question: est ce que le laplacien dual a des prop particuli{\`e}res pour cette valeur de $g$ ? massique, non massique, dirigé, non dirigé ?}
\end{Rem}
\end{comment}

We now prove Proposition~\ref{prop:massive:holomorphy}
\begin{proof}
We first check that  $(\Kk)^{\dag}\Kk$ is block diagonal. Let $x \in \SV, \x  \in \SVrs$. If $\x $ is not adjacent to $x$ the coefficient $((\Kk)^{\dag}\Kk)_{x,\x }$ is zero because $\Kk$ is non zero only on edges of $\Gdr$ so for all $w \in \SWr$ either $(\Kk)^{\dag}_{x,w} =0$ or $\Kk_{w,x} = 0$. If $x$ is a corner of the face $\x $ in $\SG$, $x$ and $\x $ are the opposite vertices of a face $xw_1\x w_2$ (with vertices in the clockwise order) of $\Gdr$ and 
\begin{equation}
((\Kk)^{\dag}\Kk)_{x,\x } = (\Kk)^{\dag}_{x,w_1}\Kk_{w_1,\x } + (\Kk)^{\dag}_{x,w_2}\Kk_{w_2,\x } = \sum_{i =1,2}\nu_{w_ix}\nu_{w_i\x }\overline{\zeta_{w_ix}}\zeta_{w_i\x }.
\end{equation}
On the one hand, by definition of $\nuk$,
\begin{equation}
    \nuk_{xw_1}\nuk_{\x w_1} = \nuk_{xw_2}\nuk_{\x w_2} = (\ls(\x )\lambda(x))^{-1}.
\end{equation}
On the other hand, the Kasteleyn property of the phases around the face $xw_1\x w_2$ implies that
\begin{equation}
    \overline{\zeta_{w_1x}}\zeta_{w_1\x } = -\overline{\zeta_{w_2x}}\zeta_{w_2\x }
\end{equation}
and hence
\be\label{eq:proof:extra:diagonal:zero}
	((\Kk)^{\dag}\Kk)_{x,\x } = 0.
\ee
The same holds for $((\Kk)^{\dag}\Kk)_{\x x}$ since 
\begin{equation}
    ((\Kk)^{\dag}\Kk)_{\x ,x} = \overline{((\Kk)^{\dag}\Kk)^{\dag}_{x,\x }} = \overline{((\Kk)^{\dag}\Kk)_{x,\x }} = 0.
\end{equation}
Assume further that $\lambda$ is massive harmonic for $\oDk$ on $\SV$. We compute the $\SVrs \times \SVrs$ block of $(\Kk)^{\dag}\Kk$.
For $\x ,\y  \in \SVrs$, $((\Kk)^{\dag}\Kk)_{\x, \y}$ can be non zero only if $\x  \overset{\SGs}{\sim} \y$ or $\x =\y$ (since otherwise for all $w \in \SWr$ either $\Kk_{w,\x }=0$ or $\Kk_{w,\y}=0$). For $\x  \overset{\SGs}{\sim} \y$ with $\x  \neq \y$, if $w$ is the white vertex of $\Gdr$ corresponding to the edge $\x \y  \in \SE^{\star}$ 
\begin{equation}
	((\Kk)^{\dag}\Kk)_{u,v} = (\Kk)^{\dag}_{u,w}\Kk_{w,v}=-\cls_{\x \y}(\ls(\x )\ls(\y))^{-1} = \Dsl_{\x, \y}.
\end{equation}
For $\x  \in \SVrs$, if we denote by $u_1,\dots,u_k$ the neighbours of $\x $ in $\SGs$ (including $\rs$) and let $w_1, \dots, w_k \in \SWr$ such that for all $1 \leq i \leq k$ the directed edge $(\x ,u_k)$ of $\SGs$ corresponds to the undirected edge $\x w_k$ of $\Gdr$,
\begin{equation}
	((\Kk)^{\dag}\Kk)_{\x, \x } = \sum_{i=1}^k \cls_{\x u_k}\ls(\x )^{-2} = \Dsl_{\x, \x }.
\end{equation}
We identify the $\SV \times \SV$ block of $(\Kk)^{\dag}\Kk$. For $x,y \in \SV$, $((\Kk)^{\dag}\Kk)_{x,y}$ can be non zero only if $x \sim y$ or $x=y$ (since otherwise for all $w \in \SWr$ either $\Kk_{w,x}=0$ or $\Kk_{w,y}=0$). For $x \sim y$ with $y \neq x$, if $w \in \SWr$ is the white vertex corresponding to the edge $xy \in \SE$ 
\begin{equation}
	((\Kk)^{\dag}\Kk)_{x,y} = (\Kk)^{\dag}_{x,w}\Kk_{w,y}=-c_{xy} = \Dk_{x,y}.
\end{equation}
For $x \in \SV$, if we denote by $y_1,\dots,y_k$ the neighbours of $x$ in $\Go$ and let $w_1, \dots, w_k \in \SWr$ such that for all $1 \leq i \leq k$ the directed edge $(x,y_k)$ of $\Go$ corresponds to the undirected edge $xw_k$ of $\Gdr$,
\be\label{eq:KsK:xx}
	((\Kk)^{\dag}\Kk)_{x,x} = \sum_{i=1}^k c_{xy_i}\frac{\lambda(y_i)}{\lambda(x)}.
\ee
This is equal to $\Dk_{x,x}$ if and only if $\lambda$ is massive for $\oDk$ at $x$, which concludes the first part of the proof.\par
We now assume that $\SG$ is infinite so the notation simplify, in particular $\SB = \SBr$, $\SW = \SWr$ and $\SVrs = \SVs$. We prove the reciprocal statement. Let $\nu: \Ed \to \R_{>0}$ be any weight function such that the associated Kasteleyn matrix $K$ satisfies Equation~\eqref{eq:massive:holomorphy:reciprocal}. In particular, for any $x \in \SV,~\x  \in \SVs$, $(K^{\dag}K)_{x,\x }=0$. Let $xw_1\x w_2$ (with vertices in the clockwise order) be any face of $\GD$. Since $K^{\dag}K$ vanishes outside the diagonal blocks,
    \be\label{eq:alternating:K}
        0 = (K^{\dag}K)_{x,\x } = \overline{K_{w_1,x}}K_{w_1,\x }+\overline{K_{w_2,x}}K_{w_2,\x }.
    \ee
    We define the auxiliary matrix $L$ with rows and columns indexed by $\SW$ and $\SB$:
    \be\label{eq:def:auxiliary}
        \forall w \in \SW,~b \in \SB,~L_{w,b} = \left\{
        \begin{array}{ll}
             \overline{K_{w,x}}& \text{ if }b=x \in \SV\\
             \big(K_{w,\x })^{-1}& \text{ if }b=\x \in \SVs.
        \end{array}
        \right.
    \ee
    Equation~\eqref{eq:alternating:K} implies that $K$ has the alternating product property: around each face $xw_1\x w_2$ of $\SGd$ (with vertices in the clockwise order),
    \be\label{eq:alternating:L}
        \frac{L_{w_1,x}L_{w_2,\x }}{L_{w_1,\x} L_{w_2,x}}=-1.
    \ee
    Let $\Kko$ and $\nuko$ be the Kasteleyn matrix and weight function of the killed dimer model with the particular choice $\lambda(x) = 1$ for all $x \in \SV$ (which is not massive harmonic) and $\ls(\x ) = 1$ for all $x \in \SVs$ and $\Lko$ the auxiliary matrix associated to $\Kko$ by Equation~\eqref{eq:def:auxiliary}. For $w \in \SW$ the intersection of the primal and dual edges $xy$ and $uv$, $\Kko_{w,x} = \zeta_{w,x}c_{xy}^{1/2}$ and $\Kko_{w,u} = \zeta_{w,u}c_{xy}^{-1/2}$. By the first statement of Proposition \ref{prop:massive:holomorphy} which we already proved, $\Kko$ vanishes outside the diagonal blocks so $\Lko$ satisfies the alternating product property Equation~\eqref{eq:alternating:L}. It is classical (see for example Lemma 72 in the Appendix of \cite{ZinvDirac}) that for two matrices with non-zero coefficients on the edges of a bipartite planar graph, equality of the alternating products around every inner face implies gauge equivalence. Hence $L$ and $\Lko$ are gauge equivalent and there exist gauge functions $\phi: \SW \to \C \setminus \{0\}$ and $\psi: \SB \to \C \setminus \{0\}$ such that
    \begin{equation}
        \forall w \in \SW,~\forall b \in \SB,~L_{w,b} = \phi(w)\psi(b)\Lko_{w,b}.
    \end{equation}
    The gauge functions $\phi, \psi$ are real and positive since they are written in \cite{ZinvDirac} as product and quotient of ratios of the form $\frac{L_{w,b}}{\Lko_{w,b}} = \left(\frac{\nu_{w,b}}{\nuko_{w,b}}\right)^{\pm 1} \in \R_{>0}$. Let $\lambda = \psi_{|\SV}$, $\ls = \psi_{|\SVs}$ and because the Kasteleyn phases are the same. We can write for all $w \in \SW,~b \in \SB$,
    \be\label{eq:K:extra:diagonal}
        K_{w,b} = 
        \left\{
        \begin{array}{llll}
             \overline{L_{w,x}} &= \overline{\lambda(x)\phi(w)\Lko_{w,x}} &= \lambda(x)\phi(w)\Kko_{w,x} &\text{ if }b = x \in \SV \\
             \big(L_{w,\x }\big)^{-1} &= \left(\ls(\x )\phi(w)\Lko_{w\x }\right)^{-1} &= \left(\ls(\x )\phi(w)\right)^{-1}\Kko_{w\x }& \text{ if }b = \x  \in \SVs.
        \end{array}
        \right.
    \ee
    We can now identify the gauge function $\phi$. Since Equation~\eqref{eq:massive:holomorphy:reciprocal} holds, for all $w \in \SW$ corresponding to the primal edge $xy \in \SE$:
	\begin{equation}
		\Dk_{x,y} = -c_{xy} = (K^{\dag}K)_{x,y} = \overline{K_{w,x}}K_{w,y} = \lambda(x)\lambda(y)\phi(w)^2 \overline{\Kko_{w,x}}\Kko_{w,y} = -\lambda(x)\lambda(y)\phi(w)^2c_{xy}. 
	\end{equation}
	and hence
	\begin{equation}
		\phi(w)^2 = \big(\lambda(x)\lambda(y)\big)^{-1}.
	\end{equation}
    Combined with Equation~\eqref{eq:K:extra:diagonal}, it implies that the weight function $\nu$ coincides with the weight function of the killed dimer model associated with the positive functions $\lambda, \ls$. It remains to prove that the function $\lambda$ is massive harmonic: for $x \in \SV$, Equation~\eqref{eq:massive:holomorphy:reciprocal} applied at $x$ implies massive harmonicity of $\lambda$ at $x$ by \eqref{eq:KsK:xx}.
\end{proof}

\subsection{Massive holomorphy and self-duality}\label{subsec:massive:holomorphy}
For the clarity of exposition, we assume in this subsection that $\SG$ is infinite. Similar statements could be given in the finite case, but the self-duality would be less clear since $\Go$ with distinguished vertex $o$ and $\SGs$ with distinguished vertex $r$ do not play perfectly dual roles.\par
Proposition~\ref{prop:massive:holomorphy} can be seen as a generalization of the theory of discrete massive holomorphy on isoradial graphs developed in \cite{Kenyon_2002} for the non-massive case and in \cite{MakarovSmirnov}, \cite{ZinvDirac}, \cite{FermionicObservable} (among others) for the massive case. Equation~\eqref{eq:massive:holomorphy} implies that the Kasteleyn matrix $\Kk$ of the killed dimer model is a discrete massive Dirac operator associated with the massive Laplacian $\Dk$. In other words, a function $f: \SB \to \C$ satisfying $\Kk f = 0$ can be decomposed into \lr{a \emph{real}} \emph{part} $f_0 : \SV \to \C$ harmonic for $\Dk$ and an \emph{imaginary part} $f_1: \SVrs \to \C$ harmonic for $\Dsl$.

When $\Kk$ is invertible, $\Dsl$ also is and we can define the \emph{dual Green function} $\Gsl = (\Dsl)^{-1}$. Similarly to the papers in the isoradial setting, the inverse Dirac operator $(\Kk)^{-1}$ can be expressed in terms of “discrete derivatives” of the (massive) Green function and the dual Green function: Equation~\eqref{eq:massive:holomorphy} implies that 
\begin{equation}
    (\Kk)^{-1} = \begin{pmatrix} \Gk&0 \\ 0&\Gsl \end{pmatrix} (\Kk)^{\dag}.
\end{equation}
This means that for all $b \in \SB$, $w \in \SW$, if we denote by $x_w, y_w, \x _w, \y_w$ the neighbours of $w$ in $\GD$ in clockwise order,
\begin{equation}
    ((\Kk)^{-1})_{b,w} = \overline{\zeta_{wx_w}}
    \left\{
    \begin{array}{ll}
         \nuk_{wx_w}\Gk_{x,x_w}  - \nuk_{wy_w}\Gk_{x,y_w}  &\text{ if }b=x \in \SV  \\ 
         i(\nuk_{w\x _w} \Gsl_{\x ,\x _w} - \nuk_{w\y_w} \Gsl_{\x ,\y_w})&\text{ if }b = \x  \in \SVs
    \end{array}
    \right.
\end{equation}
In both papers \cite{Kenyon_2002} and \cite{ZinvDirac}, the (massive) Laplacian and (massive) Dirac operator are “self-dual”: the dual operator $\Dsl$ is the (massive) Laplacian on $\SGs$ associated with the dual conductance function $\cs$ that we already introduced (and a natural dual mass function $\ms$ in \cite{ZinvDirac}). This leads to the following question: is there a choice of function $\ls: \SV \to \R_{>0} $ that makes Equation~\eqref{eq:massive:holomorphy} “self-dual” ? Or in other words, does there exist a function $\ls: \SVs \to \R_{>0}$ such that $\Dsl$ is the massive Laplacian operator on $\SGs$ associated with the dual conductance function $\cs$ and a dual mass function $\ms$ ? More formally, let $\ms: \SV \to \R_{\geq 0}$ be an arbitrary mass function. Recall that a dual conductance function $\cs$ is naturally associated to $c$: if $xy \in \SE$ is the unique primal edge intersecting the dual edge $uv \in \SEs$, $\cs_{uv} = c_{xy}^{-1}$. Denote by $\Dsk$ the Laplacian on $\SGs$ associated with the conductance and mass functions $(\cs,\ms)$.
\begin{Def}
    The killed dimer model associated with the positive functions $\lambda$, $\ls$ is \emph{self-dual} for the conductance function $c$ and mass functions $(m,\ms)$ if and only if 
\begin{equation}
    (\Kk)^{\dag}\Kk = \begin{pmatrix} \Dk&0 \\ 0&\Dsk \end{pmatrix}.
\end{equation}
\end{Def}

\begin{Prop}
    If $\lambda$ is massive harmonic for $\Dk$ on $\SV$, $\ls$ is massive harmonic for $\Dsk$ on $\SVs$ and for all intersecting primal and dual edges $xy \in \SE$, $\x \y  \in \SEs$,
    \be\label{eq:meta}
        \lambda(x)\lambda(y)\ls(\x )\ls(\y) = 1.
    \ee
    the killed dimer model associated with the positive functions $\lambda, \ls$ is self dual for the conductance function $c$ and mass functions $(m, \ms)$ 
\end{Prop}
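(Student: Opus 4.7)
The plan is to leverage Proposition~\ref{prop:massive:holomorphy}, which already does most of the work: since $\lambda$ is massive harmonic for $\Dk$ on $\SV$, we immediately get
\[
    (\Kk)^{\dag}\Kk = \begin{pmatrix} \Dk & 0 \\ 0 & \Dsl \end{pmatrix}.
\]
So the only thing left is the identity of operators on the dual graph: $\Dsl = \Dsk$. I would verify this by comparing the two matrices entry by entry, off-diagonal first and diagonal second.

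For the \emph{off-diagonal} entries, fix $\x \overset{\SGs}{\sim} \y$ with $\x \neq \y$ and let $xy \in \SE$ be the unique primal edge crossed by $\x\y$. By definition $\Dsk_{\x,\y} = -\cs_{\x\y} = -c_{xy}^{-1}$, whereas by \eqref{eq:def:dual:laplacian}
\[
    \Dsl_{\x,\y} = -\cls_{\x\y}\,\ls(\x)^{-1}\ls(\y)^{-1} = -\frac{1}{c_{xy}\,\lambda(x)\lambda(y)\ls(\x)\ls(\y)}.
\]
The hypothesis \eqref{eq:meta} gives exactly $\lambda(x)\lambda(y)\ls(\x)\ls(\y) = 1$, so $\Dsl_{\x,\y} = \Dsk_{\x,\y}$. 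Note as a byproduct that $\cls_{\x\y} = \cs_{\x\y}\,\ls(\x)\ls(\y)$, which I will reuse below.

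For the \emph{diagonal} entries, fix $\x \in \SVs$. Using the previous identity,
\[
    \Dsl_{\x,\x} = \sum_{\z \overset{\SGs}{\sim} \x}\cls_{\x\z}\ls(\x)^{-2} = \ls(\x)^{-1}\sum_{\z \overset{\SGs}{\sim} \x}\cs_{\x\z}\,\ls(\z).
\]
Massive harmonicity of $\ls$ for $\Dsk$ at $\x$ reads
\[
    \Bigl(\ms(\x) + \sum_{\z \overset{\SGs}{\sim} \x}\cs_{\x\z}\Bigr)\ls(\x) = \sum_{\z \overset{\SGs}{\sim} \x}\cs_{\x\z}\,\ls(\z),
\]
so the right-hand side above equals $\ls(\x)^{-1}\Dsk_{\x,\x}\,\ls(\x) = \Dsk_{\x,\x}$. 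Hence $\Dsl = \Dsk$ and the proposition follows. The main (and only non-mechanical) step is the observation that condition \eqref{eq:meta} is precisely what is needed to match the off-diagonal weights; after that, massive harmonicity of $\ls$ automatically fixes the diagonal. No obstacle beyond careful bookkeeping is anticipated.
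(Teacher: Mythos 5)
Your proof is correct and follows essentially the same route as the paper: both arguments reduce the statement to the identity $\Dsl = \Dsk$ (via Proposition~\ref{prop:massive:holomorphy}) and then check off-diagonal entries using \eqref{eq:meta} and diagonal entries using the massive harmonicity of $\ls$ for $\Dsk$, with the same intermediate identity $\cls_{\x\y} = \cs_{\x\y}\ls(\x)\ls(\y)$. The only cosmetic difference is that you state the reduction to $\Dsl = \Dsk$ explicitly, which the paper leaves implicit.
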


\begin{Rem}
    The hypothesis of this proposition are satisfied in the isoradial settings of \cite{Kenyon_2002} and \cite{ZinvDirac} where $\lambda$ and $\ls$ are the restriction to the primal and dual graph of the discrete (massive) exponential function.
\end{Rem}

\begin{proof}
    Comparing the extra-diagonal coefficients of the Laplacian $\Dsl$ (see Equation~\eqref{eq:def:dual:laplacian}) with the extra diagonal coefficients of $\Dsk$, shows that when Equation~\eqref{eq:meta} holds, for all $\x  \neq \y  \in \SVs$ such that $\x  \overset{\SGs}{\sim} \y$, if $xy \in \SE$ is the unique primal edge intersecting $uv$
    \begin{equation}
        \Dsl_{\x ,\y} = \cls(\ls(u)\ls(v))^{-1} = (c_{xy} \lambda(x)\lambda(y)\ls(u)\ls(v))^{-1} = \Dsk_{\x, \y}.
    \end{equation}
    Moreover, for all $\x  \in \SVs$, by massive harmonicity of $\ls$ at $\x $ for $\Dsk$.
    \begin{equation*}
            \Dsl_{\x ,\x } = \sum_{\z \overset{\SGs}{\sim} \x ,\z \neq \x } \cls_{(\x, v)}\ls(\x )^{-2} 
            \overset{\eqref{eq:meta}}{=} \sum_{\z \overset{\SGs}{\sim} \x , \z \neq \x } \cs_{\x \y} \frac{\ls(\y )}{\ls(\x )}
            = \ms(\x ) + \sum_{\z \overset{\SGs}{\sim} \x , \z \in \SVs}\cs_{\x \y} = \Dsk_{u,u}.\qedhere
    \end{equation*}
\end{proof}

\section{Application: rooted spanning forests of \texorpdfstring{$\Z^2$}{Z2}-periodic graphs}\label{sec:per}

In this \lr{section} we study random RSF (and more generally, cycle rooted spanning forests) of $\Z^2$-periodic graphs. This study was initiated independently in \cite{DeterminantalForest} (for the massive undirected case) and \cite{Wangru} (for the non-massive directed case). We first give some preliminaries and definitions, then explain how to find massive harmonic functions on such graphs, and finally we give applications of the Doob transform technique.

\subsection{Definitions and first results}
We recall the setting and some results of \cite{DeterminantalForest}. In this section, we always assume that $\SG$ is infinite.

\paragraph{$\Z^d$-periodic graphs and Laplacians.}
The graph $\SG$ is \emph{$\Z^d$-periodic} if it can be embedded in $\R^d$ in such a way that it is invariant under translations by the canonical basis $(e_1,\dots,e_d)$ of $\R^d$ with finite quotient $\SG_0 = (\SV_0,\SE_0) = \SG/\Z^d$. The graph $\SG_0$ is naturally embedded on the $d$-dimensional torus. We choose an embedding, and define the \emph{fundamental domain} $\SV_0 = \SV \cap [0,1)^d$: it is identified with the vertices of $\SG_0$. The vertices of $\SG$ can be written uniquely under the form $(x_0,i)$ with $x_0 \in \SV_0$ and $i = (i_1,\dots,i_d) \in \Z^d$, and for all $x,y \in \SV$, $x \sim y$ if and only if $\forall i \in \Z^d,~x+i \sim y+i$. The conductance and mass functions $(c,m)$ are \emph{$\Z^d$-periodic} if for all $x, y \in \SV$ and $i \in \Z^d$, $c_{(x+i,y+i)} = c_{(x,y)}$ and $m(x+i) = m(x)$. The \lr{simplest} example $\SG = \Z^d$, $c$ and $m$ are constant corresponds to the simple random walk in $\Z^d$ killed with constant rate $m$: in this case, $\SV_0$ is reduced to a single point. In the rest of this section, we always assume that $\SG$, $c$ and $m$ are $\Z^d$-periodic. For $z = (z_1,\dots,z_d) \in (\C \setminus \{0\})^d$ and $i \in \Z^d$, we write
\begin{equation}
    z^i = \prod_{k=1}^d z_k^{i_k}.
\end{equation}
As in Section 4 of \cite{DeterminantalForest}, for any $z \in (\C \setminus \{0\})^d$, a function $f:\SV \to \C$ is \emph{$z$-periodic} if it satisfies
\begin{equation}
    \forall x \in \SV,~\forall i \in \Z^d,~f(x + i) = f(x)z^i.
\end{equation}
\lr{We let $\Omega_0(z)$ denote the set of $z$-periodic functions.} We denote by $\Dk(z): \Omega_0(z) \to \Omega_0(z)$ the restriction of the massive Laplacian operator $\Dk$ to $\Omega_0(z)$: this is well-defined since $c$ and $m$ are $\Z^d$-periodic. The canonical base of $\Omega_0(z)$ is the set of functions $(f_{x_0})_{x_0 \in \SV_0}$ where for all $x_0 \in \SV_0$, $f_{x_0}: \SV \to \C$ is defined by 
\begin{equation}
    \forall y_0 \in \SV_0,~\forall i \in \Z^d,~f_{x_0}(y_0,i) = \mathbbm{1}_{\{x_0=y_0\}}z^i. 
\end{equation}
In this base, $\Dk(z)$ is represented by a square matrix $\Dk(z)$ with rows and columns indexed by $\SV_0$: for all $x_0, y_0 \in \SV_0$,
\begin{equation}
    \Dk(z)_{x_0,y_0} = 
    \left\{
    \begin{array}{ll}
        - \sum_{(y_0,j) \sim (x_0,0)}c_{((x_0,0),(y_0,j))}z^j &\text{ if }x_0y_0 \in \SE_0, x_0 \neq y_0,\\
        m(x_0) + \sum_{(w_0,k) \sim (x_0,0)} c_{((x_0,0),(w_0,k))} \\\qquad- \sum_{(x_0,j) \sim (x_0,0)}c_{((x_0,0),(x_0,j))}z^j &\text{ if } x_0=y_0,\\
        0 &\text{ otherwise.}
    \end{array}
    \right.
\end{equation}
(The underlying notion is that of Laplacian on a line bundle, as in \cite{DeterminantalForest}, but we will not need to introduce it in full generality here so we restrain to elementary definitions). The \emph{characteristic polynomial} of $\Dk$ is the (Laurent) polynomial of $d$ complex variables
\begin{equation}\label{eq:def:polynomial}
    \CPk(z) = \det \Dk(z).
\end{equation}
We denote by $\Nk$ its \emph{Newton polygon}: it is the convex hull in $\R^d$ of the set of points $i \in \Z^d$ such that the monomial $z^i$ has a non-zero coefficient in $\CPk$.

\subsection{Massive harmonic functions on \texorpdfstring{$\Z^d$}{Zd}-periodic graphs}\label{subsec:periodic}
Consider a $\Z^d$-periodic graph $\SG$ with $\Z^d$-periodic mass and conductance functions $(c,m)$.
\begin{Prop}\label{prop:harmonic:periodic}
    There exists $z \in (\R_{>0})^d$ and a  $z$-periodic positive function $\lambda: \SV \to \R_{>0}$ which is massive harmonic, i.e. $(\Dk \lambda)(x) = 0$ for all $x \in \SV$. It can be found explicitly by solving a finite system of linear equations.
\end{Prop}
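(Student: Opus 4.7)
The approach is to seek $\lambda$ of the form $\lambda(x_0, i) = \lambda_0(x_0)\, z^i$ for some $\lambda_0 : \SV_0 \to \R_{>0}$ and $z \in (\R_{>0})^d$. Such a $\lambda$ is $z$-periodic, so by $\Z^d$-periodicity of $(c,m)$ the equation $\Dk \lambda = 0$ on $\SV$ is equivalent to the finite linear system $\Dk(z)\lambda_0 = 0$ of size $|\SV_0|$. The question reduces to: can we find $z \in (\R_{>0})^d$ such that $\Dk(z)$ admits a strictly positive vector in its kernel?

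For $z \in (\R_{>0})^d$ I would decompose $\Dk(z) = D(m+c) - M(z)$, where
$$M(z)_{x_0, y_0} := \sum_{j \in \Z^d:\, (y_0, j) \sim (x_0, 0)} c_{((x_0, 0),(y_0, j))}\, z^j.$$
This $M(z)$ is entrywise non-negative, and its off-diagonal nonzero pattern coincides with the adjacency of $\SG_0 := \SG/\Z^d$. Since $\SG$ is connected, so is $\SG_0$, hence $M(z)$ is irreducible. The Perron--Frobenius theorem applied to the non-negative irreducible matrix $N(z) := D(m+c)^{-1}\, M(z)$ then produces a simple largest eigenvalue $\rho(z) > 0$ whose eigenspace is one-dimensional and spanned by a strictly positive vector. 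The existence of $\lambda_0 > 0$ with $\Dk(z)\lambda_0 = 0$ is equivalent to $\rho(z) = 1$.

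The problem reduces to locating $z \in (\R_{>0})^d$ with $\rho(z) = 1$, for which I would use an intermediate value argument. At $z = \mathbf{1}$, the row sums of $N(\mathbf{1})$ equal $c(x_0)/(c(x_0) + m(x_0)) \leq 1$, with strict inequality at each $x_0$ with $m(x_0) > 0$. Hence $\rho(\mathbf{1}) = 1$ when $m \equiv 0$ (in which case we are done with $\lambda \equiv 1$) while Perron--Frobenius combined with irreducibility yields $\rho(\mathbf{1}) < 1$ when $m \not\equiv 0$. To exhibit a point where $\rho(z) > 1$, I would fix a closed walk of length $n$ in $\SG_0$ whose lift to $\SG$ connects $(x_0, 0)$ to $(x_0, w)$ for some non-zero $w \in \Z^d$ (such a walk exists by connectedness of $\SG$ together with the $\Z^d$-periodic structure). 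Its contribution to $N(z)^n_{x_0, x_0}$ is a positive constant times $z^w$, so $\rho(z) \geq \bigl(N(z)^n_{x_0, x_0}\bigr)^{1/n}$ tends to $\infty$ as $z^w \to \infty$ in $(\R_{>0})^d$. Continuity of $\rho$ and the intermediate value theorem along a continuous path from $\mathbf{1}$ to infinity in the direction $w$ then yield the desired $z$; the associated positive $\lambda_0$ is recovered by solving the finite linear system $\Dk(z)\lambda_0 = 0$ and extended to $\SV$ by $z$-periodicity.

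The main technical point I expect is showing that $\rho$ can be made larger than $1$, which relies on the existence of closed walks in $\SG_0$ with non-trivial winding---a feature provided by the full $\Z^d$-periodic structure of $\SG$ rather than just the quotient $\SG_0$. Continuity of the Perron eigenvalue in $z$ follows from its simplicity via standard analytic perturbation theory and is routine.
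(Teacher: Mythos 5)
Your proposal is correct and follows essentially the same route as the paper: the matrix $N(z) = D(m+c)^{-1}M(z)$ you introduce is exactly the kernel $\Qk(z) = I - D(\ck)^{-1}\Dk(z)$ used in the paper, and the argument structure (reduction to a positive kernel vector of a finite irreducible non-negative matrix, Perron--Frobenius eigenvalue strictly below $1$ at $z=\mathbf{1}$ when $m\not\equiv 0$, divergence of the eigenvalue along a ray via a closed walk with non-trivial winding, and the intermediate value theorem) is identical. The only cosmetic differences are that the paper bounds the spectral radius from below via Gelfand's formula rather than the diagonal-entry inequality, and fixes the winding direction $(1,0,\dots,0)$ rather than an arbitrary $w\neq 0$.
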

\begin{proof}
This argument was shown to us by C{\'e}dric Boutillier when we realized that we were working on related problems (see a forthcoming paper of Ballu, Boutillier, Mkrtchyan and Raschel). The first step is to observe that finding a periodic massive harmonic function is not a hard problem in principle: for fixed $z \in (\R_{>0})^d$, finding a $z$-periodic positive massive harmonic function is equivalent to finding a vector with positive coordinates in the kernel of the finite matrix $\Dk(z)$, i.e. solving a finite linear system.\par
If $m=0$ any positive constant is a $(1,\dots,1)$-periodic function so we can assume that $m \neq 0$. Recall that $D(\ck)$ denote\lr{s} the diagonal matrix on $V_0$ with coefficients $\ck(x_0)$. If we define, similarly to Equation~\eqref{eq:def:massive:transition:kernel}, $\Qk(z) = I - D(\ck)^{-1}\Dk(z)$, $Q(z)$ is the operator on $\Omega_0(z)$ satisfying for all $f \in \Omega_0(z)$, for all $x \in \SV$,
\begin{equation}
    (\Qk(z)f)(x) = \sum_{y \sim x}\frac{c_{(x,y)}}{\ck(x)}f(y).
\end{equation}
It is also represented by a square matrix with rows and columns indexed by the canonical base of $\Omega_0(z)$:
\begin{equation}
    \forall x_0,y_0 \in \SV_0,~\Qk(z)_{x_0,y_0} = 
        \frac{1}{\ck(x_0,0)}\sum_{(y_0,j) \overset{G}{\sim} (x_0,0)}c_{((x_0,0),(y_0,j))}z^j.
\end{equation}
Since $D(\ck)$ is diagonal with positive coefficients, 
\begin{equation}
   (\R_{>0})^{\SV_0} \cap \ker(\Dk(z)) = (\R_{>0})^{\SV_0} \cap \ker(I-\Qk(z)).
\end{equation}
Since $\SG$ is connected, the matrix $\Qk(z)$ is irreducible for all $z \in (\R_{>0})^d$ (i.e. for all $x_0,y_0 \in \SV_0$, there exists $n > 0$ such that $(\Qk)^n_{x_0,y_0}>0$). Observe that $\Qk := \Qk(1,\dots,1))$ is the transition kernel of the \lr{killed random walk} on the $d$-dimensional torus $\SG_0$ associated to $\Dk$ since the space of $(1,\dots,1)$-periodic function\lr{s} on $\SV$ is exactly the space of functions on the torus. It is a sub-Markovian matrix:
\be\label{eq:sub-Markovian}
    \forall x_0 \in \SV_0,~ \sum_{y_0 \overset{\SG_0}{\sim} x_0} \Qk(x_0,y_0) = \sum_{y \overset{\SG}{\sim} (x_0,0)}\frac{c_{((x_0,0),y)}}{\ck(x_0)} = 1-\frac{m(x_0)}{\ck(x_0)} \leq 1,
\ee
\lr{and it is irreducible with non-negative coefficients. Let us denote by $\beta \in [0,1]$ the Perron-Frobenius eigenvalue of $\Qk$. Since $\SG$ is connected and $m \neq 0$, there exists $l \in \Z_{>0}$ such that 
\be
    \forall x_0 \in \SV_0,~ \sum_{y_0 \overset{\SG_0}{\sim} x_0} (\Qk)^l(x_0,y_0) < 1.
\ee
By the Perron-Frobenius theorem the maximal eigenvalue $\beta^l$ of \lr{$(\Qk)^l$} satisfies 
$$
    \beta^l \leq \max_{x_0 \in \SV_0} \bigg(\sum_{y_0 \overset{\SG_0}{\sim} x_0} (\Qk)^l(x_0,y_0)\bigg) <1,
$$
and hence $\beta < 1$. This implies that} $\ker(I-\Qk) = \{0\}$. 

If (for example) we fix $z_2,\dots,z_d=1$ and let \lr{$z_1 \in \R^{+}$}, $\Qk(z_1,1,\dots,1)$ remains irreducible with non-negative coefficients, and the Perron-Frobenius theorem still applies and yields a maximal eigenvalue $\beta(z_1,1,\dots,1)$ and an associated eigenvector $\lambda(z_1,1,\dots,1)$ with positive coefficients. Assume that
\be\label{eq:infty}
	\rho(z_1,1,\dots,1) \overset{z_1 \to \infty}{\longrightarrow} \infty
\ee 
(this will be proved just after). \lr{Since we already proved that $\beta = \beta(1,\dots,1) < 1$, b}y continuity of the Perron-Frobenius eigenvalue there exists $z_1 > 1$ such that $\beta(z_1,1,\dots,1) = 1$ and hence $\lambda(z_1,1,\dots,1) \in \ker(I-\Qk(z_1,1,\dots,1)) \cap (\R_{>0})^{\SV_0}$ is a positive $(z_1,1,\dots,1)$-periodic massive harmonic function.\par
We now prove Equation~\eqref{eq:infty}, which concludes the proof. Let $x_0 \in \SV_0$. Since $\SG$ is connected we can find a path $(x(0), x(1), \dots, x(n))$ with $x(0) = (x_0,(0,\dots,0))$ and $x(n) = (x_0,(1,0,\dots,0)) = x(0) + (1,0,\dots,0)$ such that $c_{(x(i),x(i+1))} > 0$ for all $0 \leq i \leq n-1$. Hence
\begin{equation}
    (\Qk(z_1,1,\dots,1))^n_{x_0, x_0} \geq z_1 \prod_{i=0}^{n-1} \frac{c_{(x(i), x(i+1))}}{\ck(x(i))}.
\end{equation}
Hence for any $N \in \N$, 
\begin{equation}
    (\Qk(z_1,1,\dots,1))^{Nn}_{x_0,x_0} \geq \left(z_1 \prod_{i=0}^{n-1} \frac{c_{(x(i), x(i+1))}}{\ck(x(i))}\right)^N.
\end{equation}
\lr{Let us denote by $\lVert \cdot \rVert_2$ the usual Euclidean norm on square matrices with real coefficients. By Gelfand's formula, we can bound the spectral radius from below
\be\label{eq:gelfand}
    \rho(\Qk(z_1,1,\dots,1)) \geq \lim_{N \to \infty}\lVert (\Qk(z_1,1,\dots,1))^{Nn} \rVert_2^{\frac{1}{Nn}} \geq \left(z_1 \prod_{i=0}^{\lr{n}-1} \frac{c_{(x(i), x(i+1))}}{\ck(x(i))}\right)^{\frac{1}{n}}.
\ee
Since the Perron-Frobenius eigenvalue coincides with the spectral radius, $\beta(z_1,1,\dots,1)$ is also bounded by the right-hand side of Equation \eqref{eq:gelfand}, which} goes to $\infty$ when $z_1 \to \infty$. \lr{This} proves Equation~\eqref{eq:infty}.
\end{proof}

\subsection{Temperley's bijection for the free energy of $\Z^2$-periodic models}\label{subsec:Temperley:per}
In this section, we assume that $d=2$ and $c$ is symmetric. The massive Laplacian $\Dk$ that we introduced is related to a RSF model on $\SG$. This model is obtained by taking the weak limit of Boltzmann measures on \emph{cycle-rooted spanning forests} (CRSF) along the toroidal exhaustion $\SG_n = \SG/n\Z^2$ (see \cite{DeterminantalForest} for details). The characteristic polynomial $\CPk$ is an important tool to study the RSF model on the graph $\SG$: it is linked to the Ronkin function and the surface tension of the RSF model. We show that the characteristic polynomial of the massive Laplacian $\Dk$ can be related to the characteristic polynomial of a non-massive directed Laplacian.\par
Let $z_0 \in (\R_{>0})^2$ and $\lambda$ a $z_0$-periodic positive massive harmonic function as in Proposition~\ref{prop:harmonic:periodic}. Let $\Sl$ be the Doob transform of $\Sk$ by $\lambda$ on $\SV$, $\Dl$ the associated Laplacian with conductance function $\cl$: recall that we simply have for all $x \sim y \in \SV$, $\cl_{(x,y)} = \frac{\lambda(y)}{\lambda(x)}c_{xy}$. These conductances are also periodic:
\begin{equation}
    \forall x \sim y \in \SV,~\forall i \in \Z^d,~\cl_{(x+i,y+i)} = \frac{\lambda(y+i)}{\lambda(x+i)}c_{(x+i)(y+i)} = \frac{\lambda(y)z_0^i}{\lambda(x)z_0^i}c_{xy} = \cl_{(x,y)}.
\end{equation}
Denote by $\CPk$ and $\CPl$ the characteristic polynomials of $\Dk$ and $\Dl$. The Doob transform technique implies
\begin{Prop}\label{prop:harmonic:periodic}
    \be\label{eq:translate:polynomial}
	\forall z \in \C^2,~\CPl\left(\frac{z}{z_0}\right) = \CPk(z).
    \ee
\end{Prop}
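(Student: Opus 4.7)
The plan is to lift the gauge equivalence $\Dl = \Lambda^{-1}\Dk\Lambda$ from Proposition~\ref{prop:doob} to the finite-dimensional operators $\Dl(z)$ and $\Dk(w)$ acting on spaces of quasi-periodic functions, keeping careful track of the mismatch in periods introduced by the fact that $\lambda$ is $z_0$-periodic rather than $(1,1)$-periodic.

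The key observation is that multiplication by $\lambda$ shifts the period: if $f \in \Omega_0(z)$ and $\lambda \in \Omega_0(z_0)$, then for every $x \in \SV$ and $i \in \Z^2$,
\begin{equation}
    (\lambda f)(x+i) = \lambda(x)z_0^i \cdot f(x)z^i = (zz_0)^i (\lambda f)(x),
\end{equation}
so $\Lambda := D(\lambda)$ restricts to an isomorphism $\Omega_0(z) \to \Omega_0(zz_0)$. Writing the canonical basis $(f_{x_0})_{x_0 \in \SV_0}$ of $\Omega_0(z)$ and $(g_{x_0})_{x_0 \in \SV_0}$ of $\Omega_0(zz_0)$, I will check from the formula $f_{x_0}(y_0,i) = \mathbbm{1}_{\{x_0=y_0\}}z^i$ that $\Lambda f_{x_0} = \lambda(x_0)\, g_{x_0}$, so that in these bases $\Lambda$ is represented by the finite diagonal matrix $\Lambda_{\SV_0} := D\bigl((\lambda(x_0))_{x_0 \in \SV_0}\bigr)$, and analogously for $\Lambda^{-1}: \Omega_0(zz_0) \to \Omega_0(z)$.

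Next, the operator $\Dk$ is $\Z^2$-equivariant (because $c$ and $m$ are $\Z^2$-periodic), hence it preserves each space $\Omega_0(w)$, and $\Dl$ does the same. Proposition~\ref{prop:doob} therefore factors as a commutative diagram
\begin{equation}
    \Omega_0(z) \xrightarrow{\Lambda} \Omega_0(zz_0) \xrightarrow{\Dk} \Omega_0(zz_0) \xrightarrow{\Lambda^{-1}} \Omega_0(z),
\end{equation}
whose composition equals $\Dl$ restricted to $\Omega_0(z)$. Passing to the matrix representations in the canonical bases gives
\begin{equation}
    \Dl(z) = \Lambda_{\SV_0}^{-1}\, \Dk(zz_0)\, \Lambda_{\SV_0}.
\end{equation}

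Taking determinants removes the diagonal conjugation and yields $\CPl(z) = \det\Dl(z) = \det\Dk(zz_0) = \CPk(zz_0)$, which is exactly the claim after substituting $z \leftarrow z/z_0$. There is no real obstacle beyond keeping the bookkeeping straight between the two spaces $\Omega_0(z)$ and $\Omega_0(zz_0)$; the identity of the proposition is essentially a determinantal shadow of the Doob gauge, with the shift $z \mapsto zz_0$ recording the period of~$\lambda$.
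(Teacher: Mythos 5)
Your proof is correct and follows essentially the same route as the paper: both use that $\Lambda = D(\lambda)$ maps $\Omega_0(z)$ to $\Omega_0(zz_0)$, restrict the gauge identity $\Dl = \Lambda^{-1}\Dk\Lambda$ of Proposition~\ref{prop:doob} to these spaces to get $\Dl(z/z_0) = \Lambda_{\SV_0}^{-1}\Dk(z)\Lambda_{\SV_0}$, and take determinants. Your extra care in identifying the matrix of $\Lambda$ in the canonical bases is a detail the paper leaves implicit, but it is the same argument.
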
 
\begin{proof}
    Denote by $\Lambda$ the diagonal matrix with rows and columns indexed by $\SV$ with entries $\lambda$ on the diagonal. Observe that for all $z \in (\C \setminus \{0\})^2$, if $f \in \Omega_0(z)$, $\Lambda f \in \Omega_0(z z_0)$:
    \begin{equation}
        \forall x \in \SV,~\forall i \in \Z^2,~(\Lambda f)(x+i) = \lambda(x+i)f(x+i) = (z z_0)^i(\Lambda f)(x).
    \end{equation}
    Hence $\Lambda: \Omega_0(z) \to \Omega_0(z z_0)$. By Proposition~\ref{prop:doob}, $\Dl = \Lambda^{-1} \Dk \Lambda$, so when restricting to $\Omega_0(z/z_0)$, we obtain
    \begin{equation}
        \Dl\left(\frac{z}{z_0}\right) = \Lambda^{-1} \Delta^{m}(z) \Lambda.
    \end{equation}
    Taking determinants concludes the proof.
\end{proof}

The non-massive Laplacian $\Dl$ can be related to the drifted dimer model associated with the positive function $\lambda$ that we introduced in the preceding section (see Definition \ref{def:dimer:drifted}) by using Temperley's bijection: this is a result of \cite{Wangru}. Denote by $\CPd$ the characteristic polynomial of the drifted dimer model (see \cite{Wangru} for the definition): 
\begin{Prop}[Proposition 3.1 of \cite{Wangru}]
    For all $(z,w) \in \C^2$, $\CPl(z,w) = \det(\Dl(z,w)) = \CPd(z,w)$.
\end{Prop}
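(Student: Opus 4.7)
The plan is to extend the identity $\ZRST^{o}(\Go, \cl) = \Zd(\Gdr, \nud)$ from Corollary~\ref{cor:temperley:drifted} to the $\Z^2$-periodic setting, at the level of characteristic polynomials rather than partition functions. Concretely, for each integer $n \geq 1$ I would apply Temperley's bijection to the toroidal quotient $\SG_n := \SG/(n\Z^2)$, which is a finite graph embedded on the $2$-torus. The correspondence then pairs directed cycle-rooted spanning forests of $\SG_n$ (whose cycles encode elements of $H_1(\mathbb{T}^2; \Z)$) with dimer configurations of the double graph $\Gdr_n$, both sides carrying a compatible $(\Z/n\Z)^2$-equivariant structure.

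Next, I would diagonalize the $(\Z/n\Z)^2$-translation action on $\SG_n$. Under this action both $\Dl_{\SG_n}$ and the drifted Kasteleyn matrix $\Kd_{\SG_n}$ decompose as direct sums indexed by characters $(z,w) = (e^{2\pi i j/n}, e^{2\pi i k/n})$, $0 \leq j, k < n$. The character-$(z,w)$ block is exactly the $(z,w)$-twisted fundamental domain matrix $\Dl(z,w)$ (resp. $\Kd(z,w)$), whose determinant equals $\CPl(z,w)$ (resp.\ $\CPd(z,w)$) by definition. Combining the matrix-tree theorem for $\Dl_{\SG_n}$ and the Kasteleyn formula for $\Kd_{\SG_n}$ with the equivariant Temperley bijection, one would match the two generating functions character by character, yielding $\CPl(z,w) = \CPd(z,w)$ for all $(z,w) \in \bigcup_{n \geq 1} \mu_n \times \mu_n$, a Zariski-dense subset of $\C^2$. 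Since both sides are Laurent polynomials of bounded degree, equality extends to all of $\C^2$.

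The main obstacle is justifying the equivariant Temperley bijection on the torus $\SG_n$. The classical statement in Theorem~\ref{thm:Temperley} uses simple connectedness of $\Go$ in a critical way: on $\SG_n$ the notion of a spanning tree is replaced by that of cycle-rooted spanning forests, whose cycles carry a homology class in $H_1(\mathbb{T}^2; \Z)$, while dimer configurations on the double graph $\Gdr_n$ come with a non-trivial height winding along non-contractible loops. In the undirected bipartite setting, the Kasteleyn formula on the torus is classically expressed as a combination of four determinants, one per spin structure, and one must then argue that these spin structure sums are absorbed by the characteristic polynomial construction. In our directed case, the non-symmetric weights of $\Dl$ should encode the homology data of CRSFs in the very same $z,w$-twist that the dimer side uses to encode the winding, so that a single-term equality $\det \Dl(z,w) = \det \Kd(z,w)$ emerges naturally with no spin-structure averaging; verifying this clean cancellation of signs (and picking a Kasteleyn phase convention on the double graph compatible with the direction of the drift) is the heart of the argument.
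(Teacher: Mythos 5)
First, a remark on scope: the paper does not prove this proposition at all --- it is imported verbatim as Proposition~3.1 of \cite{Wangru} --- so there is no internal proof to compare yours against, and your attempt has to stand on its own. As it stands, it has a genuine gap at its central step. The matrix-tree theorem applied to $\Dl$ on the finite torus $\SG/n\Z^2$ and the Kasteleyn formula applied to the corresponding double graph control only \emph{global} quantities: the determinant of the full toroidal operator factors as the product $\prod \det\Dl(z,w)$ (resp.\ a spin-structure combination of $\prod\det\Kd(z,w)$) over the characters $(z,w)$ with $z^n=w^n=1$, and equality of these products for every $n$ does not imply equality of the individual factors. ``Matching the two generating functions character by character'' is exactly the statement to be proved, and you offer no mechanism for extracting it from the global toroidal identities. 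The tool actually needed on the tree side is not the ordinary matrix-tree theorem but its line-bundle refinement (Forman's theorem) applied directly to the twisted operator $\Dl(z,w)$ on the fundamental domain: it expresses $\det\Dl(z,w)$ as a sum over oriented cycle-rooted spanning forests in which each non-contractible cycle of homology class $(a,b)$ contributes a factor of the form $1$ minus its monodromy $z^{a}w^{b}$ (times the cycle's edge weights), to be matched against the expansion of $\det\Kd(z,w)$ as a signed, homology-weighted sum over dimer configurations. (Your Zariski-density reduction from roots of unity is correct but unnecessary once one works with the twisted operators for arbitrary $(z,w)$.)

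Relatedly, you locate ``the heart of the argument'' in the wrong place. The spin-structure issue is indeed absorbed by working with the single determinant $\det\Kd(z,w)$ rather than the partition function, and the Kasteleyn phases on the double graph pose no new difficulty; that part is routine. The genuine obstruction --- discussed in Subsection~2.2.2 of \cite{Wangru}, and invisible in your write-up --- is that Temperley's correspondence on the torus is \emph{not} a bijection: a primal cycle-rooted spanning forest with $k$ non-contractible cycles is compatible with several dual configurations (of the order of $2^k$ of them), hence corresponds to several distinct dimer configurations of the double graph, generally carrying different homology weights. The ``clean single-term equality'' you hope for therefore fails configuration by configuration. The identity closes only because expanding the product of the $k$ Forman cycle factors on the Laplacian side produces exactly one monomial for each compatible dual configuration on the dimer side; carrying out this matching, signs included, is the actual content of Proposition~3.1 of \cite{Wangru} and is absent from your sketch.
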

\begin{comment}
\begin{Rem}
The fact that the characteristic polynomial of the dimer model and that of the spanning tree model are equal is not clear at first sight, because as explained in Subsection 2.2.2. of \cite{Wangru} the measure on CRSF of a $\Z^2$-periodic graph obtained by pushing forward the dimer measure via Temperley's bijection is not proportional to the product of weights of edges of the CRSF: there is a combinatorial factor (due to the fact that each CRSF $F$ have several dual forests, actually $2^k$ where $k$ is the number of non-trivial cycles of $F$) which favors OCRSF with more components.
\end{Rem}
\end{comment}
Combining this proposition with our Proposition~\ref{prop:harmonic:periodic} gives:
\begin{Prop}\label{prop:char:periodic}
	For all $(z,w) \in \C^2$,
	\begin{equation}
		\CPk(z,w) = \CPd\left(\frac{z}{z_0},\frac{w}{w_0}\right).
	\end{equation}
\end{Prop}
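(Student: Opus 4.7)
The plan is essentially to concatenate the two preceding propositions, since Proposition~\ref{prop:char:periodic} is a direct corollary of them. Concretely, I would first invoke the result relating the characteristic polynomials of the massive Laplacian $\Dk$ and of the Doob-transformed Laplacian $\Dl$ (stated just above in the proposition labeled \ref{prop:harmonic:periodic} that bears the determinant identity), which yields
\begin{equation}
    \forall (z,w) \in \C^2,~\CPk(z,w) = \CPl\left(\frac{z}{z_0},\frac{w}{w_0}\right).
\end{equation}
The key input here is that the gauge $\Lambda = D(\lambda)$ intertwines $\Dk$ and $\Dl$, and that $\Lambda$ sends $\Omega_0(z)$ to $\Omega_0(zz_0)$ since $\lambda$ is $z_0$-periodic; this was already established in the proof of that proposition, so no new argument is required.

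Next, I would apply Proposition 3.1 of \cite{Wangru}, recalled just above, which identifies the characteristic polynomial of the non-massive directed Laplacian $\Dl$ (with the periodic conductances $\cl$) with the characteristic polynomial of the associated drifted dimer model on the double graph via Temperley's bijection:
\begin{equation}
    \forall (z,w) \in \C^2,~ \CPl(z,w) = \CPd(z,w).
\end{equation}
Substituting $(z/z_0,w/w_0)$ for $(z,w)$ and combining with the previous display gives the claimed identity
\begin{equation}
    \CPk(z,w) = \CPl\left(\frac{z}{z_0},\frac{w}{w_0}\right) = \CPd\left(\frac{z}{z_0},\frac{w}{w_0}\right).
\end{equation}

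There is essentially no obstacle in the proof itself: the real content lies upstream, in Proposition~\ref{prop:harmonic:periodic} (existence of a positive $z_0$-periodic massive harmonic function $\lambda$ produced by the Perron--Frobenius argument) and in the fact that Temperley's bijection behaves well enough on the toroidal quotients $\SG/n\Z^2$ to yield the equality of characteristic polynomials in \cite{Wangru}. Once these are taken as given, the argument is a one-line concatenation, and the only thing one has to check is that the same choice of fundamental domain and the same embedding of $\Z^2$ are used consistently on both sides so that the shift of variables by $(z_0,w_0)$ is unambiguous.
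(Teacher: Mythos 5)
Your proposal is correct and matches the paper exactly: the paper gives no separate proof for this proposition but obtains it precisely by combining the identity $\CPk(z,w)=\CPl(z/z_0,w/w_0)$ from the preceding gauge-equivalence proposition with Proposition~3.1 of \cite{Wangru} stating $\CPl = \CPd$. Nothing further is needed.
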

This result can be seen as an extension of Temperley's bijection for infinite periodic RSF models, as it relates the characteristic polynomial of a RSF model with the characteristic polynomial of a dimer model. Observe that since the free energy of the models can be computed from the characteristic polynomial, this result can be seen as an infinite analog of Temperley's bijection at the level of partition functions. Proposition~\ref{prop:char:periodic} also sheds a new light on a result of Kenyon: Theorem 1.4. of \cite{DeterminantalForest} states among other things that
\begin{Thm}\label{thm:kenyon}
	The spectral curve $\{(z,w) \in \C^2~|~\CPk(z,w)=0\}$ associated with the RSF model is a simple Harnack curve.
\end{Thm}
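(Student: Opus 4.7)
The plan is to transfer the Harnack property from the drifted dimer model, where it is known, to the RSF model via the identity of characteristic polynomials already established in Proposition~\ref{prop:char:periodic}. Concretely, that proposition gives $\CPk(z,w) = \CPd(z/z_0, w/w_0)$ for some $(z_0,w_0) \in (\R_{>0})^2$, where $\CPd$ is the characteristic polynomial of a $\Z^2$-periodic bipartite dimer model (the drifted dimer model on the quotient double graph) with positive edge weights $\nud$. The positivity and $\Z^2$-periodicity of the weights follow from the positivity of $c$ together with the positive $z_0$-periodic massive harmonic function $\lambda$ produced by Proposition~\ref{prop:harmonic:periodic}: although $\lambda$ itself is only $z_0$-periodic, the associated conductances $\cl_{(x,y)} = \lambda(y)c_{(x,y)}/\lambda(x)$ are genuinely $\Z^2$-periodic, so $\nud$ descends to positive weights on the finite quotient double graph.

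Next, I would invoke the theorem of Kenyon--Okounkov--Sheffield, which asserts that the spectral curve of any $\Z^2$-periodic bipartite dimer model with positive edge weights is a simple Harnack curve. Applied directly to the drifted dimer model above, this gives that $\{(u,v) \in (\C^*)^2 : \CPd(u,v) = 0\}$ is a simple Harnack curve.

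Finally, I would conclude by noting that the simple Harnack property is invariant under the real torus action $(u,v) \mapsto (z_0 u, w_0 v)$ with $(z_0,w_0) \in (\R_{>0})^2$. Indeed, such a substitution translates the amoeba in $\R^2$ by $(\log z_0, \log w_0)$ and preserves the defining condition that the amoeba map $(z,w) \mapsto (\log|z|, \log|w|)$ be at most two-to-one onto its image, with real exceptional preimages above the boundary of the amoeba. Hence $\{\CPk = 0\}$, being the image of the Harnack curve $\{\CPd = 0\}$ under this substitution, is itself a simple Harnack curve. The only genuine obstacle in this chain is already resolved upstream, namely the construction of the positive periodic massive harmonic function $\lambda$ in Proposition~\ref{prop:harmonic:periodic}, which is what allows the Doob transform to convert the massive RSF operator into a non-massive drifted Laplacian and thus bring the KOS theorem to bear.
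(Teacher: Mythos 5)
Your argument is exactly the paper's: it derives the theorem from Proposition~\ref{prop:char:periodic} combined with the Kenyon--Okounkov--Sheffield result that spectral curves of periodic bipartite dimer models with positive weights are simple Harnack curves, observing that the substitution $(z,w)\mapsto(z/z_0,w/w_0)$ with $(z_0,w_0)\in(\R_{>0})^2$ merely translates the amoeba and so preserves the Harnack property. The proposal is correct and matches the paper's proof, including the remark that the only substantive input is the positive periodic massive harmonic function from Proposition~\ref{prop:harmonic:periodic}.
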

This could come as a surprise since the RSF model is not known to be in bijection with a dimer model while it is known by \cite{PlanarDimers} that every Harnack curve arises as the spectral curve of a dimer model (and conversely by \cite{KOS} the spectral curve of a periodic dimer model is always a Harnack curve). Proposition~\ref{prop:char:periodic} together with the result of \cite{KOS} give an alternative proof of Theorem~\ref{thm:kenyon} which fills a gap by introducing a dimer model corresponding to the Harnack curve (actually: a full family of dimer models, corresponding to all the positive massive harmonic $z$-periodic functions $\lambda$).

\section{Application to near-critical dimers}\label{sec:near-critical}
In this section, we study a near-critical dimer and RST model naturally related to a near-critical \lr{killed random walk} by the Doob transform technique and show convergence of the associated Temperleyan tree, extending the results of \cite{Berestycki} and answering in particular (iv) of their open questions \lr{section~1.7}. We introduce the model that generalizes their model to the case of isoradial graphs. We follow closely their proof to obtain analogous results and adapt the arguments when needed.

\subsection{General setting: near-critical drifted trees and dimers on isoradial graphs}\label{subsec:near-critical:preliminaries}
For the conventions, we follow as much as possible \cite{MassiveLaplacian} and \cite{DiscreteHolomorphy} for isoradial graphs and the massive Laplacian, and \cite{Berestycki} for dimers and trees. 
\subsubsection{Isoradial graphs, $Z$-invariant Laplacian and discrete exponential functions}
\paragraph{Isoradial lattice.} 
For each $\d >0$, consider an infinite \emph{isoradial grid} $\oSGd = (\oSVd, \oSEd)$ embedded in $\C$ with \emph{mesh} $\d$, that is an infinite connected planar graph such that all faces are circumscribed by circles of the same radius $\d$ in such a way that the center of the circles are in the interior of the faces. We use the same local parameters as in \cite{MassiveLaplacian}, see Figure \ref{fig:local_dimers}: for all $x \sim y \in \oSVd$, $x$ and $y$ are the opposite vertices of a lozenge of side length $\d$, and the two (vector) sides of the lozenge starting from $x$ are denoted by $\d e^{i\bar{\alpha}_{xy}}$ and $\d e^{i\bar{\beta}_{xy}}$ with $\bar{\alpha}_{xy}, \bar{\beta}_{xy} \in (-\pi, \pi]$ and $\bar{\beta}_{xy} - \bar{\alpha}_{xy} \in [0, \pi)$. We denote by $\bt_j$ the \emph{half angle} $\bt_{xy} = \left(\bar{\beta}_{xy}-\bar{\alpha}_{xy}\right)/2$. Note that $y-x = \d e^{i\bar{\alpha}_{xy}} +\d e^{i\bar{\beta}_{xy}}$.\par
We always make the so-called \emph{bounded angle assumption}: there exists an absolute constant $\eps \in (0, \pi/\lr{4})$ independent of $\d$ such that all half-angles $\bt_{xy}$ belong to $\left[\eps, \pi/2 - \eps\right]$.
\begin{figure}[!h]\centering
    \begin{overpic}[abs,unit=1mm,scale=1]{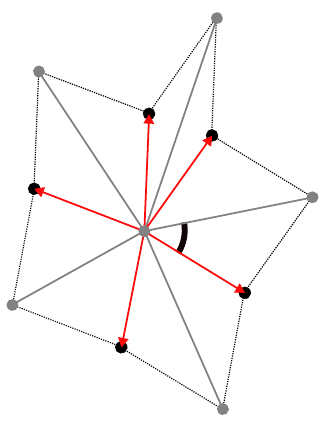}
        \put(27,37){\color{gray}$x$}
        \put(54.5,41){\color{gray}$y$}
        \put(35,29){\color{red}$\d e^{i\bar{\a}_{xy}}$}
        \put(32,43){\color{red}$\d e^{i\bar{\beta}_{xy}}$}
    \end{overpic}
    \caption{A point $x \in \oSVd$ with its neighbours in grey. All red arrows and dotted lines have length $\d$. The vertices of the dual graph are represented in black: they are the centers of the circumscribing circles (not drawn here).}
    \label{fig:local_dimers}
\end{figure}

\paragraph{Critical Laplacian.} The critical Laplacian $\Dd$ on $\oSGd$, first introduced in \cite{Kenyon_2002}, is defined by
\be\label{eq:def:non-massive:D}
	\forall f: \oSVd \to \C,~\forall x \in \oSVd,~(\oDd f)(x) = \sum_{y \sim x} \tan(\bt_{xy})(f(x) - f(y)). 
\ee
\lr{Note that our Laplacian differs from a negative multiplicative constant from the Laplacian of \cite{DiscreteHolomorphy}: if $D$ denotes the diagonal matrix with entries is $-\d^2\sum_{y \sim x}\sin(2\bt_{xy})/2$, their Laplacian is $D^{-1}\Dd$.} Define
\be\label{eq:def:Tdelta}
    \forall x \in \oSVd,~\forall \d > 0,~T_{\d}(x) := \frac{\sum_{y \sim x}\sin(2\bt_{xy})}{\sum_{y \sim x}\tan\big(\bt_{xy}\big)} \quad ; \quad T := \inf_{\d > 0, x \in \oSVd} T_{\d}(x) > 0.
\ee
The latter is positive by the bounded angle assumption. Unlike \cite{DiscreteHolomorphy}, we choose not to scale by $\d^2$, which is more natural in our setting. We denote by $\Xd$ the \lr{random walk} associated with $\Dd$, and by $\xid(x)$ the law of its increments: conditionally on $(\Xd)_n = x$, it is the law of $(\Xd)_{n+1}-(\Xd)_n$.
Equation (1.2) of \cite{DiscreteHolomorphy} writes, for all $x \in \oSVd$,
\be\label{eq:increments:critical}
    \E(\xid(x))=0 \quad ; \quad \E(\Re(\xid(x))^2) = \E(\Im(\xid(x))^2) = \d^2 T_{\d}(x) \quad ; \quad \E\big(\Re(\xid(x))\Im(\xid(x))\big) = 0.
\ee
As mentioned in \cite{DiscreteHolomorphy}, this implies that a certain time-reparametrization of $\Xd$ converges towards \lr{B}rownian motion.

\paragraph{$Z$-invariant Laplacian.} Let $k \in [0,1\lr{)}$ be a parameter called the \emph{elliptic modulus}, $k' = \sqrt{1-k^2}$ be the \emph{complementary elliptic modulus}, $K = K(k) = \int_0^{\pi/2}(1-k^2\sin^2(t))^{-1/2}dt$ and $E = E(k) = \int_0^{\pi/2}(1-k^2\sin^2(t))^{1/2}dt$ be the \emph{complete elliptic integral of the first and second kind}. Denote by $K' = K(k')$ and $E'=E(k')$ their \emph{complementary}. For fixed $k \in [0,1\lr{)}$ and any angle $\bt \in \R$, we denote by $\theta = 2K\bt/\pi$ the associated \emph{abstract angle}. To an elliptic modulus $k$ is associated a massive Laplacian $\Dkd$ (and hence a \lr{killed random walk} $\Skd$) by \cite{MassiveLaplacian}:
\begin{equation}\label{def:Zinv:laplacian}
    \forall f: \oSVd \to \C,~\forall x \in \oSVd,~\Dkd f(x) =
    m^2(x|k)f(x) + \sum_{y \sim x}\sc(\theta_{xy}|k)(f(x)-f(y))
\end{equation}
where the conductance function $\oc_{xy} := \sc(\theta_{xy}|k)$ is the Jacobian elliptic function defined in (22.2.9) of \cite{DLMF} and the mass function is
\begin{equation}
    \om(x) := m^2(x|k) = \sum_{y \sim x} \left[\frac{1}{k'}\left(\int_0^{\theta_{xy}}\dc^2(v|k)dv+\frac{E-K}{K}\theta_{xy}\right)-\sc(\theta_{xy}|k)\right],
\end{equation}
with $\dc$ the Jacobian elliptic function defined in (22.2.8) of \cite{DLMF}. We will not need this explicit expression of the mass and provide it only for completeness. The positivity of the mass function $m^2(x|k)>0$ for all $x \in \oSVd$ is established in Proposition 6 of \cite{MassiveLaplacian}.\par
At $k=0$, the massive Laplacian $\Dkd$ coincides with the critical Laplacian $\Dd$ of Equation~\eqref{eq:def:non-massive:D}: $m(\cdot|0) = 0$ and $\sc(\cdot|0) = \tan(\cdot)$.  
\begin{Rem}
    The notation might be a bit confusing since the $k$ of the elliptic modulus is not the same as the superscript $\mathrm{k}$ of $\Dk$ which means killing. Since $k=0$ corresponds to $\Dk$ being a non-massive Laplacian, we find it rather convenient.
\end{Rem}
\paragraph{The discrete massive exponential functions.} A very useful property of the massive Laplacian is the existence of an explicit family of multiplicative local massive harmonic functions: we recall its definition and summarize some of its properties from \cite{MassiveLaplacian}.
\begin{Prop}\label{prop:me}
    Let $k \in [0,1\lr{)}$, $\bu \in \C$. The \emph{discrete massive exponential function} with parameter $\bu$ is defined for all $x,y \in \oSVd$ and any path $x = x_0,\dots, x_n = y$ in $\oSGd$ from $x$ to $y$ by
    \be\label{eq:def:me}
        \me_{(x_j,x_{j+1})}(u|k) := -k'\sc\Big(\frac{u-\alpha_{x_j x_{j+1}}}{2}\Big|k\Big)\sc\Big(\frac{u-\beta_{x_j x_{j+1}}}{2}\Big|k\Big), \quad \me_{(x,y)}(u|k) := \prod_{j=0}^{n-1}\me_{(x_j,x_{j+1})}(u|k)
    \ee
    It is massive harmonic in both variables: for any fixed $x \in \oSVd$,
    \begin{equation}
        \Dkd \me_{(\cdot, x)}(u|k) = \Dkd \me_{(x, \cdot)}(u|k) = 0.
    \end{equation}
    Moreover, when $\bu \in \R$, the discrete massive exponential function evaluated at $u-2K-2iK'$ is positive:
    \begin{equation}
        \forall \bu \in \R,~\forall x,y \in \oSVd,~\me_{(x,y)}(u-2K-2iK'|k) \in \R_{>0}.
    \end{equation}
\end{Prop}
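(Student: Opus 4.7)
The proposition assembles three properties of $\me$ that are established in \cite{MassiveLaplacian}; the plan is to recall the key ideas behind each.

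The first claim is that the product in \eqref{eq:def:me} is path-independent. Since $\oSGd$ is connected, it suffices to check that the product of edge factors around each face of $\oSGd$ equals $1$. Around a fixed face, all the rhombi crossing its boundary share the same dual vertex (the center of the circumscribing circle), so the parameters $(\alpha_e, \beta_e)$ of successive edges telescope around this common center. Combined with the addition formula for $\sc$, this gives the required identity; the details are carried out in \cite{MassiveLaplacian}.

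For massive harmonicity in the second variable, write $f(y) = \me_{(x_0,y)}(u|k)$. The multiplicative relation $f(y') = f(y)\me_{(y,y')}(u|k)$ for $y' \sim y$ allows us to factor out $f(y)$ in $(\Dkd f)(y)$, so massive harmonicity at $y$ becomes equivalent to the local identity
\begin{equation*}
\sum_{y' \sim y}\sc(\theta_{yy'}|k)\me_{(y,y')}(u|k) = m^2(y|k) + \sum_{y'\sim y}\sc(\theta_{yy'}|k).
\end{equation*}
The crucial point is that the left-hand side, a priori a function of $u$, is in fact $u$-independent and equals the right-hand side. This is precisely the identity that motivates the definition of $m^2(y|k)$ in the statement, and it is proved in \cite{MassiveLaplacian} by a direct computation with Jacobi elliptic functions. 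Massive harmonicity in $x$ follows from an analogous local identity at $x$, using the relation $\me_{(x,y)}(u|k)\me_{(y,x)}(u|k) = 1$ which one checks directly from \eqref{eq:def:me} and the quarter-period shifts below.

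For positivity, one derives a closed-form expression edge by edge. Using the Jacobi quarter-period identities
\begin{equation*}
\sn(v+K|k) = \tfrac{\cn(v|k)}{\dn(v|k)},\quad \cn(v+K|k) = -\tfrac{k'\sn(v|k)}{\dn(v|k)},\quad \dn(v+K|k) = \tfrac{k'}{\dn(v|k)},
\end{equation*}
together with the imaginary shifts $\sn(w+iK'|k)=1/(k\sn(w|k))$ and $\cn(w+iK'|k)=-i\dn(w|k)/(k\sn(w|k))$, a short computation yields
\begin{equation*}
\sc(v-K-iK'|k) = -\tfrac{i\,\dn(v|k)}{k'}.
\end{equation*}
Substituting into \eqref{eq:def:me} collapses the edge factor to
\begin{equation*}
\me_{(x_j,x_{j+1})}(u-2K-2iK'|k) = \tfrac{1}{k'}\,\dn\!\Big(\tfrac{u-\alpha_{x_j x_{j+1}}}{2}\Big|k\Big)\dn\!\Big(\tfrac{u-\beta_{x_j x_{j+1}}}{2}\Big|k\Big),
\end{equation*}
which is positive for $u \in \R$ since $\dn(\cdot|k) > 0$ on $\R$ when $k \in [0,1)$. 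Taking the product along a path yields positivity of $\me_{(x,y)}(u-2K-2iK'|k)$.

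The main obstacle is the massive harmonicity: path-independence is a planarity argument collapsing face-by-face, and positivity reduces to a mechanical application of quarter-period identities, but the local harmonicity identity is nontrivial because it relies on the remarkable $u$-independence of a sum of elliptic function values, for which the explicit form of $m^2(y|k)$ is specifically engineered.
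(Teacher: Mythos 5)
Your proposal matches the paper's treatment: path-independence and massive harmonicity are deferred to Lemma 9 and Proposition 11 of \cite{MassiveLaplacian} (your sketches of the underlying face-telescoping and local-identity arguments are accurate), and your positivity argument is the same computation the paper gives, namely the quarter-period shift identity $\sc(v-K-iK'|k) = -i\,\dn(v|k)/k'$ collapsing each edge factor to $\tfrac{1}{k'}\dn\big(\tfrac{u-\alpha}{2}\big|k\big)\dn\big(\tfrac{u-\beta}{2}\big|k\big)$, which is positive since $\dn(\cdot|k)>0$ on $\R$ for $k\in[0,1)$. No gaps.
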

Equation~\eqref{eq:def:me} is the definition (see Definition 3.3 of \cite{MassiveLaplacian}): the fact that the definition does not depend of the path is their Lemma 9. Observe that they define $\me$ and prove that it is multiplicative on the full diamond graph (i.e. the union of $\oSGd$ and its dual), but we will not need it here so we restrict the definition to $\oSVd$. The massive harmonicity is their Proposition 11. The positivity is known but we could not find a reference so we provide a short proof:
\begin{proof}
    Fix $\bu \in \R$. It is enough to prove that for all $x \sim y \in \oSVd$, $\me_{(x,y)}(u-2K-2iK'|k) \in \R_{>0}$. By the change of variable formula for elliptic functions Equations (2.2.11)-(2.2.13) and (2.2.17)-(2.2.19) of \cite{Lawden} we can write
    \be\label{ScDn}
	i k' \sc\Big(\frac{u-2K-2iK'-\alpha_{xy}}{2}\Big|k\Big) = \dn\Big(\frac{u-\alpha_{xy}}{2}\Big|k\Big)
    \ee
    where $\dn$ is the Jacobian elliptic function defined in (22.2.6) of \cite{DLMF}. Since the same holds for $\beta_{xy}$,
    \be\label{eq:me:neighbours}
        \me_{(x,y)}(u-2K-2iK'|k) = \frac{1}{k'}\dn\Big(\frac{u-\alpha_{xy}}{2}\Big|k\Big)\dn\Big(\frac{u-\beta_{xy}}{2}\Big|k\Big).
    \ee
    This concludes the proof since $\dn$ is positive on $\R$. Indeed, on the explicit expression (22.2.6) of $\dn$ in \cite{DLMF}, we see that $\dn(0)=1$ and that $\dn$ is the quotient of two real continuous functions which do not vanish on the real line (by (20.2.(iv)) of \cite{DLMF}).
\end{proof}

\subsubsection{Isoradial approximation of an open set} For the rest of the article, we fix a simply connected open set $\Omega$. \lr{Recall that $\oSGd$ is an infinite isoradial grid embedded in $\C$ with \emph{mesh} $\d$}. We say that a sequence of subgraphs $\SGd$ of $\oSGd$ induced by finite subsets of vertices $\SVd \subset \oSVd$ \emph{approximates} $\Omega$ if the domains $\hSGd$ obtained by gluing together all lozenges with two opposite vertices in $\SVd$ converge towards $\Omega$ in the Caratheodory topology. In other words, every $x \in \Omega$ belongs to $\hSGd$ for $\d$ small enough, and every boundary point $a \in \partial \Omega$ can be approximated by a sequence $\ad \in \partial \SVd$ (see \cite{Berestycki}, Section 3.1).

We can construct an approximating sequence $\SGd$ by keeping the largest connected component of the subgraph induced by $\oSVd \cap \Omega$ (see also Section 1.1 of \cite{FermionicObservable}). For the rest of the article, we consider a fixed approximating sequence $(\SGd)_{\d>0}$ of $\Omega$. \lr{An example is drawn on Figure \ref{fig:isoradial:approx}.}

\begin{figure}[!h]\label{fig:isoradial:approx}\centering
    \begin{overpic}[abs,unit=1mm,scale=3]{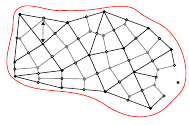}
    \put(80,54){\color{red}\huge$\Omega$}
    \put(23,46){$\delta$}
    \put(90,23){$r$}
    \end{overpic}
    \caption{An isoradial approximation $\SGd$ of $\Omega$ in grey, and the associated Temperleyan graph. The dual $\SGd^{\star}$ with a vertex $r$ removed is drawn in black. The additional white vertices of $\Gdr_{\d}$ are in white.}
    \label{fig:U}
\end{figure}

Recall that the restriction $(\Dkd)_{\SVd}$ is the Laplacian on $\SGd$ associated with the conductance and mass functions $(c,m)$ obtained by restricting $(\oc, \om)$ to $\SGd$ with wired boundary conditions. In other words, for all $xy \in \SEd$, $c_{xy} = \sc(\theta_{xy}|k)$ and the mass function
\begin{equation}
    \forall x \in \SVd,~m(x) = m^2(x|k) + \sum_{y \overset{\oSGd}{\sim} x,~y \notin \SVd} \sc(\theta_{xy}|k)
\end{equation}
differs from $m^2(x|k)$ only on the boundary.

\subsubsection{Application of the Doob transform technique and introduction of the model}
\paragraph{Doob transform.}
For the rest of the article, fix $\bu \in \R$ which we call the \emph{drift} parameter (for reasons that will become clear later). Recall that the complete integral of the first kind $K$ and and its complementary $K'$ are real constants depending only on $k$. For all $\d >0$, fix $x_0 \in \oSVd$ arbitrary and define
\begin{equation}
    \forall x \in \oSVd,~\lambda^u(x) = \me_{(x_0,x)}(u-2K-2iK'|k).
\end{equation}
By Proposition~\ref{prop:me}, $\lambda^u$ is a positive massive harmonic function for $\Dkd$ on $\oSVd$, so in particular it is massive harmonic on $\SVd$ and we can apply the Doob transform technique (this is exactly the setting mentioned in the comments after Proposition~\ref{prop:find:lambda:finite}). Recall that the Doob transform $\tSd$ of the \lr{killed random walk} $\Skd$ is the \lr{random walk} on $\oSGd$ with conductances $\ocl_{(x,y)} = \frac{\lambda^u(y)}{\lambda^u(x)}\sc(\theta_{xy}|k)$, and denote by $\Dld$ the non-massive Laplacian associated with these conductances. Denote by $\Lambda$ the diagonal matrix with rows and columns indexed by $\oSVd$ and entries $\lambda^u$ on the diagonal. Proposition \ref{prop:doob} implies
\begin{equation}
    (\Dld)_{\SVd} = \Lambda_{\SVd}^{-1}(\Dkd)_{\SVd}\Lambda_{\SVd}.
\end{equation}

\paragraph{RSF and dimers.}
To the massive Laplacian $(\Dkd)_{\SVd}$ is associated a RSF model on $\SGd$ with conductance and mass functions $(c,m)$. As was already mentioned, for $k >0$, the mass function $m$ is positive at all $x \in \SVd$: this is a typical example where the graph $\SGd^{\rho}$ of the forest-tree bijection is highly non-planar. To $(\Dld)_{\SVd}$ is associated a RST model on $\SGd^o$ (obtained by identifying all vertices in $\oSVd \setminus \SVd$ to $o$, see Figure \ref{fig:Go}) rooted at $o$ weighted by $\cl$ (see around Equation~\eqref{eq:mass:wired} for details).\par
As in Section~\ref{sec:planar}, fix an arbitrary vertex $r \in \partial \SVd^{\star}$ on the boundary of $\SGd^{\star}$, the restricted dual of $\SGd^o$. Recall from Section~\ref{SubsecTemperley} the definition of the double graph with $o$ and $r$ removed $\Gdr_{\d}$ associated with $\SGd$ (see Figure \ref{fig:U}). By Temperley's bijection and our Definition \ref{def:dimer:drifted} of the drifted dimer model, RST of $\SGd^o$ weighted by $\clo$ are in weight-preserving bijection with dimer configurations of the drifted dimer model on $\Gdr_{\d}$. The local weights for the RST model and the drifted dimer model away from the boundary are depicted in Figure \ref{fig:oriented:tree}. 
\begin{figure}[!h]\centering        
	\begin{overpic}[abs,unit=1mm,scale=1]{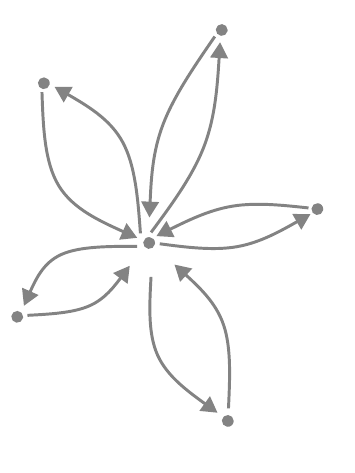}
		\put(55,42){\color{gray}$y$}
		\put(25,36){\color{gray}$x$}
		\put(40,35){\color{gray}$\tilde{c}_{(x,y)}$}
		\put(38,47.5){\color{gray}$\tilde{c}_{(y,x)}$}
	\end{overpic}
	\begin{overpic}[abs,unit=1mm,scale=1]{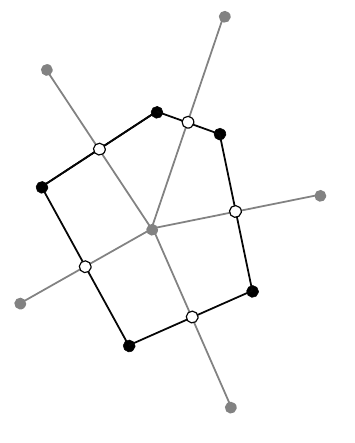}
		\put(57,41){\color{gray}$y$}
		\put(24,32){\color{gray}$x$}
		\put(43,31){$1$}
		\put(40,45){$1$}
		\put(31,40){\color{gray}$\tilde{c}_{(x,y)}$}
		\put(45,37){\color{gray}$\tilde{c}_{(y,x)}$}
        \end{overpic}
        \caption{On the left: the conductances $\cl$ around a vertex $x \in \SVd$ for the RST model on $\SGd^o$ rooted at $o$. On the right: the weights for the drifted dimer model on $\Gdr_{\d}$. The asymptotics of $\cl$ in the near-critical regime are given in Equation~\eqref{eq:cl:explicit} in terms of the local parameters.}
        \label{fig:oriented:tree}
\end{figure}

\subsubsection{Near-critical regime}\label{subsec:near-critical} We will study these models in the near-critical regime where the elliptic modulus $k$ goes to $0$ appropriately with $\delta$. Intuitively, it corresponds to the mass being of order $\d^2$ at every point (see the forthcoming Lemma~\ref{lem:mass}): it is an interesting regime where the probability that $\Skd$ leaves $\SVd$ before dying is bounded away from $0$ and $1$, so the shape of $\Omega$ has a non-trivial effect on the model.

More precisely, let $q = q(k) = \exp(-\pi K'/K)$ be the \emph{nome} associated with the elliptic parameter $k$. The \emph{near-critical regime} corresponds to $\delta \to 0$ and $q = \frac{1}{2}M\d$ where $M \in \R_{>0}$ is a real positive parameter called the \emph{mass parameter} fixed in the rest of the article (as in \cite{FermionicObservable}). In what follows, we use only one scale parameter $\d$, and write implicitly $q = q(\d)$, $k = k(\d)$. In this regime, we can compute the asymptotics of several of the quantities defined before.
\begin{Lem}\label{lem:asymp:near-critical}
    In the near-critical regime $q = \frac{1}{2}M\d$ with $\d \to 0$,
    \be
    \forall \bt \in \R,~
    \left\{
    \begin{array}{ll}
        k^2 &= 8M\d - 32M^2\d^2 + O(\d^3)\\
        \theta &= (1+2M\d+ M^2\d^2 + O(\d^3))\bt\\
        \sc(\theta|k) &= (1+2M\d+O(\d^2))\tan(\bt).
    \end{array}
    \right.
    \ee
    Moreover, for all $\bu \in \R$,
    \begin{equation}
        \forall x, y \in \oSVd,~\me_{(x,y)}(u-2K-2iK'|k) = \exp\left(2M\langle e^{i\bu}, y - x \rangle\right) + O(\delta^2|y - x|) 
    \end{equation}
    where $|y-x|$ is the \lr{E}uclidean distance on $\R^2$ and $\exp: \R \to \R$ is the usual exponential function. These asymptotics hold uniformly in the isoradial grid $\oSGd$ (given the bounded angle assumption), the angle $\bt \in \R$, the parameter $\bu \in \R$ and the points $x,y \in \oSVd$.
\end{Lem}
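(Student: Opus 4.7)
The whole lemma is a chain of elliptic-function expansions in the parameter $k$, coupled by the relation $q=\tfrac12 M\d$, so my strategy is to peel off each identity in the order it is stated and ensure that the error accounting at one step is precise enough to feed into the next. The first three bullets are short. For $k^2$ I would invoke the classical nome series $k^2=16q-128q^2+704q^3-\cdots$ (see e.g.~22.3 of \cite{DLMF}) and substitute $q=\tfrac12 M\d$, which immediately gives $k^2=8M\d-32M^2\d^2+O(\d^3)$. For $\theta=2K\bt/\pi$ I would use the hypergeometric series $K=\tfrac{\pi}{2}\bigl(1+\tfrac{k^2}{4}+\tfrac{9k^4}{64}+O(k^6)\bigr)$; plugging in $k^2$ from the previous step yields $\tfrac{2K}{\pi}=1+2M\d+(-8+9)M^2\d^2+O(\d^3)=1+2M\d+M^2\d^2+O(\d^3)$, which is exactly what is claimed.

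For the $\sc(\theta|k)$ asymptotic I would combine a first-order Taylor expansion in $k^2$ at fixed $\theta$ with the change of argument $\theta=\bt(1+2M\d)+O(\d^2)$. From $\partial_{k^2}\mathrm{sn}(\theta|k)|_{k=0}=-\tfrac14(\theta-\sin\theta\cos\theta)\cos\theta$ and the analogous formula for $\mathrm{cn}$ (classical; e.g.~\cite{Lawden}), one gets $\partial_{k^2}\sc(\theta|k)|_{k=0}=-\tfrac{\theta-\sin\theta\cos\theta}{4\cos^2\theta}$. Writing $\sc(\theta|k)=\tan\theta+k^2\,\partial_{k^2}\sc(\bt|k)|_{k=0}+O(k^4)$, expanding $\tan\theta=\tan\bt+2M\d\,\bt\sec^2\bt+O(\d^2)$, and observing the cancellation
\[
\tan\bt+2M\d\,\bt\sec^2\bt+8M\d\cdot\Bigl(-\tfrac{\bt-\sin\bt\cos\bt}{4\cos^2\bt}\Bigr)=(1+2M\d)\tan\bt,
\]
I obtain the third identity.

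The only genuinely delicate step is the asymptotic of $\me$. Here I would start from the identity \eqref{eq:me:neighbours}, which already reduces the problem, for $x\sim y$, to expanding $\tfrac{1}{k'}\dn(z_\a|k)\dn(z_\b|k)$ with $z_{\a/\b}=(u-\a_{xy})/2=K(\bu-\ba_{xy})/\pi$, similarly for $\b$. The key point is that the announced error $O(\d^2|y-x|)$ forces the per-edge $\log$-error to be $O(\d^3)$, so both sides must be matched through order $\d^2$. Using $-\log k'=\tfrac{k^2}{2}+\tfrac{k^4}{4}+O(k^6)=4M\d+O(\d^3)$ (the $O(\d^2)$ terms from $\tfrac{k^2}{2}$ and $\tfrac{k^4}{4}$ cancel), together with $\log\dn(z|k)=-\tfrac{k^2}{2}\sin^2 z+O(k^4)$ and the shift $z=\bar z(1+2M\d)+O(\d^2)$ where $\bar z=(\bu-\ba_{xy})/2$, one checks after the trigonometric identity $2\sin^2(\bar z)=1-\cos(\bu-\ba_{xy})$ that
\[
\log\Bigl(\tfrac{1}{k'}\dn(z_\a|k)\dn(z_\b|k)\Bigr)=2M\d\bigl(\cos(\bu-\ba_{xy})+\cos(\bu-\bt_{xy})\bigr)+O(\d^3),
\]
the right-hand side being precisely $2M\langle e^{i\bu},y-x\rangle=\log\exp(2M\langle e^{i\bu},y-x\rangle)$ since $y-x=\d e^{i\ba_{xy}}+\d e^{i\bt_{xy}}$. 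The final step is to iterate along an arbitrary path $x=x_0,\ldots,x_n=y$ of $\oSGd$: by the multiplicativity \eqref{eq:def:me} of $\me$ and of the complex exponential, the log-differences add, so $|\log\me_{(x,y)}-2M\langle e^{i\bu},y-x\rangle|=O(n\d^3)=O(\d^2|y-x|)$ because $n=O(|y-x|/\d)$, which converts back multiplicatively to the stated error. All bounds depend only on the bounded angle assumption since the constants in each elliptic expansion are uniform in $\bt\in[\eps,\pi/2-\eps]$ and in $\bu\in\R$ (on compact subsets). The main obstacle throughout is the bookkeeping at order $\d^2$: it is tempting to stop at $O(\d)$, which would only give error $O(\d|y-x|)$; the cancellations of the $\d^2$ contributions in $-\log k'$ and in the interplay between the Taylor expansion of $\dn$ and the shift of its argument are what make the sharper bound hold.
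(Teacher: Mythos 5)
Your treatment of the first three identities is correct. For $k^2$ and $\theta$ your computation is essentially the paper's (the paper reads both off the theta-constant expressions in the nome rather than the hypergeometric series for $K$, but the arithmetic is identical). For $\sc(\theta|k)$ you take a genuinely different route: a Taylor expansion in $k^2$ at fixed argument combined with the shift $\theta=(1+2M\d)\bt+O(\d^2)$, whereas the paper reads the answer off the $q$-series of $\sc$, whose argument is already the real angle $\bt$. Your cancellation $2M\d\,\bt\sec^2\bt-2M\d\,\tfrac{\bt-\sin\bt\cos\bt}{\cos^2\bt}=2M\d\tan\bt$ is right, and since only first order in $\d$ is needed there, both routes work (under the bounded angle assumption, as you note).

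The expansion of $\me$ is where there is a genuine gap. As you yourself observe, the target error $O(\d^2|y-x|)$ forces the per-edge logarithmic error to be $O(\d^3)$, i.e., everything must be controlled through order $\d^2$. You do this for $-\log k'$ (the cancellation of the $\d^2$ contributions of $k^2/2$ and $k^4/4$ is correct), but not for the factors $\dn(z_\a|k)$: the truncation $\log\dn(z|k)=-\tfrac{k^2}{2}\sin^2 z+O(k^4)$ carries an error $O(k^4)=O(\d^2)$ per edge, and in addition the argument shift $z_\a=(1+2M\d+O(\d^2))\bar z_\a$ contributes $-\tfrac{k^2}{2}\bigl(\sin^2 z_\a-\sin^2\bar z_\a\bigr)=-8M^2\d^2\,\bar z_\a\sin(2\bar z_\a)+O(\d^3)$, a second uncancelled $O(\d^2)$ term which is moreover secular in $\bar z_\a=(\bu-\ba_{xy})/2$ and therefore threatens the claimed uniformity in $\bu\in\R$ (your parenthetical ``on compact subsets'' betrays this; an extra periodicity argument would be needed). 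Since the final identity is true, these $\d^2$ contributions, together with the $+16M^2\d^2\sin^2\bar z_\a$ coming from $k^2=8M\d-32M^2\d^2+O(\d^3)$, must conspire to cancel — but you neither compute the $k^4$ coefficient of $\dn$ (which itself contains secular terms) nor verify the cancellation, so the displayed per-edge conclusion ``$=2M\d(\cos(\bu-\ba_{xy})+\cos(\bu-\bar\beta_{xy}))+O(\d^3)$'' does not follow from the displayed ingredients; as written you only get a per-edge error $O(\d^2)$ and hence a total error $O(\d\,|y-x|)$. The clean repair — and what the paper does — is to use the $q$-series $\dn(z|k)=\sqrt{k'}\,\frac{1+2q\cos(2\zeta)+O(q^3)}{1-2q\cos(2\zeta)+O(q^3)}$ with $\zeta=\pi z/(2K)$: because $u$ and $\alpha_{xy}$ are abstract angles, $\zeta$ equals exactly $(\bu-\ba_{xy})/2$ with no $O(\d)$ shift, the prefactor $\sqrt{k'}$ cancels the $1/\sqrt{k'}$ exactly, and the remaining quotient equals $\exp(4q\cos(\bu-\ba_{xy}))+O(q^3)$ with an error that is uniform in $\bu\in\R$ since only bounded periodic functions appear. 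With that substitution your path-multiplication step, which is the same as the paper's, goes through unchanged.
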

\begin{Lem}\label{lem:mass}
    In the near-critical regime $q = \frac{1}{2}M\delta$ with $\d \to 0$, uniformly in \lr{$x$}, 
    \begin{equation}
        \forall x \in \oSVd,~m^2(x|k) = 2M^2\d^2 \sum_{y\sim x} \sin(2\bt_{xy}) +O(\d^3). %= 2M^2 \d^2 \sum_{j=1}^n \sin(2\bt_j) + O(\d^3).
    \end{equation}
\end{Lem}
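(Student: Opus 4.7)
My plan is to avoid expanding the explicit elliptic-function expression for $m^2(x|k)$ directly and instead exploit the massive harmonicity of the discrete exponential function $\me$ combined with its near-critical asymptotic from Lemma~\ref{lem:asymp:near-critical}. Fix any $\bu\in\R$ and set $\me(\cdot):=\me_{(x_0,\cdot)}(u-2K-2iK'|k)$. By Proposition~\ref{prop:me}, $\me$ is positive and $\Dkd\me=0$, so rearranging the equation $\Dkd\me(x)=0$ yields
\[
    m^2(x|k)=\sum_{y\sim x}\sc(\theta_{xy}|k)\Big(\tfrac{\me(y)}{\me(x)}-1\Big).
\]
By multiplicativity, $\me(y)/\me(x)=\me_{(x,y)}(u-2K-2iK'|k)$, and Lemma~\ref{lem:asymp:near-critical} together with a Taylor expansion of the ordinary exponential (using $|y-x|=O(\d)$) gives
$\me(y)/\me(x)-1=2M\langle e^{i\bu},y-x\rangle+2M^2\langle e^{i\bu},y-x\rangle^2+O(\d^3)$,
while $\sc(\theta_{xy}|k)=(1+2M\d)\tan(\bt_{xy})+O(\d^2)$. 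Multiplying, summing, and keeping terms of order up to $\d^2$, I would obtain
\[
    m^2(x|k)=2M(1+2M\d)\sum_{y\sim x}\tan(\bt_{xy})\langle e^{i\bu},y-x\rangle+2M^2\sum_{y\sim x}\tan(\bt_{xy})\langle e^{i\bu},y-x\rangle^2+O(\d^3),
\]
the remainder being uniform in $x$ thanks to the bounded-angle assumption.

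The next step is to evaluate these sums via two elementary telescoping identities on the isoradial star at $x$. Labelling the dual vertices around $x$ by their angles $\gamma_0<\gamma_1<\cdots<\gamma_{n-1}$, the $i$-th neighbour satisfies $y_i-x=\d(e^{i\gamma_{i-1}}+e^{i\gamma_i})$ and $\bt_{xy_i}=(\gamma_i-\gamma_{i-1})/2$. Using the identity $\tan(a)(e^{ib}+e^{ic})=-i(e^{ic}-e^{ib})$ with $a=(c-b)/2$ (derived from $e^{ib}+e^{ic}=2\cos(a)e^{i(b+c)/2}$), the first sum telescopes:
\[
    \sum_{y\sim x}\tan(\bt_{xy})(y-x)=-i\d\sum_i(e^{i\gamma_i}-e^{i\gamma_{i-1}})=0.
\]
For the second sum, I would write $y-x=2\d\cos(\bt_{xy})e^{im_{xy}}$ with $m_{xy}=(\gamma_{i-1}+\gamma_i)/2$, so that $\langle e^{i\bu},y-x\rangle^2=2\d^2\cos^2(\bt_{xy})(1+\cos(2\bu-2m_{xy}))$; this contributes exactly $2M^2\d^2\sum_{y\sim x}\sin(2\bt_{xy})$ plus a $\bu$-dependent piece, and a second telescoping identity $\sum_{y\sim x}\sin(2\bt_{xy})e^{\pm 2im_{xy}}=\frac{1}{2i}\sum_i(e^{\pm 2i\gamma_{i-1}}-e^{\pm 2i\gamma_i})=0$ kills that piece (as it must, since $m^2(x|k)$ does not depend on $\bu$), yielding the claim.

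The main obstacle, and the only nontrivial step, is establishing the two telescoping identities; both follow from the fact that consecutive rhombi around $x$ share a side, i.e.\ $\bar\beta_{xy_i}=\bar\alpha_{xy_{i+1}}=\gamma_i$. The forced cancellation of all $\bu$-dependent contributions serves as a useful consistency check. Uniformity in $\d$ and in $x\in\oSVd$ of the $O(\d^3)$ remainder is ensured by the bounded-angle assumption, which bounds vertex degrees and controls the implicit constants in both Lemma~\ref{lem:asymp:near-critical} and the Taylor expansions used above.
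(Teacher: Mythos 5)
Your proof is correct, and it shares the paper's central idea --- exploit the massive harmonicity $\Dkd\me=0$ together with the near-critical asymptotics of $\me$ from Lemma~\ref{lem:asymp:near-critical}, rather than expanding the elliptic-integral formula for $m^2(x|k)$ --- but it finishes differently. The paper evaluates $(\oDkd e^{M,\bu})(x)$ by applying the approximation property (Lemma~\ref{LemApproxMassless}, imported from \cite{DiscreteHolomorphy}, extended to the massive case in Lemma~\ref{lem:approx:M:constant}) to the smooth function $e^{M,\bu}(y)=\exp(2M\langle e^{i\bu},y-x\rangle)$, which satisfies $\Delta e^{M,\bu}=4M^2e^{M,\bu}$; the coefficient $\frac{\d^2}{2}\sum_{y\sim x}\sin(2\bt_{xy})$ then drops out immediately. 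You instead solve $\Dkd\me(x)=0$ for $m^2(x|k)$, Taylor-expand, and prove by hand the two star identities $\sum_{y\sim x}\tan(\bt_{xy})(y-x)=0$ and $\sum_{y\sim x}\sin(2\bt_{xy})e^{\pm 2i m_{xy}}=0$ via telescoping over the shared rhombus sides. These identities are precisely the mean-zero and isotropic-second-moment statements recorded in Equation~\eqref{eq:increments:critical} (Equation (1.2) of \cite{DiscreteHolomorphy}), i.e.\ the content of the approximation property at second order; so your argument re-derives the needed input rather than citing it. The benefit is a self-contained, purely elementary proof with the $\bu$-independence of the outcome as a built-in consistency check; the cost is that you redo a computation the paper deliberately black-boxes (its proof is explicitly presented as a ``shortcut'' avoiding exactly these expansions). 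Both routes give the stated uniformity, yours via the bounded-angle control of degrees and of the implicit constants, the paper's via the uniformity already built into Lemmas~\ref{lem:asymp:near-critical} and~\ref{lem:approx:M:constant}.
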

\begin{Rem}\label{rem:mass:positive}
In the bounded angle assumptions, the $\sin(\bt_j)$ are positive and bounded away from $0$, so the mass is of order $\d^2$ (uniformly in $x, \d$). 
\end{Rem}
We will prove Lemma~\ref{lem:mass} in Appendix~\ref{subsec:asymp:mass} using only the massive harmonicity of the discrete exponential and the approximation property of the Laplacian (which will be introduced only in the next section). We now prove Lemma~\ref{lem:asymp:near-critical}.
\begin{proof}
Using the expression of the elliptic modulus in terms of the nome and the theta functions (see (22.2.2) of \cite{DLMF}):
\begin{equation}
k^2 %= \frac{\theta_2^4(0,q)}{\theta_3^4(0,q)} 
= \left(\frac{2q^{1/4}(1+O(q^2))}{1+2q + O(q^2)}\right)^4
= 16q(1+O(q^2))(1-8q + O(q^2)) = 16q - 128q^2 + O(q^3).
\end{equation}
Moreover, by (22.2.2) of \cite{DLMF} and our definition of the abstract angle,
\be\label{eq:bar:angles}
    \forall \bt \in \R,~\frac{\theta}{\bt} = \frac{2K}{\pi} = (1+2q + O(q^3))^2 = 1+2M\d+ M^2\d^2 + O(\d^3).
\ee
\lr{uniformly in $\bt$.} The asymptotic of the conductances can be computed using (22.2.9) of \cite{DLMF}. Observe that for $\bt \in \R$, their notation $\zeta(\t) = \pi \t/(2K)$ corresponds to our notation for the abstract angle: $\bt = \zeta(\t)$. Hence,
	\be\label{eq:approx:sc}
		\forall \bt \in \R,~\sc(\theta|k) = (1+4q+O(q^2))\frac{\sin(\bt)+O(q^2)}{\cos(\bt)+O(q^2)} = (1+2M\d+O(\d^2))\tan(\bt),
	\ee
\lr{uniformly in $\bt$: the constant in the $O$ depends only on $M$ and the constant $\eps$ in the bounded angle assumption (for the last equality).}

We now prove the asymptotic expression of the discrete massive exponential function. A proof of this result could probably be extracted from Section 5 of \cite{UniversalityCorrelations} among much more technical computations, but  for the sake of clarity we write down the computations in our (simpler) setting. 
Fix $\bu \in \R$, $x, y \in \oSVd$. Since the massive exponential is multiplicative, it can be computed along any path from $x$ to $y$. It is a classical consequence of the bounded angle assumption (see for example Assumption 1 of \cite{ShortTime}) that we can choose a path $\gamma_{\d}: x \to y$ of length $\leq \frac{C|y - x|}{\delta}$ where $|\cdot|$ is the \lr{E}uclidean distance and where the constant $C$ depends only on the lower bound in the bounded angle assumption. Denoting $ \gamma_{\delta} = (x_0, \dots, x_n)$,

\begin{equation}
	\me_{(x, y)}(u-2K-2iK'|k) = \prod_{j=0}^{n-1}\me_{(x_j, x_{j+1})}(u-2K-2iK'|k).
\end{equation}
We now compute the asymptotics of each term of the product. Let $x_j \sim x_{j+1} \in \SVd$, $x_{j+1}-x_j = \delta e^{i\ba_j} + \d e^{i\bar{\beta_j}}$ and let $\bt_j = (\bar{\beta}_j-\ba_j)/2$ be the half angle associated to the edge $x_jx_{j+1}$. Using the change of variable formula as in Equation~\eqref{ScDn},
\be\label{ExpNeighbours}
	\me_{(x_j,x_{j+1})}(u-2K-2iK'|k) = \frac{1}{k'}\dn\Big(\frac{u-\alpha_j}{2}\Big|k\Big)\dn\Big(\frac{u-\beta_j}{2}\Big|k\Big).
\ee
%By equation (22.2.2) of \cite{DLMF}, 
%\begin{equation}
%	\sqrt{k'} =  \frac{\theta_4(0,q)}{\theta_3(0,q)}.
%\end{equation}
By definition of $\dn$ Equation (22.2.6) of \cite{DLMF}:
\be\label{dnua}
	\dn\Big(\frac{u-\alpha_j}{2}\Big) 
	%= \frac{\theta_4(0,q)}{\theta_3(0,q)}\frac{\theta_3(\bar{u}_{\a},q)}{\theta_4(\bar{u}_{\a},q)}
	= \sqrt{k'} \frac{1+2q\cos(\bu - \ba_j) + O(q^3)}{1-2q\cos(\bu-\ba_j) + O(q^3)}
	= \sqrt{k'} \exp(4q \cos(\bu - \ba_j)) + O(q^3)
\ee
because it is easily checked that $x \to \frac{1+x}{1-x}$ and $x \to \exp(2x)$ coincide up to $O(x^3)$. The $O$ is uniform \lr{in $u, \alpha_j$} (the constant depends only on the mass parameter $M \in \R_{>0}$). Using the fact that $\bu \in \R$ and the scaling $q = \frac{1}{2}M\d$ yields 
\begin{equation}
	\frac{1}{\sqrt{k'}}\dn\Big(\frac{u-\alpha_j}{2}\Big) = \exp(2M\d \langle e^{i\bu}, e^{i\ba_j}\rangle) + O(\d^3).
\end{equation}
Since we can write the same thing with $\beta_j$ instead of $\alpha_j$, Equation~\eqref{ExpNeighbours} gives 
\begin{equation}
    \begin{aligned}
	\me_{(x_j,x_{j+1})}(u-2K-2iK'|k) 
        &= \exp(2M\d\langle e^{i\bu}, e^{i\ba_j} + e^{i\bar{\beta}_j}\rangle) + O(\d^3)\\
        &= \exp(2M\langle e^{i\bu}, x_{j+1}-x_j\rangle) + O(\d^3).
    \end{aligned}
\end{equation}
\lr{uniformly in $u, \alpha_j$.} By multiplying along the path $\gamma_{\d}$ of length $\leq \frac{C|y - x|}{\delta}$, we get the desired result.
\end{proof}

As an application of Lemma~\ref{lem:asymp:near-critical}, we can compute the asymptotics of the weights of the RST and drifted dimer models in the near-critical regime. Fix $\bu \in \R$. Let $x \sim y \in \SVd$. Using the asymptotics of $\sc$ and $\me$ of Lemma~\ref{lem:asymp:near-critical}, we obtain that the conductance $\cl$ attributed to the edge $(x,y)$ by the RST model on $\SGd^o$ associated to the Laplacian $\Dld$ is asymptotically
\be\label{eq:cl:explicit}
	\begin{aligned}
	\cl_{(x,y)} = \sc(\theta_{xy}|k) \frac{\lambda^u(y)}{\lambda^u(x)}
	&= (1+2M\d+O(\d^2))\tan(\bt_{xy})\left(\exp\left(2M\d\langle e^{i\bu},e^{i\bar{\alpha}_{xy}}+e^{i\bar{\beta}_{xy}} \rangle\right) + O(\d^3)\right)\\
        %&= (1+2M\d)\tan(\bt_{xy})\exp\left(4M\d\cos(\bt_{xy})\langle e^{i\bu},e^{i(\bar{\alpha}_{xy}+\bar{\beta}_{xy})/2} \rangle\right) + O(\d^2)\\
	%&= (1+2M\d)\tan(\bt_{xy})\left(1 + 4M\d \cos(\bt_{xy}) \cos\left(\bar{u}-\frac{\bar{\alpha}_{xy} + \bar{\beta}_{xy}}{2}\right) \right) + O(\d^2)\\
	&= (1+2M\d))\tan(\bt_{xy})\left(1 + 4M\d \cos(\bt_{xy}) \cos\left(\bar{u}-\frac{\bar{\alpha}_{xy} + \bar{\beta}_{xy}}{2}\right) \right) + O(\d^2)
	\end{aligned}
\ee 
\lr{uniformly for $x \sim y \in \SV$.}
\begin{comment}
Since multiplying all conductances on the edges leaving a vertex by the same constant does not modify the probability law on the random spanning tree model, the model is the same as one with conductances
\be\label{eq:tildec}
	\tilde{c_{xy}} = \tan(\bt_{xy_k})\left(1 + 4m\d \cos(\bt_{xy_k}) \cos\left(\bar{u}-\frac{\alpha_k + \alpha_{k+1}}{2}\right) \right) + O(\d^2)
\ee 
\end{comment}

\begin{Ex}\label{ex:square}
    If the isoradial grid $\oSGd$ is the square lattice $\d \Z^2$, we check that our model of drifted dimers coincides with the near-critical dimer model of \cite{Chhita} and \lr{\cite{Berestycki} w}ith drift parameter $\bu$: any $x \in \SGd$ has four neighbours $x \pm \d$ and $x \pm i \d$, with respective edge conductances $1 \pm 2M\d\sqrt{2}\cos(\bu)$ and $1 \pm 2M\d\sqrt{2}\sin(\bu)$ (up to an error $O(\d^2)$ and a global factor $(1+2M\d)$ which does not change the transition probabilities). This is exactly Equation (1.6) of \cite{Berestycki} with drift vector $\sqrt{2}Me^{i\bu}$.
    %edge conductances are $y_1 = x + \d, y_2 = x+i\d, y_3 = x-\d, y_4 = x-i\d$. Up to a global factor $(1+2M\d)$ which does not change the transition probabilities and to an error $O(\d^2)$, the conductances are $\cl_{(x,y_1)} = 1 + 2M\d\sqrt{2}\cos(\bu)$, $\cl_{(x,y_2)} = 1 + 2M\d\sqrt{2}\sin(\bu)$, $\cl_{(x,y_3)} = 1 - 2M\d\sqrt{2}\cos(\bu)$, $\cl_{(x,y_4)} = 1 - 2M\d\sqrt{2}\sin(\bu)$. 
\end{Ex}

\subsection{From the killed random walk to the drifted random walk}
In this section, we follow closely \cite{Berestycki}: we provide discrete Girsanov identities between the \lr{killed random walk} $\Skd$ and its Doob transform $\tSd$, and between their loop-erasures.\par
%écrire explicitement le mod{\`e}le de dim{\`e}re ?
Recall that the Doob transform $\tSd$ and associated Laplacian $\Dld$ depend on the drift parameter $\bu$ via $\lambda^u$. We adapt to our setting Corollary 2.5 of \cite{Berestycki}. Denote by $\td = \tau_{\d}(\rho) \wedge \tau_{\d}(\oSVd \setminus \SVd)$ and $\tld = \tld(\oSVd \setminus \SVd)$ the escape times of $\SVd$ for $\Skd$ and $\tSd$ respectively.

\begin{Lem}\label{lem:near-critical:doob}
Let $\gamma_{\d}$ be a sequence of simple paths in $\SGd$ from $\xd \in \SVd$ to $\yd \in \partial\SVd$, with $\xd \overset{\d \to 0}{\longrightarrow} x \in \Omega$ and $y_{\delta} \overset{\d \to 0}{\longrightarrow} y \in \lr{\partial}\Omega$. Then:
\begin{equation}
\frac{\Proba_{\xd}\big(\forall 0 \leq i < \tld,~(\tSd)_i = \gamma_{\delta}(i)\big)}{\Proba_{\xd}\big(\forall 0 \leq i < \td,~(\Skd)_i = \gamma_{\delta}(i)\big)} \overset{\d \to 0}{\longrightarrow} \exp\left(2M\langle e^{i\bu}, y - x \rangle\right)
\end{equation}
uniformly in the isoradial grid, the drift parameter $\bu$, and the path.
\end{Lem}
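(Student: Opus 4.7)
My plan is to decompose each probability as a product of one-step transitions along $\gamma_\delta$ multiplied by the probability of being killed (for $\Skd$) or of exiting $\SVd$ (for $\tSd$) at $\yd$ in one more step, and then to compare both factors separately. Writing $\gamma_\delta = (\gamma_\delta(0), \dots, \gamma_\delta(n))$ with $\gamma_\delta(0) = \xd$ and $\gamma_\delta(n) = \yd$, and recalling that with wired boundary conditions dying and exiting $\SVd$ merge for $\Skd$, the event $\{\forall i < \td,\,(\Skd)_i = \gamma_\delta(i)\}$ is exactly the event that $\Skd$ follows $\gamma_\delta$ for $n$ steps and is killed at step $n+1$, so it has probability $\left(\prod_{i=0}^{n-1}\Qk(\gamma_\delta(i), \gamma_\delta(i+1))\right)\frac{m(\yd)}{\ck(\yd)}$. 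An analogous identity holds for $\tSd$, with $\Qk$ replaced by $\Qlk$ and the last factor replaced by $\sum_{z \sim \yd,\, z \notin \SVd}\ocl_{\yd,z}/\ck(\yd)$; the two denominators agree because by~\eqref{eq:equality:total:c} the Doob transform preserves the ``total mass $+$ conductance at $\yd$'', which coincides with $\ck(\yd)$ both in the wired-boundary picture for $\Skd$ and in the Doob-transformed picture for $\tSd$.

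Next, I would apply the Doob transform identity~\eqref{eq:kernel:doob}, $\Qlk(x,y) = \frac{\lambda^u(y)}{\lambda^u(x)}\Qk(x,y)$, which makes the product along $\gamma_\delta$ telescope. Taking the ratio of the two path probabilities yields
\begin{equation*}
\frac{\Proba_{\xd}\big(\forall i < \tld,\,(\tSd)_i = \gamma_\delta(i)\big)}{\Proba_{\xd}\big(\forall i < \td,\,(\Skd)_i = \gamma_\delta(i)\big)} = \frac{\lambda^u(\yd)}{\lambda^u(\xd)}\cdot\frac{\sum_{z \sim \yd,\, z \notin \SVd}\ocl_{\yd,z}}{m(\yd)}.
\end{equation*}
Crucially, this expression depends on $\gamma_\delta$ only through its endpoints $\xd,\yd$, which is what will give the uniformity in the path for free.

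It then remains to analyze each factor as $\delta \to 0$. By the asymptotic of $\me$ in Lemma~\ref{lem:asymp:near-critical}, $\lambda^u(\yd)/\lambda^u(\xd) = \exp\!\big(2M\langle e^{i\bu}, \yd - \xd\rangle\big)(1 + O(\delta^2))$ uniformly, which converges to $\exp(2M\langle e^{i\bu}, y - x\rangle)$ as $\xd \to x$, $\yd \to y$. For the second factor, I would write $\ocl_{\yd,z} = \frac{\lambda^u(z)}{\lambda^u(\yd)}\oc_{\yd,z}$ and apply Lemma~\ref{lem:asymp:near-critical} on the single edge $\yd z$ to get $\lambda^u(z)/\lambda^u(\yd) = 1 + O(\delta)$ uniformly, so that $\sum_{z \sim \yd,\, z \notin \SVd}\ocl_{\yd,z} = (1 + O(\delta))\sum_{z \sim \yd,\, z \notin \SVd}\oc_{\yd,z}$. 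The bounded angle assumption together with $\sc(\theta|k) = (1 + O(\delta))\tan(\bt)$ from Lemma~\ref{lem:asymp:near-critical} ensures that this last sum is bounded below by a positive constant, since $\yd \in \partial \SVd$ has at least one neighbor outside $\SVd$. Combined with $m^2(\yd|k) = O(\delta^2)$ from Lemma~\ref{lem:mass} and the identity $m(\yd) = m^2(\yd|k) + \sum_{z \sim \yd,\, z \notin \SVd}\oc_{\yd,z}$, this gives $\sum_{z \sim \yd,\, z \notin \SVd}\ocl_{\yd,z}/m(\yd) = 1 + O(\delta)$ uniformly, completing the proof.

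All steps are routine once Lemmas~\ref{lem:asymp:near-critical} and~\ref{lem:mass} are in hand; the only mildly subtle point is verifying that the denominator $m(\yd)$ is not dominated by the vanishing intrinsic mass $m^2(\yd|k) = O(\delta^2)$ but rather by the non-vanishing boundary escape conductance $\sum_{z \sim \yd,\, z \notin \SVd}\oc_{\yd,z}$, which is exactly what the bounded angle assumption guarantees.
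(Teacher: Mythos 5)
Your proof is correct and follows essentially the same route as the paper: both reduce the ratio of path probabilities to $\frac{\lambda^u(\yd)}{\lambda^u(\xd)} = \me_{(\xd,\yd)}(u-2K-2iK'|k)$ times the ratio of one-step exit weights at $\yd$, and then apply Lemmas~\ref{lem:asymp:near-critical} and~\ref{lem:mass} together with the boundary lower bound coming from the bounded angle assumption. The only difference is cosmetic: you derive the telescoping identity explicitly from the one-step decomposition and Equation~\eqref{eq:kernel:doob}, whereas the paper asserts it directly as a consequence of the Doob transform.
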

Observe that this is precisely the same statement as Corollary 2.5 of \cite{Berestycki}, justifying the name “drift parameter” for $\bu$: $\tSd$ is a \lr{random walk} with a drift in the direction $e^{i\bu}$ of intensity $M$.
%constantes à vérifier
%what about u réel ? Ça dégén{\`e}re, mais dans quel sens ?

\begin{proof}
With the notation of the lemma, by definition of the Doob transform
\begin{equation}
        \frac{\Proba_{\xd}\big(\forall 0 \leq i < \tl_{\d}(\rho),~(\tSd)_i = \gamma_{\delta}(i)\big)}{\Proba_{\xd}\big(\forall 0 \leq i < \tau_{\d}(\rho),~(\Skd)_i = \gamma_{\delta}(i)\big)} = \me_{(\xd, \yd)}(u-2K-2iK'|k)\frac{\Proba_{\yd}\big((\tSd)_1 \in \oSVd \setminus \SVd\big)}{\Proba_{\yd}\big((\Skd)_1 \in \{\rho\} \cup \oSVd \setminus \SVd\big)}.
\end{equation} 
On the one hand, by the asymptotic expression of $\me$ computed in Lemma~\ref{lem:asymp:near-critical},
\begin{equation}
    \me_{(\xd, \yd)}(u-2K-2iK'|k) = \exp\left(2M\langle e^{i\bu}, \yd - \xd \rangle\right) + O(\delta^2|\yd - \xd|).
\end{equation}
On the other hand, by definition of the Doob transform, and since $m^2(x|k) = O(\d^2)$ by Lemma~\ref{lem:mass} and for all $z \sim \yd$, $\me_{(\yd,z)}(u-2K-2iK'|k) = 1+O(\d)$ by Lemma~\ref{lem:asymp:near-critical},
\begin{equation}
    \begin{aligned}
    \frac{\Proba_{\yd}\big((\tSd)_1 \in \oSVd \setminus \SVd\big)}{\Proba_{\yd}\big((\Skd)_1 \in \{\rho\} \cup \oSVd \setminus \SVd\big)} &= \frac{\sum_{z \sim \yd, z \notin \SVd}\me_{(\yd,z)}(u-2K-2iK'|k)\sc(\t_{\yd z}|k)}{m^2(\yd |k)+\sum_{z \sim \yd, z \notin \SVd}\sc(\t_{\yd z}|k)}\\
    &= \frac{(1+O(\d))\sum_{z \sim \yd, z \notin \SVd}\sc(\t_{\yd z}|k)}{O(\d^2) + \sum_{z \sim \yd, z \notin \SVd}\sc(\t_{\yd z}|k)} = 1+O(\d).
    \end{aligned}
\end{equation}
It is also important that since $y \in \partial \SVd$, $\sum_{z \sim \yd, z \notin \SVd}\sc(\theta_{\yd z}|k)$ is bounded away from $0$ since there is at least one term in the sum (and by the bounded angle assumption). 
\end{proof}

Recall from Section~\ref{subsec:Wilson} that $LE(\gamma)$ denotes the loop-erasure of the path $\gamma$. For a transient \lr{killed random walk} $\Xk$ (it is the case of both $\Skd$ and $\tSd$ killed when exiting $\SVd$: they die almost surely since $\SGd$ is finite), $LE(\Xk)$ denotes the loop-erasure of the trajectory up to the killing time. Lemma~\ref{lem:near-critical:doob} implies, as in \cite{Berestycki}, \lr{the following lemma which is analogous to their Lemma 2.6}.

\begin{Lem}\label{lem:drift=mass}
Let $\gamma_{\delta}$ be a sequence of simple paths in $\SGd$ from $x_{\delta} \in \SVd$ to $y_{\delta} \in \partial\SVd$, with $x_{\delta} \overset{\d \to 0}{\longrightarrow} x \in \Omega$ and $y_{\delta} \overset{\d \to 0}{\longrightarrow} y \in \partial\Omega$. Then,
\begin{equation}
\frac{\Proba_{\xd}(LE(\tSd) = \gamma_{\d})}{\Proba_{\xd}(LE(\Skd) = \gamma_{\d})} \overset{\d \to 0}{\longrightarrow} 
\exp\left(2M\langle e^{i\bu}, y - x \rangle\right).
\end{equation}
uniformly in the isoradial grid, the drift parameter $\bu \in \R$ and the path.
\end{Lem}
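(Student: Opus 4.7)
The plan is to mimic the proof of Lemma~\ref{lem:near-critical:doob}, but now track what happens to the loops that are erased when one passes from a trajectory to its chronological loop-erasure. The key algebraic point is that under a Girsanov/Doob ratio, every loop contributes the trivial factor $\lambda^u(x)/\lambda^u(x)=1$, so only the net displacement along $\gamma_\delta$ survives. Concretely, I would first write the canonical decomposition of any trajectory whose chronological loop-erasure equals $\gamma_\delta=(x_0,\dots,x_n)$:
\[
\xi = \ell_0 \cdot (x_0,x_1) \cdot \ell_1 \cdot (x_1,x_2) \cdots (x_{n-1},x_n) \cdot \ell_n \cdot \dagger,
\]
where $\ell_i$ is a (possibly empty) walk from $x_i$ to $x_i$ staying in $\SVd\setminus\{x_0,\dots,x_{i-1},x_{i+1},\dots,x_n\}$ and $\dagger$ is the one-step killing event at $\yd$ (either going to $\rho$ or stepping to a vertex of $\oSVd\setminus\SVd$). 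Summing over all such decompositions factorises as
\[
\Proba_{\xd}\big(LE(\Skd)=\gamma_\delta\big) = \Big[\prod_{i=0}^{n-1} Q^{\mathrm{k}}(x_i,x_{i+1})\Big]\Big[\prod_{i=0}^{n}\mathcal{L}_i^{\Skd}\Big]\,\mathrm{kill}^{\Skd}(\yd),
\]
where $\mathcal{L}_i^{\Skd}$ is the total $\Skd$-weight of admissible loops at $x_i$ and $\mathrm{kill}^{\Skd}(\yd)$ is the one-step killing probability from $\yd$. The analogous identity holds for $\tSd$.

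Next, I would apply the Doob identity of Proposition~\ref{prop:doob} in two places. For the straight-path factor, the ratio $\tilde Q(x_i,x_{i+1})/Q^{\mathrm{k}}(x_i,x_{i+1}) = \lambda^u(x_{i+1})/\lambda^u(x_i)$ telescopes to $\lambda^u(\yd)/\lambda^u(\xd)$, which by Lemma~\ref{lem:asymp:near-critical} converges to $\exp(2M\langle e^{i\bu},y-x\rangle)$ uniformly. For each loop $\ell$ from $x_i$ to $x_i$, the same telescoping gives the ratio $1$ exactly, so $\mathcal{L}_i^{\tSd}=\mathcal{L}_i^{\Skd}$ and every loop contribution cancels identically when forming $\Proba^{\tSd}(LE=\gamma_\delta)/\Proba^{\Skd}(LE=\gamma_\delta)$.

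The remaining—and slightly delicate—step is to control $\mathrm{kill}^{\tSd}(\yd)/\mathrm{kill}^{\Skd}(\yd)$ at the boundary vertex, which is where the mass/drift dichotomy manifests itself. Using the massive-harmonicity identity $\cl(\yd)=\ck(\yd)$ from~\eqref{eq:equality:total:c}, the estimate $\ocl_{\yd z}/\oc_{\yd z}=\lambda^u(z)/\lambda^u(\yd)=1+O(\delta)$ for neighbouring $z$ from Lemma~\ref{lem:asymp:near-critical}, and the bound $\om(\yd)=O(\delta^2)$ from Lemma~\ref{lem:mass}, one computes
\[
\frac{\mathrm{kill}^{\tSd}(\yd)}{\mathrm{kill}^{\Skd}(\yd)} = \frac{\sum_{z\sim\yd,\,z\notin\SVd}\ocl_{\yd z}}{\om(\yd)+\sum_{z\sim\yd,\,z\notin\SVd}\oc_{\yd z}} = \frac{(1+O(\delta))\sum_{z\sim\yd,\,z\notin\SVd}\oc_{\yd z}}{O(\delta^2)+\sum_{z\sim\yd,\,z\notin\SVd}\oc_{\yd z}} = 1+O(\delta),
\]
the denominator being bounded away from zero because $\yd\in\partial\SVd$ has at least one exiting edge and the bounded-angle assumption gives a uniform lower bound on $\oc_{\yd z}$. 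Multiplying the three factors produces the desired limit, with uniformity inherited from Lemmas~\ref{lem:asymp:near-critical} and~\ref{lem:mass}. The main conceptual obstacle is the loop-cancellation identity $\mathcal{L}_i^{\tSd}=\mathcal{L}_i^{\Skd}$; the main technical check is that the boundary factor is benign, which relies crucially on the mass being of order $\delta^2$ while the outgoing conductances remain of order $1$.
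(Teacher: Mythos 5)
Your argument is correct and is essentially the paper's (much terser) proof: the paper simply notes that every trajectory whose loop-erasure is $\gamma_\delta$ starts at $\xd$ and has $\yd$ as its last vertex in $\SVd$, so by the telescoping computation of Lemma~\ref{lem:near-critical:doob} each such trajectory has the \emph{same} probability ratio, and summing over them preserves that ratio — your loop-factorization just makes the underlying cancellation explicit. One small inaccuracy worth flagging: in the chronological loop-erasure decomposition the loop $\ell_i$ at $x_i$ must avoid only the \emph{past} vertices $\{x_0,\dots,x_{i-1}\}$ (it may revisit $x_{i+1},\dots,x_n$), not all other vertices of $\gamma_\delta$ as you wrote, but this does not affect your conclusion since the loop-weight ratio equals $1$ for any admissible class of loops.
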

\begin{proof}
It follows from Lemma~\ref{lem:near-critical:doob} by summing over all ways to get $\gamma_{\delta}$ as a loop-erasure, and observing that for each of these paths the ratio of the probability is exactly the right-hand side by the preceding lemma. 
\end{proof}

\subsection{Convergence of the loop-erased killed random walk to massive \texorpdfstring{$SLE_2$}{SLE2}}
The next step towards convergence of the near-critical RST and dimer models is to prove convergence of the massive \lr{loop-erased random walk} conditioned to survive. We will verify the hypothesis of a general result for convergence of the killed \lr{loop-erased random walk}, Theorem 4.2 of \cite{Berestycki}, which we now recall.\par
\paragraph{Notation for continuous processes.} We first introduce some definitions related to continuous processes. The \emph{\lr{B}rownian motion} is the diffusion process $B$ on $\R^2$ with unit covariance matrix and $0$ drift vector. Recall from Equation (4.4) of \cite{Berestycki} that the \emph{killed \lr{B}rownian motion} with killing rate $M(\cdot)$ (a continuous non-negative bounded function on $\Omega$) is the process $\Bk$ which behaves like $B$ until a stopping time $\tau$ given by $\Proba(\tau >t~|~\FF_t) = \exp\left(-\int_0^t M^2(B_s)ds\right)$, where $(\FF_t,~t \geq 0)$ is the canonical filtration of the \lr{Brownian motion}, after which it goes to an absorbing cemetery. 

Given a \lr{random walk trajectory $\Yd$} on $\Omega_{\d}$ embedded in $\C$, we also denote by $\Yd$ the affine interpolation (that is for all $n \in \N$, $t \in [n, n+1($, $(\Yd)_t = (n+1-t)(\Yd)_n + (t-n)(\Yd)_{n+1}$). 
\lr{
\begin{Def}\label{def:uniform:topology}
    We say that a sequence of discrete random walk trajectories converges in the \emph{uniform topology} if the affine interpolation converges uniformly on bounded time intervals. 
\end{Def}
When we discuss convergence of a sequence of random walks (to a continuous process), we will refer to the convergence in law with respect to this topology as the \emph{convergence in law with respect to the uniform topology}.
}

\paragraph{A general convergence result.} Let $\Omega_{\d}$ be a sequence of planar graphs embedded in the complex plane with conductance function $\oc$, and suppose that there is no accumulation point in the sense that for every compact set $K$, the number of vertices of $\Omega_{\d}$ inside $K$ is finite. Let $M(\cdot): \Omega \to [0,\infty)$ be a bounded nonnegative continuous function. Denote by $\Yd$ the \lr{random walk} with conductances $\oc$ and $\Ykd$ the massive random walk with conductances $\oc$ and mass function $\om = M^2(\cdot)\d^2$. Recall that we can couple $\Ykd$ and $\Yd$ such that $\Ykd$ is killed in position $x$ with probability proportional to $M^2(\cdot)\d^2$ and otherwise moves like $\Yd$.\par
Let $\RR$ be the horizontal rectangle $[0,3]\times[0,1]$ and $\RR'$ be the vertical rectangle $[0,1]\times[0,3]$. Let $\Bst = (\Bst)' = B((1/2,1/2), 1/4)$ be the starting ball and $\Bta = B((5/2,1/2), 1/4)$, resp. $(\Bta)' = B((1/2,5/2), 1/4)$, be the target ball. Let $\D = \RR \setminus \Bta$. For $r > 0$, $z \in \C$, let $\Rrz = r \RR + z$ , resp. $\Rrz'$, and define similarly $\Bstrz, \Btarz, \Drz$.\par
For all $\d >0$, let $\crossk$, resp. $\crossk'$, denote the event that the \lr{killed random walk} $\Ykd$ started at a point of the starting ball $\xd \in \Bstrz \cap \oSVd$ enters the target ball $\Btarz$ before leaving the rectangle $\Rrz$ or dying:
\begin{equation}
    \crossk = \Big(\td(\Btarz) < \td(\rho) \wedge \td(\oSVd \setminus \Rrz)\Big).
\end{equation}
The \lr{killed random walk} $\Ykd$ satisfies the \emph{uniform crossing estimate} if for all $R >0$, there \lr{exist} constant\lr{s} $\eta = \eta(R) >0$ \lr{and $\d_0(R) > 0$ depending only on $R$} such that uniformly in $0 <r<R$, $z \in \C$, \lr{$0<\delta<\d_0(R)$} and $\xd \in \Bstrz \cap \oSVd$,
\begin{equation}\label{eq:cross:estimate}
    \Proba_{\xd}(\crossk) \geq \eta,~\Proba_x(\crossk') \geq \eta.
\end{equation}
\lr{Before stating our theorem, we recall the definition of \emph{massive $SLE_2$} from \cite{Berestycki}. For more details on this definition, see their Section 1.4. and the references therein.
\begin{Def}\label{def:massive:SLE_2}
    Let $x \in \Omega$, $a \in \partial \Omega$ be fixed and $\varphi$ be a fixed conformal map from $\Omega$ to the disk $\disk$ sending $x$ to $0$. The \emph{massive radial Schramm-Loewner Evolution} with parameter $\kappa = 2$ ($SLE_2$) and mass $M(\cdot)$ is a random curve $\gamma$ in $\Omega$ from $x$ to $a$. The \emph{driving function} $\zeta_t$ is related to the curve $\gamma$ by $\zeta(t) = g_t(\gamma(t))$. The massive radial $SLE_2$ is defined by its associated \emph{Loewner flow}, which is defined by Loewner's equation
    \be
        \frac{dg_t(z)}{dt} = - \varphi'(z)g_t(z)\frac{g_t(z)+\zeta_t}{g_t(z)-\zeta_t} \quad ; \quad z \in \Omega_t
    \ee
    where $\Omega_t$ denotes the slit domain $\Omega \setminus \gamma([0,t])$, $g_t$ is the Loewner map from $\Omega_t$ to $\disk$. If we write the driving function in the form $\zeta_t = e^{i\xi_t}$, then $\xi_t$ solves the stochastic differential equation
    $$
        d\xi_t = \sqrt{2}dB_t+2\lambda_t dt \quad ; \quad \lambda_t = \left.\frac{\partial}{\partial g_t(x_t)}\log \frac{P^{(M)}_{\Omega_t}(x_t,z)}{P_{\Omega_t}(x_t,z)}\right|_{z=b}
    $$
    where $x_t = \gamma(t)$, $P_{\Omega_t}^{(M)}$ and $P_{\Omega_t}$ are the Poisson kernels for, respectively, the Brownian motion killed with constant rate $M(\cdot)$ and the regular Brownian motion, in $\Omega_t$
\end{Def} 
}
\lr{To state our theorem, we need to introduce a topology on the space of discrete or continuous trajectories of the loop-erased random walk.
\begin{Def}\label{def:Schramm:RandomWalk}
    Let $\cH(\Omega)$ denote the Hausdorff space of non empty compact subsets of $\Omega$, endowed with the Hausdorff distance. The \emph{Hausdorff topology} is the topology induced by this distance. 
\end{Def}
A loop-erased random walk from $\xd \in \SVd$ to $\ad \in \partial \SVd$ or a trajectory of massive $SLE_2$ from $x \in \Omega$ to $a \in \partial \Omega$ are both elements of $\cH(\Omega)$. We will refer to the convergence in law with respect to this topology as the convergence in law with respect to the Hausdorff topology.}
\begin{Thm}[Theorem 4.2 of \cite{Berestycki}]\label{thm:berestycki}
    Assume that $\Ykd$ satisfies the uniform crossing estimate and that $\Ykd$ starts at $\xd \overset{\d \to 0}{\longrightarrow} x \in \Omega$. Suppose that $\left((\Yd)_{\d^{-2}t}, t\geq 0\right)$ converges \lr{in law (with respect to the uniform topology of Definition \ref{def:uniform:topology})} towards the \lr{Brownian motion}, or equivalently that $((\Ykd)_{\d^{-2}t}, t\geq 0)$ converges towards the \lr{killed Brownian motion} $\Bk$ killed with rate $M(\cdot)$. Then, the loop erasure $LE(\lr{\Ykd})$ of $\Ykd$, conditioned to leave $\Omega$ (at the stopping time $\td$) before dying, and conditioned on $(\Ykd)_{\td} = \ad$ with $\ad \overset{\d \to 0}{\longrightarrow} a \in \partial \Omega$, converges \lr{in law (with respect to the Hausdorff topology of Definition \ref{def:Schramm:RandomWalk})} to \lr{the massive $SLE_2$ from $x$ to $a$ with mass parameter $M$ of Definition \eqref{def:massive:SLE_2}}.
\end{Thm}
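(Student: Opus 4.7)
The plan is to extend the Lawler--Schramm--Werner strategy for convergence of the (critical) loop-erased random walk to $SLE_2$ to the massive setting. First, I would establish tightness of the law of $LE(\Ykd)$ (conditioned on exiting before dying and on the exit point $\ad$) in the Hausdorff topology on $\cH(\Omega)$. Tightness follows from the uniform crossing estimate \eqref{eq:cross:estimate} via a Beurling-type or Aizenman--Burchard argument, exactly as in the critical case: the uniform crossing estimate implies that with uniformly positive probability the walk (hence its loop-erasure) disconnects any prescribed scale, which rules out thin bottlenecks and yields an $\alpha$-H\"older estimate for the Loewner driving function of any subsequential limit. One also has to check that conditioning on $(\Ykd)_{\td} = \ad$ is non-degenerate in the limit; this amounts to a lower bound on the discrete massive Poisson kernel, which again follows from uniform crossing together with convergence of $\Yd$ to Brownian motion.

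Next, I would identify the subsequential limit. The key is a \emph{discrete Girsanov identity}: by the coupling between $\Ykd$ and the unkilled walk $\Yd$,
\begin{equation}
\frac{d\Proba^{\mathrm{k},\d}_{\xd}}{d\Proba^{\d}_{\xd}}\bigg|_{\FF_n} = \prod_{i=0}^{n-1}\bigg(1-\frac{M^2((\Yd)_i)\d^2}{\oc((\Yd)_i)+M^2((\Yd)_i)\d^2}\bigg),
\end{equation}
whose logarithm, up to lower order, is a Riemann sum approximating $-\int_0^t M^2(B_s)\,ds$ after rescaling time by $\d^{-2}$. Hence, by the assumed invariance principle $(\Yd)_{\d^{-2}t}\Rightarrow B_t$, the law of $(\Ykd)_{\d^{-2}t}$ conditioned to survive and hit a neighborhood of $a\in\partial\Omega$ converges to the law of Brownian motion tilted by $\exp(-\int_0^{\tau}M^2(B_s)\,ds)$ and conditioned on $B_{\tau}=a$, which is precisely the killed Brownian motion $\Bk$ conditioned on its endpoint.

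Having identified the scaling limit of the walk itself, the remaining step is to pass from convergence of $\Ykd$ to convergence of its loop-erasure. Here I would use the fact that loop-erasure is, in Schramm's sense, a measurable and continuous operation once tightness and the domain Markov property are in hand. More concretely, a subsequential limit $\gamma$ of $LE(\Ykd)$ is a simple random curve in $\Omega$ from $x$ to $a$ satisfying the conformal domain Markov property when sampled jointly with a tilted Brownian motion; this characterizes $\gamma$ via its Loewner driving function $\xi_t = e^{i\zeta_t}$, and a direct Radon--Nikodym computation against the critical case (where LSW gives $d\zeta_t = \sqrt{2}\,dB_t$) shows that the drift $\lambda_t$ picked up in the tilt is exactly
\begin{equation}
\lambda_t = \frac{\partial}{\partial g_t(x_t)}\log\frac{P_{\Omega_t}^{(M)}(x_t,a)}{P_{\Omega_t}(x_t,a)}\bigg|_{\cdot},
\end{equation}
matching Definition~\ref{def:massive:SLE_2}. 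Uniqueness of the limit then upgrades subsequential convergence to convergence.

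The main obstacle is the last step: transferring the Radon--Nikodym relation from the walk to its loop-erasure and identifying the limiting driving function. Tightness and the invariance principle for the walk are essentially soft consequences of the hypotheses, but controlling loop-erasure under conditioning requires either a careful second-moment argument for the probability that the walk visits a neighborhood of a fixed point of $\gamma$ (to upgrade convergence of finite-dimensional marginals of the walk to convergence of the trace of its loop-erasure), or an explicit identification of the massive Poisson/Green function ratios on slit domains. Both routes are technical but tractable given the standing assumptions of the theorem, and the uniform crossing estimate controls the uniformity needed to pass to the limit on slit domains.
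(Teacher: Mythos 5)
This statement is not proved in the paper at all: it is quoted verbatim as Theorem~4.2 of \cite{Berestycki} and used as a black box (the paper's contribution is to verify its hypotheses — the uniform crossing estimate of Proposition~\ref{prop:cross} and the invariance principle — for the near-critical isoradial walk). So there is no in-paper proof to compare your argument against; what you have written is a sketch of the proof of the cited external result, which does broadly follow the actual strategy of \cite{Berestycki} and of Chelkak--Wan \cite{ChelkakWan}: tightness from the uniform crossing estimate, a Girsanov-type absolute-continuity relation with the critical case, and identification of the drift of the Loewner driving function.

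That said, your sketch has one genuine gap in its logic as written. You assert that ``loop-erasure is, in Schramm's sense, a measurable and continuous operation once tightness and the domain Markov property are in hand.'' Loop-erasure is emphatically not a continuous functional of the path, and convergence in law of $(\Ykd)_{\d^{-2}t}$ to the killed Brownian motion does not transfer to its loop-erasure by any soft argument. Moreover, your discrete Girsanov identity is a Radon--Nikodym derivative for the \emph{walk}; what is actually needed is the density of the law of the \emph{loop-erased path} $LE(\Ykd)$ with respect to $LE(\Yd)$, conditioned on the endpoint. For a fixed simple path $\gamma$ this density is a ratio of sums over all walks whose loop-erasure is $\gamma$, and it involves the massive-versus-critical Green functions of the successive slit domains $\Omega\setminus\gamma[0,t]$ (the erased loops carry mass), not merely the endpoint tilt $\exp(-\int_0^\tau M^2(B_s)\,ds)$. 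Controlling the convergence of these slit-domain ratios uniformly along the curve, and showing the resulting density is a martingale observable for critical $SLE_2$ whose Girsanov drift is the $\lambda_t$ of Definition~\ref{def:massive:SLE_2}, is essentially the entire content of \cite{ChelkakWan}; your closing paragraph correctly locates the difficulty there but treats it as routine, which it is not. Contrast this with Lemma~\ref{lem:drift=mass} in the paper, which works only because the drifted-versus-killed density happens to depend on the trajectory solely through its endpoints — a feature the massive-versus-critical comparison does not share.
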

To apply this theorem, we will need the following result which is proved in Appendix~\ref{subsec:cross}, using the weak Beurling estimate of \cite{DiscreteHolomorphy} on the critical Laplacian.
\begin{Prop}\label{prop:cross}
    The near-critical \lr{killed random walk} $\Skd$ satisfies the uniform crossing estimate Equation~\eqref{eq:cross:estimate}.
\end{Prop}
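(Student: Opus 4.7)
The idea is to compare the near-critical killed walk $\Skd$ with the critical non-massive random walk $\Xd$ (associated to $\oDd$ of Equation~\eqref{eq:def:non-massive:D}), for which a crossing estimate can be obtained via classical tools on isoradial graphs, and then show that the killing is negligible on the timescale of a single crossing.

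First, I would establish a crossing estimate for $\Xd$ in macroscopic time: there exist constants $T > 0$ and $\eta_0 = \eta_0(R) > 0$, depending only on $R$ and the bounded angle constant $\eps$, such that for all $0 < r < R$, $z \in \C$, $\d$ small enough and $\xd \in \Bstrz \cap \oSVd$,
\begin{equation*}
\Proba_{\xd}\big(\tau_{\d}(\Btarz) \leq T\d^{-2} \text{ and } \tau_{\d}(\Btarz) < \tau_{\d}(\oSVd \setminus \Rrz)\big) \geq \eta_0,
\end{equation*}
and similarly for the vertical rectangle. This follows from the invariance principle for $\Xd$ after a time reparametrization by $\d^{-2}$ (a direct consequence of Equation~\eqref{eq:increments:critical} together with the bounded angle assumption which controls $T_\d$ in Equation~\eqref{eq:def:Tdelta}), combined with the weak Beurling estimate of \cite{DiscreteHolomorphy} to handle boundary effects near the target ball and to cover the small scales $r \sim \d$.

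Second, I would control the killing probability using Lemma~\ref{lem:mass}: $m(x) \leq C M^2 \d^2$ uniformly in $x$ and $\d$, while by the bounded angle assumption the total conductance $c(x)$ is uniformly bounded below by a positive constant. Coupling $\Skd$ with $\Xd$ as in Remark~\ref{rem:coupling:RandomWalk/KilledRandomWalk} (using independent uniforms $U_i$ to decide whether to kill at each visit), the two processes coincide as long as the killing has not been triggered, and the survival probability at each step is at least $1 - C'\d^2$ for some $C' = C'(M,\eps)$. Over $T\d^{-2}$ steps, the survival probability is then bounded below by $(1-C'\d^2)^{T\d^{-2}} \geq e^{-2C'T}$ for $\d$ small enough, uniformly in the starting point and in the graph.

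Third, I would combine the two estimates. On the event that $\Xd$ performs a successful crossing in time $\leq T\d^{-2}$ (probability at least $\eta_0$) and that none of the first $T\d^{-2}$ killing trials fires (conditional probability at least $e^{-2C'T}$ given $\Xd$, by independence of the $U_i$ from $\Xd$), the coupling forces $\Skd$ to perform the same crossing before dying. Hence $\Proba_{\xd}(\crossk) \geq \eta_0\, e^{-2C'T} =: \eta(R)$, which is the desired uniform crossing estimate. The hard part is the first step, namely obtaining the crossing estimate for $\Xd$ uniformly across all scales $0 < r < R$: at macroscopic scales it follows from the invariance principle and Brownian crossing estimates, but the uniformity down to $r \sim \d$ requires quantitative boundary regularity of the discrete harmonic measure on isoradial graphs, which is exactly what the weak Beurling-type estimates of \cite{DiscreteHolomorphy} provide. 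This technical step is naturally relegated to Appendix~\ref{app:critical:laplacian}.
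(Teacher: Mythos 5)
Your overall architecture matches the paper's: establish a crossing estimate for the critical walk $\Xd$, bound the number of steps a crossing takes, and then argue that the killing (of probability $O(\d^2)$ per step, by Lemma~\ref{lem:mass}) survives that many steps with probability bounded below. Your second and third steps are essentially identical to the paper's coupling argument, and your separate treatment of the microscopic scales $r\lesssim\d$ (bounded graph diameter, finite path) is also what the paper does.

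The gap is in your first step. You ask for a \emph{time-constrained} crossing estimate, uniform over all $0<r<R$, all $z$, and all isoradial grids satisfying the bounded angle assumption, and you propose to get it from the invariance principle plus the weak Beurling estimate. Neither tool delivers this as stated: the invariance principle gives convergence for a fixed domain and starting point, not uniformity over scales and positions (which is precisely the delicate point), and the Beurling estimate controls \emph{where} the walk exits, not \emph{when}. The paper fills this by splitting the claim into two separately uniform statements and intersecting them by a union bound: (a) the crossing probability alone, $\Proba_{\xd}(\mathrm{Cross})\geq\eta_0^4$, obtained with no invariance principle at all by chaining the Beurling estimate along four overlapping balls threaded through the rectangle (exiting each enlarged ball through a prescribed arc of angular width $\geq\pi/4$ lands you in the next ball, and the strong Markov property multiplies the bounds); and (b) a quantitative, scale-free exit-time bound $\Proba_{\xd}\big(\tau_{\d}(\oSVd\setminus\Rrz)\geq\lambda R^2/\d^2\big)\leq 9/(\eta_1\lambda)$, proved by optional stopping applied to $\big(\Re(\Xd)-\Re(\xd)\big)^2-\langle\Re(\Xd)\rangle$ together with the lower bound $\langle\Re(\Xd)\rangle_n\geq\eta_1 n\d^2$ coming from Equation~\eqref{eq:increments:critical} and the bounded angle assumption. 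Choosing $\lambda$ so that $9/(\eta_1\lambda)$ is half of $\eta_0^4$ gives the time-constrained crossing you need. Your proof becomes complete once you replace the appeal to the invariance principle by these two explicit uniform estimates.
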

Using Theorem~\ref{thm:berestycki} and Proposition \ref{prop:cross}, we are able to prove the following.
\begin{Thm}\label{thm:cv:KilledRandomWalk:mSLE}
    If $\xd \overset{\d \to 0}{\longrightarrow} x \in \Omega$, the loop erasure $LE(\Skd)$ of the \lr{killed random walk} $\Skd$ started at $\xd \in \SVd$ conditioned to leave $\Omega$ (or equivalently $\SVd$, at the stopping time $\td$) before dying, and conditioned on $(\Skd)_{\td} = \ad \in \partial \SVd$ with $\ad \to a \in \partial \Omega$ converges \lr{in law (with respect to the Hausdorff topology of Definition \ref{def:Schramm:RandomWalk})} to \lr{the massive $SLE_2$ from $x$ to $a$ with mass parameter $\sqrt{2}M$ of Definition \eqref{def:massive:SLE_2}}.
\end{Thm}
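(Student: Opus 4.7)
The plan is to apply Theorem~\ref{thm:berestycki} directly to the killed random walk $\Skd$ on the isoradial approximation $\SGd$, with constant mass function $M(\cdot) \equiv \sqrt{2}M$. Two hypotheses must be verified: the uniform crossing estimate and convergence of the time-rescaled walk to a killed Brownian motion with the correct rate. The crossing estimate is precisely the content of Proposition~\ref{prop:cross}, proved in Appendix~\ref{subsec:cross} using the weak Beurling estimate from \cite{DiscreteHolomorphy}, so the bulk of the work lies in identifying the limiting killed diffusion.

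For the unkilled critical walk $\Sd$ (associated with the Laplacian $\Dd$, i.e.~$k=0$), Equation~\eqref{eq:increments:critical} identifies the first two moments of the increments: zero mean, uncorrelated real and imaginary parts, and per-coordinate variance $\d^2 T_\d(x)$. The bounded-angle assumption gives uniform bounds on $T_\d$ away from $0$ and $\infty$, and a standard invariance principle then shows that $(\Sd)_{\d^{-2}t}$ converges in law (in the uniform topology) to a Brownian motion, up to a local time-reparametrization of speed $T_\d$. Since loop-erasure depends only on the trace, which is invariant under any time change, this is sufficient for the purposes of Theorem~\ref{thm:berestycki}.

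To identify the killing rate of the limit, we use the coupling of Remark~\ref{rem:coupling:RandomWalk/KilledRandomWalk}: at each step $\Skd$ dies at site $x$ with probability
\[
p_{\mathrm{kill}}(x) = \frac{m^2(x|k)}{c(x)+m^2(x|k)}.
\]
By Lemma~\ref{lem:mass} and the asymptotic $\sc(\theta|k) = \tan(\bt)(1 + 2M\d) + O(\d^2)$ of Lemma~\ref{lem:asymp:near-critical}, together with the definition~\eqref{eq:def:Tdelta} of $T_\d$,
\[
p_{\mathrm{kill}}(x) = 2M^2\, T_\d(x)\, \d^2 + O(\d^3),
\]
uniformly in $x$. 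In the diffusive parametrization, the walk accumulates local Brownian time $T_\d(x)\,\d^2$ per step at site $x$, so the cumulative killing intensity per unit of Brownian time is $2M^2 T_\d(x) \cdot (T_\d(x))^{-1} = 2M^2$. The local speed $T_\d(x)$ cancels, yielding a spatially constant killing rate equal to $2M^2 = (\sqrt{2}M)^2$: this is exactly the killed Brownian motion with mass function $\sqrt{2}M$ in the convention of \cite{Berestycki}. Applying Theorem~\ref{thm:berestycki} then yields the desired convergence of $LE(\Skd)$, conditioned on exiting $\Omega$ before dying and on the exit point $\ad \to a$, to massive radial $SLE_2$ from $x$ to $a$ with mass parameter $\sqrt{2}M$.

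The main technical subtlety is the bookkeeping of this local speed $T_\d(x)$ and the resulting time change in the scaling limit of $\Sd$: it is precisely the cancellation between the $T_\d(x)$ appearing in $p_{\mathrm{kill}}$ and the $T_\d(x)$ coming from the variance of the increments that produces a \emph{constant} effective killing rate (and hence a \emph{position-independent} mass for the limiting process), together with the factor $\sqrt{2}$ in the final mass parameter. One must also check that conditioning on survival and on the exit point passes to the limit, which is standard once the uniform crossing estimate and convergence to Brownian motion are established, since it ensures nondegeneracy of all relevant hitting distributions.
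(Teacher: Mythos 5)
Your proposal correctly identifies the two hypotheses to verify, correctly invokes Proposition~\ref{prop:cross} for the crossing estimate, and correctly computes the per-step killing probability $2M^2 T_\d(x)\d^2 + O(\d^3)$; the cancellation of $T_\d(x)$ between the local variance and the local killing rate is indeed the reason the limiting mass is the constant $\sqrt{2}M$. However, the central step --- ``apply Theorem~\ref{thm:berestycki} directly to $\Skd$ with constant mass function $\sqrt{2}M$'' --- does not go through as written, and this is a genuine gap rather than a presentational difference. Theorem~\ref{thm:berestycki} is a black box whose hypotheses are (i) that $(\Yd)_{\d^{-2}t}$ converges in the \emph{uniform topology} to Brownian motion (a statement about the time-parametrized process, not merely its trace), and (ii) that the walk is killed at $x$ with probability $M^2(x)\d^2$ for a \emph{continuous} function $M(\cdot)$ on $\Omega$. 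The walk $\Skd$ satisfies neither: $(\Sd)_{\d^{-2}t}$ converges to Brownian motion only after a site-dependent time reparametrization of speed $T_\d(x)$, and the effective killing rate $2M^2 T_\d(x)$ is a function of the local combinatorics of the isoradial grid, not a continuous function of position. Your appeal to the time-change invariance of the loop-erasure explains why the \emph{conclusion} should be insensitive to the reparametrization, but it does not license applying a theorem whose hypotheses fail; the killing rate per unit of the theorem's time variable is exactly what determines the mass of the limiting $SLE_2$, so one cannot simply discard the parametrization.

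The missing ingredient is a concrete construction realizing your heuristic cancellation. The paper does this by introducing a \emph{lazy} killed random walk $\Slkd$, obtained by adding at each vertex a loop edge of conductance $l_\d(x) = \frac{T_\d(x)-T}{T}\sum_{y\sim x}\sc(\theta_{xy}|k)$ where $T = \inf_{\d,x} T_\d(x)$. The laziness slows the walk at fast sites so that the per-step variance becomes the constant $T\d^2$ and the per-step killing probability becomes the constant $2M^2T\d^2 + O(\d^2)$; hence $((\Slkd)_{(T\d^2)^{-1}t})$ converges to the killed Brownian motion with constant rate $2M^2$, and Theorem~\ref{thm:berestycki} applies to $\Ykd := S^{\mathrm{l,k}}_{T^{-1/2}\d}$ (the mesh rescaling absorbs the factor $T$). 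Since the loop-erasure only sees the sequence of distinct positions, $LE(\Slkd)$ and $LE(\Skd)$ have the same law, which transfers the conclusion back to $\Skd$. Your observation about traces is thus used, but at the \emph{end} of the argument (to compare the two loop-erasures), not at the beginning (to excuse the failed hypotheses). To complete your proof you would need either this lazy-walk construction or a reproof of the black-box theorem allowing spatially varying speed with proportionally varying killing.
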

Note that we cannot apply Theorem~\ref{thm:berestycki} directly to $\Skd$ which is not killed with probability $M(x)\d^2$ for a continuous function $M$ (see Lemma~\ref{lem:mass}), and moreover the \lr{random walk} $\Sd$ with conductances $\oc_{xy} = \sc(\theta_{xy}|k)$ converges towards the \lr{Brownian motion} only after a time re-parametrization. These two matters are related, and we will solve them by considering an associated lazy \lr{random walk} $\Sld$.
\begin{proof}
Denote by $\Sd$ the \lr{random walk} with conductances $\oc_{xy} = \sc(\theta_{xy}|k)$ and by $\zed(x)$ the law of its increments: conditionally on $(\Sd)_n = x$, it is the law of $(\Sd)_{n+1}-(\Sd)_n$. \lr{Recall the definition of $T_{\d}$ from Equation \eqref{eq:def:Tdelta}.} Using Equation~\eqref{eq:increments:critical}, the asymptotics of $\sc$ of Lemma~\ref{lem:asymp:near-critical} and the fact that two neighbours in $\oSGd$ are at distance $\leq 2\d$, we obtain that
\be\label{eq:increments}
    \E(\zed(x))= O(\d^3);~\E(\Re(\zed(x))^2) = \E(\Im(\zed(x))^2) = \d^2 T_{\d}(x) + O(\d^4);~\E\big(\Re(\zed(x))\Im(\zed(x))\big) = O(\d^4).
\ee
Hence, a certain lazy version of $\Sd$ converges towards the \lr{Brownian motion}. To make it formal, we define the \emph{laziness} of a vertex 
\be
    \forall x \in \oSVd,~l_{\d}(x) := \frac{T_{\d}(x)-T}{T}\sum_{y\sim x} \sc(\theta_{xy}|k).
\ee
Let $\Sld$ be the \emph{lazy \lr{random walk}}, or more precisely the \lr{random walk} on the graph $\oSGd$ with an additional loop edge $(x,x)$ with conductance $l_{\d}(x)$ at each vertex $x \in \oSVd$: in other words, it is the \lr{random walk} which stays in $x$ with probability proportional to $l_{\d}(x)$ and otherwise moves towards $y \sim x$ with probability proportional to $\oc_{xy} = \sc(\theta_{xy}|k)$. The time spent by $\Sld$ at $x$ follows a geometric law with parameter $\pi_{\d}(x) = l_{\d}(x)/\left(l_{\d}(x) + \sum_{y \sim x}\sc(\theta_{xy}|k)\right)$. If we denote by $\zeld$ the law of the increments of $\Sld$, we have for all $x \in \oSVd$ and all $\phi: \C \to \C$ with $\phi(0)=0$,
\be
    \E(\phi(\zeld(x))) 
    %= \pi(x)\E(\phi(0)) + (1-\pi(x))\E(\phi(\zd(x))) 
    = (1-\pi_{\d}(x))\E(\phi(\zed(x))).
\ee
This applies in particular with $\phi(x) = x, \Re(x)^2, \Im(x)^2, \Re(x)\Im(x)$ so Equation~\eqref{eq:increments} gives
\be
    \E(\zeld(x))= O(\d^3);~\E(\Re(\zeld(x))^2) = \E(\Im(\zeld(x))^2) = T\d^2 + O(\d^4);~\E\big(\Re(\zeld(x))\Im(\zeld(x))\big) = O(\d^4).
\ee
This implies that $((\Sld)_{(T\d^2)^{-1}t}, t\geq 0)$ converges \lr{in law (with respect to the uniform topology)} towards \lr{the Brownian motion}, see Theorem 11.2.3 of \cite{StroockVaradhan} with $h = T\d^2$ (it can also be seen as a consequence of the functional CLT for martingales after centering).\par

We are ready to deal with the \lr{killed random walk} $\Skd$. Let $\Slkd$ be the \emph{lazy \lr{killed random walk}}, or more precisely the \lr{killed random walk} on the graph $\oSGd$ with additional loop edges $(x,x)$ with conductance $l_{\d}(x)$ at each vertex and mass function $\om(x)=m^2(x|k)$. Define stopping times $t_0 = 0$ and $t_{i+1} = \inf\{n \geq t_i, (\Slkd)_n \neq (\Slkd)_{t_i}\}$: they are the times at which the lazy \lr{killed random walk} changes position. The law of the trajectories of $\Slkd$ and $\Skd$ are the same, that is 
\be\label{eq:lazy:law}
    ((\Slkd)_{t_i},~i \geq 0) \overset{(law)}{=} ((\Skd)_i,~i \geq 0). 
\ee
\lr{On the one hand, since the event $\crossk$ depends only on the trajectory of the \lr{killed random walk}, this implies that $\Slkd$ satisfies the uniform crossing assumption by Proposition \ref{prop:cross}. On the other hand, t}his implies that the loop-erasures of $\Slkd$ and $\Skd$ \emph{have the same law}. Using the asymptotics of $\sc$ and $m^2(\cdot|k)$ from Lemmas \ref{lem:asymp:near-critical} and \ref{lem:mass}, we get that the lazy \lr{killed random walk} $\Slkd$ is killed with probability 
\be
    \begin{aligned}
        \Proba\left((\Slkd)_{i+1}= \rho~|~(\Slkd)_i = x\right) &= \frac{m^2(x|k)}{m^2(x|k) + l(x) + \sum_{y \sim x}\sc(\bt_{xy}|k)}\\
        %&= \frac{Tm^2(x|k)}{Tm^2(x|k)+T(x)\sum_{y \sim x}\sc(\bt_{xy}|k)}\\
        &= \frac{Tm^2(x|k)}{Tm^2(x|k)+\sum_{y \sim x}\sin(2\bt_{xy})+O(\d)} = 2M^2T\d^2 + O(\d^2),
    \end{aligned}
\ee
so $((\Slkd)_{(T\d^2)^{-1}t}, t\geq 0)$ converges \lr{in law (with respect to the uniform topology)} towards the \lr{killed Brownian motion} with constant killing rate $\sqrt{2}M$.\par
We now apply Theorem~\ref{thm:berestycki} to the \lr{killed random walk} $\Ykd := S^{\mathrm{l,k}}_{T^{-1/2}\d}$ on the graph $\Omega_{\d}$ which is $\SG_{T^{-1/2}\d}$ with additional loop edges at each point (this graph is still planar). \lr{Since $\Ykd$ has the same trajectories as $\Slkd$, it also satisfies the uniform crossing assumption.} Moreover, we just obtained that $((\Ykd)_{\d^{-2}t},t\geq 0)$ converges towards the \lr{killed Brownian motion} with constant killing rate $\sqrt{2}M$. Hence, conditionally on the endpoint, $LE(\Ykd)$ converges towards massive $SLE_2$ with mass parameter $\sqrt{2}M$. In other words (replacing $\d$ by $T^{-1/2}\d$), $LE(\Slkd)$ converges towards \lr{the} massive $SLE_2$ with mass $\sqrt{2}M$. Since the loop-erasures of $\Slkd$ and $\Skd$ have the same laws, this concludes the proof of Theorem~\ref{thm:cv:KilledRandomWalk:mSLE}.
\end{proof}

\subsection{Convergence of the near-critical rooted spanning tree and drifted dimer models}
\lr{Recall from Definition \ref{def:Schramm:RandomWalk} that given a metric space $E$ we denote by $\cH(E)$ the Hausdorff space of non empty compact subsets of $E$, endowed with the Hausdorff distance. 
\begin{Def}\label{def:Schramm:tree}
    We call the \emph{Schramm topology} (introduced in \cite{Schramm}) the Hausdorff topology on the metric space $\Omega \times \Omega \times \cH(\Omega)$ endowed with the product distance.
\end{Def}}

\lr{Given a tree $T_\d \in \TT^{o}(\Go_\d)$ and $\xd, \yd \in \SVd$, let $\omega_{\xd,\yd}$ denote the unique path from $\xd$ to $\yd$ in the undirected tree obtained by forgetting all edge orientations in $T_\d$. Then, the set 
$$
    \cI_\d(T_\d) := \{(\xd,\yd,\omega_{\xd,\yd}), \xd, \yd \in \SVd\}
$$
is an element of the Hausdorff space $\cH(\Omega \times \Omega \times \cH(\Omega))$. Given a sequence of random trees $(T_\d)$, we will abuse notation and say that the sequence converges in law with respect to the Schramm topology if the associated sequence $\cI_\d(T_\d)$ converges in law with respect to this topology towards a random element of the space $\cH(\Omega \times \Omega \times \cH(\Omega))$.}

We denote by $\TT_{\d}^{M,\bu}$ the \lr{random} RST of $\SGd^o$ associated to the Laplacian $\Dld$ by Wilson's algorithm. The following theorem extends Theorem 1.2 of \cite{Berestycki} to the isoradial case. 

\lr{Let $\sigd$ denote the exit law from $\partial \Omega$ of a \emph{drifted Brownian motion} started at $x$ with drift vector $2Me^{i\bu}$, that is a diffusion process with unit covariance matrix and drift vector $\lr{2}Me^{i\bu}$}.
\begin{Thm}\label{thm:near-critical:tree}
    \lr{In the near-critical regime described above ($q = \frac{1}{2}M\d$), the tree $\TT_{\d}^{M,\bu}$ converges in law \lr{(with respect to the Schramm topology of Definition \ref{def:Schramm:tree})} to a continuum limit tree $\TT^{M,\bu}$ on $\Omega$\lr{, seen as a random element of $\cH(\Omega \times \Omega \times \cH(\Omega))$}. The law of a branch of this continuum tree can be described explicitly: given $x \in \Omega$, the endpoint $y \in \partial \Omega$ of the branch has law $\sigd$, and conditionally on $y$ the branch of the tree from $x$ to $y$ has the law of the massive radial $SLE_2$ in $\Omega$ with mass parameter $\sqrt{2}M$ from $x$ to $y$.}
\end{Thm}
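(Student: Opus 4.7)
The plan is to follow closely the strategy of \cite{Berestycki} and \cite{LawlerSchrammWerner}: use Wilson's algorithm on $\SGd^o$ with the drifted random walk $\tSd$ to sample $\TT_\d^{M,\bu}$, reduce Schramm convergence to the joint convergence of finitely many branches, and transfer all asymptotic statements from the already-understood killed random walk $\Skd$ to $\tSd$ via the discrete Girsanov identity of Lemma~\ref{lem:drift=mass}. Concretely, given a finite set of approximating points $x_{1,\d}, \dots, x_{k,\d} \to x_1, \dots, x_k \in \Omega$, I would enumerate them first in Wilson's algorithm: the first branch is $LE(\tSd)$ started at $x_{1,\d}$ and run until $\tSd$ exits $\SVd$ at some $y_{1,\d} \in \partial \SVd$; each subsequent branch is $LE(\tSd)$ started at the next point, stopped when it hits the partial tree or exits $\SVd$. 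Tightness of the whole tree in the Schramm topology is a consequence of the uniform crossing estimate of Proposition~\ref{prop:cross} (which, via Lemma~\ref{lem:drift=mass}, transfers to $\tSd$): one rules out pinch-points and wild branches by exactly the arguments of \cite{LawlerSchrammWerner,Berestycki}.

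I would then establish the law of a single branch. Fix $\xd \to x \in \Omega$ and $\yd \to y \in \partial \Omega$. By Lemma~\ref{lem:drift=mass}, for any simple discrete path $\gamma_\d$ from $\xd$ to $\yd$,
\[
    \Proba_{\xd}(LE(\tSd) = \gamma_\d) = \bigl(1+o(1)\bigr) \exp\bigl(2M\langle e^{i\bu}, y-x\rangle\bigr)\, \Proba_{\xd}(LE(\Skd) = \gamma_\d),
\]
uniformly in $\gamma_\d$. Summing over paths with fixed endpoint $\yd$ and dividing gives that, conditionally on the exit point, the law of $LE(\tSd)$ and that of $LE(\Skd)$ conditioned to exit $\SVd$ at $\yd$ before dying agree in the limit. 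Theorem~\ref{thm:cv:KilledRandomWalk:mSLE} then identifies this common limit as massive radial $SLE_2$ from $x$ to $y$ with mass parameter $\sqrt{2}M$.

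Next I would identify the marginal law of the endpoint. The vertex $\yd$ is simply the exit point of $\tSd$ from $\SVd$. Using the explicit asymptotics \eqref{eq:cl:explicit} of the drifted conductances (or, equivalently, taking an expansion of $\cl$ as a small perturbation of the critical conductances by $\lambda^u(y)/\lambda^u(x) = 1 + 2M\langle e^{i\bu},y-x\rangle + O(\d^2)$ from Lemma~\ref{lem:asymp:near-critical}), a computation of the first two moments of the increments of $\tSd$ analogous to \eqref{eq:increments} shows that, after the same lazy time reparametrization used in the proof of Theorem~\ref{thm:cv:KilledRandomWalk:mSLE}, the process $\tSd$ converges in law (uniform topology) to a Brownian motion with drift vector $2Me^{i\bu}$. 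Combined with the transferred uniform crossing estimate, this yields the convergence of the exit law of $\tSd$ to $\sigd$ by a standard skeleton/tightness argument.

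The main obstacle is extending the single-branch convergence to the joint convergence required by Wilson's algorithm, since beyond the first step the target of the drifted walk is not $\partial \SVd$ but the union of $\partial \SVd$ with the previously constructed branches $T_{i,\d}$. The strategy will be to condition on $T_{i,\d}$, view it as part of an enlarged boundary, and regard $\Omega_i := \Omega \setminus T_i$ (with $T_i$ a limiting compact tree) as a new Carathéodory-approximated domain; on this domain the argument of the second paragraph applies to produce a massive $SLE_2$ targeting either $\partial \Omega$ or $T_i$. The delicate point is to verify that this reduction is uniform in the (random) shape of $T_{i,\d}$, which requires the uniform crossing estimate of Proposition~\ref{prop:cross} and a tightness statement showing that $T_{i,\d}$ stays away from the starting point of the next branch with uniformly positive probability on good events; both are handled as in the inductive Wilson step of \cite{Berestycki} and \cite{LawlerSchrammWerner}, and closing this induction delivers the stated Schramm convergence together with the announced branch and endpoint laws.
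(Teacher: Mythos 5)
Your proposal follows essentially the same route as the paper's proof: sample $\TT_{\d}^{M,\bu}$ by Wilson's algorithm driven by $\tSd$, use the Girsanov identity of Lemma~\ref{lem:drift=mass} to show that, conditionally on the exit point, $LE(\tSd)$ has asymptotically the same law as $LE(\Skd)$ conditioned to escape at that point, invoke Theorem~\ref{thm:cv:KilledRandomWalk:mSLE} to identify the limit as massive radial $SLE_2$ with mass $\sqrt{2}M$, and iterate Wilson's algorithm (deferring, exactly as the paper does, to \cite{Berestycki} and \cite{LawlerSchrammWerner}) to upgrade single-branch convergence to Schramm convergence of the tree. The one place you genuinely diverge is the marginal law of the endpoint: you propose a fresh invariance principle for $\tSd$ (moment expansion of the drifted conductances \eqref{eq:cl:explicit}, lazy reparametrization, then a skeleton argument for the exit measure), whereas the paper gets it for free from the path-level Girsanov identity of Lemma~\ref{lem:near-critical:doob} together with the already-established convergence of the killed walk's exit measure to $\sigk(y)dy$: the endpoint density is $\exp(2M\langle e^{i\bu},y\rangle)\sigk(y)dy$, which the continuous Cameron--Martin theorem identifies as the exit law of a drifted Brownian motion, i.e.\ $\sigd$. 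Your route works and the drift constant does come out right (the mean increment of $\tSd$ is $2M\d^2 T_{\d}(x)e^{i\bu}+O(\d^3)$, giving drift $2Me^{i\bu}$ after the $T\d^2$ time normalization), but it costs an extra functional CLT and an extra exit-law argument that the paper's transfer avoids. Two small corrections: the tool for transferring the uniform crossing estimate of Proposition~\ref{prop:cross} to $\tSd$ is the path-level Lemma~\ref{lem:near-critical:doob} (whose ratio is bounded above and below uniformly), not the loop-erasure version Lemma~\ref{lem:drift=mass}; and the crossing estimate enters the paper not to prove tightness of the tree directly but as a hypothesis of Theorem~\ref{thm:berestycki} and of the Wilson iteration borrowed from \cite{Berestycki}.
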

Note that this theorem contains the square case of Theorem~1.2 of \cite{Berestycki} as a particular case by Example~\ref{ex:square}, but not the hexagonal case as they consider a non-reversible random walk.

\begin{proof}
\lr{This proof is strictly identical to the proof of Theorem 1.2 of \cite{Berestycki}}. Recall that $\TT_{\d}^{M,\bu}$ can be seen as a RST of $\SGd$ rooted outside $\SVd$. The law of a branch of this RST from a point $\xd \in \SVd$ to the boundary $\partial \SVd$ is the loop-erasure of $\tSd$ started at $\xd$. 

Let us denote by $\sigk(y)dy$ the exit measure on $\partial \Omega$ of the \lr{killed Brownian motion} with constant killing rate $\sqrt{2}M$ (note that this is a sub-probability measure). Lemma~\ref{lem:near-critical:doob} together with the convergence of the \lr{(lazy) killed random walk} towards the \lr{killed Brownian motion}, which we explained in the proof of Theorem \ref{thm:cv:KilledRandomWalk:mSLE}, imply that asymptotically, the law of the endpoint of a branch starting from $\xd \overset{\d \to 0}{\longrightarrow} x \in \Omega$ is
\be
    \exp(2M\langle e^{i\bu},y\rangle)\sigk(y)dy.
\ee
\lr{By the Cameron-Martin theorem, this is the exit law of $\Omega$ of a Brownian motion with drift vector $\sqrt{2}Me^{i\bu}$, that is $\sigd$.}  

By Lemma~\ref{lem:drift=mass}, conditionally on the endpoint $\ad \in \partial \SVd$, \lr{the} law \lr{of the loop-erasure of $\tSd$ from $\xd$ to $\ad$} is \lr{equal to} the \lr{law of the} loop-erasure of the \lr{killed random walk} $\Skd$ conditioned on escaping $\SVd$ via $\ad$ before dying. \lr{B}y Theorem~\ref{thm:cv:KilledRandomWalk:mSLE}, \lr{this law} converges \lr{weakly (with respect to the Hausdorff topology)} towards \lr{the law of the} massive $SLE_2$ with mass parameter $\sqrt{2}M$. \lr{Hence the loop-erasure of $\tSd$ converges in law (with respect to the Hausdorff topology) to the massive $SLE_2$ with mass parameter $\sqrt{2}M$, with exit distribution given by $\sigd$.}

\lr{
The convergence of the loop-erased random walk, applied iteratively using Wilson's algorithm, implies the convergence in law (with respect to the Schramm topology) of the tree $\TT_{\d}^{M,\bu}$ towards a continuum tree $\TT^{M,\bu}$.
}
\end{proof}

\begin{comment}
\begin{Rem}
	We could try to prove a more general version of this theorem for \lr{killed random walk} with varying mass associated with the Laplacian of Paragraph~\ref{subsec:general:laplacian} since Theorem~\ref{th:cv:SLE} proves that the \lr{killed random walk} associated with this more general Laplacian conditioned to leave $\partial \Omega$ before dying converges to massive $SLE_2$. Applying the general procedure of our first sections, we only need to find massive harmonic functions $\lambda$ for this Laplacian to obtain results for a Temperleyan tree and dimer model. But in this case, positive harmonic functions are no more explicit (as opposed to the discrete exponential function) so it is not clear how to choose $\lambda$ such that an equivalent of Lemma~\ref{lem:near-critical:doob} holds.
\end{Rem}
\end{comment}
By Temperley's bijection (see our Section~\ref{sec:planar}), the tree model is in weight-preserving bijection with the drifted dimer model on the double isoradial graph $\Gdr_{\d}$. \lr{By \cite{TreesMatchings}, to each dimer configuration of $\Gdr_{\d}$ is associated a $\R$-valued height function on the faces of $\Gdr_{\d}$, uniquely defined up to an additive constant (see their Lemma 2 for a precise definition). Let $f_\d$ be some arbitrary face of $\Gdr_{\d}$ and let $h^{M,\bu}_{\d}$ be the height function of a random drifted dimer configuration with mass and drift parameters $\sqrt{2}M, \bu$, with the additive constant chosen to be zero at $f_{\d}$. The choice of $f_\d$ does not matter to us since we will only consider centered height functions.} 

As in \cite{Berestycki}, convergence \lr{in law} of the \lr{tree} implies convergence \lr{in law} of the height function \lr{associated by the Temperley bijection.} 
%using the imaginary geometry approach developed in \cite{ImaginaryGeometry},
\begin{Thm}\label{thm:near-critical:dimers}
    In the near-critical regime described above ($q = \frac{1}{2}M\d$), the centered height function $h^{M,\bu}_{\d}- \E[h_{\d}^{M,\bu}]$ of the drifted dimer model with mass and drift parameters $\sqrt{2}M>0, \bu \in \R$ converges \lr{in law in the sense of distributions} to a limit $h^{M,\bu}$ when $\d \to 0$.
\end{Thm}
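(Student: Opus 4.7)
The strategy is to follow the approach of \cite{Berestycki}: deduce convergence of the centered height function from the convergence of the Temperleyan tree $\TT^{M,\bu}_\d$ established in Theorem \ref{thm:near-critical:tree}. By Temperley's bijection, the dimer configuration on $\Gdr_\d$ is entirely determined by the RST on $\SGd^o$, and the associated height function $h^{M,\bu}_\d$ on the faces of $\Gdr_\d$ is characterized (up to an additive constant) by the branches of this tree: for a face $f$ of $\Gdr_\d$ corresponding to a primal vertex $x_f \in \SVd$, $h^{M,\bu}_\d(f)$ can be expressed, modulo a bounded local correction depending only on the isoradial structure, as $\frac{1}{2\pi}$ times the total winding of the unique path in $\TT^{M,\bu}_\d$ from $x_f$ to the outer vertex $o$. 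This height-winding formula is the standard bridge between tree convergence and height convergence in the Temperleyan setting.

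The plan proceeds in two main steps. First, I would establish tightness of $h^{M,\bu}_\d - \E[h^{M,\bu}_\d]$, viewed as a random distribution on $\Omega$, in a negative Sobolev space $H^{-1-\eta}(\Omega)$ for arbitrary $\eta > 0$. For this it suffices to obtain a uniform second-moment bound on the pairing $\langle h^{M,\bu}_\d - \E[h^{M,\bu}_\d], \varphi \rangle$ for smooth compactly supported test functions $\varphi$. Such a bound follows from the local statistics formula \eqref{eq:local:statistics} combined with Proposition \ref{prop:massive:holomorphy}: the latter identifies $(\Kk)^\dagger \Kk$ with a block diagonal operator whose primal block is the massive Laplacian $\Dk_{\SV}$, so that entries of $(\Kk)^{-1}$ are expressed as discrete derivatives of the massive Green function $\Gk$. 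The near-critical asymptotics of $\Gk$, available through the analysis of the $Z$-invariant massive Laplacian in \cite{MassiveLaplacian,MartinBoundary}, provide the required off-diagonal decay; Proposition \ref{prop:gauge:killed:drifted} then transfers these estimates from the killed to the drifted dimer model.

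Second, to identify the limit, I would use the convergence of the tree given by Theorem \ref{thm:near-critical:tree} in the Schramm topology. Pairing the centered height function against $\varphi$ becomes, via the height-winding formula, a centered functional of branch windings of $\TT^{M,\bu}_\d$. Combining Schramm convergence with the uniform moment bound from the first step (which provides uniform integrability) would allow to pass to the limit, and $h^{M,\bu}$ would be defined as the corresponding functional of the continuum tree $\TT^{M,\bu}$. The main obstacle is that the winding functional is not continuous in the Schramm topology on fractal curves such as massive $SLE_2$: small perturbations of a branch near its tip can drastically change its total winding. The resolution, following the strategy of \cite{Berestycki}, is that integration against a smooth test function regularizes these fluctuations; one must nevertheless carefully control the near-boundary behaviour of branches, relying on the Beurling-type estimate underlying Proposition \ref{prop:cross} and on tail bounds on branch hitting times implicit in Theorem \ref{thm:cv:KilledRandomWalk:mSLE}, in order to ensure that the winding contributions assemble into a well-defined limiting random distribution.
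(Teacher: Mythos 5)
Your high-level strategy (Temperleyan tree convergence plus a winding representation of the height function implies convergence of the centered height function) is the same as the paper's, but the paper does not carry out this program by hand: it invokes Theorem~6.1 of \cite{BLR19} as a black box. That theorem states precisely that convergence in law of the Temperleyan tree in the Schramm topology, together with the uniform crossing estimate \eqref{eq:cross:estimate} for the underlying walk, implies convergence in law of the centered height function in the sense of distributions; it does \emph{not} require convergence of the walk to Brownian motion. The only hypothesis that requires work here is the crossing estimate for the \emph{drifted} walk $\tSd$ (the branches of $\TT^{M,\bu}_{\d}$ are drifted loop-erased walks, not killed ones), and the paper obtains it from Proposition~\ref{prop:cross} for $\Skd$ together with the fact that, by Lemma~\ref{lem:near-critical:doob}, the Radon--Nikodym derivative of the law of $\tSd$ with respect to that of $\Skd$ is bounded above and below uniformly in $\d$. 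Your proposal omits this transfer step entirely: you only invoke the Beurling-type estimate behind Proposition~\ref{prop:cross}, which concerns the killed walk.

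The more serious gap is in your tightness step. You propose to bound second moments of $\langle h^{M,\bu}_{\d}-\E[h^{M,\bu}_{\d}],\varphi\rangle$ via the local statistics formula \eqref{eq:local:statistics} and Proposition~\ref{prop:massive:holomorphy}, using ``near-critical asymptotics of $\Gk$'' from \cite{MassiveLaplacian,MartinBoundary}. Those references give asymptotics of the \emph{full-plane} massive Green function; what enters $(\Kk)^{-1}$ here is the Green function of $\Dk_{\SVd}$ on the bounded domain with wired boundary conditions, for which no asymptotics are established in this paper, and deriving uniform-in-$\d$ off-diagonal estimates for it near $\partial\Omega$ is a substantial separate project. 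This is exactly the difficulty that the approach of \cite{BLR19} (and \cite{Berestycki}) is designed to circumvent: the moment bounds on the height field are obtained there from the winding representation and the crossing estimate alone, with no Green function asymptotics. As written, your tightness argument rests on estimates that are not available, so the proof is incomplete at that point; replacing it by the citation of Theorem~6.1 of \cite{BLR19}, after verifying the crossing estimate for $\tSd$, closes the gap.
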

\begin{proof}
\lr{We refer to the proof of Proposition 5.2 of \cite{Berestycki} for more details. Recall that the tree $\TT^{M,\bu}_\d$ is obtained from the drifted random walk $\tSd$ by Wilson's algorithm. The idea of the proof is to apply Theorem 6.1 of \cite{BLR19}. This theorem states in a very general setting (the dimer model on a sequence of graphs approximating a Riemann surface $M$) that convergence in law (with respect to the Schramm topology) of the tree associated via the Temperley bijection implies convergence of the height function in law in the sense of distributions. For this theorem to apply, a few hypothesis must hold, which are described in Section 2.4 of \cite{BLR19}, and which we now recall:
\begin{itemize}
    \item their hypothesis (i), (ii) and (v) require that $\SGd$ is “correctly” embedded in a Riemann surface $M$: in our case the Riemann surface is simply $M = \Omega$ and these hypothesis are trivially satisfied.
    \item their hypothesis (iv) is the uniform crossing estimate of Equation \eqref{eq:cross:estimate} for the drifted random walk $\tSd$. We know from Proposition \ref{prop:cross} that the killed random walk $\Skd$ satisfies the uniform crossing estimate. By Lemma \ref{lem:near-critical:doob}, the Radon-Nikodym derivative of the law of $\tSd$ with respect to the law of $\Skd$ is bounded from above and below by constants independent of $\delta$, hence $\tSd$ also satisfies the uniform crossing estimate. 
\end{itemize}
There is an additional hypothesis (iii), requiring that the random walk $\tSd$ converges to the Brownian motion. This hypothesis is only used later in \cite{BLR19} to identify the limit in law of the loop-erased random walk, but it is not used in the proof of their Theorem 6.1 (as observed in the proof of Proposition 5.2 in \cite{Berestycki}). Hence Theorem 6.1 of \cite{BLR19} applies in our setting. As a consequence, the centered height function converges in law in the sense of distribution, that is for all $f: \Omega \to \R$ smooth and compactly supported with $\int_{\Omega} f = 0$, then
\be\label{eq:cv:distribution}
    \sum f(\xd)(h^{M,\bu}_{\d}(\xd)- \E[h_{\d}^{M,\bu}])
\ee
converges in law, where the sum is taken over faces of $\Gdr_{\d}$. Actually, Theorem 6.1 of \cite{Berestycki} states the joint convergence in law of Equation \eqref{eq:cv:distribution} and of $T_{\d}^{M,\bu}$, which is stronger.
}
\end{proof}
The limit is not explicit with these techniques, but \cite{SineGordon} seems to indicate that it is related to the Sine-Gordon field, see (i) of the open question Section 1.7. of \cite{Berestycki} for more details on this conjecture.

\lr{
\begin{Rem}\label{rem:conformal:covariance}
In Theorems \ref{thm:cv:KilledRandomWalk:mSLE}, \ref{thm:near-critical:tree}, \ref{thm:near-critical:dimers}, we expect the limit to display \emph{conformal covariance}. When we apply a conformal map $\phi: \Omega \to \Omega'$, the limit should change and be associated to a massive $SLE_2$ with \emph{variable mass} 
$$
    \forall z \in \Omega, M'(\phi(z)) = |\phi'(z)|M.
$$
This is discussed in details in \cite{Berestycki}, in particular in their Theorem 1.4. We did not define a dimer and tree model with variable mass, but we can still illustrate the \emph{conformal scaling} of the mass. Let $r > 0$ and define $\phi: \C \to \C; \phi(z) = rz$. Let $\Omega' := \phi(\Omega)$. The conformal map $\phi$ maps $\SGd$ with an isoradial approximation $\SGd' := \phi(\SGd)$ of $\Omega'$ with mesh $r\delta$ on which a massive Laplacian operator $(\Dkd)'$ is defined by 
$$
    \forall f: \SGd' \to \C, (\Dkd)' f := \Dkd (f\circ\phi^{-1}).  
$$
Letting $\delta' = r \delta$, Theorems \ref{thm:cv:KilledRandomWalk:mSLE}, \ref{thm:near-critical:tree}, \ref{thm:near-critical:dimers} apply and prove that the  loop erasure of the \lr{killed random walk} conditioned to leave $\Omega'$, the near-critical tree and the height function of the near-critical dimer model converge towards the corresponding continuum objects on $\Omega'$. Since $q = \frac{1}{2}M\d = \frac{1}{2}\left(\frac{M}{r}\right)\d'$, the corresponding mass parameter is $M' = M/r$, as expected by conformal covariance.
\end{Rem}
}

%%%%%%%%%%%%%%%%%%%%%%%%%%%%%%%%%%%%%%%%%%%%%%%%%%%%%%%%%%%%%%%%%%%%%%

\appendix
\section{Critical and near-critical isoradial Laplacians}\label{app:critical:laplacian}
In this appendix, we use some tools developed in \cite{DiscreteHolomorphy} for the analysis of the critical Laplacian on isoradial graphs and adapt them to the near-critical regime to prove Lemma~\ref{lem:mass} and Proposition \ref{prop:cross}. We use the notation of Section~\ref{sec:near-critical} for isoradial graphs and discrete $Z$-invariant and critical Laplacians.

\subsection{Proof of the asymptotic of the mass in the near-critical regime}\label{subsec:asymp:mass}
\paragraph{The approximation property.} Many of the properties of the critical Laplacian $\Dd$ of Equation~\eqref{eq:def:non-massive:D} follow from the fundamental \emph{approximation property} Lemma 2.2 of \cite{DiscreteHolomorphy} which we now recall. Let $\Delta = \partial_{xx} + \partial_{yy}$ and $D$ denote respectively the usual continuous Laplacian and derivative operators on $\R^2$. Note that the definition of and $\Dd$ \lr{(and $\Dkd$)} are \emph{local}: for $x \in \oSVd$, $(\Dd f)(x)$ (and also $(\Dkd f)(x)$) is defined as long as $f$ is defined on the neighbours of $x$. In particular, it makes sense if $f$ is defined only on $B(x,2\d)$ (the \lr{E}uclidean ball centered at $x$ of radius $r$) which contains all the neighbours of $x$. 
\begin{Lem}[Lemma 2.2 of \cite{DiscreteHolomorphy}]\label{LemApproxMassless}
    There exists an absolute constant $C$ such that for all $x \in \oSVd$, for all smooth function $f \in \CC^3(B(x,2\d), \R)$, 
	\begin{equation}
		\left| (\oDd f)(x) + \frac{\d^2}{2}\left(\sum_{y \sim x}\sin(2\bt_{xy}\right)(\Delta f)(x) \right| \leq C \d^3 \sup_{B(x,2\d)} |D^3f|.
	\end{equation}
\end{Lem}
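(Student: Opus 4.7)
The plan is to prove the approximation property by a third-order Taylor expansion at $x$, where the miraculous cancellations in the zeroth, first and second order terms follow from two elementary trigonometric identities specific to the isoradial structure. Fix $x \in \oSVd$, label its neighbors $y_0, \dots, y_{d-1}$ in cyclic order, and let $\phi_0 < \phi_1 < \cdots < \phi_{d-1} < \phi_0 + 2\pi$ be the angles of the rhombus edges emanating from $x$, so that $\bar{\alpha}_{xy_j} = \phi_j$, $\bar{\beta}_{xy_j} = \phi_{j+1}$, and
\[ y_j - x = \delta(e^{i\phi_j} + e^{i\phi_{j+1}}) = 2\delta \cos(\bt_j) e^{i\bar{\gamma}_j}, \]
with half-angle $\bt_j = (\phi_{j+1}-\phi_j)/2$ and mean angle $\bar{\gamma}_j = (\phi_j + \phi_{j+1})/2$.

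First I would substitute the Taylor expansion
\[ f(y_j) - f(x) = \nabla f(x)\cdot (y_j-x) + \tfrac{1}{2}(y_j-x)^T H_f(x)(y_j-x) + R_j, \qquad |R_j| \leq \tfrac{(2\d)^3}{6}\sup_{B(x,2\d)}|D^3 f|, \]
into the definition $(\oDd f)(x) = \sum_j \tan(\bt_j)(f(x)-f(y_j))$. The linear-in-$\nabla f$ term vanishes because
\[ \tan(\bt_j)(y_j - x) = 2\d \sin(\bt_j) e^{i\bar{\gamma}_j} = -i\delta(e^{i\phi_{j+1}} - e^{i\phi_j}), \]
so summing over $j$ is telescopic and gives $0$. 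This is the first key isoradial identity.

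For the quadratic term, writing $(y_j-x)^T H_f(x)(y_j-x) = 4\d^2 \cos^2(\bt_j)\bigl[f_{xx}\cos^2(\bar{\gamma}_j) + f_{yy}\sin^2(\bar{\gamma}_j) + f_{xy}\sin(2\bar{\gamma}_j)\bigr]$ and using $\tan(\bt_j)\cdot 4\cos^2(\bt_j) = 2\sin(2\bt_j)$ yields
\[ \tfrac{1}{2}\sum_j \tan(\bt_j)(y_j-x)^T H_f(x)(y_j-x) = \d^2 \sum_j \sin(2\bt_j)\bigl[f_{xx}\cos^2(\bar{\gamma}_j) + f_{yy}\sin^2(\bar{\gamma}_j) + f_{xy}\sin(2\bar{\gamma}_j)\bigr]. \]
Product-to-sum formulas give
\[ \sin(2\bt_j)\cos(2\bar{\gamma}_j) = \tfrac{1}{2}(\sin(2\phi_{j+1}) - \sin(2\phi_j)), \qquad \sin(2\bt_j)\sin(2\bar{\gamma}_j) = \tfrac{1}{2}(\cos(2\phi_j) - \cos(2\phi_{j+1})), \]
both of which telescope to $0$ when summed over $j$. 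Hence $\sum_j \sin(2\bt_j)\cos^2(\bar{\gamma}_j) = \sum_j \sin(2\bt_j)\sin^2(\bar{\gamma}_j) = \tfrac{1}{2}\sum_j \sin(2\bt_j)$ and $\sum_j \sin(2\bt_j)\sin(2\bar{\gamma}_j) = 0$, so the quadratic contribution is exactly $\tfrac{\d^2}{2}\bigl(\sum_j \sin(2\bt_j)\bigr)\Delta f(x)$, producing the claimed main term with the correct sign after accounting for the $f(x)-f(y_j)$ (rather than $f(y_j)-f(x)$) convention.

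Finally, the remainder is controlled by $\bigl|\sum_j \tan(\bt_j) R_j\bigr| \leq \tfrac{4\d^3}{3}\bigl(\sum_j \tan(\bt_j)\bigr)\sup_{B(x,2\d)}|D^3 f|$, which under the bounded angle assumption is $\leq C\d^3 \sup|D^3 f|$ since both the degree (bounded by $\pi/\eps$) and each $\tan(\bt_j)$ (bounded by $\cot(\eps)$) are controlled uniformly. The main (and only) obstacle is really a bookkeeping one: writing down the right complex parametrization of the rhombus edges so that the two telescoping identities become transparent; once that is done, the proof is essentially algebraic.
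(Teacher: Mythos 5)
Your proof is correct. The paper itself gives no argument for this statement --- it is quoted verbatim as Lemma~2.2 of \cite{DiscreteHolomorphy} --- and your Taylor-expansion-plus-telescoping derivation is essentially the standard proof from that reference: the first identity says the discrete Laplacian annihilates affine functions, and the second says it annihilates the harmonic quadratics $\Re(z^2)$, $\Im(z^2)$ while acting on $|z|^2$ with the stated weight. The only ingredient you use implicitly and should state is that the lozenges attached to the edges at $x$ tile a full neighbourhood of $x$, i.e.\ $\bar{\beta}_{xy_j} = \bar{\alpha}_{xy_{j+1}}$ cyclically and $\sum_j 2\bt_j = 2\pi$; this is exactly what closes both telescoping sums and what bounds the degree by $\pi/\eps$ in your remainder estimate. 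With that made explicit, the argument is complete.
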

From this approximation property follows the \emph{approximation property for the \lr{near-critical massive Laplacian}}. \lr{Recall that we work in the near-critical regime $q = \frac{1}{2}M\d$ of Section \ref{subsec:near-critical}.}
\begin{Lem}\label{lem:approx:M:constant}
    There exists an absolute constant $C$ such that for all $x \in \oSVd$, for all smooth function $f \in \CC^3(B(x,2\d), \R)$ defined in a neighborhood of $x$,
	\begin{equation}
		\left| (\oDkd f)(x) + \frac{\d^2}{2}\left(\sum_{y \sim x}\sin(2\bt_{xy}\right)(\Delta f)(x) - m^2(x|k)f(x) \right| \leq C \d^3 \sup_{B(x,2\d)}(|Df|+|\Delta f|+|D^3f|)
	\end{equation}
\end{Lem}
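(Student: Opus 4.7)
The plan is to decompose $\oDkd$ as the critical Laplacian $\oDd$ plus a correction and bound the two pieces separately. Writing
\[
(\oDkd f)(x) - m^2(x|k)f(x) = (\oDd f)(x) + \sum_{y\sim x}\bigl(\sc(\theta_{xy}|k) - \tan(\bt_{xy})\bigr)(f(x) - f(y)),
\]
Lemma~\ref{LemApproxMassless} handles the critical term, producing the announced main term $-\tfrac{\d^2}{2}\bigl(\sum_{y\sim x}\sin(2\bt_{xy})\bigr)\Delta f(x)$ plus an $O(\d^3)\sup|D^3 f|$ error. It only remains to bound the correction sum $R := \sum_{y\sim x}\bigl(\sc(\theta_{xy}|k)-\tan(\bt_{xy})\bigr)(f(x) - f(y))$ by $C\d^3 \sup(|Df| + |\Delta f| + |D^3 f|)$, and this is where the work lies.

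The two key ingredients are the sharp asymptotic $\sc(\theta_{xy}|k) - \tan(\bt_{xy}) = 2M\d\tan(\bt_{xy}) + O(\d^2)$ (extracted from Lemma~\ref{lem:asymp:near-critical}, uniform under the bounded angle assumption) and the third-order Taylor expansion of $f$ at $x$. Distributing both into $R$ produces three contributions. The linear Taylor contribution reduces to $-Df(x)\cdot\sum_{y\sim x}\bigl(\sc(\theta_{xy}|k)-\tan(\bt_{xy})\bigr)(y-x)$, which naively would be of order $\d^2$; the crucial observation is that its leading part $2M\d\sum_{y\sim x}\tan(\bt_{xy})(y-x)$ vanishes exactly, thanks to the classical isoradial identity $\sum_{y\sim x}\tan(\bt_{xy})(y-x) = 0$. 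This identity itself expresses the fact that affine functions are harmonic for $\oDd$, and the cancellation it produces is the main obstacle in the proof: without it, the announced bound would fail by one power of $\d$. Only $O(\d^3)\sup|Df|$ remains.

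For the quadratic Taylor contribution $-\tfrac12\sum_{y\sim x}\bigl(\sc(\theta_{xy}|k)-\tan(\bt_{xy})\bigr)D^2 f(x)(y-x, y-x)$, the idea is to apply Lemma~\ref{LemApproxMassless} to the quadratic polynomial $Q(z) := \tfrac12 D^2 f(x)(z-x, z-x)$: since $D^3 Q \equiv 0$ and $\Delta Q \equiv \Delta f(x)$, the lemma yields exactly $\sum_{y\sim x}\tan(\bt_{xy}) Q(y) = \tfrac{\d^2}{2}\bigl(\sum_{y\sim x}\sin(2\bt_{xy})\bigr)\Delta f(x)$. Multiplying by the leading $2M\d$ in the weight difference gives an $O(\d^3)\sup|\Delta f|$ contribution to $R$. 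The subleading $O(\d^2)$ part of the weight difference combined with the $O(\d^2)$ quadratic Taylor piece yields an $O(\d^4)$ remainder, and the cubic Taylor remainder multiplied by $\sum_{y\sim x}(\sc(\theta_{xy}|k)-\tan(\bt_{xy})) = O(\d)$ likewise gives $O(\d^4)$; both are absorbed in the announced bound. Summing the three estimates yields the desired bound on $|R|$ with an absolute constant depending only on $M$ and on the angle lower bound $\eps$ from the bounded angle assumption.
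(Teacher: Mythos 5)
Your proof is correct, but it takes a genuinely different (and more laborious) route than the paper. The paper does not subtract $(\oDd f)(x)$ from $(\oDkd f)(x)-m^2(x|k)f(x)$; it subtracts $(1+2M\d)(\oDd f)(x)$. Since $\sc(\theta_{xy}|k)=(1+2M\d+O(\d^2))\tan(\bt_{xy})$, the residual conductance discrepancy per edge is then $O(\d^2)$ rather than $O(\d)$, so the crude bound $|f(x)-f(y)|\le 2\d\sup|Df|$ immediately gives an $O(\d^3)\sup|Df|$ error with no Taylor expansion and no cancellation; the term $\sup|\Delta f|$ in the final bound arises simply because the prefactor $(1+2M\d)$ multiplies the main term $-\frac{\d^2}{2}\bigl(\sum_{y\sim x}\sin(2\bt_{xy})\bigr)(\Delta f)(x)$ of Lemma~\ref{LemApproxMassless}, contributing an extra $M\d^3\bigl(\sum_{y\sim x}\sin(2\bt_{xy})\bigr)|\Delta f(x)|$. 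Your decomposition forfeits that power of $\d$ and has to win it back through the identity $\sum_{y\sim x}\tan(\bt_{xy})(y-x)=0$ and a second application of Lemma~\ref{LemApproxMassless} to the Hessian quadratic; both steps are valid (note that the identity you invoke is itself a corollary of Lemma~\ref{LemApproxMassless} applied to affine functions, so no external input is needed), but the paper's choice of comparison operator makes the whole analysis a two-line triangle inequality. One loose end in your write-up: the two remainders you declare to be $O(\d^4)$ carry an implicit factor $\lVert D^2f(x)\rVert$, which is not among the quantities controlled on the right-hand side of the lemma; you need the interpolation bound $\lVert D^2f(x)\rVert\le C\bigl(\d^{-1}\sup_{B(x,2\d)}|Df|+\d\sup_{B(x,2\d)}|D^3f|\bigr)$ (obtained by finite-differencing $Df$ across $B(x,2\d)$) to convert them into admissible $O(\d^3)\sup|Df|+O(\d^5)\sup|D^3f|$ terms. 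This is easily patched but should be said.
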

\begin{proof}
     Recall that in the near-critical regime, by Equation~\eqref{eq:approx:sc}, the conductances of the massive Laplacian $\oDkd$ are $\oc_{xy} = \sc(\theta_{xy}|k)= (1+2M\d+O(\d^2))\tan(\bt_{xy})$. Moreover, for all $x \in \oSVd$, $f \in \CC^3(B(x,2\d), \R)$, $y \sim x$, we have $|f(x)-f(y)| \leq \d \sup_{B(x,2\d)}|Df|$ so
    \begin{equation}
        \begin{aligned}
        &\left|(\oDkd f)(x) - m^2(x|k)f(x) - (1+2M\d)(\oDd f)(x)\right|\\
        &\qquad =  \left|\sum_{y \sim x} \left(\sc(\t_{xy}|k)-(1+2M\d)\tan(\bt_{xy})\right)(f(x)-f(y))\right| \leq C\d^3\sup_{B(x,2\d)}|Df|.
        \end{aligned}
    \end{equation}
    For some universal constant $C$. The approximation property for the massive Laplacian follows by triangular inequality and Lemma~\ref{LemApproxMassless}.
\end{proof}

\paragraph{The asymptotics of the near-critical mass.} We use the approximation property and the massive harmonicity of $\me$ to prove Lemma~\ref{lem:mass}. 

\begin{Rem}
This proof is a shortcut: it avoids all direct computations which are hidden in the massive harmonicity of the discrete exponential. A direct computation is also possible but lengthy.
\end{Rem}

\begin{proof}
    Let $x \in \oSVd$ be fixed. \lr{Fix an arbitrary $\bu \in \R$.} We introduce a function $e^{M,\bu}: y \in \C \to \exp(2M\langle e^{i\bu},y-x\rangle)$. On the one hand, by Lemma~\ref{lem:asymp:near-critical}, for $y \sim x$,
    \begin{equation} 
	\me_{(x,y)}(u-2K-2iK'|k) = \exp(2M\langle e^{i\bu},y-x\rangle) + O(\d^3) = e^{M,\bu}(y) + O(\d^3)
    \end{equation}
    so by linearity of $\oDkd$ and massive harmonicity of $\me$, 
    \begin{equation}
        (\oDkd e^{M,\bu})(x) = O(\d^3).
    \end{equation}
    On the other hand, $e^{M,\bu}$ is (continuous) \emph{massive harmonic with mass $4M^2$}: $\Delta e^{M,\bu} = 4M^2 e^{M,\bu}$. By the approximation property applied at $x$ to the function $e^{M,\bu}$ (and since by definition, $e^{M,\bu}(x) = 1$): 
    $$
        m^2(x|k) = 2M^2\d^2 \sum_{y\sim x} \sin(2\bt_{xy}) +O(\d^3).
        \qedhere
    $$
\end{proof}

\subsection{Proof of the uniform crossing estimate}\label{subsec:cross}
\paragraph{The weak Beurling estimate for the critical Laplacian.}
We recall the notation and result of \cite{DiscreteHolomorphy}. For a complex number $z \in \C \setminus \{0\}$, denote by $\arg(z)$ its argument in $(-\pi, \pi]$. For an open \lr{E}uclidean ball $B \subset \C$ \lr{of radius $r > 0$} and $\d >0$, denote by $B_{\d}$ the \emph{discretization} of $B$: it is the subgraph induced by $\oSVd \cap B$. Note that for $r/\d \geq C_0$ some absolute constant depending only on the bounding angle assumption, it is connected and coincides with the definition of \cite{DiscreteHolomorphy}, and moreover $\partial B_{\d}$ is at distance $\leq 2\d$ of $\partial B$: this might not hold for more complicated sets. For $a,b \in \partial B$, resp. $\ad, \bd \in \partial B_{\d}$, we denote by $(a,b) \subset \partial B$, resp. $(\ad,\bd) \subset \partial B_{\d}$, the points of $\partial B$, resp. vertices of $\partial B_{\d}$, encountered when following the boundary counterclockwise. Denote by $\Xd$ the \lr{random walk} with conductances $\tan(\bt_{xy})$, that is the \lr{random walk} associated with the critical Laplacian $\Dd$.
\begin{Lem}[Lemma 2.10 of \cite{DiscreteHolomorphy}]\label{lem:beurling}
    For $o_{\d} \in \oSVd$, $r > C_0 \d$, let $B(o_{\d},2r)$ be the \lr{E}uclidean ball of radius $2r$, $B_{\d}(o_{\d},2r)$ its discretization and $\td$ the escape time of $B_{\d}(o_{\d},2r)$ by $\Xd$. Let $\ad, \bd \in \partial B_{\d}(o_{\d},2r)$ such that $\arg(\bd-o_{\d}) - \arg(\ad - o_{\d}) \geq \pi/4$. Then, 
    \be
        \forall \xd \in B_{\d}(o_{\d}, r),~\Proba_{\xd}((\Xd)_{\td} \in (\ad, \bd)) \geq \eta_0 \lr{>} 0,
    \ee
    where $\eta_0$ is an absolute constant (independent of the isoradial grid, $r$, $o_{\d}, \ad, \bd$).
\end{Lem}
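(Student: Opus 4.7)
The plan is to reduce the estimate to the corresponding statement for planar Brownian motion and transfer it back to $X^\delta$ through the approximation property Lemma~\ref{LemApproxMassless}, with a separate combinatorial argument in the local regime $r = O(\delta)$. First I would establish the Brownian counterpart: for every ball $B(o,2r)$ and every arc $(a,b) \subset \partial B(o,2r)$ with angular width $\geq \pi/4$, the planar Brownian motion started at any $x \in \overline{B(o,r)}$ exits $B(o,2r)$ through $(a,b)$ with probability at least an absolute constant $\eta_{\mathrm{cont}}>0$. By translation, rotation, and Brownian scale invariance one reduces to $o=0$, $r=1$, and the disk Poisson kernel gives
\[
K(\rho e^{i\varphi},2e^{i\psi}) \;=\; \frac{1}{4\pi}\cdot\frac{4-\rho^2}{4-4\rho\cos(\psi-\varphi)+\rho^2} \;\geq\; \frac{1}{12\pi} \quad \text{for } \rho \leq 1,
\]
uniformly in $\varphi,\psi$; integrating against the arclength $2\,d\psi$ over $(a,b)$ yields $\eta_{\mathrm{cont}}$ explicitly.

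For the transfer when $r/\delta \geq N$ for an absolute constant $N$ to be chosen, set $h(y):=\Proba_y(B_{\tau_{B(o_\delta,2r)}} \in (a_\delta,b_\delta))$. The function $h$ is harmonic on $B(o_\delta,2r)$, continuous up to the boundary, bounded by $1$, and by standard Schauder estimates its derivatives up to order three are bounded by $C/r^k$ on $B(o_\delta, 3r/2)$. Apply Lemma~\ref{LemApproxMassless} to $h$ at each $x_\delta \in B_\delta(o_\delta,2r)$: since $\Delta h = 0$, the lemma yields $|\oDd h(x_\delta)| \leq C' \delta^3/r^3$. The discrepancy $e(y) := h(y) - \Proba_y(X^\delta_{\tau_\delta} \in (a_\delta,b_\delta))$ therefore solves $|\oDd e| \leq C'\delta^3/r^3$ on $B_\delta(o_\delta,2r)$, and is bounded by $O(\delta/r)$ on $\partial B_\delta(o_\delta,2r)$ (because $h$ is $C^1$ up to the continuous boundary and $\partial B_\delta$ lies within $2\delta$ of $\partial B$). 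A discrete maximum principle combined with the standard optional-stopping bound $\E_{x_\delta}[\tau_\delta] = O(r^2/\delta^2)$, obtained by applying optional stopping to the $\oDd$-submartingale $|X^\delta_{n\wedge \tau_\delta}-o_\delta|^2$, gives $\|e\|_\infty = O(\delta/r)$. Choosing $N$ so that this error is $\leq \eta_{\mathrm{cont}}/2$ yields $\eta_0 \geq \eta_{\mathrm{cont}}/2$ in this regime.

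In the remaining range $C_0 \leq r/\delta \leq N$, the discretized ball $B_\delta(o_\delta,2r)$ has uniformly bounded cardinality (by the bounded angle assumption, which controls both vertex density and combinatorial diameter), and the conductances $\tan(\bar\theta_{xy})$ are bounded above and below by absolute constants. The walk $X^\delta$ is then irreducible on a finite graph of size bounded by an absolute constant, with transition probabilities bounded away from $0$ by an absolute constant, and the arc $(a_\delta,b_\delta)$ contains at least one boundary vertex. Direct enumeration of a shortest path from $x_\delta$ to that vertex yields the desired lower bound by another absolute constant, so by taking the minimum of the two regimes we obtain a single $\eta_0$.

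The main obstacle is the \emph{uniformity} across all starting points $x_\delta \in B_\delta(o_\delta,r)$ and all mesh-to-radius ratios $\delta/r$: a bare invariance principle delivers pointwise convergence for each fixed domain but not the quantitative control needed here. The resolution, implicit in the plan above, is to transfer the smooth harmonic function $h$ into the discrete setting via Lemma~\ref{LemApproxMassless}, rather than transferring the discrete walk into the continuum via weak convergence. This has the double benefit of being automatic in $x_\delta$ (the approximation property is pointwise) and of producing an explicit error $O(\delta/r)$ that beats $\eta_{\mathrm{cont}}$ as soon as $r/\delta$ exceeds the threshold $N$.
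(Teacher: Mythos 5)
The paper offers no proof of this statement: it is imported verbatim as Lemma~2.10 of \cite{DiscreteHolomorphy} and used as a black box, so your proposal can only be judged on its own terms. Your overall strategy --- prove the Brownian bound via the Poisson kernel, transfer it to $\Xd$ through the approximation property Lemma~\ref{LemApproxMassless} plus a discrete maximum principle and the bound $\E_{\xd}[\td]=O(r^2/\d^2)$, and handle $r\asymp\d$ by finite enumeration --- is a standard and viable route; the last two ingredients are exactly those the paper itself deploys in Appendix~\ref{subsec:cross} to pass from this lemma to the crossing estimate.

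There is, however, a genuine gap in the transfer step. You set $h(y)=\Proba_y(B_\tau\in(\ad,\bd))$, the harmonic extension of the \emph{indicator of an arc}, and assert that $h$ is $C^1$ up to the boundary with $|D^3h|\le C/r^3$ wherever the approximation property is applied. This fails near the endpoints $\ad,\bd$: the boundary data is discontinuous there, so $h$ admits no modulus of continuity up to $\partial B(o_{\d},2r)$ at those points; the interior estimates $|D^kh|\le C\|h\|_\infty/\mathrm{dist}(\cdot,\partial B)^k$ blow up in the $2\d$-collar of the boundary, where Lemma~\ref{LemApproxMassless} must also be invoked; and the boundary discrepancy $e(y)=h(y)-\Proba_y((\Xd)_{\td}\in(\ad,\bd))$ is of order $1$ (not $O(\d/r)$) at the $O(1)$ boundary vertices closest to $\ad$ and $\bd$. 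Since the maximum principle propagates the \emph{maximum} of the boundary error into the interior, the conclusion $\|e\|_\infty=O(\d/r)$ does not follow as written. The standard repair is to smooth the boundary data from below: take $\phi\le\mathbbm{1}_{(\ad,\bd)}$ of class $C^3$, equal to $1$ on the arc shrunk by $\pi/16$ on each side and supported in $(\ad,\bd)$, with $\|D^k\phi\|_\infty\le Cr^{-k}$ after scaling. Then $f=\E_\cdot[\phi(B_\tau)]$ is $C^3$ up to the closed ball by elliptic regularity with $|D^kf|\le Cr^{-k}$ globally, your maximum-principle argument yields $\Proba_{\xd}((\Xd)_{\td}\in(\ad,\bd))\ge\E_{\xd}[\phi((\Xd)_{\td})]\ge f(\xd)-O(\d/r)$, and $f(\xd)$ remains bounded below by an absolute constant because the shrunk arc still has angular width $\ge\pi/8$. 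With that modification (and the harmless convention that $(\ad,\bd)$ contains at least one boundary vertex in the regime $r\asymp\d$), your argument is correct.
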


\paragraph{The uniform crossing estimate for the near-critical Laplacian.}
We prove Proposition \ref{prop:cross}. We only do the proof for $\crossk$: the proof for $\crossk'$ is exactly the same.
\begin{proof}
Fix $R >0$, and let $C_0 \d \leq r \leq R$ \lr{where $C_0$ is the constant defined before the statement of Lemma \ref{lem:beurling}, depending only on the constant $\eps$ from the bounded angle assumption}. The remaining case $r < C_0 \d$ will be considered at the end of the proof.\\
\emph{First step: the uniform crossing estimate for the \lr{random walk} with no killing.} For all $0 \leq i \leq 4$, define $B^i = B((1/2 + i/2,1/2),1/4), A^i = B((1/2 + i/2,1/2),1/2)$ and note that $B^0 = \Bst$, $B^4 = \Bta$. Let $0 \leq i \leq 4$ and define $\Birz, A^i(r,z)$ for the scaled and translated balls (as in the statement of Proposition \ref{prop:cross}) and $\Birzd, A^i_{\d}(r,z)$ for their discretizations. Denote by $o^i(r,z) = r(1/2 + i,1/2)+z$ the common center of the $i$-th small and big balls. Let $a^i, b^i \in \partial A^i(r,z)$ such that
\begin{equation}
    \arg(a^i - o^i(r,z)) = -\frac{\pi}{7} \quad ; \quad \arg(b^i - o^i(r,z)) = \frac{\pi}{7}.
\end{equation}
where $\arg$ denotes the argument in $(-\pi, \pi]$. \lr{This is illustrated on Figure \ref{fig:crossing}.}

\begin{figure}[h!]
    \centering   
        \begin{overpic}[abs,unit=1mm,scale=4
        ]{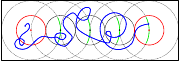}
        \end{overpic}
        \caption{\lr{An illustration of the proof of the uniform crossing estimate. The starting and target balls $\Bst, \Bta$ are in red. The intermediate balls $B_i$ are in black. The rectangle $\Rrz$ is in black. For each $i$, the arc of $\partial A^i$ between $a^i$ and $b^i$ is indicated in green. A path of the random walk $\Xd$ is in blue. The blue dots indicate the first escape of $A^i$. Note that the green arc of $A^i$ is included in the ball $B^{i+1}$, so every blue dot belongs to the next $B^{i+1}$. By the Beurling estimate, the probability that each blue dot belongs to the next green arc is bounded from below by an absolute constant. When each blue dot belongs to the next green arc, the random walk enters the target ball before leaving $\Rrz$.} }
        \label{fig:crossing}
\end{figure}

Let $\ad^i, \bd^i$ be the closest points in $\partial A^i_{\d}(r,z)$ from $a^i$ and $b^i$ respectively: they are at distance $\leq 2\d$ from $a_i$ and $b_i$, so upon increasing $C_0$ \lr{(which still depends only on the constant $\eps$ from the bounded angle assumption)}, $\arg(\bd-o^i(r,z)) - \arg(\ad - o^i(r,z)) \geq \pi/4$ for all $r \geq C_0\d$.

The weak Beurling estimate Lemma~\ref{lem:beurling} implies that 
\be\label{eq:beurling}
    \forall \xd \in \Birzd,~\Proba_{\xd}((\Sd)_{\td^i} \in (\ad^i, \bd^i)) \geq \eta_0.
\ee
Now, we observe that for all $0 \leq i < 4$, for all $y \in (a^i, b^i) \subset \partial A^i(r,z)$, denoting $\a = \arg(y-o^i(r,z)) \in [-\pi/7, \pi/7]$,
\begin{equation}
    |y - o^{i+1}(r,z)| = \left|\frac{r}{2} \sin(\a)\right| \leq \frac{r}{2} \sin(\pi/7).
\end{equation}
Since $\sin(\pi/7)/2 < 1$, $(a_i, b_i) \subset \partial\lr{A}^i(r,z)$ is a strict subset of $B^{i+1}(r,z)$. Since $(\ad^i, \bd^i)$ is within \lr{$2\d$} of $(a^i, b^i)$, upon increasing $C_{\lr{0}}$, we obtain that for all $r \geq C_{\lr{0}}\d$, for all $0 \leq i \leq 3$, $(\ad^i, \bd^i) \subset B^{i+1}_{\d}(r,z)$. We deal with the remaining case $r \leq C_{\lr{0}}\d$ at the end of the proof.\\
Let $\td^i$ denote the first escape time of $A^i_{\d}(r,z)$ by $\Xd$. Since we also have $A^i(r,z) \subset \Rrz$, on the event $(\Xd)_{\td^i} \in (\ad^i, \bd^i)$ the \lr{random walk} enters $B^{i+1}_{\d}(r,z)$ before leaving $\Rrz$, \lr{see Figure \ref{fig:crossing}. Hence,} the crossing event $\cross := \Big(\td(\Btarzd) < \td(\oSVd \setminus \Rrz)\Big)$ for the \lr{random walk} $\Xd$ satisfies 
\begin{equation}
    \bigcap_{i=0}^3 \Big((\Xd)_{\td^i} \in (\ad^i, \bd^i)\Big) \subset \cross.
\end{equation}
Applying four times the strong Markov property (with the stopping times $\td^i$ for $0 \leq i \leq 3$) and using Equation~\eqref{eq:beurling} gives the uniform crossing estimate for the critical \lr{random walk} $\Xd$:
\be\label{eq:crossing:no-killing}
    \forall \xd \in \Bstrzd,~\Proba_{\xd}(\cross) \geq \eta_0^4.
\ee
\par
\emph{Second step: upper bound on the escape time for the \lr{random walk} with no killing.} Let $z \in \oSVd$ and $\xd \in \Bstrzd$ a starting point. Recall that $(\xid)_n = (\Xd)_{n+1}-(\Xd)_n$ denotes the increments of the \lr{random walk}. The law $\xid(\yd)$ of $(\xid)_n$ depends only on the value $\yd = (\Xd)_n \in \oSVd$. Denote by $M_{\d} = \Re(\Xd)$. From Equation~\eqref{eq:increments:critical}, we see that $M_{\d}$ is a martingale in the canonical filtration $(\FF_n)_{n \in \N}$ of $\Xd$ with increments satisfying for all $\yd \in \oSVd$, $\E(\Re(\xid(\yd))) = 0$ and $\E(\Re(\xid(\yd))^2) \geq \eta_1\d^2$ for some absolute constant $\eta_1$. Hence the quadratic variation $\langle M_{\d}\rangle$ of $M_{\d}$ satisfies for all $n \geq 0$, $\langle M_{\d}\rangle_n \geq \eta_1 n \d^2$. Let $\sid = \inf \{n \geq 0, |(M_{\d})_n-\Re(\xd)| \geq 3R \}$. By the monotone convergence theorem and the optional stopping theorem applied to the bounded martingales $((M_{\d})_{\cdot \wedge n} - \Re(\xd))^2-\langle M_{\d} \rangle_{\cdot \wedge n}$ at stopping time $\sid$, 
\begin{equation}
    \eta_1 \d^2 \E(\sid) \leq \E(\langle M_{\d}\rangle_{\sid}) = \lim_{n \to \infty} \E(\langle M_{\d}\rangle_{\sid \wedge n}) = \lim_{n \to \infty} \E((M_{\d})_{\sid \wedge n} - \Re(\xd)^2) \leq (3R)^2.
\end{equation}
By Markov's inequality and since $\td(\oSVd \setminus \Rrz) \leq \sid$, this implies for the critical \lr{random walk} $\Xd$:
\be\label{eq:bounded:time}
    \forall \lambda >0,~\Proba_{\xd}\left(\td(\oSVd \setminus \Rrz) \geq \lambda \frac{R^2}{\d^2}\right) \leq \Proba\left(\sid \geq \lambda \frac{R^2}{\d^2}\right) \leq \frac{9}{\eta_1\lambda}.
\ee
\par

\emph{Third step: coupling the \lr{killed random walk} and the \lr{random walk}.} From Lemma~\ref{lem:mass} and Equation~\eqref{eq:approx:sc} we know that the mass satisfies $m^2(\xd|k) = O(\d^2)$ and that the conductances $\sc(\theta_{xy}|k)$ of the near critical Laplacian $\Dkd$ are within $O(\d^2)$ of the conductances $\tan(\bt_{xy})$ of $\Dd$ (after multiplying all conductances by a common factor $1+2M\d + O(\d^2)$ which does not change the probabilities). Hence the \lr{killed random walk} $\Skd$ and the \lr{random walk} $\Xd$ can be coupled with a probability of failing $\leq \eta_2\d^2$ at each step \lr{where $\eta_2$ is a constant depending only on the bounded angle assumption and $M$.}

\emph{Fourth step: concluding the proof when $C_0\d \leq r \leq R$.} We choose $\lambda$ such that $9/(\eta_1\lambda) = \eta_0^4/2$: Equations \eqref{eq:crossing:no-killing} and \eqref{eq:bounded:time} imply that the crossing event $\cross$ for the critical \lr{random walk} $\Xd$ happens with positive probability at a time of order $\lambda R^2/\d^2$ or less:
\begin{equation}
    \forall \xd \in \Bstrzd,~ \Proba_{\xd}\Big(\td(\Btarzd) < \td(\oSVd \setminus \Rrz) < \lambda \frac{R^2}{\d^2}\Big) \geq \frac{\eta_0^4}{2}.
\end{equation}
\lr{Recall that $R > 0$ is fixed, and let $\d \leq \eta_2^{-1/2}$.} On the event $\Big(\td(\Btarzd) < \td(\oSVd \setminus \Rrz) < \lambda R^2/\d^2\Big)$, the coupling fails with probability at most $1-(1-\eta_2\d^2)^{\lambda R^2/\d^2} \leq 1-\eta_3 <1$ for some \lr{$\eta_3 = \eta_3(R) >0$ and all $\d \leq \d_0(R)$ where the constants $\eta_3(R), \d_0(R)$ depend only on $R$, $M$ and the bounded angle assumption}. Since on the event $\cross$, when the coupling succeeds until $\td(\oSVd \setminus \Rrz$, the event $\crossk$ happens, we conclude that 
\be
    \forall \xd \in \Bstrzd,~\Proba_{\xd}(\crossk) \geq (1-\eta_3)\eta_0^4/2.
\ee
\emph{Fifth step: when $r \leq C_0\d$.}
In this case, the rectangle $\Rrz$ has a bounded diameter for the graph distance: by the bounded angle assumption, we can always find a path with at most $C$ vertices from any point in the starting ball to the target ball. The probability that the walk follows this path and survives is uniformly bounded away from $0$ by a constant of the form $\eta^{C}$ for some absolute constant $\eta >0$.\qedhere
\end{proof}

\end{document}